\definecolor{gray75}{gray}{0.75}
\newcommand{\hsp}{\hspace{20pt}}
\titleformat{\chapter}[hang]{\Huge\bfseries}{\thechapter\hsp\textcolor{gray75}{}\hsp}{0pt}{\Huge\bfseries}
\numberwithin{equation}{section}
\newcounter{item}[section]
\newcounter{kirshr}
\newcounter{kirsha}
\newcounter{kirshb}
\newtheorem{theorem}{Theorem}[section]
\newtheorem{proposition}[theorem]{Proposition}
\newtheorem{example}[theorem]{Example}
\newtheorem{lemma}[theorem]{Lemma}
\newtheorem{corollary}[theorem]{Corollary}
\theoremstyle{definition}
\newtheorem{definition}[theorem]{Definition}
\title{Modules with Non-Cyclic Socle and the
Extension Property}
\author{By\\Ali Assem Abd-AlQader Mahmoud}
\date{}
\begin{document}
\onehalfspacing
\begin{titlepage}
\begin{center}
\Large \textbf{Modules with Non-Cyclic Socle and the
Extension Property}\\[1.5cm]

Presented by\\ \large \textbf{Ali Assem Abd-AlQader Mahmoud}\\[1cm]

\normalsize A Thesis Submitted \\ to\\ \textbf{Faculty of Science} \\ In Partial Fulfillment of the\\ Requirements for \\ the Degree of \\ Master of Science \\ (Pure Mathematics)\\ Mathematics Department \\ Faculty of Science \\ Cairo University \\ (2015)
\end{center}
\end{titlepage}
\newpage
\thispagestyle{empty}
\mbox{}
.
\newpage
\thispagestyle{empty}
\mbox{}

\begin{center}
\large \textbf{APPROVAL SHEET FOR SUBMISSION}\\[1cm]
\end{center}
\textbf{Thesis Title}: Modules with Non-Cyclic Socle and the
Extension Property.\\
\textbf{Name of candidate}: Ali Assem Abd-AlQader Mahmoud.\\[1cm]

This thesis has been approved for submission by the supervisors:\\[1cm]

Dr. Nefertiti Megahed \\ \mbox{  }\textbf{Signature:}   \\[1cm]
Prof. Tarek Sayed Ahmad\\ \mbox{  }\textbf{Signature:}\\[0.2cm]
\begin{flushright}
Prof.Dr. N.H. Sweilam.\\
Chairman of Mathematics Department\\
Faculty of Science - Cairo University
\end{flushright}

\newpage
\thispagestyle{empty}
\mbox{}
.
\newpage
\thispagestyle{empty}
\mbox{}

\begin{center}
\large \textbf{ABSTRACT}\\[1cm]
\end{center}
\textbf{Student Name}: Ali Assem Abd-AlQader Mahmoud.\\
\textbf{Title of the Thesis}: Modules with Non-Cyclic Socle and the Extension Property.\\
\textbf{Degree}: M.Sc. (Pure Mathematics).\\[0.1cm]

 In 2009, J. Wood \cite{r7} proved that Frobenius bimodules have the extension property for symmetrized weight compositions. More generally, in \cite{r4}, it is shown that having a cyclic socle is sufficient for satisfying the property, while the necessity remained an open question.
\newline Here, landing in Midway, a partial converse is proved. For a significant class of finite module alphabets, the cyclic socle is shown necessary to satisfy the extension property. The idea is bridging to the case of Hamming weight through a new weight function.\\[1cm]
\textbf{Keywords}: linear codes; characters; Frobenius rings; extension property.\\[0.2cm]

\textbf{Supervisors}:\hspace{6cm} \textbf{Signature:} \\\\
Dr. Nefertiti Megahed   \\[1cm]
Prof. Tarek Sayed Ahmad\\[0.2cm]
\begin{flushright}
Prof.Dr. N.H. Sweilam.\\
Chairman of Mathematics Department\\
Faculty of Science - Cairo University
\end{flushright}
\pagenumbering{roman}
\newpage
\thispagestyle{empty}
\mbox{}

\newpage
\thispagestyle{empty}
\mbox{}

Scores
\newpage
\thispagestyle{empty}
\mbox{}

\newpage
\thispagestyle{empty}
\mbox{}

\begin{center}\huge{\textbf{Acknowledgements}}\\\end{center}

In the turn of this rough stage, the current last page had to bear the gratitude and appreciation to many people. Many thanks are to my supervisors Dr. Nefertiti Megahed, and Dr. Tarek Sayed Ahmad. Perhaps they are unaware of this, but each has provided me with a different kind of support. 

Dr. Hany ElHosseiny, for you, a thesis of acknowledgements wouldn't be enough, thanks!

Prof. M. Lamei, without your course in the third year, I wouldn't have been attracted to ring theory. In the same algebraic line, my greatest thanks are to Prof. Ismail Ameen, Dr. Waleed Ahmed, Dr. Laila soueif, Dr. Fatma Ismail, and Dr. Yasser Ibrahim. It has been a big algebraic bulk I have learned from you.

As they were more than demonstrators, they were actually some of the best friends, I'd like to thank  Dr. Youssri Hassan, and Dr. Mohamad Adel for being so supportive from the early beginning. 

Many thanks are to Prof. Ahmad Nasr and Prof. Malak Rizk, I spent some of my best times lecturing calculus to our students, maybe this served as a relief!

Finally, to my best doctors, Dr. Amr Sid-Ahmad and Prof. Alaa E. Hamza, you left me with the most delightful memories through the many semesters.

\newpage
\thispagestyle{empty}
\mbox{}

\tableofcontents
\newpage
\newpage

\pagenumbering{arabic}
\setcounter {chapter}{-1}
\chapter{Introduction}

In 1962, in her doctoral dissertation \cite{Mac}, F. J. MacWilliams proved a certain extension-of-maps' theorem, that was posed when two notions of equivalence of codes were conceived. In that classical context, a code of length $n$ over a finite field alphabet $\mathbb{F}_q$, is defined to be just a subspace of $\mathbb{F}^n_q$. Then, in a sense, two codes are equivalent when they are isomorphic vector spaces, with ``distance'' between codewords being preserved through the indicated isomorphism. In another sense, two codes $C_1,C_2$ are equivalent if the codewords of $C_2$ are obtained, bijectively, from those of $C_1$ by permuting the components and scaling them, using a fixed permutation, and fixed $\mathbb{F}_q$-automorphism for scaling.  MacWilliams extension theorem, also known as the \emph{extension property}  (EP) for linear codes, was re-proved, later, through other proofs \cite{bogart}, \cite{r1996}. However, the character-theoretic proof which H. Ward and J. Wood gave in \cite{r1996} is a one that had the impetus to run in more general settings, and its machinery was operated, over and over, to `forge' many proofs for the EP, in the various contexts. \\

\begin{par}More generally, a (left linear) code of length $n$ over a module alphabet $_RA$ is a (left) submodule $C\subset A^n$. $A$ has the EP with respect to the weight $w$ if, for any $n$ and any two codes $C_1,C_2\subset A^n$, any isomorphism $f: C_1\rightarrow C_2$ preserving $w$ extends to a $G$-monomial transformation of $A^n$, where $G$ is the  \emph{right symmetry group}\footnote{The right symmetry group of a weight $w$ is the group $\{\tau\in\mathrm{Aut}_R(A):w(a\tau)=w(a), \; \text{for all }\; a\in A\}\leq\mathrm{Aut}_R(A).$} of $w$. The first attempt to generalize towards codes over arbitrary finite rings was, perhaps, the treatment of codes over the rings $\mathbb{Z}_m$ accomplished by I. Constantinescu and W. Heise \cite{const1}.

In 1999, following the character-theoretic proof, J. Wood \cite{duality}, proved that Frobenius rings have the EP for Hamming weight. Besides, a partial converse was proved: commutative rings satisfying the EP for Hamming weight are necessarily Frobenius.  It was also proved in \cite{r5} that Frobenius rings have the EP for \emph{symmetrized weight compositions}.\end{par}
\begin{par}In 2004, Greferath et al. \cite{2004} showed that, in general, Frobenius bimodules  have the EP for Hamming weight. In \cite{r2}, Dinh and L\'{o}pez-Permouth suggested a strategy for proving the full converse. The strategy has three parts. (1) If a finite ring is not Frobenius, its socle contains a matrix module of a particular type. (2) Provide a counter-example to the EP in the context of linear codes over this special module. (3) Show that this counter-example over the matrix module pulls back to give a counter-example over the original ring. In 2008, J. Wood \cite{r6} provided a technical lemma (Theorem \ref{wood} below) for carrying out step 2 in  the strategy, and thereby proving that rings having the EP for Hamming weight are necessarily Frobenius. The proof was easily adapted in \cite{r7} (2009) to prove that a module alphabet $_RA$ has the EP for Hamming weight if and only if $A$ is pseudo-injective with cyclic socle.\end{par}

\begin{par}On the other lane, in \cite{r7}, J. Wood proved that Frobenius bimodules have the EP for $\mathrm{swc}_G$, for all $G\preceq \mathrm{Aut}_R(A)$; and in \cite{r4} it was shown that having a cyclic socle is sufficient (Theorem \ref{Noha} below), while the necessity remained an open question. Here, for a certain significant class of module alphabets, we show that having a cyclic socle is necessary.  We define a  weight function that depends on annihilators, and use a monomorphism's preservation of this weight to study the preservation of Hamming weight as well as $\mathrm{swc}_{\mathrm{Aut}_R(A)}$ (Chapter 3). \\\end{par}
\begin{par} Having done this, we find that an alphabet of the form $_KL$, where $K$ is a subfield of another field $L$, has the EP with respect to all symmetrized weight compositions if and only if $K=L$. This handles the case of additive codes. Moreover, the class of modules, on which the necessity is proved, consists of modules that already have qood extension properties (e.g. pseudo-injective modules, semisimple modules, etc.). Were it, in general, unnecessary to have a cyclic socle in order to satisfy the indicated EP, it would have been quite likely that the described class should satisfy the EP under a condition weaker than having a cyclic socle. However, afterall, this wasn't the case. For this reason, it is conjectured that having a cyclic socle is necessary, in general, for satisfying the property. \\\end{par}\\
\setlength{\parindent}{0cm}\textbf{\large{Layout of the thesis}}\\\mbox{}\\
In the most of it, this thesis tends to maintain a certain fashion by displaying some results of pure ring theory, and then, without delay, these make their own performance in our treatment of the extension problem. Thus, it was somehow inevitable that the text should be preluded with a big bulk of pure ring theory.\\
Indeed, in chapter 1, we study semisimplicity, the Wedderburn-Artin theorem, and the Jacobson radical, this is section 1.1. In section 1.2, we study local, semilocal, and semiperfect rings, and what these classes of rings imply for idempotents. Section 1.3 is devoted for some elements of character theory  that will be crucial in later chapters.\\
\setlength{\parindent}{0.5cm}
\begin{par}In chapter 2, we directly apply section 1.3 to prove MacWilliams extension theorem for codes over finite fields, but, of course, this is done in section 2.2, after defining  the basic terms of coding theory in section 2.1. Anyway, section 2.1 displays an inadequate history of coding theory, and, for this reason, a history compensation is done by displaying, in many occasions, a fair amount of historical remarks is divided throughout the text. In section 2.3, we return, again, to some pure ring theory, studying Frobenius an d quasi-Frobenius rings. One crucial result about Frobenius rings is that a finite ring is Frobenius if and only if its character module is cyclic. \\
In \cite{duality}, J. Wood proved this by appealing to Morita duality. Were we to adopt this approach, the thesis would either witness a large degression, or, in the best case we would have stated, without proofs, many powerful theorems. Nevertheless, the thesis managed in avoiding the dilemma, using a theorem of T. Honold (Theorem \ref{hon}). Honold \cite{Honold} proved that, for a finite ring $R$, $_R\widehat{R}$ is cyclic if and only if $_R(R/\mathrm{rad}R)\cong \mathrm{soc}(_RR)$. It remained for us to prove, using only the original definition of QF rings, and guided by \cite{Lam},  that a finite ring is Frobenius if and only if $_R(R/\mathrm{rad}R)\cong \mathrm{soc}(_RR)$. This heavy part about Frobenius rings isn't included in chapter 1 so as to end chapter 1 in a good timing, allowing a reader longing to see the applications in coding theory to refresh himself. Another reason is that, it seemed better to place results from papers of coding theory in their context of coding theory even if they are  pure ring theory results.\end{par}

\begin{par}In chapter 3, we investigate the extension problem, this time with respect to symmetrized weight compositions. In section 3.1 and the first two sub-sections of 3.2, the exposition mainly follows \cite{r5}, \cite{r7}, \cite{2004} and \cite{r4}. \\
Section 3.2.3  contains the main result of the thesis, where we define annihilator weight ($aw$) and use it to prove that, for a considerable class of modules,  a cycle socle is necessary for satisfying the extension property with respect to symmetrized weight compositions.  \end{par}

A talk on this manipulation was delivered by the author in the "Congress on Non-commutative Rings and their Applications" held in Lens, France, in June 2015. One paper has been extracted out of this work \cite{JAA}, and was accepted for publication in the Journal of Algebra and Its Applications (published by World Scientific).

\chapter{Algebraic Context and Tools}

\begin{par}At the end of this chapter, a certain algebraic theme, that prevails throughout the thesis,  would have been brought into action. As a pleasant way to go, this chapter (the thesis, thereby) shall grow from a common ground, the Wedderburn-Artin theory seemed to serve this purpose, this is section 1. In Section 2, we call on semilocal, semiperfect rings, and the theory of idempotents as an outgrowth. Section 3 triggers some character theory  and investigates interactions with  preceding sections.  The definitions and basic properties of Frobenius rings, however, will  be covered in the next chapter. This is justified with that the intended exposition of Frobenius rings in this thesis will mainly focus on the relation of such a ring with its character bimodule, the ideas involved are fairly new ones. Besides, it was preferred that these ideas be only a few pages apart from their applications in coding theory. \end{par}\\[0.1cm]

\textbf{Note:} All rings are with unity, and modules are unitary, unless otherwise stated. Proofs corresponding to famous results will  often be omitted.

\section{Semisimple Rings}
\begin{par}Traced back to 1908, starting with a theorem of J. H. Wedderburn in his  ``On hypercomplex numbers'' (Proc. London Math. Soc. V.6, N.2), a sequence of refined results eventually formed the major repertoirs in the structure theory of rings.
Working on finite dimensional algebras (they were known as \emph{systems of hypercomplex numbers} at that time), Wedderburn defined the radical of  an algebra $A$ to be the largest nilpotent ideal in $A$, and call $A$ semisimple if it has a zero radical.\end{par}
\begin{par}The next refinement was about twenty years later, when E. Artin generalized Wedderburn's work to the class of rings satisfying both chain conditions on left ideals (Levitzki and Hopkins, later in 1939, showed that the \emph{DCC} on left ideals  implies the \emph{ACC}  on left ideals, Theorem \ref{4.15lam}). Rings satisfying the \emph{DCC} (now called \emph{Artinian}) have a largest nilpotent ideal -- as Artin showed -- and whence semisimple rings were naturally defined.\end{par}
\begin{par} In our exposition, however, we shall start out in a rather different approach, away from radicals, mainly following \cite{Lamnoncom}.\end{par}

\subsection{Semisimple Modules}

\begin{definition} Let $R$ be a ring, and $M$ a (left) R-module.
\begin{enumerate}
\item[$(1)$]$M$ is called a \emph{simple} (or \emph{irreducible}) module if $M\neq0$, and $M$ has no $R$-submodules but 0 and $M$.
\item[$(2)$] $M$ is called a \emph{semisimple} (or \emph{completely reducible}) module if every $R$-submodule of $M$ is direct summand of  $M$. \end{enumerate}\end{definition}

\begin{lemma}\label{remark1}Any submodule (resp., quotient module) of a semisimple $R$-module is semisimple.\end{lemma}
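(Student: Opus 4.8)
The plan is to work straight from the definition in play here: a module is \emph{semisimple} exactly when every one of its submodules is a direct summand. So for the submodule assertion I would fix a submodule $N \subseteq M$ with $M$ semisimple, take an arbitrary submodule $P \subseteq N$, and aim to produce a complement of $P$ inside $N$. The natural move is to first complement $P$ inside the big module: semisimplicity of $M$ gives a submodule $Q \subseteq M$ with $M = P \oplus Q$. Then I would cut everything down to $N$ by intersecting: since $P \subseteq N$, Dedekind's modular law yields $N = N \cap M = N \cap (P + Q) = P + (N \cap Q)$, and this sum is direct because $P \cap (N \cap Q) \subseteq P \cap Q = 0$. Hence $N = P \oplus (N \cap Q)$, so $P$ is a direct summand of $N$; as $P$ was arbitrary, $N$ is semisimple.

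For the quotient assertion I would reduce to the case just handled rather than argue afresh. Given a quotient $M/N$, semisimplicity of $M$ provides $N' \subseteq M$ with $M = N \oplus N'$, and then the composite $N' \hookrightarrow M \twoheadrightarrow M/N$ is an isomorphism of $R$-modules. Thus $M/N$ is isomorphic to the submodule $N'$ of $M$, which is semisimple by the first part; since semisimplicity is clearly preserved under isomorphism, $M/N$ is semisimple. (One could instead note that every submodule of $M/N$ has the form $P/N$ with $N \subseteq P \subseteq M$ and split $P$ directly, but passing through the complement $N'$ is cleaner and avoids re-running the modular-law computation.)

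I do not expect any genuine obstacle here; the content is the one-line observation that a complement taken in $M$ can be intersected back into $N$. The single point that needs care is the application of the modular law: it is essential that $P$ be contained in $N$ so that $N \cap (P + Q)$ collapses to $P + (N \cap Q)$, and one should record the directness of that sum explicitly. Everything else is bookkeeping.
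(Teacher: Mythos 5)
Your proof is correct: the modular-law argument $N = P \oplus (N \cap Q)$ handles submodules, and reducing the quotient case to the submodule case via the splitting $M = N \oplus N'$ is sound. The paper itself gives no argument here (its proof reads simply ``Clear''), and what you have written is precisely the standard verification that would fill that omission.
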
\begin{proof}Clear.\end{proof}
\begin{lemma}Any nonzero semisimple module contains a simple submodule.\end{lemma}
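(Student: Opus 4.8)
The plan is to reduce to the cyclic case and then extract a maximal submodule whose complement is simple. First I would pick any nonzero element $x\in M$ and pass to the cyclic submodule $Rx$; since $R$ has a unit, $x\in Rx$, so $Rx\neq 0$, and by Lemma \ref{remark1} $Rx$ is again semisimple. It suffices to find a simple submodule of $Rx$, as this will also be a submodule of $M$.

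Next I would produce a maximal (proper) submodule $N$ of $Rx$. The point is that every proper submodule of $Rx$ fails to contain the generator $x$, and the union of a chain of submodules none of which contains $x$ still does not contain $x$, hence is proper. So Zorn's lemma applies to the poset of proper submodules of $Rx$, yielding a maximal submodule $N\subsetneq Rx$, and by maximality $Rx/N$ is simple.

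Finally I would invoke semisimplicity of $Rx$: the submodule $N$ is a direct summand, so $Rx=N\oplus C$ for some submodule $C\subseteq Rx$. Then $C\cong Rx/N$ is simple, and $C\subseteq Rx\subseteq M$, which finishes the proof.

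The only genuinely delicate point is the existence of the maximal submodule $N$: one must use that $Rx$ is cyclic (so proper submodules are exactly those missing a fixed generator) together with Zorn's lemma; for a non-cyclic module the naive chain argument would break down, which is precisely why the reduction to $Rx$ is the crucial first move. Everything else is immediate from the definition of semisimple (submodules are direct summands) and Lemma \ref{remark1}.
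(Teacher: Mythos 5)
Your proof is correct and follows essentially the same route as the paper: Zorn's lemma produces a submodule maximal with respect to missing the chosen element (your maximal proper submodule of $Rx$ is precisely a submodule of $Rx$ maximal with respect to not containing $x$), and semisimplicity then splits off a complement, which is the desired simple submodule. Your explicit reduction to the cyclic module $Rx$ is actually a point in your favour: it makes the key step immediate (any submodule containing $x$ equals $Rx$), whereas the paper works in $M$ itself and its assertion that $N\oplus K=M$ once $m\in N\oplus K$ tacitly relies on this cyclic reduction (or on a small additional argument using semisimplicity to show $N+Rm=M$).
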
\begin{proof} Let $m \in M$ be a nonzero element. By Zorn's Lemma, the family of submodules not containing $m$ has a maximal element $N$. Let $N'\neq0$ be such that $M=N\oplus N'$. We claim that $N'$ is simple. Indeed, if $K$ is a nonzero submodule of $N'$, then $N\oplus K$ must contain $m$ by the maximality of $N$, hence $N\oplus K=M$, which implies $K=N'$.\end{proof}
\begin{theorem}\label{ss} The following are equivalent for any module $_RM$:
\begin{enumerate}
\item[$(1)$]$M$ is semisimple.
\item[$(2)$] $M$ is the direct sum of a family of simple submodules.
\item[$(3)$] $M$ is the sum of a family of simple submodules.
 \end{enumerate}
\end{theorem}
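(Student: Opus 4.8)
The plan is to establish the cycle of implications $(1)\Rightarrow(3)\Rightarrow(2)\Rightarrow(1)$, using only the two preceding lemmas (a submodule of a semisimple module is semisimple; a nonzero semisimple module contains a simple submodule) together with Zorn's Lemma.

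For $(1)\Rightarrow(3)$ I would let $M_0$ denote the sum of all simple submodules of $M$ and argue that $M_0=M$: if not, semisimplicity gives $M=M_0\oplus N$ with $N\neq0$, the first lemma makes $N$ semisimple, the second lemma produces a simple submodule of $N$, and this submodule lies in $M_0$ by definition, contradicting $M_0\cap N=0$; hence $M=M_0$ is a sum of simple submodules. For $(3)\Rightarrow(2)$, writing $M=\sum_{i\in I}M_i$ with the $M_i$ simple, I would use Zorn's Lemma to choose $J\subseteq I$ maximal with the property that the sum $\sum_{j\in J}M_j$ is direct, set $M'=\bigoplus_{j\in J}M_j$, and observe that for each $i$ the submodule $M_i\cap M'$ of the simple module $M_i$ cannot be $0$ (otherwise $J\cup\{i\}$ would contradict maximality of $J$), hence $M_i\subseteq M'$ for all $i$, so $M=M'$ is a direct sum of simple submodules.

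For $(2)\Rightarrow(1)$, given $M=\bigoplus_{i\in I}M_i$ with the $M_i$ simple and an arbitrary submodule $N\subseteq M$, I would pick (again by Zorn) $J\subseteq I$ maximal with $N\cap\bigoplus_{j\in J}M_j=0$, put $M'=N\oplus\bigoplus_{j\in J}M_j$, and show $M_i\subseteq M'$ for every $i$: for $i\in J$ this is immediate, and for $i\notin J$ maximality forces $N$ to meet $\bigl(\bigoplus_{j\in J}M_j\bigr)\oplus M_i$ nontrivially, which after projecting onto the $M_i$-component yields a nonzero element of $M_i$ lying in $M'$, so simplicity gives $M_i\subseteq M'$; hence $M=M'$ and $N$ is a direct summand, i.e. $M$ is semisimple. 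The only point requiring care is the legitimacy of the two Zorn's Lemma applications: one must note that ``$\sum_{j\in J}M_j$ is direct'' and ``$N\cap\bigoplus_{j\in J}M_j=0$'' are conditions of finite character, so that the relevant posets are closed under unions of chains. This is routine, and I do not expect any genuine obstacle in the argument.
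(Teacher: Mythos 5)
Your proof is correct and takes essentially the same approach as the paper: the same argument for $(1)\Rightarrow(3)$ via the sum of all simple submodules, and the same Zorn's Lemma selection of a maximal subfamily $J$ whose members are then absorbed using simplicity. The only difference is organizational -- the paper runs that Zorn argument once with an arbitrary submodule $N$ to prove $(3)\Rightarrow(1)$ and gets $(3)\Rightarrow(2)$ by specializing to $N=0$, whereas you run it twice (for $(3)\Rightarrow(2)$ and then for $(2)\Rightarrow(1)$), which changes nothing of substance.
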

\begin{proof}\begin{par}$(1)\Longrightarrow(3).$ Let $M$ be semisimple, and $M_1$ be the sum of all simple submodules in $M$, and write $M=M_1\oplus M_2$, where $M_2$ is a suitable submodule. If $M_2\neq 0$, the previous lemma implies that $M_2$ contains a simple submodule, which is a contradiction. \end{par}
\begin{par} $(3)\Longrightarrow(1).$ Write $M=\sum_{i\in I}M_i$, where $M_i$'s are  simple submodules of $M$. Let $N\subseteq M$ be a submodule. Consider the subsets $J\subseteq I$ with the property that $N+\sum_{j\in J}M_j$ is a direct sum. Applying Zorn's Lemma, there is a maximal such subset $J\subseteq I$, and we show that $M=N\oplus\bigoplus_{j\in J}M_j$. If for some $i\in I$,
$M_i\nsubseteq N\oplus\bigoplus_{j\in J}M_j$,  the simplicity of $M_i$ implies that $M_i\cap N\oplus\bigoplus_{j\in J}M_j=0$, which makes $N+M_i+\bigoplus_{j\in J}M_j$ a direct sum, contradicting the maximality of $J$. This shows that $M=\bigoplus_{j\in J}N_j$.\end{par}

\begin{par}$(3)\Longrightarrow(2)$  follows by applying the above arguments for $N=0$.\end{par}

\end{proof}

\begin{theorem}\label{semisimple rings}(Semisimple Rings)
For a ring $R$, the following are equivalent:
    \begin{enumerate}
        \item[$(1)$] All short exact sequences of left $R$-modules split.
        \item[$(2)$] All left $R$-modules are semisimple.
        \item[$(3)$] All finitely generated left $R$-modules are semisimple.
        \item[$(4)$] All cyclic left $R$-modules are semisimple.
        \item[$(5)$] The left regular\footnote{The ring $R$, considered as a left (right) module over itself is called the left (right) regular $R$-module.} $R$-module $_RR$ is semisimple.
    \end{enumerate}
If any of these conditions holds, $R$ is said to be a left semisimple ring.
\end{theorem}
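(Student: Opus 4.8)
The plan is to run the cycle $(1)\Rightarrow(2)\Rightarrow(3)\Rightarrow(4)\Rightarrow(5)\Rightarrow(1)$, with essentially all of the work concentrated in the two ``outer'' implications $(1)\Leftrightarrow(2)$ and $(5)\Rightarrow(2)$; the middle implications will be immediate.

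First I would record the elementary observation that, for a single module $_RM$, semisimplicity is equivalent to the splitting of every short exact sequence $0\to N\to M\to M/N\to 0$: such a sequence splits precisely when $N$ is a direct summand of $M$, and by definition $M$ is semisimple exactly when every submodule $N$ is a direct summand. Granting this, $(1)\Leftrightarrow(2)$ is nearly a tautology. If all short exact sequences of left $R$-modules split, then in particular the sequences of the above form split, so every module is semisimple. Conversely, if every left $R$-module is semisimple and $0\to A\xrightarrow{\iota} B\to C\to 0$ is short exact, then $\iota(A)$ is a submodule of the semisimple module $B$, hence a direct summand, hence the sequence splits. Then $(2)\Rightarrow(3)\Rightarrow(4)\Rightarrow(5)$ need no argument at all: finitely generated modules are modules, cyclic modules are finitely generated, and $_RR$ is cyclic.

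The only real content is closing the loop with $(5)\Rightarrow(2)$. Assume $_RR$ is semisimple. I would first show that every free left $R$-module $F=\bigoplus_{i\in I}R$ is semisimple: by Theorem \ref{ss} each copy of $R$ is a sum of simple submodules, so $F$ is a sum of simple submodules, hence semisimple again by Theorem \ref{ss}. Next, given an arbitrary left $R$-module $M$, pick a free module $F$ mapping onto $M$ (take $F$ free on a generating set of $M$); then $M$ is a quotient of the semisimple module $F$, so $M$ is semisimple by Lemma \ref{remark1}. This gives $(2)$ and completes the cycle.

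I do not expect a genuine obstacle here. The one point that needs a little care is the passage through possibly infinite free modules in $(5)\Rightarrow(2)$: arguing directly from the ``every submodule is a direct summand'' definition would force awkward bookkeeping, whereas it becomes painless once one uses the equivalent ``(arbitrary) sum of simple submodules'' formulation supplied by Theorem \ref{ss}. So the strategic choice is to lean on Theorem \ref{ss} and Lemma \ref{remark1} rather than on the bare definition.
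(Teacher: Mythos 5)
Your proposal is correct and follows essentially the same route as the paper: the equivalence $(1)\Leftrightarrow(2)$ is the same tautological observation, $(2)\Rightarrow(3)\Rightarrow(4)\Rightarrow(5)$ is trivial in both, and the real content $(5)\Rightarrow(2)$ rests on the same two ingredients, Lemma \ref{remark1} and Theorem \ref{ss}. The only (cosmetic) divergence is in that last step: the paper writes $M=\sum_{m\in M}Rm$ and notes each cyclic submodule $Rm$ is semisimple as a quotient of $R$, applying Theorem \ref{ss}(3) directly inside $M$, whereas you route through a free module surjecting onto $M$ and then pass to the quotient --- a slightly longer detour that proves the same thing with the same tools.
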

\begin{par}\textbf{Note:} We shall see later that a ring is left  semisimple if and only if it is right semisimple.\end{par}
\begin{proof}
(1) and (2) are clearly equivalent, and of course $$(2)\Longrightarrow(3)\Longrightarrow(4)\Longrightarrow(5).$$ Now we prove that $(5)\Longrightarrow(2)$. Let $M$ be any left $R$-module where $R$ satisfies (5). In view of Lemma \ref{remark1}, any cyclic submodule $Rm$ of $M$ is semisimple (being isomorphic to a quotient of $R$). Since $M=\sum_{m\in M}Rm$, by (3) in Theorem \ref{ss}, $M$ is semisimple.
\end{proof}
\begin{corollary}A left semisimple ring $R$ is both left noetherian and left artinian.\end{corollary}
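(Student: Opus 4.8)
The plan is to reduce the statement to a length argument on $_RR$ as a left module. By condition $(5)$ of Theorem~\ref{semisimple rings}, $_RR$ is a semisimple module, so by Theorem~\ref{ss} we may write $_RR=\bigoplus_{i\in I}S_i$ as a direct sum of simple submodules. The first step is to show that $I$ is \emph{finite}: writing the identity as $1=\sum_{i}s_i$ with $s_i\in S_i$ and only finitely many $s_i$ nonzero, say for $i$ ranging over a finite subset $I_0\subseteq I$, we get $R=R\cdot 1\subseteq\sum_{i\in I_0}S_i$; since the sum is direct this forces $S_i=0$ for $i\notin I_0$, so $I=I_0$ is finite, say $|I|=n$.

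Next I would exhibit the chain
\[
0\subset S_1\subset S_1\oplus S_2\subset\cdots\subset\bigoplus_{i=1}^{n}S_i={}_RR,
\]
whose successive quotients are the simple modules $S_1,\dots,S_n$. Thus $_RR$ possesses a composition series of length $n$. At this point I would invoke the standard fact (provable by the Jordan--H\"older refinement argument, or by a direct induction on $n$) that a module admitting a finite composition series satisfies both the ACC and the DCC on submodules: any strictly ascending or strictly descending chain of submodules can be refined into, hence has length at most, a composition series. Consequently $_RR$ is simultaneously a Noetherian and an Artinian module, which is precisely the assertion that the ring $R$ is left Noetherian and left Artinian.

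There is essentially no obstacle here; the only point demanding a moment's care is the finiteness of the index set $I$, and this is exactly where the standing hypothesis that $R$ has an identity element enters. The composition-series machinery invoked at the end is routine and could be bypassed altogether by a direct induction on $n$: for a submodule $N\subseteq S_1\oplus\cdots\oplus S_n$, projection onto $S_1\oplus\cdots\oplus S_{n-1}$ yields a short exact sequence whose two ends have strictly smaller ``length,'' and both chain conditions are inherited through short exact sequences, closing the induction.
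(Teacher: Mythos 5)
Your argument is correct and is essentially the paper's own proof: decompose $_RR$ into simple submodules, use the finite support of the identity element to see the decomposition is finite, and conclude both chain conditions from the resulting finite composition series. You merely spell out the final "the result then follows" step, which the paper leaves implicit.
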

\begin{proof}By the previous theorem, express $_RR$ as a direct sum of simple modules, and obtain a finite expression for 1, hence $R$ is actually having a finite decomposition into simple left ideals. The result then follows.\end{proof}

\begin{definition}(Projective Modules). A module $P$ over a ring $R$ (not necessarily finite) is said to be \emph{projective} if given any diagram of $R$-module homomorphisms
\begin{displaymath}
\xymatrix{ &P \ar[d]^{f}&\\A\ar[r]^g  & B\ar[r]&0}
\end{displaymath}
with bottom row exact (that is, $g$ an epimorphism), there exists an $R$-module homomorphism $h:P\rightarrow A$ such that the diagram
\begin{displaymath}
\xymatrix{ &P \ar[d]^{f}\ar[dl]^h&\\A\ar[r]^g  & B\ar[r]&0}
\end{displaymath} is commutative (that is, $gh=f$).\end{definition}

\begin{definition}(Injective Modules). A module $J$ over a ring $R$ (not necessarily finite) is said to be \emph{injective} if given any diagram of $R$-module homomorphisms
\begin{displaymath}
\xymatrix{ 0\ar[r]&A \ar[r]^g\ar[d]_{f} &B\\&J&}
\end{displaymath}
with top row exact (that is, $g$ a monomorphism), there exists an $R$-module homomorphism $h:B\rightarrow J$ such that the diagram
\begin{displaymath}
\xymatrix{ 0\ar[r]&A \ar[r]^g\ar[d]_{f} &B\ar[dl]^h\\&J&}
\end{displaymath} is commutative (that is, $hg=f$).\end{definition}

The following proposition -- given without proof -- is a familiar result about projective and injective modules. (\cite{hungr}, Propositions IV.3.4 and IV.3.13)
\begin{proposition}\label{projinj} Let $P,I$ be (left) $R$-modules.  The following are true.
 \begin{enumerate}
        \item[$(1)$] $P$ is projective if and only if every short exact sequence\\ $0\rightarrow A\overset{\mathrm{f}}{\rightarrow} B\overset{\mathrm{g}}{\rightarrow} P\rightarrow0$ splits, and $B\cong A\oplus P$.
        \item[$(2)$] $I$ is injective if and only if  every short exact sequence\\ $0\rightarrow I\overset{\mathrm{f}}{\rightarrow} C\overset{\mathrm{g}}{\rightarrow} D\rightarrow0$ splits, and $C\cong I\oplus D$.\end{enumerate}\end{proposition}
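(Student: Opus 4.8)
The two parts are formally dual, so I plan (1) in full and then indicate how to dualize for (2).

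For the ``only if'' half of (1): given $0\to A\xrightarrow{f}B\xrightarrow{g}P\to 0$, feed into the definition of projectivity the diagram with vertical arrow $\mathrm{id}_P\colon P\to P$ and bottom row $B\xrightarrow{g}P\to 0$; the resulting $h\colon P\to B$ satisfies $gh=\mathrm{id}_P$, so the sequence splits. Writing $b=(b-hg(b))+hg(b)$ exhibits $B=f(A)\oplus h(P)$, since $b-hg(b)\in\ker g=f(A)$ and $f(A)\cap h(P)=0$ because $h$ is a monomorphism and $gf=0$; thus $B\cong A\oplus P$. For the ``if'' half, I would use that every module is a quotient of a free module: pick a free $F$ and an epimorphism $\pi\colon F\to P$, so that $0\to\ker\pi\to F\xrightarrow{\pi}P\to 0$ splits by hypothesis and $P$ becomes a direct summand of $F$. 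Since a free module is projective (a lift is defined by sending a basis to chosen preimages under the given epimorphism) and a direct summand of a projective module is projective (restrict a lift of the whole module to the summand), $P$ is projective. An alternative avoiding free modules is the pullback $X=\{(p,a)\in P\times A:f(p)=g(a)\}$ of $f$ along $g$: the projection $X\to P$ is an epimorphism with kernel $\cong\ker g$, so $0\to\ker g\to X\to P\to 0$ splits, and a section composed with $X\to A$ solves the given lifting problem.

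For (2) I would run the mirror argument. The ``only if'' half applies the definition of injectivity to $\mathrm{id}_I$ along the monomorphism $f\colon I\to C$ in $0\to I\xrightarrow{f}C\xrightarrow{g}D\to 0$, yielding a retraction $r\colon C\to I$ with $rf=\mathrm{id}_I$; then $C=f(I)\oplus\ker r\cong I\oplus D$. For the ``if'' half I would invoke the existence of enough injectives, i.e.\ an embedding $I\hookrightarrow E$ into an injective module, giving $0\to I\to E\to E/I\to 0$, which splits by hypothesis, so $I$ is a direct summand of the injective $E$ and hence injective (extend along a monomorphism by first extending into $E$, then retracting onto the summand). Dually to the pullback trick, one may instead use the pushout $Y=(I\oplus B)/\{(f(a),-g(a)):a\in A\}$ of a monomorphism $g\colon A\to B$ along $f\colon A\to I$: the canonical map $I\to Y$ is a monomorphism with cokernel $\cong\operatorname{coker}g$, so $0\to I\to Y\to\operatorname{coker}g\to 0$ splits, and a retraction $Y\to I$ precomposed with $B\to Y$ extends $f$ along $g$.

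The step needing the most care is the ``if'' direction of (2): it rests on the nontrivial fact that every module embeds in an injective module (proved via Baer's criterion and divisibility over $\mathbb{Z}$, or via injective hulls), in contrast with the trivial ``enough projectives'' used in (1). If one wants a self-contained argument, the work instead moves into checking that the pushout map $I\to Y$ is injective with the stated cokernel --- the precise point where injectivity of $g$ is consumed. The remaining ingredients (the splitting computations and ``a direct summand of a projective/injective is projective/injective'') are routine.
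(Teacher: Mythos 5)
Your proof is correct. Note that the paper itself does not prove this proposition at all: it is stated without proof, with a citation to Hungerford (Propositions IV.3.4 and IV.3.13), so there is no in-paper argument to diverge from. What you wrote is essentially the standard proof from that reference: the ``only if'' halves apply the lifting/extension property to $\mathrm{id}_P$ (resp.\ $\mathrm{id}_I$) and your splitting computations $B=f(A)\oplus h(P)$ and $C=f(I)\oplus\ker r$ are correct; the ``if'' half of (1) via a free presentation, and of (2) via an embedding into an injective module, are the textbook route, while your pullback and pushout alternatives are also valid and have the merit of being self-contained. You correctly isolate the one genuinely nontrivial ingredient, namely that every module embeds in an injective module (needed only for the embedding version of the ``if'' half of (2)); the pushout variant trades that theorem for the routine verification that $I\to Y$ is monic with cokernel $\operatorname{coker}g$, which you carry out by exactly the right observation that injectivity of $g$ is what makes $I\to Y$ injective. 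Nothing is missing.
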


In the next is another characterization of (left) semisimple rings.
\begin{theorem}\label{lastlast}For a ring $R$, the following are equivalent:
    \begin{enumerate}
        \item[$(1)$] $R$ is left semisimple.
        \item[$(2)$] All left $R$-modules are projective.
        \item[$(3)$] All left $R$-modules are injective.

    \end{enumerate}\end{theorem}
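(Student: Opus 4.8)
The plan is to read everything off of two facts already in hand: Theorem \ref{semisimple rings}, whose clause $(1)$ says that $R$ is left semisimple exactly when \emph{every} short exact sequence of left $R$-modules splits, and Proposition \ref{projinj}, which translates projectivity and injectivity of a single module into the splitting of the short exact sequences having that module at the appropriate end. So the strategy is a cycle of quick implications rather than any genuine construction.

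First I would do $(1)\Rightarrow(2)$ and $(1)\Rightarrow(3)$ together. Assuming $R$ is left semisimple, Theorem \ref{semisimple rings} gives that every short exact sequence of left $R$-modules splits. Now take an arbitrary left $R$-module. If we call it $P$, then any sequence $0\rightarrow A\rightarrow B\rightarrow P\rightarrow 0$ splits by hypothesis, and Proposition \ref{projinj}$(1)$ says precisely that this makes $P$ projective; hence every left $R$-module is projective. Dually, calling the module $I$, any sequence $0\rightarrow I\rightarrow C\rightarrow D\rightarrow 0$ splits, and Proposition \ref{projinj}$(2)$ says this makes $I$ injective; hence every left $R$-module is injective.

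For the converses $(2)\Rightarrow(1)$ and $(3)\Rightarrow(1)$, I would show directly that each hypothesis forces every short exact sequence $0\rightarrow A\overset{f}{\rightarrow} B\overset{g}{\rightarrow} C\rightarrow 0$ to split, and then invoke Theorem \ref{semisimple rings}$(1)$. If all left $R$-modules are projective, then $C$ is projective, so the sequence splits by Proposition \ref{projinj}$(1)$; if all left $R$-modules are injective, then $A$ is injective, so the sequence splits by Proposition \ref{projinj}$(2)$. Either way all short exact sequences split, so $R$ is left semisimple.

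The only thing to be careful about — and it is bookkeeping rather than a real obstacle — is keeping track of which \emph{end} of a short exact sequence is relevant: testing projectivity of a module uses sequences with that module as the quotient term, while testing injectivity uses sequences with that module as the sub. Once that is kept straight, the proof is essentially immediate from the cited results, and there is no hard step.
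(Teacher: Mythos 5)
Your proof is correct and follows exactly the route the paper intends: its own proof consists of the single line ``Follows from (1) in Theorem \ref{semisimple rings} and the previous proposition,'' and your write-up simply spells out those same implications via Proposition \ref{projinj}. No gaps; the care about which end of the short exact sequence tests projectivity versus injectivity is handled correctly.
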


\begin{proof}Follows from (1) in Theorem \ref{semisimple rings} and the previous proposition.\end{proof}

\subsection{The Wedderburn-Artin Theorem}\label{wed}
Here we shall display, without proof, three of the most famous and essential theorems. These, in order, are: the Jordan-H\"{o}lder theorem, the Wedderburn-Artin theorem, and Wedderburn's little theorem. Proofs of these theorems appear in many textbooks, see for example \cite{cat}, \cite{radicalappr}, \cite{Lam}, and \cite{Lamnoncom}.
\begin{definition}A finite chain $0=M_0\subset M_1\subset\ldots\subset M_n=M$ of submodules of a module $M$ is called a \emph{composition series}  for the module $M$ if all \emph{factors} $M_{i+1}/M_i$ are simple. The number $n$ is called the \emph{length} of the series, the zero module is considered to have a composition series of length zero and no composition factors.  \end{definition}

\begin{par}If a module $M$ has a composition series $0=M_0\subset M_1\subset\ldots\subset M_n=M$, then for a  submodule $K\subset M$ the chains $0=M_0\cap K\subset M_1\cap K\subset\ldots\subset M_n\cap K=K$, and $0=(M_0+K)/K\subset (M_1+K)/K\subset\ldots\subset (M_n+K)/K=M/K$ are composition series for $K$ and $M/K$ respectively, one can check easily that the factors are simple. We shall refer these series as the \emph{relative series} of $K$ and $M/K$.\end{par}

\begin{theorem}\label{jordan} \textbf{(Jordan-H\"{o}lder Theorem)}. If a module $M$ is having a composition series, then any finite chain of submodules can be included in a composition series. Besides, any two composition series have the same length and a bijection can be established between the two series such that the corresponding factors are isomorphic.  \end{theorem}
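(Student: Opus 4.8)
The plan is to prove all three assertions simultaneously by induction on $n$, the length of a composition series of $M$, taking as induction hypothesis the full statement: \emph{every module admitting a composition series of length $<n$ has the property that all its composition series have that common length with pairwise isomorphic factors up to a permutation, and that every finite chain of its submodules refines to a composition series.} The base cases $n=0$ (so $M=0$) and $n=1$ (so $M$ simple) are immediate. For the inductive step I would compare two composition series $0=M_0\subset M_1\subset\cdots\subset M_n=M$ and $0=N_0\subset N_1\subset\cdots\subset N_k=M$.

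If $M_{n-1}=N_{k-1}$, this common submodule carries two composition series of lengths $n-1$ and $k-1$, so the induction hypothesis gives $n-1=k-1$ with matching factors, and $M/M_{n-1}=M/N_{k-1}$ contributes the remaining factor. Otherwise $M_{n-1}\neq N_{k-1}$; since both are maximal in $M$ (their quotients being simple) we get $M_{n-1}+N_{k-1}=M$, and for $K=M_{n-1}\cap N_{k-1}$ the second isomorphism theorem yields $M/M_{n-1}\cong N_{k-1}/K$ and $M/N_{k-1}\cong M_{n-1}/K$; in particular both are simple, so $K$ is maximal in each of $M_{n-1},N_{k-1}$. As a submodule of $M_{n-1}$, which has a composition series of length $n-1$, $K$ itself has a composition series by the relative-series remark preceding the theorem, say of length $m$. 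Appending $K\subset M_{n-1}$ gives a composition series of $M_{n-1}$ of length $m+1$; the induction hypothesis applied inside $M_{n-1}$ (legitimate, as its series $0=M_0\subset\cdots\subset M_{n-1}$ has length $n-1<n$) forces $m+1=n-1$. The same composition series of $K$ extended instead by $K\subset N_{k-1}$ is a composition series of $N_{k-1}$ of length $m+1=n-1<n$, so the induction hypothesis now applies to $N_{k-1}$ and gives $k-1=n-1$, i.e. $k=n$. To match factors I would compare the three composition series of $M$ obtained by routing through $M_{n-1}$, through $N_{k-1}$, and through $K$ in both ways, invoking the factor statement of the induction hypothesis at each comparison; the two isomorphisms coming from the second isomorphism theorem are precisely what reconciles the two routes.

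For the refinement assertion, given a finite chain $0=L_0\subset L_1\subset\cdots\subset L_r=M$, each quotient $L_{i+1}/L_i$ is a subquotient of $M$ and hence has a composition series (relative-series remark, applied first to $L_{i+1}\subseteq M$ and then to the quotient $L_{i+1}/L_i$ of $L_{i+1}$). Pulling each such series back along $L_{i+1}\rightarrow L_{i+1}/L_i$ and concatenating over $i$ inserts enough intermediate submodules to make every factor simple; the length-invariance already established then shows this refinement has length $n$.

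I expect the main obstacle to be exactly the bookkeeping in the non-degenerate case $M_{n-1}\neq N_{k-1}$: one must apply the induction hypothesis twice — once inside $M_{n-1}$, once inside $N_{k-1}$, each time to a composition series passing through $K$ — and then stitch the two resulting factor correspondences together using the identifications $M/M_{n-1}\cong N_{k-1}/K$ and $M/N_{k-1}\cong M_{n-1}/K$ without double-counting any simple factor. Everything else reduces to a base case, a direct appeal to the relative-series construction granted just before the statement, or the elementary observation that a chain all of whose factors are simple is a composition series.
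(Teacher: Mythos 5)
The paper itself states the Jordan--H\"older theorem without proof (it is one of the three results explicitly displayed without proof, with the reader referred to textbooks such as \cite{cat} and \cite{Lamnoncom}), so there is no in-text argument to compare yours against. Your proposal is the standard induction on composition length and it is correct: the case split on whether $M_{n-1}=N_{k-1}$, the second isomorphism theorem giving $M/M_{n-1}\cong N_{k-1}/K$ and $M/N_{k-1}\cong M_{n-1}/K$ with $K=M_{n-1}\cap N_{k-1}$, and especially the order in which you invoke the induction hypothesis --- first inside $M_{n-1}$ to conclude $m+1=n-1$, which only then licenses applying it to $N_{k-1}$ (since a priori you do not know $k-1<n$) --- is exactly the bookkeeping that makes the argument close, and your three-series comparison through $K$ correctly reconciles the factor multisets. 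Two small points are worth making explicit if you write this up: the ``relative series'' $M_i\cap K$ (resp.\ $(M_i+K)/K$) quoted before the theorem only becomes a composition series after deleting repeated terms, since $(M_{i+1}\cap K)/(M_i\cap K)$ embeds into $M_{i+1}/M_i$ and may be zero rather than simple; and in the refinement step you should first discard repetitions in the given chain and append $0$ and $M$ if they are absent, before pulling back composition series of the quotients $L_{i+1}/L_i$ along $L_{i+1}\rightarrow L_{i+1}/L_i$.
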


\begin{theorem}\textbf{(Wedderburn-Artin Theorem)}. Let $R$ be any left semisimple\footnote{In many texts, this theorem is stated for J-semisimple artinian rings. This is actually equivalent to being semisimple (as shown in the next section).} ring. Then $R\overset{\varphi}{\cong} \mathbb{M}_{n_1}(D_1)\times\cdots\times\mathbb{M}_{n_k}(D_k)$, where $\mathbb{M}_{n_i}(D_i)$ is the  ring of all square matrices of size $n_i$ over the division ring $D_i$, and $\varphi$ is an isomorphism of rings. The number $k$ is unique, and there are exactly $k$ simple $R$-modules up to isomorphism. \end{theorem}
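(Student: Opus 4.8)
The plan is to proceed in two stages: first decompose $_RR$ into isotypic components and identify the endomorphism ring, then apply Schur's lemma to pin down the matrix-ring structure. By Theorem~\ref{semisimple rings}(5), since $R$ is left semisimple the regular module $_RR$ is semisimple, so by Theorem~\ref{ss} we may write $_RR = \bigoplus_{i=1}^{k} B_i$, where each $B_i$ is the sum of all simple left ideals in a fixed isomorphism class $[S_i]$ (the \emph{homogeneous} or \emph{isotypic} components), and $S_1,\dots,S_k$ are pairwise non-isomorphic. Finiteness of $k$ and of each internal sum follows from the corollary after Theorem~\ref{semisimple rings}: a finite expression for $1 \in R$ already exhausts finitely many simple summands. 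First I would check that each $B_i$ is a two-sided ideal (right multiplication by $r$ carries a simple left ideal to a simple left ideal or to $0$, and preserves isomorphism class), so that $B_iB_j = 0$ for $i\neq j$ and $R \cong B_1 \times \cdots \times B_k$ as rings, with each $B_i$ a semisimple ring in its own right having (up to isomorphism) the single simple module $S_i$.

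The second stage is the classical computation $R \cong \mathrm{End}_R(_RR)^{\mathrm{op}}$ together with Schur's lemma. I would use the standard ring isomorphism $R \xrightarrow{\sim} \mathrm{End}_R(_RR)^{\mathrm{op}}$ sending $r$ to right multiplication by $r$. Writing $_RR = \bigoplus_{i=1}^k S_i^{n_i}$ (collecting isomorphic simple summands, so $B_i \cong S_i^{n_i}$), the endomorphism ring splits as a product $\prod_i \mathrm{End}_R(S_i^{n_i})$ because $\mathrm{Hom}_R(S_i, S_j) = 0$ for $i \neq j$ by Schur. Then $\mathrm{End}_R(S_i^{n_i}) \cong \mathbb{M}_{n_i}(\mathrm{End}_R(S_i))$ by the usual matrix-of-homomorphisms description of endomorphisms of a direct sum, and $D_i := \mathrm{End}_R(S_i)$ is a division ring, again by Schur's lemma. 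Taking opposites and using $\mathbb{M}_{n}(D)^{\mathrm{op}} \cong \mathbb{M}_n(D^{\mathrm{op}})$ (transpose) absorbs the ${}^{\mathrm{op}}$ into a relabeling of the division rings, yielding $R \cong \mathbb{M}_{n_1}(D_1) \times \cdots \times \mathbb{M}_{n_k}(D_k)$.

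For uniqueness and the count of simple modules: any simple $R$-module is a quotient of $_RR$, hence isomorphic to one of $S_1,\dots,S_k$, and these are pairwise non-isomorphic by construction, so there are exactly $k$ simple $R$-modules up to isomorphism; since $k$, the $n_i$, and the $D_i$ can be read off from the module-theoretic invariants of $_RR$ (the isotypic decomposition is unique by the Jordan–Hölder theorem, Theorem~\ref{jordan}), the decomposition is unique up to permutation and isomorphism. The main obstacle, and the only place real care is needed, is verifying that the homogeneous components $B_i$ are two-sided ideals with $B_iB_j=0$ — equivalently, that $\mathrm{Hom}_R(S_i,S_j)=0$ forces the product decomposition of $R$ as a ring and not merely as a left module; everything else is a bookkeeping exercise with Schur's lemma and the endomorphism-ring functor. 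Since the excerpt explicitly states this theorem is displayed without proof, I would in fact cite \cite{Lam} or \cite{Lamnoncom} and only sketch the above; but the sketch above is the proof I would write out in full if required.
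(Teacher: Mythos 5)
Your proposal is correct, and it is essentially the standard argument from the references the paper itself points to: the paper displays the Wedderburn--Artin theorem without proof, citing \cite{cat}, \cite{Lamnoncom} and \cite{Lam}, and your sketch (isotypic decomposition of $_RR$, the observation that right multiplication preserves the isomorphism class of a simple left ideal so the homogeneous components are two-sided ideals with zero mutual products, the identification $R\cong\mathrm{End}_R(_RR)^{\mathrm{op}}$, Schur's lemma, $\mathrm{End}_R(S_i^{n_i})\cong\mathbb{M}_{n_i}(\mathrm{End}_R(S_i))$, and uniqueness via the Jordan--H\"older theorem) is precisely the proof found there. The one step you single out as delicate, that the isotypic components give a ring-theoretic and not merely module-theoretic decomposition, is handled correctly, and the passage from $\mathrm{End}_R(_RR)^{\mathrm{op}}$ to matrix rings over (relabelled) division rings via $\mathbb{M}_n(D)^{\mathrm{op}}\cong\mathbb{M}_n(D^{\mathrm{op}})$ is also right; no gaps.
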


Since $\mathbb{M}_{n_1}(D_1)\times\cdots\times\mathbb{M}_{n_k}(D_k)$ is right semisimple as well as left semisimple, we have the following corollary.
\begin{corollary}\label{smsmpl} A ring is left semisimple if and only if it is right semisimple.\end{corollary}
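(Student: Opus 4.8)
The plan is to exploit the Wedderburn--Artin theorem, which reduces a left semisimple ring to an explicit normal form that is visibly symmetric in its left and right module structures. First, suppose $R$ is left semisimple. By the Wedderburn--Artin theorem, $R \cong S := \mathbb{M}_{n_1}(D_1)\times\cdots\times\mathbb{M}_{n_k}(D_k)$ as rings, with each $D_i$ a division ring. By criterion $(5)$ in Theorem \ref{semisimple rings}, it suffices to show that $S$ is semisimple as a right module over itself; and since $S_S = \bigoplus_{i} \big(\mathbb{M}_{n_i}(D_i)\big)_S$ (the $j$-th block acting as zero on the $i$-th factor for $j\neq i$), by Theorem \ref{ss} it is enough to show each $\mathbb{M}_n(D)$ is right semisimple.

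Next, I would decompose $\mathbb{M}_n(D)$, regarded as a right module over itself, into its ``row'' submodules: for $1\le i\le n$ let $R_i$ be the set of matrices whose entries vanish outside the $i$-th row. A direct check shows each $R_i$ is a right ideal, that $\mathbb{M}_n(D) = R_1\oplus\cdots\oplus R_n$ as right modules, and that each $R_i$ is simple: indeed $R_i$ is isomorphic, as a right $\mathbb{M}_n(D)$-module, to the module $D^n$ of $n$-dimensional row vectors, and given a nonzero row vector $v$ with $v_j\neq 0$, the matrix $M$ with $M_{jk}=v_j^{-1}w_k$ and zeros elsewhere satisfies $vM=w$ for any target $w$, so $D^n$ has no nonzero proper submodules. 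Hence $\mathbb{M}_n(D)$ is a direct sum of simple right submodules, so it is right semisimple by Theorem \ref{ss}, and therefore so is $S$, whence $R$ is right semisimple.

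Finally, the converse follows by the same argument applied verbatim to $R^{\mathrm{op}}$ (noting $\mathbb{M}_n(D)^{\mathrm{op}}\cong\mathbb{M}_n(D^{\mathrm{op}})$ and that $D^{\mathrm{op}}$ is again a division ring), or simply by invoking the left--right mirror of the reasoning above, since the hypotheses and the Wedderburn--Artin decomposition are symmetric. The only step requiring genuine verification is the simplicity of the row modules $R_i$ over $\mathbb{M}_n(D)$; everything else is bookkeeping, and even that is the standard fact that $D^n$ is an irreducible $\mathbb{M}_n(D)$-module, so I do not anticipate any real obstacle.
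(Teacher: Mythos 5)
Your proof is correct and follows essentially the same route as the paper: the paper also deduces the corollary from the Wedderburn--Artin theorem, simply asserting that $\mathbb{M}_{n_1}(D_1)\times\cdots\times\mathbb{M}_{n_k}(D_k)$ is right as well as left semisimple. You merely fill in the details the paper leaves implicit (the decomposition into simple row right ideals $R_i\cong D^n$ and the passage to $R^{\mathrm{op}}$ for the converse), and those details check out.
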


\textbf{Remark: }
The matrix module $T_i=\mathbb{M}_{n_i\times 1 }(D_i)$ can be endowed with the following  left $R$-module structure:\begin{equation}\label{action}r \begin{bmatrix}a_1\\\vdots\\a_{n_i}\end{bmatrix}:= \pi_i(\varphi(r+ \mathrm{rad}R))\begin{bmatrix}a_1\\\vdots\\a_{n_i}\end{bmatrix},\end{equation} where $\pi_i:\mathbb{M}_{n_1}(D_1)\times\cdots\times\mathbb{M}_{n_k}(D_k)\rightarrow \mathbb{M}_{n_i}(D_i)$ is the $i^{\text{th}}$ projection, and $\varphi$ is the indicated isomorphism of rings in the theorem. $_RT_i$ is called the pullback to $R$ of the matrix module $_{\mathbb{M}_{n_i}(D_i)}\mathbb{M}_{n_i\times 1}(D_i)$.
The $T_i$'s form a complete list, up to isomorphism, of all simple left $R$-modules.

A similar argument provides the matrix modules $T'_i=\mathbb{M}_{1\times n_i }(D_i)$ with a right $R$-module structure. The $T'_i$'s form a complete list of simple right $R$-modules.

\begin{theorem}\textbf{(Wedderburn's Little Theorem)}.  Every finite division ring is a f{}ield.\end{theorem}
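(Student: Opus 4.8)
The plan is to show that a finite division ring $D$ is commutative by exploiting the class equation together with a divisibility argument on cyclotomic polynomials. First I would set $Z = Z(D)$, the center of $D$; this is a field, say $|Z| = q$ with $q \geq 2$, and $D$ is a vector space over $Z$, so $|D| = q^n$ where $n = \dim_Z D$. The goal is to prove $n = 1$. The centralizer $C_D(x)$ of any $x \in D^\times$ is a sub-division-ring containing $Z$, hence has order $q^{d(x)}$ for some $d(x) \mid n$ (the divisibility coming from $D$ being a left vector space over $C_D(x)$, forcing $q^{d}-1 \mid q^n - 1$, which in turn forces $d \mid n$ — this last implication is a small number-theoretic lemma I would isolate).

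Next I would write the class equation for the group $D^\times$, which has order $q^n - 1$:
\[
q^n - 1 = (q - 1) + \sum_{i} \frac{q^n - 1}{q^{d_i} - 1},
\]
where the sum runs over representatives $x_i$ of the non-central conjugacy classes, each with $d_i = d(x_i) \mid n$ and $d_i < n$. The key input is the factorization $X^n - 1 = \prod_{e \mid n} \Phi_e(X)$ into cyclotomic polynomials over $\mathbb{Z}$. Since $d_i \mid n$ and $d_i < n$, the integer $\Phi_n(q)$ divides $\frac{q^n-1}{q^{d_i}-1}$ for each $i$, and $\Phi_n(q)$ also divides $q^n - 1$; hence $\Phi_n(q)$ divides $q - 1$.

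Finally I would derive the contradiction from the size of $\Phi_n(q)$. For $n > 1$, $\Phi_n(q) = \prod_{\zeta}(q - \zeta)$ over primitive $n$th roots of unity $\zeta$, and each factor satisfies $|q - \zeta| > q - 1$ (since $q \geq 2$ and $\zeta \neq 1$ lies on the unit circle off the positive real axis), so $|\Phi_n(q)| > q - 1$, contradicting $\Phi_n(q) \mid q - 1$. Therefore $n = 1$ and $D = Z$ is commutative. The main obstacle — really the only nontrivial point — is the cyclotomic divisibility bookkeeping: verifying carefully that $\Phi_n(q)$ divides each $\frac{q^n-1}{q^{d_i}-1}$ uses that $X^{d_i}-1$ and $\Phi_n(X)$ are coprime factors of $X^n - 1$ in $\mathbb{Z}[X]$ (as $d_i \neq n$), together with the fact that these integer polynomials are monic so the quotients stay in $\mathbb{Z}[X]$; everything else (the class equation, the vector-space counting) is routine.
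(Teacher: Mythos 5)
Your proof is correct: it is the classical Witt argument, and the paper itself offers no proof to compare against --- Wedderburn's Little Theorem is stated there explicitly without proof, with the reader referred to standard texts (Lam, Anderson--Fuller, etc.), and the argument you give is precisely the one found in those references. All the key steps check out: the center $Z$ is a field of size $q\geq 2$, centralizers of non-central elements are intermediate division rings of size $q^{d}$ with $d\mid n$ and $d<n$, the class equation for $D^\times$ reads $q^n-1=(q-1)+\sum_i \frac{q^n-1}{q^{d_i}-1}$, the divisibility $\Phi_n(q)\mid \frac{q^n-1}{q^{d_i}-1}$ holds because $\Phi_n(X)$ is a monic integer factor of $\frac{X^n-1}{X^{d_i}-1}$ when $d_i\mid n$, $d_i<n$, and the estimate $|q-\zeta|>q-1$ for every primitive $n$th root of unity $\zeta\neq 1$ gives $|\Phi_n(q)|>q-1$, contradicting $\Phi_n(q)\mid q-1$ unless $n=1$. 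One small remark: in justifying $d(x)\mid n$ you take a detour --- once you observe that $D$ is a left vector space over $C_D(x)$, you get $q^n=(q^{d})^{m}$ and hence $d\mid n$ immediately, with no need for the intermediate step $q^{d}-1\mid q^{n}-1$ and the accompanying number-theoretic lemma; that lemma is only needed if you argue purely group-theoretically via Lagrange's theorem applied to $C_D(x)^\times\leq D^\times$. Either route is sound, so this is a matter of economy rather than a gap.
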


Proofs of the previous theorems may be found in \cite{cat}, \cite{Lamnoncom} and \cite{Lam}, ordered with respect to the theorems.

\subsection{The Jacobson Radical}

\begin{par} The Jacobson radical of a ring is defined  to be the intersection of all maximal left ideals. As probably noticed, it's name indicates nothing about a one-sided nature, since, it turns out that the radical, so defined, coincides with the intersection of all maximal right ideals.\end{par}

\begin{lemma}\label{4.1lam}(\cite{Lamnoncom}, (4.1)) The following are equivalent:
\begin{enumerate}
        \item[$(1)$] $y\in \mathrm{rad}R$;
        \item[$(2)$] $1-xy$ is left invertible for any $x\in R$;
        \item[$(3)$] $yM=0$ for any simple left $R$-module $M$.
    \end{enumerate}\end{lemma}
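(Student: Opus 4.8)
Proof plan for Lemma \ref{4.1lam} (the characterization of the Jacobson radical).

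The plan is to prove the cycle $(1)\Rightarrow(2)\Rightarrow(3)\Rightarrow(1)$, closing the loop, with the element characterization $(2)$ serving as the hinge between the ideal-theoretic definition $(1)$ and the module-theoretic condition $(3)$.

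For $(1)\Rightarrow(2)$: suppose $y\in\mathrm{rad}R$ and fix $x\in R$. If $1-xy$ were not left invertible, the left ideal $R(1-xy)$ would be proper, hence contained in some maximal left ideal $\mathfrak{m}$. But $y\in\mathrm{rad}R\subseteq\mathfrak{m}$, so $xy\in\mathfrak{m}$, and then $1=(1-xy)+xy\in\mathfrak{m}$, contradicting that $\mathfrak{m}$ is proper. (This uses that every proper left ideal sits inside a maximal one, i.e. Zorn's Lemma applied to the unital ring $R$.)

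For $(2)\Rightarrow(3)$: let $M$ be a simple left $R$-module and suppose $yM\neq 0$, so $ym\neq 0$ for some $m\in M$. By simplicity $R(ym)=M$, so there is $x\in R$ with $xym=m$, i.e. $(1-xy)m=0$. By $(2)$, $1-xy$ is left invertible, say $u(1-xy)=1$; applying $u$ gives $m=u(1-xy)m=0$, a contradiction. Hence $yM=0$.

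For $(3)\Rightarrow(1)$: every maximal left ideal $\mathfrak{m}$ gives a simple module $R/\mathfrak{m}$, and if $yM=0$ for all simple $M$ then in particular $y(R/\mathfrak{m})=0$, which forces $y\cdot 1\in\mathfrak{m}$, i.e. $y\in\mathfrak{m}$. Intersecting over all maximal left ideals gives $y\in\mathrm{rad}R$.

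The routine-but-essential ingredient throughout is that maximal left ideals correspond bijectively to (the annihilators of cyclic) simple modules, and the one genuine subtlety is making sure $(2)$ is used correctly in $(2)\Rightarrow(3)$: left invertibility of $1-xy$ is exactly what is needed there, so no upgrade to two-sided invertibility is required at this stage. I do not expect a real obstacle here; the only care needed is to keep the sidedness consistent (everything stays on the left) and to invoke Zorn's Lemma where a proper left ideal must be enlarged to a maximal one.
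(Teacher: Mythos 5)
Your proof is correct: each implication $(1)\Rightarrow(2)\Rightarrow(3)\Rightarrow(1)$ is carried out properly, with the sidedness kept consistent and left invertibility used exactly where needed. The paper itself omits a proof of this lemma (citing Lam's \emph{A First Course in Noncommutative Rings}), and your argument is precisely the standard one given there, so there is nothing to reconcile.
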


Noticing that the annihilator of a (left) module is a two-sided ideal, the following corollary follows at once.
\begin{corollary}\label{4.2lam}$\mathrm{rad}R=\underset{M\text{simple}}{\bigcap} \mathrm{ann} (M)$, and $\mathrm{rad}R$ is then an ideal. \end{corollary}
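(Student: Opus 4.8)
The plan is to read the statement off directly from Lemma \ref{4.1lam}, using the equivalence of its conditions $(1)$ and $(3)$. First I would unwind the definition: for an element $y\in R$ and a left $R$-module $M$, the assertion $y\in\mathrm{ann}(M)$ means precisely $yM=0$. Hence the condition ``$y$ annihilates every simple left $R$-module'' is literally the condition $y\in\bigcap_{M\text{ simple}}\mathrm{ann}(M)$.

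Next, Lemma \ref{4.1lam} asserts that $y\in\mathrm{rad}R$ if and only if $yM=0$ for every simple left $R$-module $M$. Combining this with the reformulation above yields the set equality $\mathrm{rad}R=\bigcap_{M\text{ simple}}\mathrm{ann}(M)$ immediately; there is essentially nothing further to compute, so this is a genuine corollary rather than a theorem with its own argument.

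Finally, for the ``ideal'' claim I would invoke the observation recorded just before the statement: the annihilator $\mathrm{ann}(M)$ of a left module $M$ is a two-sided ideal. It is visibly a left ideal, and it is closed under right multiplication because $(ys)M=y(sM)\subseteq yM=0$ whenever $yM=0$, using that $sM\subseteq M$. An arbitrary intersection of two-sided ideals is again a two-sided ideal, so $\mathrm{rad}R$, being exactly such an intersection, is a two-sided ideal. I do not anticipate any real obstacle here: the single point that deserves a word of care is the two-sidedness of the annihilators, and that is precisely the remark motivating the corollary.
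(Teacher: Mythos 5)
Your argument is correct and is exactly the paper's route: the set equality is read off from the equivalence of conditions (1) and (3) in Lemma \ref{4.1lam}, and the ideal claim follows from the remark that the annihilator of a left module is a two-sided ideal (your verification $(ys)M=y(sM)\subseteq yM=0$ is the right justification). Nothing further is needed.
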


The next is a refinement of (1) in the previous lemma.
\begin{lemma}\label{4.3lam} $y\in \mathrm{rad}R$ if and only if $1-xyz$ is a unit for any $x,z\in R$.\end{lemma}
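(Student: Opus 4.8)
The plan is to prove both implications by reducing to the characterizations of $\mathrm{rad}R$ already established in Lemma \ref{4.1lam} and Corollary \ref{4.2lam}. For the ``if'' direction, suppose $1-xyz$ is a unit for all $x,z\in R$. Specializing to $z=1$, we get that $1-xy$ is a unit, in particular left invertible, for every $x\in R$; by the equivalence $(2)\Leftrightarrow(1)$ in Lemma \ref{4.1lam} this already gives $y\in\mathrm{rad}R$. So this direction is immediate and costs nothing.

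The substantive direction is ``only if'': assume $y\in\mathrm{rad}R$ and show $1-xyz$ is a unit for all $x,z\in R$. First I would observe that since $\mathrm{rad}R$ is a two-sided ideal (Corollary \ref{4.2lam}), the element $y':=yzx$ (or, depending on how one arranges the factors, $zxy$ or $xyz$ read as an element of the ideal) again lies in $\mathrm{rad}R$. The idea is to use the general fact that for $a,b\in R$, $1-ab$ is left invertible if and only if $1-ba$ is left invertible — indeed if $u(1-ba)=1$ then $(1+aub)(1-ab)=1$ by a direct check. Applying this with $a=x$, $b=yz$: since $yz\in\mathrm{rad}R$, Lemma \ref{4.1lam}$(2)$ says $1-(yz)x$ is left invertible, hence $1-x(yz)=1-xyz$ is left invertible; thus $1-xyz$ has a left inverse $v$, $v(1-xyz)=1$. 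It remains to upgrade ``left invertible'' to ``unit'', i.e.\ to show $v$ is itself invertible. Writing $v = 1 + v\cdot xyz$, note $v(1-xyz)=1$ means $v = 1 + vxyz$; and $vxyz \in \mathrm{rad}R$ since $\mathrm{rad}R$ is a right ideal (here $vxy\in R$, $z\in R$, $y$ supplies the radical membership through the ideal property — more cleanly, $xyz\in\mathrm{rad}R$ so $v\cdot xyz\in\mathrm{rad}R$). Therefore $v = 1 - (-vxyz)$ with $-vxyz\in\mathrm{rad}R$, and by Lemma \ref{4.1lam}$(2)$ applied with $x$ there equal to $1$, $v$ is left invertible. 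An element that is both left invertible and has a left inverse (namely $1-xyz$) is a two-sided unit: if $v(1-xyz)=1$ and $wv=1$ then $1-xyz = w v (1-xyz) = w$, so $1-xyz$ is the two-sided inverse of $v$ and in particular $1-xyz$ is a unit.

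The step I expect to be the main obstacle is the bookkeeping in passing from ``$1-xyz$ left invertible'' to ``$1-xyz$ a unit'': one must be careful that Lemma \ref{4.1lam} only hands us \emph{one-sided} invertibility, so the argument has to apply it twice (once to $xyz$, once to the resulting left inverse $v$) and then combine the two one-sided facts. The cleanest packaging is probably to first prove the auxiliary remark ``$m\in\mathrm{rad}R \Rightarrow 1-m$ is a unit'' as a consequence of Lemma \ref{4.1lam} — apply $(1)\Rightarrow(2)$ to get a left inverse $v$ of $1-m$, note $v=1+vm$ so $v=1-(-vm)$ with $-vm\in\mathrm{rad}R$, get a left inverse of $v$, and conclude $1-m$ is a unit — and then simply invoke it with $m=xyz\in\mathrm{rad}R$. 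Everything else is routine.
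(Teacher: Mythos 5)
Your proof is correct and follows essentially the same route as the paper: use that $\mathrm{rad}R$ is a two-sided ideal to place $yz$ and $xyz$ in the radical, obtain a left inverse $v$ of $1-xyz$ from Lemma \ref{4.1lam}(2), write $v=1+v\,xyz$ to see that $v$ is itself left invertible, and conclude that $v$ (hence $1-xyz$) is a unit; the converse by specializing $z=1$ is also how the paper dispatches it. The only difference is your detour through the ``$1-ab$ left invertible iff $1-ba$ left invertible'' trick, which is unnecessary: applying Lemma \ref{4.1lam}(2) directly to the radical element $yz$ with multiplier $x$ already yields that $1-x(yz)=1-xyz$ is left invertible.
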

\begin{proof}If $y\in \mathrm{rad} R$, then, by the corollary, $yz\in \mathrm{rad}R$. Thus, by (2) in Lemma \ref{4.1lam}, there exists $u\in R$ such that $u (1-xyz)=1$. Once again, $xyz\in  \mathrm{rad}R$, and hence left invertible is the element $1-(-u)(xyz)=1+u(xyz)=u$, and thus $u\in \mathcal{U}$, the group of units of $R$. This completes the proof, the converse is immediate by Lemma \ref{4.1lam}. \end{proof}

\begin{proposition}$\mathrm{rad} (R/\mathfrak{A})=(\mathrm{rad} R)/\mathfrak{A}$ for any ideal $\mathfrak{A}$ contained in
$\mathrm{rad} R$.\end{proposition}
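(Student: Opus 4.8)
<br>

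The plan is to prove the equality of ideals $\mathrm{rad}(R/\mathfrak{A}) = (\mathrm{rad}\, R)/\mathfrak{A}$ by exploiting the correspondence theorem for submodules/ideals under the quotient map $\pi\colon R \to R/\mathfrak{A}$, together with the characterization of the radical as an intersection of maximal left ideals. The key observation is that, since $\mathfrak{A} \subseteq \mathrm{rad}\, R$ and $\mathrm{rad}\, R$ is the intersection of all maximal left ideals, $\mathfrak{A}$ is contained in every maximal left ideal of $R$. Hence the correspondence theorem sets up an inclusion-preserving bijection between the maximal left ideals of $R/\mathfrak{A}$ and the maximal left ideals of $R$ (all of which contain $\mathfrak{A}$, so none is lost in passing to the quotient).

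First I would recall that for a surjective ring homomorphism with kernel $\mathfrak{A}$, the lattice of left ideals of $R/\mathfrak{A}$ is isomorphic (via $\mathfrak{m} \mapsto \mathfrak{m}/\mathfrak{A}$) to the lattice of left ideals of $R$ containing $\mathfrak{A}$, and this bijection preserves maximality. Since $\mathfrak{A} \subseteq \mathrm{rad}\, R = \bigcap\{\mathfrak{m} : \mathfrak{m} \text{ maximal left ideal of } R\}$, every maximal left ideal of $R$ already contains $\mathfrak{A}$, so the maximal left ideals of $R/\mathfrak{A}$ are exactly the images $\mathfrak{m}/\mathfrak{A}$. Then I would intersect:
\[
\mathrm{rad}(R/\mathfrak{A}) = \bigcap_{\mathfrak{m}} \mathfrak{m}/\mathfrak{A} = \left(\bigcap_{\mathfrak{m}} \mathfrak{m}\right)\big/\mathfrak{A} = (\mathrm{rad}\, R)/\mathfrak{A},
\]
where the middle step uses that intersection commutes with the quotient map on the collection of submodules all containing $\mathfrak{A}$ (if $x + \mathfrak{A}$ lies in every $\mathfrak{m}/\mathfrak{A}$, then $x \in \mathfrak{m}$ for all $\mathfrak{m}$ since $\mathfrak{A} \subseteq \mathfrak{m}$, hence $x \in \mathrm{rad}\, R$; the reverse inclusion is immediate).

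The only delicate point — and the step I would be most careful about — is the claim that intersection passes through the quotient, i.e. that $\bigcap_\alpha (\mathfrak{m}_\alpha/\mathfrak{A}) = \big(\bigcap_\alpha \mathfrak{m}_\alpha\big)/\mathfrak{A}$; this genuinely uses $\mathfrak{A} \subseteq \mathfrak{m}_\alpha$ for all $\alpha$, which is exactly where the hypothesis $\mathfrak{A} \subseteq \mathrm{rad}\, R$ is consumed. (Without this hypothesis the right-hand side need not even make sense as written.) Everything else is the routine correspondence-theorem bookkeeping, so the proof is short. One could alternatively argue via Corollary \ref{4.2lam} and simple modules — noting that a simple $R/\mathfrak{A}$-module is precisely a simple $R$-module annihilated by $\mathfrak{A}$, and since $\mathfrak{A} \subseteq \mathrm{rad}\, R$ annihilates every simple $R$-module, the two classes of simple modules coincide — but the maximal-left-ideal route is the most direct.
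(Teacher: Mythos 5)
Your proof is correct, and since the paper disposes of this proposition with the single word ``Clear,'' your correspondence-theorem argument (maximal left ideals of $R/\mathfrak{A}$ are exactly the $\mathfrak{m}/\mathfrak{A}$ because $\mathfrak{A}\subseteq\mathrm{rad}\,R$ lies in every maximal left ideal, then intersect) is precisely the standard filling-in the author intends. You also correctly isolate the one place the hypothesis $\mathfrak{A}\subseteq\mathrm{rad}\,R$ is used, so nothing further is needed.
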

\begin{proof}Clear.\end{proof}
\begin{definition} A ring $R$ is called \emph{Jacobson semisimple} (J-\emph{semisimple}, or \emph{semiprimitive}) if $\mathrm{rad} R=0$. \end{definition}

\begin{par}The next proposition indicates some of the preserved features when passing from $R$ onto $R/\mathrm{rad}R$. \end{par}

\begin{proposition}\label{rad andsim}$R$ and $R/\mathrm{rad}R$ have the same simple left modules. An element $r\in R$ is left invertible (invertible) if and only if $\bar{r}$ is left invertible (invertible) in $R/\mathrm{rad} R$.\end{proposition}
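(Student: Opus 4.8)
The plan is to prove the two assertions separately. For the first ("$R$ and $R/\mathrm{rad}R$ have the same simple left modules"), I would argue that every simple left $R/\mathrm{rad}R$-module is, by inflation along the quotient map $R \to R/\mathrm{rad}R$, a simple left $R$-module; this direction is immediate. Conversely, if $M$ is a simple left $R$-module, then by Corollary \ref{4.2lam} we have $(\mathrm{rad}R)\,M = 0$ (since $\mathrm{rad}R \subseteq \mathrm{ann}(M)$), so the $R$-action on $M$ factors through $R/\mathrm{rad}R$, making $M$ a module over $R/\mathrm{rad}R$; it is simple there because its $R/\mathrm{rad}R$-submodules coincide with its $R$-submodules. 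Thus the two collections of simple modules match up canonically.

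For the second assertion, write $\bar r$ for the image of $r$ in $R/\mathrm{rad}R$. If $r$ is left invertible in $R$, say $sr = 1$, then $\bar s\,\bar r = \bar 1$, so $\bar r$ is left invertible; the same works for two-sided invertibility, so one implication is trivial in both cases. For the converse, suppose $\bar r$ is left invertible in $R/\mathrm{rad}R$: there is $s \in R$ with $sr - 1 \in \mathrm{rad}R$. Set $y = 1 - sr \in \mathrm{rad}R$. Then $sr = 1 - y$, and since $y \in \mathrm{rad}R$, Lemma \ref{4.1lam}(2) (applied with $x = 1$, or directly the fact that $1 - y$ is left invertible for $y \in \mathrm{rad}R$) gives that $sr$ is left invertible in $R$, say $t(sr) = 1$; then $(ts)r = 1$, so $r$ is left invertible in $R$. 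For the invertibility statement, if $\bar r$ is a unit in $R/\mathrm{rad}R$, then $\bar r$ is in particular left invertible, so by what we just proved $r$ is left invertible in $R$, say $ur = 1$; but then $\bar u \bar r = \bar 1$ with $\bar r$ a unit forces $\bar u = \bar r^{-1}$, so $\bar u$ is also a unit, hence (again by the left-invertible case) $u$ is left invertible in $R$, say $vu = 1$; then $u$ is both left invertible and has a left inverse $r$ on the other side ($ur = 1$), so $u$ is a unit with $u^{-1} = r$, whence $r$ is a unit. Alternatively, and more cleanly, one can invoke Lemma \ref{4.3lam}: if $\bar r$ is a unit with inverse $\bar s$, then $1 - sr \in \mathrm{rad}R$ and $1 - rs \in \mathrm{rad}R$, and Lemma \ref{4.3lam} (or just Lemma \ref{4.1lam}) shows $sr$ and $rs$ are units in $R$, which forces $r$ to be a unit.

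I do not expect any serious obstacle here: everything reduces to the characterization of $\mathrm{rad}R$ already established in Lemma \ref{4.1lam} and Corollary \ref{4.2lam}, together with the elementary observation that a one-sided unit whose one-sided inverse is itself a unit must be two-sided. The only point requiring a little care is the invertibility half of the second statement, where one must avoid the fallacy that "left invertible in the quotient" plus "left invertible in $R$" automatically yields "invertible in $R$" — the argument must genuinely use that the relevant inverses can be taken compatibly, which is where Lemma \ref{4.3lam} (unit-ness of $1 - xyz$) does the work most painlessly.
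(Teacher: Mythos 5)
Your proposal is correct and follows essentially the same route as the paper: the first claim rests on $\mathrm{rad}R$ annihilating every simple module (Corollary \ref{4.2lam}), and the second on the fact that $sr\in 1+\mathrm{rad}R$ is (left) invertible, via Lemma \ref{4.1lam} / Lemma \ref{4.3lam}. The paper's proof is just a terser version of the same argument, noting directly that $1+\mathrm{rad}R\subset\mathcal{U}$; your extra care in the two-sided case is fine but not a different method.
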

\begin{proof}The first claim follows from \ref{4.2lam}. For the second claim, assume that $\bar{s}\bar{r}=\bar{1}$, then $$sr\in 1+\mathrm{rad} R\subset \mathcal{U}. $$ This gives that $r$ is left invertible in $R$.\end{proof}

\begin{lemma}\label{4.11lam}$\mathrm{rad}R$ contains every nil  left (right) ideal in $R$. \end{lemma}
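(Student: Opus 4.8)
The plan is to reduce everything to the criterion in Lemma \ref{4.1lam}, namely that $y \in \mathrm{rad}R$ precisely when $1 - xy$ is left invertible for every $x \in R$, together with the elementary fact that $1$ minus a nilpotent element is a unit.

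First I would treat a nil left ideal $\mathfrak{a}$. Fix $y \in \mathfrak{a}$ and an arbitrary $x \in R$. Since $\mathfrak{a}$ is a left ideal, $xy \in \mathfrak{a}$, hence $xy$ is nilpotent, say $(xy)^{n} = 0$. Then the finite geometric sum $1 + xy + (xy)^{2} + \cdots + (xy)^{n-1}$ is a genuine two-sided inverse of $1 - xy$; in particular $1 - xy$ is left invertible. As $x$ was arbitrary, Lemma \ref{4.1lam} gives $y \in \mathrm{rad}R$, and therefore $\mathfrak{a} \subseteq \mathrm{rad}R$.

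For a nil right ideal $\mathfrak{a}$ I would either invoke the left--right symmetry of the radical already announced in the text (it is also the intersection of the maximal right ideals, so the right-handed analogue of Lemma \ref{4.1lam} is available) and repeat the argument with $yx$ in place of $xy$; or, staying strictly within the stated results, use Lemma \ref{4.3lam}: given $y \in \mathfrak{a}$ and $x, z \in R$, set $b := yz \in \mathfrak{a}$, so $b$ is nilpotent, hence $bx \in \mathfrak{a}$ is nilpotent and $1 - bx$ is a unit with inverse the corresponding finite geometric sum; the identity $(1 - xb)\bigl(1 + x(1-bx)^{-1}b\bigr) = 1 = \bigl(1 + x(1-bx)^{-1}b\bigr)(1 - xb)$ then shows $1 - xyz = 1 - xb$ is a unit, and Lemma \ref{4.3lam} yields $y \in \mathrm{rad}R$.

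I do not expect a real obstacle here. The only point requiring care is that the excerpt states Lemma \ref{4.1lam} only on the left, so the right-ideal case needs either the symmetry remark or the short $1 - xb$ versus $1 - bx$ manipulation recorded above; everything else is the routine observation that a finite geometric series inverts $1$ minus a nilpotent.
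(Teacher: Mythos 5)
Your proof is correct and follows essentially the same route as the paper: for a nil left ideal, invert $1-xy$ by the finite geometric series and conclude via Lemma \ref{4.1lam}. The paper simply remarks that the argument ``clearly works for right ideals,'' so your more careful treatment of that case (via left--right symmetry, or via Lemma \ref{4.3lam} and the $1-xb$ versus $1-bx$ identity) only fills in a detail the paper leaves implicit.
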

\begin{proof} \begin{par}Let $\mathfrak{A}$ be a nil left ideal, and let $y\in\mathfrak{A}$. It follows that for any $x\in R$ there exists $n\in \mathbb{N}$ such that $(xy)^n=0$, and then $1-xy$ has the inverse  $\overset{n}{\underset{i=0}{\sum}}(xy)^i$, so $y\in\mathrm{rad}R $. The proof clearly works for right ideals.\end{par}\end{proof}

The following theorem, implicitly says that the Jacobson radical generalizes the Wedderburn radical and the two radicals coincide for left artinian rings.
\begin{theorem}\label{artradnilp} Let $R$ be a left artinian ring. Then $\mathrm{rad}R$ is the largest nilpotent left (right) ideal.\end{theorem}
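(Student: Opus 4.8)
The plan is to establish two facts: (i) that $\mathrm{rad}R$ is itself nilpotent, and (ii) that $\mathrm{rad}R$ contains every nilpotent left (right) ideal; together with Corollary \ref{4.2lam} (which guarantees $\mathrm{rad}R$ is two-sided, hence is simultaneously a left and a right ideal) these yield the claim that $\mathrm{rad}R$ is the largest nilpotent left ideal and the largest nilpotent right ideal. Fact (ii) is immediate from Lemma \ref{4.11lam}: a nilpotent one-sided ideal $\mathfrak{A}$ with $\mathfrak{A}^m=0$ is in particular nil, since $a^m\in\mathfrak{A}^m=0$ for every $a\in\mathfrak{A}$, so $\mathfrak{A}\subseteq\mathrm{rad}R$. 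Thus all the work is in proving (i).

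For (i), I would write $J:=\mathrm{rad}R$ and invoke the \emph{DCC} on left ideals, applied to the descending chain $J\supseteq J^2\supseteq J^3\supseteq\cdots$, to obtain an $n$ with $I:=J^n=J^{n+1}=\cdots$; in particular $I^2=I$. The goal becomes $I=0$. Suppose $I\neq 0$. Since $I\cdot I=I\neq 0$, the family of left ideals $\mathfrak{A}\subseteq R$ with $I\mathfrak{A}\neq 0$ is nonempty, so by the \emph{DCC} it has a minimal member $\mathfrak{A}_0$; pick $a\in\mathfrak{A}_0$ with $Ia\neq 0$. Then $Ia$ is a left ideal contained in $\mathfrak{A}_0$, and $I(Ia)=I^2a=Ia\neq 0$, so minimality of $\mathfrak{A}_0$ forces $Ia=\mathfrak{A}_0$. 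In particular $a\in Ia$, say $a=ra$ with $r\in I\subseteq\mathrm{rad}R$.

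Now $(1-r)a=0$, but $1-r$ is a unit by Lemma \ref{4.3lam} (take $x=z=1$), so $a=0$, contradicting $Ia\neq 0$. Hence $I=J^n=0$, i.e. $\mathrm{rad}R$ is nilpotent, which completes (i) and hence the proof. The one genuinely delicate step is the minimal-counterexample device: it is essential to minimize over the left ideals that are \emph{not} annihilated on the left by $I$, and to verify that $Ia$ again lies in this family, so that minimality can be invoked to produce the equation $ra=a$ with $r\in\mathrm{rad}R$ — everything else is bookkeeping with the two chain conditions and the basic properties of the radical recorded above.
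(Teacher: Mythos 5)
Your proposal is correct and follows essentially the same route as the paper: the reduction to showing $\mathrm{rad}R$ itself nilpotent via Lemma \ref{4.11lam}, the stabilizing chain $J\supseteq J^2\supseteq\cdots$, and the minimal-counterexample argument over left ideals $\mathfrak{A}$ with $I\mathfrak{A}\neq 0$ leading to $a=ra$ with $r\in\mathrm{rad}R$ and the unit $1-r$ forcing $a=0$. Your explicit justification that $Ia\subseteq\mathfrak{A}_0$ and the appeal to Lemma \ref{4.3lam} for the unit are fine fillings-in of steps the paper leaves implicit.
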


\begin{proof}\begin{par} By the previous lemma we're done if $J=\mathrm{rad}R$ itself is shown nilpotent. Since $R$ is left artinian, the chain $$J\supset J^2\supset J^3\supset\cdots$$eventually stabilizes. There exists $N$ such that $J^N=J^{N+1}=\cdots$, and we show that $I=J^N$ is the zero ideal. \end{par}
\begin{par} If $I\neq0$ then the DCC for left ideals provides a left ideal $\mathfrak{A}_0$ that is minimal in the set of left ideals $\mathfrak{A}$ satisfying $I\mathfrak{A}\neq0$. Let $a\in \mathfrak{A}_0$ be an element that satisfy $Ia\neq 0$, then $$I(Ia)=I^2a=Ia\neq 0,$$ and the minimality of $\mathfrak{A}_0$ yields $Ia=\mathfrak{A}_0$. One gets that, for some $y\in I$, $a=ya$. A contradiction follows now from the fact that $(1-y)$ is  a unit (which implies $a=0$), and thus $J$ must be nilpotent as $I$ couldn't be nonzero.  \end{par}\end{proof}

\begin{theorem}\label{4.14lam} The following are equivalent for any ring $R$:
\begin{enumerate}
        \item[$(i)$] $R$ is semisimple;
        \item[$(ii)$] $R$ is J-semisimple and left artinian;
        \item[$(iii)$] $R$ is J-semisimple, and satisfies DCC on principal left ideals.
    \end{enumerate}
\end{theorem}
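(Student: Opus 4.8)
The plan is to close the cycle $(i)\Rightarrow(ii)\Rightarrow(iii)\Rightarrow(i)$, with all the real work in $(iii)\Rightarrow(i)$. For $(i)\Rightarrow(ii)$: if $R$ is semisimple then $_RR$ is a direct sum of simple left ideals, hence $R$ is left artinian by the corollary to Theorem~\ref{semisimple rings}; moreover $\mathrm{rad}R$ annihilates every simple module by Corollary~\ref{4.2lam}, so it annihilates $_RR$ (a sum of simple modules), forcing $\mathrm{rad}R=(\mathrm{rad}R)\cdot 1=0$. For $(ii)\Rightarrow(iii)$ there is nothing to prove: DCC on all left ideals a fortiori gives DCC on principal ones.

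For $(iii)\Rightarrow(i)$ I would show $_RR$ is semisimple and invoke Theorem~\ref{semisimple rings}. First I would record two preliminary facts. (a) DCC on principal left ideals means every nonempty family of principal left ideals has a minimal member; inside any nonzero principal left ideal $Rr$, a minimal member $Rx$ of the nonempty family of nonzero principal left ideals contained in $Rr$ is automatically a minimal left ideal, since any nonzero left ideal $L\subseteq Rx$ contains some $Ry\neq 0$ with $Ry\subseteq Rx$, whence $Ry=Rx$ by minimality and $L=Rx$. In particular $R$ itself has a minimal left ideal. (b) Here is where $\mathrm{rad}R=0$ enters: every minimal left ideal $L$ has the form $L=Re$ with $e^2=e$. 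Indeed $L^2=0$ would make $L$ a nil left ideal, hence $L\subseteq\mathrm{rad}R=0$ by Lemma~\ref{4.11lam}; so $La\neq 0$ for some $a\in L$, giving $La=L$ by minimality, and the standard extraction ($ea=a$ for some $e\in L$, the proper submodule $\{x\in L:xa=0\}$ is zero, and $e^2-e$ lies in it) produces the idempotent $e$, with $Re=L$.

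The core is then a minimal-counterexample argument run on \emph{principal} left ideals. Consider the family $\Sigma$ of left ideals $R(1-e)$ with $e^2=e$ and $Re$ semisimple; it is nonempty (take $e=0$, or the idempotent from a minimal left ideal), and every member is principal, so by (iii) $\Sigma$ has a minimal element $R(1-e_0)$. I claim $1-e_0=0$, which gives $R=Re_0$ semisimple. If not, $R(1-e_0)$ is a nonzero principal left ideal, so by (a) it contains a minimal left ideal $L=Rf$ with $f^2=f$; since $L\subseteq R(1-e_0)$ we get $fe_0=0$, and then $g:=f-e_0f$ is an idempotent with $e_0g=ge_0=0$ and $Rg=L$ (as $Rg\subseteq Rf=L$ is nonzero and $L$ is simple — note $g\neq 0$, else $f\in Re_0\cap R(1-e_0)=0$). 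Now $e_0+g$ is an idempotent, $R(e_0+g)=Re_0\oplus Rg$ is semisimple (a semisimple module plus a simple one), and $1-(e_0+g)=(1-e_0-g)(1-e_0)\in R(1-e_0)$, while $g\in R(1-e_0)\setminus R(1-e_0-g)$ because $g(1-e_0-g)=0$ but $g^2=g\neq 0$. Hence $R(1-e_0-g)\subsetneq R(1-e_0)$ is a member of $\Sigma$, contradicting minimality.

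The hard part is precisely arranging the descent so that it takes place among principal left ideals, since hypothesis (iii) is strictly weaker than full DCC; this is why the complements $R(1-e)$ of idempotents, rather than arbitrary intersections of maximal left ideals, are the correct objects on which to run the minimality argument. The accompanying idempotent bookkeeping ($fe_0=0\Rightarrow$ replace $f$ by $g=f-e_0f$ to get an orthogonal pair) is routine, but it must be checked carefully that $R(e_0+g)$ remains semisimple and that the chain strictly drops; everything else reduces to the two preliminary facts above together with Theorem~\ref{semisimple rings} and Lemma~\ref{4.11lam}.
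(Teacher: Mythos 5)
Your proposal is correct; all the idempotent computations ($fe_0=0$, $g=f-e_0f$ orthogonal to $e_0$, $R(e_0+g)=Re_0\oplus Rg$, strictness of $R(1-e_0-g)\subsetneq R(1-e_0)$) check out, and the minimal-counterexample argument on $\Sigma$ legitimately only ever compares principal left ideals, so hypothesis (iii) suffices. Your route differs from the paper's in two places worth noting. For the key fact that minimal left ideals split off, the paper uses J-semisimplicity directly: given a minimal left ideal $I$, since $\mathrm{rad}R=0$ there is a maximal left ideal $\mathfrak{m}$ with $I\not\subseteq\mathfrak{m}$, whence $I\cap\mathfrak{m}=0$ and $_RR=I\oplus\mathfrak{m}$; you instead use $\mathrm{rad}R=0$ through Lemma~\ref{4.11lam} to rule out $L^2=0$ and then extract an idempotent generator (Brauer's lemma), $L=Rf$ with $f^2=f$. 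For the descent, the paper assumes $R$ not semisimple and builds an infinite strictly descending chain $I_1\supsetneq I_2\supsetneq\cdots$ of complements coming from a decomposition of $1$, remarking only briefly that these are principal and that a summand of $_RR$ inside $I_k$ is a summand of $I_k$; you take a minimal element of the family $\{R(1-e): e^2=e,\ Re \text{ semisimple}\}$ and contradict minimality by explicit orthogonalization. What each buys: your version makes completely explicit why the objects being compared are principal, why the chain strictly drops, and how the complement inside $R(1-e_0)$ is produced, at the cost of more idempotent bookkeeping; the paper's maximal-ideal complement is quicker but leaves those verifications to the reader. (Your $(i)\Rightarrow(ii)$ also differs mildly: you let $\mathrm{rad}R$ annihilate the semisimple module $_RR$, while the paper argues via an idempotent generator of $\mathrm{rad}R$ whose complement is a unit; both are fine.)
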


\begin{proof}\begin{par} $(i)\Longrightarrow(ii)$. Setting $J=\mathrm{rad}R$, the assumed semisimplicity of $R$ implies that $R=J\oplus \mathfrak{B}$ for some left ideal $\mathfrak{B}$. From the corresponding decomposition of 1, there exist idempotents $e,f$ such that $J=Re, \mathfrak{B}=Rf$, $e+f=1$. $e\in J=\mathrm{rad}R$ implies that $f=1-e$ is a unit. Being an idempotent, it follows that $f=1$, hence $J=0$.\end{par}
\begin{par}$(ii)\Longrightarrow(iii)$. Trivial.  \end{par}
\begin{par}$(iii)\Longrightarrow(i)$. Before proceeding, just notice that:\end{par}
 \begin{enumerate} \item[(1)] Every left ideal $I$ in $R$ contains a minimal left ideal, simply by scanning the family of principal ideals in $I$ and using the condition in (iii) to capture at a minimal principal ideal which is certainly a minimal left ideal.
 \item[(2)] Every minimal left ideal is a direct summand of $_RR$, for if $I$ is minimal, there exists a maximal left ideal $\mathfrak{m}$ not containing $I$ (since $\mathrm{rad}R=0$). Then $I\cap\mathfrak{m}=0$, and $_RR=I\oplus \mathfrak{m}$. \end{enumerate}
Assume $R$ isn't semisimple. Let $\mathfrak{B}_1$ be a minimal left ideal. Then $_RR=\mathfrak{B}_1\oplus I_1$, with $I_1\neq0$. By (1)  above, $I_1$ contains a minimal left ideal $\mathfrak{B}_2$,  then $\mathfrak{B}_2$ is a direct summand of $_RR$ and  thus of course a direct summand of $I_1$, say $I_1=\mathfrak{B}_2\oplus I_2$, $I_2\neq0$. Proceeding this way, we get a chain $$I_1\supsetneq I_2\supsetneq I_3\supsetneq\cdots,$$ of left ideals that are direct summands of $_RR$. A decomposition of 1 is enough to see that these left ideals are, in fact, principal left ideals, which contradicts (iii).

     \end{proof}

\begin{proposition}\label{socrad} For any left $R$-module $_RM$,\begin{center} $\mathrm{soc}(M)\subset\{m\in M: (\mathrm{rad}R)\cdot m=0\}$,\end{center} with equality holding if $R/\mathrm{rad}R$ is an artinian\footnote{Recall that a ring is called artinian if it is both left and right artinian.} ring.\end{proposition}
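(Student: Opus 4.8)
The plan is to prove the two assertions separately: first the inclusion $\mathrm{soc}(M)\subseteq\{m\in M:(\mathrm{rad}R)\cdot m=0\}$, which holds for any ring $R$, and then the reverse inclusion under the hypothesis that $\bar R:=R/\mathrm{rad}R$ is artinian. For the inclusion, I would argue that $\mathrm{soc}(M)$ is by definition the sum of all simple submodules of $M$ (this matches Theorem~\ref{ss}, since a sum of simples is semisimple and conversely). If $S\subseteq M$ is simple, then Lemma~\ref{4.1lam}(3) — or rather Corollary~\ref{4.2lam}, which says $\mathrm{rad}R=\bigcap_{M\text{ simple}}\mathrm{ann}(M)$ — gives $(\mathrm{rad}R)\cdot S=0$. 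Summing over all simple submodules, $(\mathrm{rad}R)\cdot\mathrm{soc}(M)=0$, which is exactly the stated containment.

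For the reverse inclusion, set $N:=\{m\in M:(\mathrm{rad}R)\cdot m=0\}$. This is a submodule of $M$ (it is the annihilator in $M$ of the two-sided ideal $\mathrm{rad}R$, so it is closed under addition and under the $R$-action since $\mathrm{rad}R$ is an ideal), and moreover $(\mathrm{rad}R)\cdot N=0$, so $N$ is naturally a module over $\bar R=R/\mathrm{rad}R$, with the same submodule lattice whether viewed over $R$ or over $\bar R$. Now $\bar R$ is, by hypothesis, artinian, and it is J-semisimple by the proposition $\mathrm{rad}(R/\mathfrak A)=(\mathrm{rad}R)/\mathfrak A$ applied with $\mathfrak A=\mathrm{rad}R$; hence by Theorem~\ref{4.14lam} $\bar R$ is a semisimple ring. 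By Theorem~\ref{semisimple rings}, every $\bar R$-module is semisimple, so $N$ is a semisimple $\bar R$-module, hence a sum of simple $\bar R$-submodules. Each such simple $\bar R$-submodule is also a simple $R$-submodule of $M$ (the actions agree and the submodule structure is identical), so $N\subseteq\mathrm{soc}(M)$. Combined with the first part, this gives $N=\mathrm{soc}(M)$.

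The only genuinely delicate point is bookkeeping the change of rings: one must check that a subset of $N$ is an $\bar R$-submodule if and only if it is an $R$-submodule, and that ``simple'' means the same thing in both categories — both are immediate because the $R$-action on $N$ factors through $\bar R$ and the $\bar R$-action is just the induced one. A secondary point worth a line is why one needs $\bar R$ artinian rather than merely $R$ artinian: we are applying Theorem~\ref{4.14lam} to $\bar R$, so it is $\bar R$ that must satisfy DCC; if $R$ itself is left artinian then so is $\bar R$, recovering the classical special case, but the stated hypothesis is the sharper one. I expect no real obstacle beyond phrasing these identifications cleanly.
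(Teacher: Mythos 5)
Your proposal is correct and follows essentially the same route as the paper: the first inclusion comes from $\mathrm{rad}R$ annihilating simple modules, and the reverse inclusion uses that $\bar R=R/\mathrm{rad}R$ is J-semisimple and artinian, hence semisimple by Theorem~\ref{4.14lam}, so that the annihilated part is a semisimple $\bar R$-module and hence a semisimple $R$-module. The only cosmetic difference is that the paper argues element-wise with the cyclic submodule $Rm$ and cites Proposition~\ref{rad andsim} for the agreement of simple modules, while you treat the whole submodule $N$ at once and verify the change-of-rings identification directly; the substance is identical.
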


\begin{proof}The first inclusion is clear knowing that $\mathrm{rad}R$ annihilates all simple left $R$-modules. Now, assume that $R/\mathrm{rad}R$ is artinian, and that $m\in M$ has the property that $ (\mathrm{rad}R)\cdot m=0$. Then the module $Rm$ can be considered as an  $(R/\mathrm{rad}R)$-module. But, by Theorem \ref{4.14lam}, $R/\mathrm{rad}R$ is a semisimple ring, hence $Rm$ is a semisimple $(R/\mathrm{rad}R)$-module. Consequently, $Rm$ is a semisimple $R$-module since $R$ and $R/\mathrm{rad}R$ share the same simple modules (Proposition \ref{rad andsim}). Thus, $Rm\subset \mathrm{soc}(M)\subset M$, and $m\in \mathrm{soc}(M)$. \end{proof}

\begin{theorem}\label{4.15lam}\textbf{(Hopkins-Levitzki, 1939)}. Let $R$ be a semiprimary\footnote{A ring $R$ is said to be \emph{semiprimary} if $\mathrm{rad}R$ is nilpotent and $R/\mathrm{rad}R$ is semisimple.} ring. Then for any $R$-module $_RM$, the following are equivalent:
\begin{enumerate}
        \item[$(i)$] $M$ is noetherian;
        \item[$(ii)$] $M$ is artinian;
        \item[$(iii)$] $M$ has a composition series.
    \end{enumerate}

\end{theorem}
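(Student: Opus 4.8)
The plan is to use the nilpotency of $J=\mathrm{rad}R$ to push the whole problem into the semisimple ring $R/J$. First I would fix $n$ with $J^{n}=0$ (available since $R$ is semiprimary) and consider the finite chain
$$M\supseteq JM\supseteq J^{2}M\supseteq\cdots\supseteq J^{n}M=0 .$$
For each $i$ one has $J\cdot(J^{i}M)\subseteq J^{i+1}M$, so the factor $J^{i}M/J^{i+1}M$ is a module over $R/J$; an additive subgroup of it is $R$-stable exactly when it is $R/J$-stable, so its submodule lattice is the same whether we regard it over $R$ or over $R/J$. Since $R/J$ is a semisimple ring, every $R/J$-module is semisimple (Theorem \ref{semisimple rings}), hence each factor $J^{i}M/J^{i+1}M$ is a semisimple $R$-module.

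The heart of the argument is then a lemma on semisimple modules: for a semisimple module $N$ the following are equivalent --- $N$ is noetherian; $N$ is artinian; $N$ is a finite direct sum of simple submodules; $N$ has a composition series. Writing $N=\bigoplus_{i\in I}S_{i}$ with the $S_{i}$ simple (Theorem \ref{ss}), if $I$ were infinite the partial sums along a countable subset would give both a strictly ascending and a strictly descending infinite chain of submodules, so either chain condition forces $I$ finite; a finite direct sum of simples manifestly carries a composition series (take the partial sums); and a module with a composition series satisfies both chain conditions, since by Jordan--H\"older (Theorem \ref{jordan}) every strictly monotone finite chain embeds in a composition series and so has length bounded by the composition length.

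Next I would invoke the routine closure properties: each of ``noetherian'', ``artinian'', and ``has a composition series'' passes to submodules and quotients and is closed under extensions (given $0\to M'\to M\to M''\to0$ with $M'$ and $M''$ having the property, $M$ has it too; for composition series one splices a series of $M'$ with the preimage of one of $M''$). Running these facts along the filtration displayed above: $M$ is noetherian iff every factor $J^{i}M/J^{i+1}M$ is noetherian, and likewise with ``artinian''. By the lemma these two conditions on the (semisimple) factors are equivalent to each other and to the factor having a composition series. Hence $(i)\Leftrightarrow(ii)$; and when they hold each factor has a composition series, so splicing these up the filtration produces a composition series for $M$, giving $(iii)$. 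Finally $(iii)\Rightarrow(i)$ and $(iii)\Rightarrow(ii)$ are immediate, a composition series being finite.

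I expect the only real bookkeeping to be in the extension/splicing step --- verifying that the filtration of $M$ by the $J^{i}M$, refined inside each factor by a composition series, assembles into a genuine composition series of $M$ --- but this is standard once the semisimple-module lemma is available; nothing here needs more than the Jordan--H\"older theorem and the structure theorem for semisimple modules already established.
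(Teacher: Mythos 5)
Your proposal is correct and follows essentially the same route as the paper: filter $M$ by the powers $J^iM$ of the nilpotent radical, note each factor is a semisimple module over the semisimple ring $R/J$ so that either chain condition forces it to be a finite direct sum of simples, splice the resulting composition series, and use Jordan--H\"older to get both chain conditions from $(iii)$. The extra remarks about closure under extensions are harmless but not needed beyond what the paper already uses.
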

\begin{proof} \begin{par}We'll begin with the simple implications $(iii)\Longrightarrow(i)$ and (ii). Suppose that $M$ has a composition series $S$ of length  $n$, if it happened that either chain condition fails, one can obtain a normal series $T$ of length $n+1$ $$M=M_0\supsetneq M_1\supsetneq M_2\supsetneq \cdots\supsetneq M_{n+1}. $$ By the Jordan-H\"{o}lder theorem (\ref{jordan}), $T$ is supposed to have a refinement that is equivalent to $S$, which is impossible.\end{par}

\begin{par}Now we prove that either chain condition on $M$ implies (iii). As usual, for the sake of a simple reference, set $J=\mathrm{rad}R$, and let $m$ be such that $J^m=0$. Through the chain $$M\supset JM\supset J^2M\supset J^mM=0,$$ we can provide a composition series for $M$ by showing that each of the factors $J^iM/J^{i+1}M$ has a composition series. $J^iM/J^{i+1}M$ is either noetherian or artinian (depends on our assumption for $M$) as left module over  $R/J$. Since $R/J$  is semisimple, by Theorem \ref{semisimple rings}, $J^iM/J^{i+1}M$ is a direct sum of simple $R/J$-modules. Either chain condition on $J^iM/J^{i+1}M$ makes this a finite sum, giving rise to the sought composition series.\end{par}
\end{proof}

\begin{theorem}\label{nakayama}\textbf{(Nakayama's Lemma)}. For any left ideal $I\subset R$, the following are equivalent:
\begin{enumerate}
        \item[$(1)$] $I\subset \mathrm{rad}R.$
        \item[$(2)$] For any finitely generated left $R$-module $M$, if $I\cdot M=M$ then $M=0$.
        \item[$(3)$] For any left $R$-modules $N\subset M$ such that $M/N$ is finitely generated, if $N+I\cdot M=M$ then  $N=M$.
    \end{enumerate}

\end{theorem}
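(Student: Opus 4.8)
The plan is to prove the three statements are equivalent by establishing the cycle $(1)\Rightarrow(2)\Rightarrow(3)\Rightarrow(1)$, which is the cleanest route for Nakayama-type results. The implication $(3)\Rightarrow(2)$ is immediate: take $N=0$ in statement $(3)$, so that $I\cdot M=M$ forces $0=M$. So the real content is $(1)\Rightarrow(2)$ and $(2)\Rightarrow(3)$.

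For $(1)\Rightarrow(2)$, suppose $I\subseteq\mathrm{rad}R$ and $M$ is finitely generated with $I\cdot M=M$; I would argue by induction on the number of generators, or more slickly by a minimality argument. Pick a generating set $m_1,\dots,m_k$ of $M$ of least possible size and assume toward a contradiction that $M\ne 0$, so $k\geq 1$. Since $m_k\in M=I\cdot M$, write $m_k=\sum_{i=1}^k a_i m_i$ with $a_i\in I$. Then $(1-a_k)m_k=\sum_{i<k}a_i m_i$. By Lemma \ref{4.1lam} (part (2)), $1-a_k$ is left invertible since $a_k\in\mathrm{rad}R$ — actually I want a two-sided unit here, but left-invertibility suffices: multiplying on the left by a left inverse $u$ of $1-a_k$ expresses $m_k$ as a combination of $m_1,\dots,m_{k-1}$, so $M$ is generated by $k-1$ elements, contradicting minimality. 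Hence $M=0$.

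For $(2)\Rightarrow(3)$, suppose $N\subseteq M$ with $M/N$ finitely generated and $N+I\cdot M=M$. Apply the quotient map $M\to M/N$: the image of $I\cdot M$ is $I\cdot(M/N)$, and since $N+I\cdot M=M$ we get $I\cdot(M/N)=M/N$. As $M/N$ is finitely generated, statement $(2)$ gives $M/N=0$, i.e.\ $N=M$.

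The main obstacle, and the only place the hypothesis on $R$ really bites, is $(1)\Rightarrow(2)$: one must convert the algebraic identity $(1-a_k)m_k=\sum_{i<k}a_i m_i$ into an actual reduction of the generating set, and this is exactly where $a_k\in\mathrm{rad}R$ is used, via the left-invertibility of $1-a_k$ from Lemma \ref{4.1lam}. Everything else is formal. I would also note that the equivalences so far do not use finiteness of $R$ anywhere — only the characterization of $\mathrm{rad}R$ in Lemma \ref{4.1lam} — so the theorem holds for arbitrary rings, which matches its stated generality.
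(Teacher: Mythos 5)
Your proposal does not actually establish the equivalence: you announce the cycle $(1)\Rightarrow(2)\Rightarrow(3)\Rightarrow(1)$, but the implications you then prove are $(1)\Rightarrow(2)$, $(2)\Rightarrow(3)$ and $(3)\Rightarrow(2)$ (the last by taking $N=0$). Nothing in the write-up returns to $(1)$, so all you have shown is $(1)\Rightarrow(2)\Longleftrightarrow(3)$; the converse direction, which is genuine content, is missing. To close the loop, argue for instance $(2)\Rightarrow(1)$: if $I\not\subset\mathrm{rad}R$, choose a maximal left ideal $\mathfrak{m}$ with $I\not\subset\mathfrak{m}$. Then $R/\mathfrak{m}$ is a nonzero cyclic (hence finitely generated) module, and $I\cdot(R/\mathfrak{m})$ is a nonzero submodule of the simple module $R/\mathfrak{m}$ (it contains $a+\mathfrak{m}$ for any $a\in I\setminus\mathfrak{m}$), so $I\cdot(R/\mathfrak{m})=R/\mathfrak{m}$, contradicting $(2)$. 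Equivalently, from $(3)$: take $M=R$, $N=\mathfrak{m}$; then $N+I\cdot M=\mathfrak{m}+I=R$ by maximality, while $\mathfrak{m}\neq R$. Hence $I\subset\mathfrak{m}$ for every maximal left ideal, i.e. $I\subset\mathrm{rad}R$. (The paper itself states the theorem without proof, so this omission is not something you could have cross-checked there.)

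The parts you do prove are essentially sound. One small imprecision in $(1)\Rightarrow(2)$: since $I$ is only a left ideal, an element of $I\cdot M$ expanded in the generators $m_1,\dots,m_k$ has coefficients in $IR$, not necessarily in $I$; but $IR\subset\mathrm{rad}R$ because $\mathrm{rad}R$ is a two-sided ideal containing $I$, and your argument only needs $a_k\in\mathrm{rad}R$ to invoke Lemma \ref{4.1lam} for the left-invertibility of $1-a_k$ (indeed, by Lemma \ref{4.3lam} it is even a unit), so the minimal-generating-set contradiction goes through unchanged. The reduction $(2)\Rightarrow(3)$ via the quotient $M/N$ is correct as written.
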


\begin{proposition}\label{19.27} Let $I\subset\mathrm{rad}R$ be an ideal, and set $\overline{R}=R/I$. Let $P,Q$ be finitely generated projective left $R$-modules. Then $P\cong Q$ as $R$-modules if and only if $P/IP\cong Q/IQ$ as $\overline{R}$-modules.
\end{proposition}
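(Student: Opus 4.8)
The plan is to settle the trivial implication at once and then devote the work to the converse, where projectivity of $P$ and Nakayama's Lemma carry the argument. First, if $\psi\colon P\to Q$ is an isomorphism of $R$-modules, then $\psi(IP)=I\psi(P)=IQ$, so $\psi$ descends to a well-defined $\overline{R}$-isomorphism $P/IP\to Q/IQ$. That takes care of the ``only if'' direction, and no hypothesis beyond $I$ being an ideal is used there.

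For the ``if'' direction, suppose $\bar\phi\colon P/IP\to Q/IQ$ is an $\overline{R}$-isomorphism. I would compose the canonical surjection $\pi_P\colon P\to P/IP$ with $\bar\phi$ to obtain a map $P\to Q/IQ$, and, since $P$ is projective and $\pi_Q\colon Q\to Q/IQ$ is an epimorphism, lift this through $\pi_Q$ to a homomorphism $\phi\colon P\to Q$ with $\pi_Q\circ\phi=\bar\phi\circ\pi_P$; thus $\phi$ reduces modulo $I$ to exactly $\bar\phi$. Because $\bar\phi$ is onto, $\phi(P)+IQ=Q$, and since $Q/\phi(P)$ is finitely generated and $I\subseteq\mathrm{rad}\,R$, Nakayama's Lemma (Theorem \ref{nakayama}, part $(3)$, with $N=\phi(P)$ and $M=Q$) yields $\phi(P)=Q$, i.e.\ $\phi$ is surjective.

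Set $K=\ker\phi$. The short exact sequence $0\to K\to P\xrightarrow{\ \phi\ }Q\to 0$ splits because $Q$ is projective (Proposition \ref{projinj}, part $(1)$), giving $P\cong K\oplus Q$; in particular $K$ is finitely generated as a direct summand of the finitely generated module $P$. Choosing a section $s\colon Q\to P$ of $\phi$ and noting $IP=IK\oplus s(IQ)$, the splitting descends to $P/IP\cong K/IK\oplus Q/IQ$, under which $\bar\phi$ acts as $0$ on $K/IK$ and as the identity on $Q/IQ$; hence $\ker\bar\phi=K/IK$. Since $\bar\phi$ is injective, $K/IK=0$, that is $IK=K$, and one final application of Nakayama's Lemma (Theorem \ref{nakayama}, part $(2)$) to the finitely generated module $K$ forces $K=0$. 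Therefore $\phi\colon P\to Q$ is an isomorphism.

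The only place that requires care—and the main obstacle—is making sure the lifted map $\phi$ genuinely reduces to the given $\bar\phi$, so that the kernel of the reduced map is precisely $K/IK$; it is this identification that converts the injectivity of $\bar\phi$ into the equality $IK=K$ needed to apply Nakayama. The rest (existence of the lift, splitting, finite generation of $K$) is routine.
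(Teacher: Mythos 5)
Your proof is correct and follows essentially the same route as the paper's: lift $\bar\phi$ through $\pi_Q$ using projectivity of $P$, apply Nakayama's Lemma to get surjectivity of the lift, split the resulting short exact sequence via projectivity of $Q$, and apply Nakayama once more to kill the kernel. Your explicit identification of $\ker\bar\phi$ with $K/IK$, and your observation that $K$ is finitely generated because it is a direct summand (not merely a submodule) of $P$, are in fact slightly more careful than the paper's own wording, but the argument is the same.
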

\begin{proof}The ``only if'' part is direct. For the ``if'' part, let $\overline{f}:P/IP\rightarrow Q/IQ$ be an isomorphism, and consider the following diagram
\begin{displaymath}
\xymatrix{ P \ar@{.>}[d]_{f}\ar[r]  & P/IP \ar[d]^{\overline{f}} \\
             Q\ar[r]  & Q/IQ  }
\end{displaymath}
By the projectivity of $P$, since $\overline{f}$ is surjective, there is an $R$-homomorphism $f:P\rightarrow Q$ (dotted in the diagram) such that the diagram is commutative. Again, since $\overline{f}$ is surjective, $\mathrm{Im}f+IQ=Q$. Since $Q$ is finitely generated, Nakayama's Lemma implies that $\mathrm{Im}f=Q$ and $f$ is surjective. Now, the projectivity of $Q$ implies, by Proposition \ref{projinj}, that the short exact sequence $0\rightarrow \mathrm{Ker}f\overset{\iota}{\rightarrow} P \overset{f}{\rightarrow} Q\rightarrow0$ splits, and $P\cong \mathrm{Ker}f\oplus Q$. Set $M=\mathrm{Ker}f$, and calculate that,
$$P/IP\cong M/IM \oplus Q/IQ.$$
Now, since $\overline{f}$ is an isomorphism, $ M/IM=0$ and $M=IM$. Also $M$ is finitely generated, being a submodule of $P$. Then, in the use of Nakayama's Lemma, we get $ \mathrm{Ker}f=M=0$.\end{proof}

\begin{definition}The Jacobson radical of a ring was defined to be the intersection of all maximal left ideals. Extending this notion, define $\mathrm{rad} M$, the radical of an $R$-module $_RM$, to be the intersection of all maximal  submodules.\end{definition} The next  lemma is  to be used later.
\begin{proposition}\label{radmod} The following are true.

\begin{enumerate}

\item[$(1)$]$\mathrm{rad} M=\underset{\varphi,U}{\bigcap}\{\mathrm{Ker}\varphi\;| \;\varphi:{_RM}\rightarrow {_RU}\;\;\text{is a homomorphism}, {_RU} \;\;\text{simple} \}$.
\item [$(2)$]If $\{M_\alpha\}$ is a family of $R$-modules, then $\mathrm{rad} (\bigoplus M_\alpha)=\bigoplus \mathrm{rad} M_\alpha$.
\item[$(3)$]If $f:M\rightarrow N$ is an $R$-module isomorphism, then $\mathrm{rad} N=f(\mathrm{rad} M)$.
\item[$(4)$]For any left $R$-module $M$, $(\mathrm{rad}R) M\subset\mathrm{rad} M$.
\item[$(5)$]$\mathrm{rad} F=(\mathrm{rad} R) F$ for any free $R$-module $F$.
\item[$(6)$]$\mathrm{rad} P=(\mathrm{rad} R) P$ for any nonzero projective $R$-module $P$.

\end{enumerate}\end{proposition}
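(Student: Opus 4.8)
The plan is to prove (1) first, since it converts the lattice-theoretic definition of $\mathrm{rad} M$ into the functorial description by maps into simple modules, after which the other five parts drop out by successively shorter arguments. For (1) I would check both inclusions. Each maximal submodule $N$ is the kernel of the canonical epimorphism $M\to M/N$ onto a simple module, so the right-hand intersection lies inside $\mathrm{rad} M$; conversely, any nonzero $\varphi\colon M\to U$ with $U$ simple is automatically surjective (its image is a nonzero submodule of a simple module), hence $M/\mathrm{Ker}\varphi\cong U$ is simple and $\mathrm{Ker}\varphi$ is a maximal submodule, while zero maps contribute only $M$ to the intersection; this gives $\mathrm{rad} M\subseteq\bigcap\mathrm{Ker}\varphi$.

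Part (3) is then immediate: an isomorphism $f$ carries the poset of submodules of $M$ bijectively and order-preservingly onto that of $N$, hence maximal submodules to maximal submodules, and a bijection commutes with intersections. For (2), the inclusion $\bigoplus\mathrm{rad} M_\alpha\subseteq\mathrm{rad}(\bigoplus M_\alpha)$ follows by restricting an arbitrary $\varphi\colon\bigoplus M_\alpha\to U$ ($U$ simple) to each summand and invoking (1); for the reverse inclusion I would argue contrapositively, using that a coordinate $x_\beta$ of $x\in\bigoplus M_\alpha$ lying outside $\mathrm{rad} M_\beta$ yields, by (1), a map $\varphi_\beta\colon M_\beta\to U$ with $\varphi_\beta(x_\beta)\neq0$, which precomposed with the projection $\pi_\beta$ gives a map on $\bigoplus M_\alpha$ not killing $x$, so $x\notin\mathrm{rad}(\bigoplus M_\alpha)$.

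Part (4) uses Corollary \ref{4.2lam}: $\mathrm{rad} R$ annihilates every simple left module, so for $\varphi\colon M\to U$ with $U$ simple we get $\varphi((\mathrm{rad} R)M)=(\mathrm{rad} R)\varphi(M)\subseteq(\mathrm{rad} R)U=0$, whence $(\mathrm{rad} R)M\subseteq\bigcap\mathrm{Ker}\varphi=\mathrm{rad} M$ by (1). For (5), write a free module as $F=\bigoplus_i Re_i$; since the maximal submodules of $_RR$ are exactly the maximal left ideals, $\mathrm{rad}(_RR)=\mathrm{rad} R$, and (2) together with (3) gives $\mathrm{rad} F=\bigoplus_i(\mathrm{rad} R)e_i$, which equals $(\mathrm{rad} R)F$ because $\mathrm{rad} R$ is a two-sided ideal. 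Finally, for (6) realize a nonzero projective $P$ as a direct summand of a free module, $F\cong P\oplus Q$ by Proposition \ref{projinj}(1): parts (2) and (3) give $\mathrm{rad} F=\mathrm{rad} P\oplus\mathrm{rad} Q$, while (5) and distributivity of the ideal action give $\mathrm{rad} F=(\mathrm{rad} R)F=(\mathrm{rad} R)P\oplus(\mathrm{rad} R)Q$; since by (4) $(\mathrm{rad} R)P\subseteq\mathrm{rad} P$ sits inside $P$ and $(\mathrm{rad} R)Q\subseteq\mathrm{rad} Q$ sits inside $Q$, comparing the two internal decompositions of $\mathrm{rad} F$ forces equality in each summand, in particular $\mathrm{rad} P=(\mathrm{rad} R)P$.

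I expect the only mildly delicate points to be the contrapositive half of (2) — manufacturing the right global map from a coordinatewise obstruction — and the bookkeeping at the end of (6), where one must observe that two internal direct-sum decompositions of a submodule of $P\oplus Q$, with the respective summands contained in $P$ and in $Q$ and one componentwise refining the other, must actually coincide componentwise (intersect with $P$ to see it). Everything else is formal once (1) is available.
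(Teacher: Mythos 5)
Your proposal is correct and follows essentially the same route as the paper: establish (1) as the functorial description of $\mathrm{rad}M$ and then derive (2)--(6) from it, with (6) handled by comparing the two internal decompositions $\mathrm{rad}P\oplus\mathrm{rad}Q=(\mathrm{rad}R)P\oplus(\mathrm{rad}R)Q$ inside $P\oplus Q$ exactly as in the text. The only (harmless) local differences are that you prove (3) by transporting the lattice of submodules instead of composing with $f$ and $f^{-1}$, and you get (4) from $(\mathrm{rad}R)U=0$ for simple $U$ rather than from the paper's observation that any homomorphism carries $\mathrm{rad}$ into $\mathrm{rad}$ applied to $r\mapsto rm$; both are valid.
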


\begin{proof}\begin{enumerate}
\item[$(1)$] If $U$ is a simple left $R$-module, and $\varphi:M\rightarrow U$ is a nonzero homomorphism, then $M/\mathrm{Ker}\varphi\cong U$ is simple and hence $\mathrm{Ker}\varphi$ is a maximal submodule. Conversely, if $M_1$ is a maximal submodule of $M$, then the natural epimorphism $M\rightarrow M/M_1$ has its kernel equal to $M_1$.
\item[$(2)$] Let $\pi_\alpha:M\rightarrow M_\alpha$ be the natural projection, and $\iota_\alpha: M_\alpha\rightarrow M$ be the natural inclusion. If $m\in \mathrm{rad}M$, and $\pi_\alpha(m)=m_\alpha$, we show that $m_\alpha\in\mathrm{rad}M_\alpha$. Let $\psi:M_\alpha\rightarrow U$ be a homomorphism into a simple module $U$. Then $\psi\pi_\alpha: M\rightarrow U$ satisfies $\psi(m_\alpha)=\psi\pi_\alpha(m)=0$.  Thus, $m_\alpha\in\mathrm{rad}M_\alpha $. Conversely, let $m\in\bigoplus \mathrm{rad} M_\alpha$, and let $\varphi:M\rightarrow U$ be a homomorphism. Notice that $\varphi(m)=\underset{\alpha}{\sum} \varphi\iota_\alpha \pi_\alpha(m) $, and that  $\varphi_\alpha=\varphi\iota_\alpha$ is a homomorphism from $M_\alpha$ into $U$, hence $\varphi(m)=\underset{\alpha}{\sum} \varphi_\alpha(m_\alpha)=0$ since $m_\alpha\in\mathrm{rad}M_\alpha$ for each $\alpha$.

\item[$(3)$] If $m\in\mathrm{rad}M$, and $\varphi:N\rightarrow U$ is a homomorphism into a simple $R$-module $U$, then $\varphi f$ is a homomorphism from $M$ into $U$, and hence $\varphi(f(m))=0$. This shows that $f(m)\in \mathrm{rad}N$. Also, if $f(m)\in\mathrm{rad}N$, and $\psi:M\rightarrow U$ is any homomorphism into a simple $R$-module $U$, then $\psi f^{-1}:N\rightarrow M$ must satisfy $0=\psi f^{-1}(f(m))=\psi(m)$.
\item[$(4)$] First, notice that the first part in  the proof of (3) above shows, in particular, that if $f:M\rightarrow N$ is a homomorphism, then $f(\mathrm{rad}M)\subset \mathrm{rad}N$. Now, for statement (4), if $m\in M$, define $f:R\rightarrow M$ by $r\mapsto rm$. Then $(\mathrm{rad}R)m=f(\mathrm{rad}R)\subset\mathrm{rad}M$, giving the desired result.

\item[$(5)$] Suppose that $X$ is a basis of $F$, then we know that $F=\underset{x\in X}{\oplus}Rx$, and that, for each $x\in X$, the map $r\mapsto rx$ is an $R$-isomorphism between $R$ and $ Rx$. The result now follows from (2), (3) and (4).

\item[$(6)$] Being projective, $P$ is a direct summand of a free $R$-module $F$, say $F=P\oplus Q$, for some $Q$. Then, by (2), $(\mathrm{rad}R)F=\mathrm{rad}F=\mathrm{rad}P\oplus\mathrm{rad}Q$. Besides, we have $(\mathrm{rad}R)F\subset(\mathrm{rad}R)P\oplus(\mathrm{rad}R)Q$, and, by (4), $(\mathrm{rad}R)P\oplus(\mathrm{rad}R)Q\subset(\mathrm{rad}R)F$, hence $(\mathrm{rad}R)F=(\mathrm{rad}R)P\oplus(\mathrm{rad}R)Q$. Now, It follows  that $\mathrm{rad}P=(\mathrm{rad}R)P$  (since $(\mathrm{rad}R)P\subset P$ and $(\mathrm{rad}R)Q\subset Q$).
\end{enumerate}

\end{proof}

\section{Theory of Idempotents}
\subsection{Local, Semilocal, and Semiperfect Rings}
\begin{par}The notion of a local ring originally appears in commutative algebra,  referring to those nonzero commutative rings with a unique maximal ideal. In the noncommutative case, however, the notion was naturally generalized: a nonzero ring $R$ is \emph{local} if $R$ has a unique maximal left ideal. One may wonder now, why such ring is not called \emph{left} local instead? The answer is that having a unique maximal left ideal turns out to be equivalent to having a unique maximal right ideal; moreover these two one-sided ideals coincide, and that unique `ideal' is $\mathrm{rad}R$ (\cite{Lamnoncom}, Theorem 19.1). Thus, being local is equivalent to that $R/\mathrm{rad}R$ is a division ring. In view of the Wedderburn theorem, a probably good generalization would be the class of  rings $R$ with the property that $R/\mathrm{rad}R$ is semisimple. Indeed, this becomes more convincing when we find that, in the commutative case, those are exactly the rings with finitely many maximal ideals (Proposition \ref{prop20.2lam} below). This is the class of \emph{semilocal} rings.
\end{par}

\begin{definition}(Semilocal Rings). A ring $R$ is said to be \emph{semilocal} if $R/\mathrm{rad}R$ is a semisimple ring, or, equivalently\footnote{Theorem \ref{4.14lam}.}, if $R/\mathrm{rad}R$ is left artinian.\end{definition}

\begin{proposition}\label{prop20.2lam}If a ring $R$ has finitely many maximal left ideals, then $R$ is semilocal. The converse holds if $R/\mathrm{rad}R$ is commutative. \end{proposition}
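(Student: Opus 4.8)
The plan is to reduce everything to the semisimple quotient $S := R/\mathrm{rad}R$. The key elementary observation, used in both directions, is that $\mathrm{rad}R$ is by definition the intersection of \emph{all} maximal left ideals, so every maximal left ideal of $R$ contains $\mathrm{rad}R$; hence the canonical projection $R \to S$ induces a bijection between the maximal left ideals of $R$ and the maximal left ideals of $S$ (the usual lattice correspondence for a quotient). In particular $R$ has finitely many maximal left ideals if and only if $S$ does.

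For the forward implication, I would suppose $\mathfrak{m}_1, \dots, \mathfrak{m}_n$ are the only maximal left ideals of $R$. Then $\mathrm{rad}R = \bigcap_{i=1}^n \mathfrak{m}_i$, so the diagonal homomorphism $R \to \bigoplus_{i=1}^n R/\mathfrak{m}_i$ has kernel $\mathrm{rad}R$ and embeds $S$, as a left $R$-module, into a finite direct sum of simple modules. By Theorem \ref{ss} that direct sum is semisimple, and by Lemma \ref{remark1} so is its submodule $S$. Since the left $R$-submodules of $S$ are precisely the left ideals of the ring $S$, condition (5) of Theorem \ref{semisimple rings} shows $S$ is left semisimple; that is, $R$ is semilocal.

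For the converse I would assume $R$ semilocal with $S$ commutative and simply count the maximal left ideals of $S$. Being a commutative semisimple ring, $S$ is a finite product of fields $F_1 \times \cdots \times F_k$ by the Wedderburn--Artin theorem (commutativity collapses each matrix block to a $1 \times 1$ block over a field). A finite product of fields has exactly $k$ maximal ideals, namely the kernels of the coordinate projections, and since $S$ is commutative these are all of its maximal left ideals; pulling back along $R \to S$ then gives exactly $k$ maximal left ideals of $R$.

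I do not anticipate a genuine obstacle: every step is either the lattice-correspondence argument or an appeal to a structure theorem already available. The one point worth flagging is \emph{why} commutativity is needed in the converse — without it $S$ may contain a block $\mathbb{M}_{n_i}(D_i)$ with $n_i \ge 2$, and such a block (over an infinite division ring) already has infinitely many maximal left ideals, namely the ``column subspaces'' of $\mathbb{M}_{n_i \times 1}(D_i)$, so a semilocal ring can have infinitely many maximal left ideals and the hypothesis on $S$ cannot be dropped.
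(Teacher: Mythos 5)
Your proof is correct and follows essentially the same route as the paper: reduce modulo $\mathrm{rad}R$ via the lattice correspondence, embed into $\bigoplus_{i=1}^n R/\mathfrak{m}_i$ through the diagonal map with kernel $\bigcap_i\mathfrak{m}_i=\mathrm{rad}R$, and for the converse apply Wedderburn--Artin to the commutative semisimple quotient and count the factors. The only (harmless) difference is cosmetic: you conclude semisimplicity of $R/\mathrm{rad}R$ directly from Lemma \ref{remark1} (submodule of a semisimple module), whereas the paper passes through composition series and artinianness before invoking Theorem \ref{4.14lam}; your closing remark on why commutativity cannot be dropped is also accurate.
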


\begin{proof}Upon reflection, $R$ is semilocal if and only if $R/\mathrm{rad}R$ is semilocal, also, $R$ has finitely many maximal left ideals if and only if $R/\mathrm{rad}R$ has finitely many maximal left ideals. Thus, we may assume that $\mathrm{rad}R=0$ in proving the two claims in the proposition.\\ Let $\mathfrak{m}_1,\ldots,\mathfrak{m}_n$ be the maximal left ideals of $R$. The map $R\rightarrow \overset{n}{\underset{i=1}{\bigoplus}} R/\mathfrak{m}_i$, defined by $r\mapsto (r+\mathfrak{m}_1,\ldots,r+\mathfrak{m}_n)$  is an injection of left $R$-modules due to the assumption  $0=\mathrm{rad}R=\bigcap^n_{i=1}\mathfrak{m}_i$. The module $\overset{n}{\underset{i=1}{\bigoplus}} R/\mathfrak{m}_i$ has a composition series and hence, by the previous, so does $R$. Thus, $R$ is artinian and hence is semilocal. \\
Conversely, assuming that $R\neq0$ is artinian and commutative, we can use the Wedderburn theorem to show that $R$ is a direct product of a finite number of fields\footnote{Since $\mathrm{rad}R=0$, and $R$ is artinian, the Wedderburn theorem says that $R\cong\mathbb{M}_{n_1}(D_1)\times\cdots\times\mathbb{M}_{n_k}(D_k)$, where the $D_i$'s are division rings. Now, the assumed commutativity forces the size of each matrix ring to be 1, lest we have non-commuting elements like $\left( \begin{smallmatrix} 1&0 \\ 0&0 \end{smallmatrix}\right) ,\left( \begin{smallmatrix} 1&0 \\ 1&0 \end{smallmatrix}\right) $. Then, further, the division rings should be fields.}. Then the number of maximal ideals is the finite number of  these fields (a maximal ideal is obtained by fixing one and only one component in the prescribed decomposition equal to zero).\\
\end{proof}

Noticing that left or right artinian rings are semilocal, we see that semilocal rings generalizes one-sided artinian rings. If $R$ is left (right) artinian, we know that $\mathrm{rad}R$ is nilpotent (Theorem \ref{artradnilp}). So it would be reasonable to consider the class of those semilocal rings with $\mathrm{rad}R$ nilpotent, namely, \emph{semiprimary} rings. This clearly generalizes, again, one-sided artinian rings.\\

\begin{definition}(Semiprimary Rings). A ring $R$ is \emph{semiprimary} if $R/\mathrm{rad}R$ is semisimple and $\mathrm{rad}R$ is nilpotent.\end{definition}
 Once again in this chart of generalizations, we know that if $I$ is a nil  ideal then idempotents lift modulo $I$, that is, if $\overline{e}\in\overline{R}=R/I$ is an idempotent then there is an idempotent $x\in R$ such that $\overline{x}=\overline{e}$. We may then generalize semiprimary rings by considering  those semilocal rings for which idempotents lift modulo $\mathrm{rad}R$, these are \emph{semiperfect} rings. In this text, however, we shall not consider  \emph{perfect} rings, lest a digression would occur. Refer to \cite{Lamnoncom} for further readings.

 \begin{definition}\label{semiper}(Semiperfect Rings). A ring $R$ is \emph{semiperfect} if $R$ is semilocal, and idempotents lift modulo $\mathrm{rad}R$. (The last statement is sometimes said as: idempotents of $R/\mathrm{rad}R$ can be lifted to $R$.)

 \end{definition}
\begin{par}
We have the following scheme (\cite{Lamnoncom}, p.345):
\begin{center}$\{\text{one-sided artinian rings}\}$\\$\cap$\\$\{\text{semiprimary rings}\}$\\$\cap$\\$\quad\quad\{\text{local rings}\}\subset
\{\text{semiperfect rings}\}\subset\{\text{semilocal rings}\}.$\end{center}\end{par}

Indeed, any local ring $R$ is semiperfect since $R/\mathrm{rad}R$ contains only the trivial idempotents, being a division ring. Also note that the class of semiperfect rings is proper in the class of semilocal rings. To see this, let $R$ be a  semilocal domain (commutative) with two maximal ideals $\mathfrak{m}_1,\mathfrak{m}_2$. Then $R/\mathrm{rad}R\cong R/\mathfrak{m}_1\times R/\mathfrak{m}_2$ and hence has two nontrivial idempotents whilst $R$ lacks any nontrivial idempotents.

\subsection{Idempotents}
In his lithograph, \emph{Linear Associative Algebra} (1872), Benjamin Peirce introduced the notions of an idempotent and a nilpotent element and the famous Peirce decompositions. A modified version of this, including notes of his son, Charles Sanders Peirce, was published in the \emph{American Journal of Mathematics}, volume 4, in 1881. For any idempotent $e$, we have the three Peirce decompositions: \begin{enumerate}
\item[$(1)$] $R=Re\oplus Rf$,
\item[$(2)$] $R=eR\oplus fR$,
\item[$(3)$] $R=eRe\oplus  eRf\oplus fRe\oplus fRf$,
\end{enumerate}
where $f=1-e$ is the ``complementary'' idempotent to $e$. Note that the first two are decompositions into one-sided ideals, while  the third is a decomposition into  subgroups, the summands, in fact, are rings. The ring $eRe$ has the identity $e$.

\begin{proposition}\label{nillift} Idempotents lift modulo every nil ideal $I$ $\mathrm{(}I\subset\mathrm{rad}R$, since the radical contains every nil ideal $\mathrm{)}$.
\end{proposition}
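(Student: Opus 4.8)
The plan is to begin with any preimage $a\in R$ of the given idempotent $\bar e\in R/I$, so that $b:=a^{2}-a\in I$, and then to replace $a$ by a suitable polynomial in $a$ that is a genuine idempotent congruent to $a$ modulo $I$. The one structural feature to exploit is that, even though $I$ is only assumed nil and not nilpotent, the \emph{particular} element $b$ is nilpotent, say $b^{n}=0$ for some $n\ge 1$; and every element I shall produce lies in the commutative subring $\mathbb{Z}[a]\subseteq R$, so the non-commutativity of $R$ never intervenes.

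Concretely, I would expand $1=(a+(1-a))^{2n-1}=\sum_{k=0}^{2n-1}\binom{2n-1}{k}a^{k}(1-a)^{2n-1-k}$ and set $e:=\sum_{k\ge n}\binom{2n-1}{k}a^{k}(1-a)^{2n-1-k}$, so that $e+e'=1$ where $e':=1-e$ is the sum of the terms with $k\le n-1$. Each term of $e$ is divisible by $a^{n}$ and each term of $e'$ is divisible by $(1-a)^{n}$ (since $2n-1-k\ge n$ once $k\le n-1$); as all these elements commute, every cross product occurring in $ee'$ is divisible by $a^{n}(1-a)^{n}=(-(a^{2}-a))^{n}=\pm b^{n}=0$, whence $ee'=0$. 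Left-multiplying $e+e'=1$ by $e$ then yields $e=e^{2}$, so $e$ is idempotent.

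It remains to verify $e\equiv a\pmod I$. Reducing modulo $I$, $\bar a$ is idempotent, so $\bar a^{k}=\bar a$ and $(\bar 1-\bar a)^{j}=\bar 1-\bar a$ for all $k,j\ge 1$, while $\bar a(\bar 1-\bar a)=0$. Hence every term of $e$ with $1\le k\le 2n-2$ dies modulo $I$, and only the term $k=2n-1$ survives, equal to $a^{2n-1}\equiv\bar a$. Thus $\bar e=\bar a$, and $e$ is the desired lift. The parenthetical assertion $I\subseteq\mathrm{rad}R$ is immediate from Lemma \ref{4.11lam}, since a nil ideal is in particular a nil one-sided ideal.

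The only genuine obstacle is the one signalled above: because $I$ need not be nilpotent, one cannot lift idempotents ``in a single step'' by invoking nilpotency of $I$ itself, so the argument must be anchored to the single nilpotent element $a^{2}-a$. Equivalently, one may run the Newton-type recursion $a\mapsto 3a^{2}-2a^{3}$, for which a short computation in $\mathbb{Z}[a]$ gives $(3a^{2}-2a^{3})-(3a^{2}-2a^{3})^{2}=(a-a^{2})^{2}(3-2a)(2a+1)$; this roughly halves the nilpotency index of $a^{2}-a$ at each step and therefore reaches an idempotent after finitely many iterations, while keeping the iterate congruent to $a$ modulo $I$. The closed-form binomial construction above simply carries out this iteration in one stroke.
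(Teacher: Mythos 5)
Your proof is correct and follows essentially the same route as the paper's: expand a suitable power of $1=a+(1-a)$ by the binomial theorem, split the sum into the high-$a$-power and high-$(1-a)$-power parts, use nilpotency of $a^{2}-a$ (and commutativity within $\mathbb{Z}[a]$) to kill the cross terms and get $e^{2}=e$, then reduce modulo $I$ to see $e\equiv a$. The only differences (odd exponent $2n-1$ instead of $2m$, and the Newton-iteration aside) are cosmetic.
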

\begin{proof}Let $a\in R$  be such that $ \overline{a}\in \overline{R}=R/I$ is an idempotent. Set $b=1-a$, then $ab=ba=a-a^2\in I$, so there exists $m$ such that $(ab)^m=0$. By the binomial theorem,
\begin{align*}
 1&=(a+b)^{2m}\\&=a^{2m}+r_1 a^{2m-1} b+\cdots+r_m a^m b^m+r_{m+1}a^{m-1}b^{m+1}+\cdots+b^{2m},
\end{align*}
the $r_i$'s being integers. Now, let
\begin{align*}
 e&=a^{2m}+r_1 a^{2m-1} b+\cdots+r_m a^m b^m,\quad\text{and}\\f&=r_{m+1}a^{m-1}b^{m+1}+\cdots+b^{2m}.
\end{align*}
$ef=0$ since $a^mb^m=b^ma^m=0$, and so $e=e(e+f)=e^2$. Finally, $ab\in I$ gives that $e\equiv a^{2m}\equiv a \;$(mod $I$).

\end{proof}

We now proceed to define some kinds of idempotents that will be used in the coming sections, and in order to define these properly, we shall have some idempotent-tools in hand.

\begin{proposition}\label{21.6}
Let $e,e'$ be idempotents, and $M$ be a right $R$-module. Then, as groups, $\mathrm{Hom}_R(eR,M)\cong Me$. In particular, $\mathrm{Hom}_R(eR,e'R)\cong e'Re$.

\end{proposition}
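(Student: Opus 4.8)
\textbf{Proof proposal for Proposition \ref{21.6}.}

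The plan is to construct explicit maps in both directions between $\mathrm{Hom}_R(eR, M)$ and $Me$ and check that they are mutually inverse group homomorphisms. First I would define $\Phi : \mathrm{Hom}_R(eR, M) \to Me$ by $\Phi(\varphi) = \varphi(e)$. One must first verify that this lands in $Me$: since $e = e^2 = e\cdot e$ and $\varphi$ is $R$-linear, $\varphi(e) = \varphi(e\cdot e) = \varphi(e)e \in Me$, so $\Phi$ is well-defined. Additivity of $\Phi$ is immediate from the pointwise addition of homomorphisms.

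Next I would define the candidate inverse $\Psi : Me \to \mathrm{Hom}_R(eR, M)$ by sending $m\in Me$ (so $m = me$) to the map $\psi_m : eR \to M$, $\psi_m(ex) = mx = mex$ for $x \in R$. The point to check here is that $\psi_m$ is well-defined and $R$-linear: if $ex = ex'$ then $m(ex) = (me)x = mex = mex' = m(ex')$ using $m = me$, so the value depends only on the element $ex$ of $eR$ and not on the chosen representative $x$; $R$-linearity on the right is then clear. Additivity of $\Psi$ in $m$ is routine.

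Then I would check $\Phi \circ \Psi = \mathrm{id}_{Me}$ and $\Psi \circ \Phi = \mathrm{id}_{\mathrm{Hom}_R(eR,M)}$. For the first: $\Phi(\Psi(m)) = \psi_m(e) = \psi_m(e\cdot 1) = me = m$ since $m\in Me$. For the second: given $\varphi\in\mathrm{Hom}_R(eR,M)$, the map $\Psi(\Phi(\varphi)) = \psi_{\varphi(e)}$ sends $ex \mapsto \varphi(e)x = \varphi(ex)$ by $R$-linearity of $\varphi$, so $\psi_{\varphi(e)} = \varphi$ as maps on $eR$ (every element of $eR$ has the form $ex$, indeed $ex = e(ex)$ so it suffices that the formula agree on such elements). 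This establishes the group isomorphism $\mathrm{Hom}_R(eR,M)\cong Me$.

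For the ``in particular'' claim, I would apply the general statement with $M = e'R$, viewed as a right $R$-module, giving $\mathrm{Hom}_R(eR, e'R)\cong (e'R)e = e'Re$. I do not anticipate a genuine obstacle here; the only mild subtlety worth stating carefully is the well-definedness of $\psi_m$ (that its value on $ex$ depends only on $ex$, which uses $me = m$), and I would make sure to spell that out rather than leave it to the reader.
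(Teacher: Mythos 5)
Your proposal is correct and follows essentially the same route as the paper: the map $\varphi\mapsto\varphi(e)$ with inverse $m\mapsto(er\mapsto mr)$, together with the observation $\varphi(e)=\varphi(e)e\in Me$; the paper merely phrases the inverse direction as a surjectivity check (using $er=0\Rightarrow mr\in Mer=0$ for well-definedness) rather than exhibiting the two-sided inverse explicitly. Your extra care about well-definedness of $\psi_m$ is exactly the point the paper also addresses, so there is nothing to add.
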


\begin{proof} Consider any $R$-homomorphism $\eta:eR\rightarrow M$, and set $m=\eta(e)$. Then $$me=\eta(e)e=\eta(e^2)=\eta(e)=m.$$Thus, $m\in Me$. Define $\lambda:\mathrm{Hom}_R(eR,M)\rightarrow Me$ by $\eta\mapsto \eta(e)$. It is clear that $\lambda$ is an injective group homomorphism. Now we show $\lambda$ is surjective. Let $m$ be any element in $Me$ and define $\eta:eR\rightarrow M$ by $\eta(er)=mr,\; r\in R$. $\eta$ is a well defined $R$-homomorphism since $er=0$ implies that $mr\in Mer=0$. Finally, $\lambda(\eta)=\eta(e)=m$, and $\lambda$ is surjective.

\end{proof}

\begin{corollary}\label{21.7} If $e\in R$ is any idempotent, then, $\mathrm{End}_R(eR)\cong eRe$, as rings.\end{corollary}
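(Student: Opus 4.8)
The statement to prove is Corollary~\ref{21.7}: if $e \in R$ is any idempotent, then $\mathrm{End}_R(eR) \cong eRe$ as rings.

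The plan is to invoke Proposition~\ref{21.6} in the special case $M = eR$, $e' = e$. That proposition already supplies a group isomorphism $\lambda : \mathrm{End}_R(eR) = \mathrm{Hom}_R(eR, eR) \to eRe$ given by $\eta \mapsto \eta(e)$. So the entire content of the corollary that is not already done is checking that this additive bijection respects multiplication, where the multiplication on $\mathrm{End}_R(eR)$ is composition of endomorphisms and the multiplication on $eRe$ is the ring multiplication inherited from $R$ (with identity $e$, as noted in the Peirce decomposition discussion).

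Concretely, I would take two endomorphisms $\eta, \zeta \in \mathrm{End}_R(eR)$, write $m = \lambda(\eta) = \eta(e)$ and $n = \lambda(\zeta) = \zeta(e)$, both lying in $eRe$, and compute $\lambda(\eta \circ \zeta) = (\eta \circ \zeta)(e) = \eta(\zeta(e)) = \eta(n)$. Since $n \in eRe \subseteq eR$ we may write $n = e n$ (as $n = en e$, in particular $n = en$), and then use that $\eta$ is a right $R$-module homomorphism: $\eta(n) = \eta(e \cdot n) = \eta(e) \cdot n = m n = \lambda(\eta)\lambda(\zeta)$. One should also remark that $\lambda$ sends the identity endomorphism $\mathrm{id}_{eR}$ to $e$, the identity of the ring $eRe$, so it is a unital ring homomorphism; combined with the bijectivity from Proposition~\ref{21.6} this gives the ring isomorphism.

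There is essentially no obstacle here — the only point requiring a moment's care is the bookkeeping of where $n$ lives: one must notice $n \in eR$ so that $\eta(n)$ makes sense and that $n = e \cdot n$ so the module-homomorphism property can be applied to pull $\eta$ past the scalar. Everything else is a direct consequence of Proposition~\ref{21.6}. I would therefore keep the write-up to a few lines, citing the proposition for the bijection and verifying multiplicativity as above.
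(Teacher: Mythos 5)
Your proposal is correct and follows exactly the paper's argument: apply Proposition \ref{21.6} with $e'=e$ to get the additive bijection $\lambda:\eta\mapsto\eta(e)$, then verify multiplicativity via $\lambda(\eta\circ\zeta)=\eta(\zeta(e))=\eta(e\,\zeta(e))=\eta(e)\zeta(e)$, using that $\zeta(e)\in eR$ and that $\eta$ is right $R$-linear. The extra remark that $\lambda$ sends $\mathrm{id}_{eR}$ to $e$ is a harmless (and welcome) addition the paper leaves implicit.
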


\begin{proof}First, by the previous proposition, setting $e'=e$, we have a group isomorphism $\lambda:\mathrm{End}_R(eR)\rightarrow eRe$. Further,  this is a ring isomorphism. Let $\eta,\eta'\in \mathrm{End}_R(eR)$ and notice that $\eta'(e)\in eR$. Then
$$\lambda(\eta\eta')=\eta\eta'(e)=\eta(\eta'(e))=\eta(e\eta'(e))=\eta(e)\eta'(e)=\lambda(\eta)\lambda(\eta').$$\end{proof}

Two idempotents $\alpha,\beta\in R$ are \emph{orthogonal} if $\alpha\beta=\beta\alpha=0$. To proceed, it is also necessary to make the following definitions.
\begin{definition} Let $R$ be any ring, and $M\neq0$ be any left (right) $R$-module.
\begin{enumerate}
\item[$(1)$] $M$ is said to be \emph{indecomposable} if $M$ cannot be written as the direct sum of two nonzero submodules.
\item[$(2)$] $M$ is said to be \emph{strongly indecomposable} if $\mathrm{End}_R(M)$ is a local ring.
\end{enumerate}
\end{definition}

The definition of strongly indecomposable modules may seem unnatural in this context. However, once we check easily that, $M$ is indecomposable if and only if the ring $\mathrm{End}_R(M)$ has no nontrivial idempotents, the definition is more than justified, since, in particular, local rings have no nontrivial idempotents.

\begin{proposition}\label{21.8}\textbf{(Primitive Idempotents)}. For any idempotent $e\in R$, the following are equivalent:
\begin{enumerate}
\item[$(1)$] $eR$ is an  indecomposable right $R$-module.
\item[$(1')$] $Re$ is an indecomposable left $R$-module.
\item[$(2)$] The ring $eRe$ has no nontrivial idempotents.
\item[$(3)$] $e$ has no decomposition into $\alpha+\beta$ where $\alpha,\beta$ are nonzero orthogonal idempotents in $R$.
\end{enumerate}

If the idempotent $e\neq0$ satisfies any of these conditions, $e$ is  said to be a primitive idempotent of $R$.
\end{proposition}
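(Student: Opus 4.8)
The plan is to establish the equivalence of the four conditions by a cycle of implications, using the idempotent machinery already developed (Corollary \ref{21.7} and the characterization of indecomposability in terms of idempotents of the endomorphism ring). First I would observe the symmetry between $(1)$ and $(1')$: by the left-hand analogue of Proposition \ref{21.6} one has $\mathrm{End}_R({}_RRe)\cong eRe$ as rings (the argument is the mirror image, using $\mathrm{Hom}_R(Re,M)\cong eM$ for a left module $M$), so both $(1)$ and $(1')$ are each equivalent to $(2)$ once we prove $(1)\Leftrightarrow(2)$, since the ring $eRe$ is the same on both sides.

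For $(1)\Leftrightarrow(2)$ I would invoke the remark made just before the proposition: a nonzero module $M$ is indecomposable if and only if $\mathrm{End}_R(M)$ has no nontrivial idempotents. Applying this to $M=eR$ and using Corollary \ref{21.7}, $\mathrm{End}_R(eR)\cong eRe$, the module $eR$ is indecomposable exactly when $eRe$ has no nontrivial idempotents. (One should check $eR\neq 0$, which holds since $e\neq 0$ is idempotent, hence $e=e^2\in eR$.) This gives $(1)\Leftrightarrow(2)$, and symmetrically $(1')\Leftrightarrow(2)$.

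It remains to link $(2)$ and $(3)$. For $(2)\Rightarrow(3)$, suppose $e=\alpha+\beta$ with $\alpha,\beta$ nonzero orthogonal idempotents; then I would verify $\alpha=\alpha e = e\alpha = e\alpha e \in eRe$ (using orthogonality: $\alpha\beta=\beta\alpha=0$, so $e\alpha=\alpha^2+\beta\alpha=\alpha$ and likewise $\alpha e=\alpha$), so $\alpha$ is a nonzero idempotent in $eRe$; it is nontrivial since $\alpha\neq e$ (as $\beta\neq 0$), contradicting $(2)$. For $(3)\Rightarrow(2)$, I would take a nontrivial idempotent $\alpha\in eRe$ and set $\beta=e-\alpha$; one checks $\beta$ is idempotent in $eRe$ (since $e$ is the identity of $eRe$ by the Peirce decomposition remark, $\beta^2=e-\alpha-\alpha+\alpha^2=e-\alpha=\beta$), that $\alpha,\beta$ are orthogonal ($\alpha\beta=\alpha e-\alpha^2=\alpha-\alpha=0$, similarly $\beta\alpha=0$), and that both are nonzero in $R$ ($\alpha\neq 0$ by hypothesis, $\beta\neq0$ since $\alpha\neq e$); then $e=\alpha+\beta$ is the forbidden decomposition, so $(3)$ fails — contrapositively, $(3)\Rightarrow(2)$.

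The routine bookkeeping here — confirming that elements of $eRe$ absorb $e$ on both sides and that $e$ serves as the identity of the corner ring $eRe$ — is where a little care is needed, but none of it is deep; the only genuinely substantive input is the already-proven isomorphism $\mathrm{End}_R(eR)\cong eRe$ together with its left-module counterpart. So I do not anticipate a real obstacle; the main point is simply to organize the two ``corner ring'' translations and the idempotent-splitting computation cleanly.
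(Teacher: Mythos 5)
Your proof is correct and takes essentially the same route as the paper: Corollary \ref{21.7} (reading indecomposability of $eR$ off the idempotents of $\mathrm{End}_R(eR)\cong eRe$) for $(1)\Leftrightarrow(2)$, left-right symmetry of the corner ring for $(1')$, and the same explicit computations ($e=\alpha+\beta$ giving $\alpha=\alpha e=e\alpha\in eRe$, and conversely $\beta=e-\alpha$) for $(2)\Leftrightarrow(3)$. Your additional bookkeeping (checking $eR\neq0$, $\beta^2=\beta$, orthogonality) only makes explicit what the paper leaves routine.
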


\begin{proof} The last two conditions are left-right symmetric, so it is enough to show that $(1)\Longleftrightarrow(2)\Longleftrightarrow(3)$. The first equivalence is apparent from Corollary \ref{21.7}, since $eR$ is indecomposable if and only if $\mathrm{End}_R(eR)$ has no nontrivial idempotents.\\
$(3)\Longrightarrow(2)$. If $eRe$ has a nontrivial idempotent $\alpha$, then the complementary idempotent to $\alpha$ in the ring $eRe$ is $\beta=e-\alpha$ and we have the orthogonal decomposition  $e=\alpha+\beta$, contradiction.\\
$(2)\Longrightarrow(3)$. Suppose we have the orthogonal decomposition $e=\alpha+\beta$ where $\alpha,\beta$ are nonzero. Then $e\alpha=\alpha^2+\beta\alpha\alpha=\alpha$ and $\alpha e=\alpha^2+\alpha\beta=\alpha$. Thus, $\alpha=\alpha e=(e\alpha)e\in eRe$, and of course $\alpha$ is nontrivial ($\alpha\neq0$ and $\alpha\beta=0$), contradiction.

\end{proof}

\begin{proposition}\label{21.9}\textbf{(Local Idempotents)}. For any idempotent $e\in R$, the following are equivalent:
\begin{enumerate}
\item[$(1)$] $eR$ is a strongly indecomposable right $R$-module.
\item[$(1')$] $Re$ is a strongly indecomposable left $R$-module.
\item[$(2)$]  $eRe$ is a local ring.
\end{enumerate}

If the idempotent $e\neq0$ satisfies any of these conditions, it is called a local idempotent of $R$ (clearly, a local idempotent is primitive).
\end{proposition}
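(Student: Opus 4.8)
The plan is to reduce everything to Corollary~\ref{21.7} and its left-handed mirror, together with the left--right symmetry of localness. Since the definition of a strongly indecomposable module is phrased entirely in terms of its endomorphism ring being local, the equivalence $(1)\Longleftrightarrow(2)$ should fall out immediately: Corollary~\ref{21.7} supplies a \emph{ring} isomorphism $\mathrm{End}_R(eR)\cong eRe$, so $eR$ is strongly indecomposable precisely when $eRe$ is local. No work beyond quoting the corollary is needed here.

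For $(1')\Longleftrightarrow(2)$ I would first record the left-module analogue of Proposition~\ref{21.6}: for a left $R$-module $M$ one has $\mathrm{Hom}_R(Re,M)\cong eM$ as groups via $\eta\mapsto\eta(e)$, by the same one-line argument (note $e\in Re$, and if $m=\eta(e)$ then $em=e\eta(e)=\eta(e^2)=m$, so $m\in eM$). Taking $M=Re$ gives a group isomorphism $\mathrm{End}_R(Re)\cong eRe$; running the multiplicativity check as in Corollary~\ref{21.7}, but now paying attention to the order of composition, shows this map is a ring \emph{anti}-isomorphism, i.e. $\mathrm{End}_R(Re)\cong (eRe)^{\mathrm{op}}$ as rings. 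I would then invoke the fact recorded when local rings were introduced --- that a ring has a unique maximal left ideal if and only if it has a unique maximal right ideal, equivalently that $R/\mathrm{rad}R$ is a division ring --- to conclude that $eRe$ is local iff $(eRe)^{\mathrm{op}}$ is local. Chaining these, $Re$ is strongly indecomposable $\iff \mathrm{End}_R(Re)\cong(eRe)^{\mathrm{op}}$ is local $\iff eRe$ is local $\iff eR$ is strongly indecomposable, which closes the cycle. The parenthetical claim that a local idempotent is primitive then follows at once from Proposition~\ref{21.8}, since a local ring (being a division ring modulo its radical) has no nontrivial idempotents.

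The only genuine obstacle is bookkeeping rather than mathematics: getting the opposite-ring twist right in the left-handed endomorphism computation, and making sure the appeal to the left--right symmetry of localness is legitimate --- it is, being exactly the content of the fact quoted when local rings were defined. Everything else is a transcription of Corollary~\ref{21.7} and Proposition~\ref{21.6}.
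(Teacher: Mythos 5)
Your proposal is correct and follows essentially the same route as the paper, which deduces $(1)\Longleftrightarrow(2)$ from Corollary~\ref{21.7} and then gets $(1')\Longleftrightarrow(2)$ by left--right symmetry. Your extra care in spelling out that symmetry --- the left-handed analogue of Proposition~\ref{21.6}, the anti-isomorphism $\mathrm{End}_R(Re)\cong(eRe)^{\mathrm{op}}$, and the fact that a ring is local iff its opposite is --- simply makes explicit what the paper leaves implicit.
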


\begin{proof}
$(1)\Longleftrightarrow(2)$ follows from Corollary \ref{21.7}.  $(1')\Longleftrightarrow(2)$ then  follows from left-right symmetry.
\end{proof}

\begin{theorem}\label{21.10} Let $J=\mathrm{rad}R$ and $\overline{R}=R/J$. Then for any idempotent $e\in R$, $\mathrm{rad}(eRe)=J\cap(eRe)=eJe$. Moreover, $eRe/\mathrm{rad}(eRe)\cong \overline{e}\;\overline{R}\;\overline{e}$.
\end{theorem}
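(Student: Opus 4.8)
The plan is to prove the chain of identities $\mathrm{rad}(eRe)=J\cap(eRe)=eJe$ first, and then to read off the isomorphism from the first isomorphism theorem. Throughout, $eRe$ is viewed as a ring with identity $e$, so that $ew=we=w$ for every $w\in eRe$. The rightmost identity is the cheap one: $eJe\subseteq J$ because $J$ is an ideal, and $eJe\subseteq eRe$ is obvious, while conversely any $x\in J\cap(eRe)$ satisfies $x=exe\in eJe$.

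For the inclusion $eJe\subseteq\mathrm{rad}(eRe)$ I would apply the radical characterization of Lemma~\ref{4.1lam} inside the ring $eRe$: it suffices that for every $y\in eJe$ and every $a\in eRe$ the element $e-ay$ be left invertible in $eRe$. Set $w:=ay$; then $w\in(eRe)\cap J=eJe$, and since $w\in\mathrm{rad}R$ the element $1-w$ is a unit of $R$, say with inverse $u$. Using $ew=we=w$ and $e^2=e$ one checks directly that $(e-w)(eue)=e=(eue)(e-w)$, so $e-ay$ is in fact a unit of $eRe$, which is more than enough.

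The reverse inclusion $\mathrm{rad}(eRe)\subseteq eJe$ --- equivalently $\mathrm{rad}(eRe)\subseteq J$, since $\mathrm{rad}(eRe)\subseteq eRe$ already --- is the delicate step, and I would establish it via the annihilator description of the radical (Lemma~\ref{4.1lam}(3)) rather than by a naive transfer of invertibility (the latter tends to deliver only $s(1-ry)=e$ in $R$ instead of $1$, the ``$(1-e)$-part'' of $1-ry$ refusing to be absorbed because $ry$ need not lie in $eR$). The auxiliary fact I would prove is that for every simple left $R$-module $M$ the group $eM$ is a left $eRe$-module which is either $0$ or simple: if $0\neq v=em\in eM$ then $Rv=M$ by simplicity, whence $eM=e(Rv)=(eRe)v$, using the identity $(er)v=(ere)v$ which holds because $ev=v$. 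Now take $y\in\mathrm{rad}(eRe)$; it annihilates every simple $eRe$-module and hence every such $eM$, and since $y=ye$ we get, for an arbitrary simple left $R$-module $M$ and any $m\in M$, that $ym=y(em)\in y\cdot(eM)=0$. Thus $y$ annihilates every simple left $R$-module, so $y\in\mathrm{rad}R=J$.

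With $\mathrm{rad}(eRe)=eJe=J\cap(eRe)$ in hand, I would finish by restricting the canonical surjection $R\to\overline{R}=R/J$ to $eRe$: this is a ring homomorphism $eRe\to\overline{R}$ sending $ere$ to $\overline{e}\,\overline{r}\,\overline{e}$, so its image is precisely $\overline{e}\,\overline{R}\,\overline{e}$ and its kernel is $eRe\cap J=\mathrm{rad}(eRe)$; the first isomorphism theorem then yields $eRe/\mathrm{rad}(eRe)\cong\overline{e}\,\overline{R}\,\overline{e}$. As indicated above, the one genuinely non-routine step is $\mathrm{rad}(eRe)\subseteq J$, which is why I route it through the simple-or-zero behaviour of the modules $eM$ instead of a direct computation.
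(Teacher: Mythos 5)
Your proof is correct, and it diverges from the paper's at exactly the step you flag as delicate. For $eJe\subseteq\mathrm{rad}(eRe)$ and for the final isomorphism (restricting $R\to\overline{R}$ to $eRe$, kernel $eRe\cap J$, first isomorphism theorem) you and the paper do essentially the same thing; your verification that $eue$ is a two-sided inverse of $e-w$ in $eRe$ checks out (one also uses $u(1-w)=(1-w)u=1$ to get $eu(1-e)=0=(1-e)ue$), and it even gives a unit where only left invertibility is needed. The real difference is the inclusion $\mathrm{rad}(eRe)\subseteq J$: you route it through the annihilator description of the radical, proving that for each simple left $R$-module $M$ the corner $eM$ is zero or a simple $eRe$-module (via $eM=(eRe)v$ for $0\neq v=em$) and then using $y=ye$ to conclude $yM=0$; this is clean, conceptual, and yields a reusable fact about the modules $eM$, at the cost of invoking Lemma \ref{4.1lam}(3). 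The paper instead does the ``naive transfer'' you distrust and shows it can be completed: from a left inverse $b$ of $e-exe\,r$ in $eRe$ it gets $b(1-xr)=e$, and the stray $(1-e)$-part is absorbed by the bootstrap $xrb(1-xr)=xre=xr$, giving the explicit left inverse $(1+xrb)(1-xr)=1$ in $R$. So the paper's argument is purely element-wise and self-contained, while yours trades that computation for the module-theoretic characterization; both are complete proofs of the statement.
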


\begin{proof}Suppose that $r\in \mathrm{rad}(eRe)$. We show that $r\in J$ by showing that, for any $x\in R$, $1-xr$ has a left inverse in $R$ (Lemma \ref{4.1lam}). In the ring $eRe$, the element $(e-exe \cdot r)$ has a left inverse, $b$, say. Thus, $e=b(e-exe \cdot r)=be(1-xe\cdot r)=b(1-xr)$. Then, $$xrb(1-xr)=xre=xr.$$ Adding $1-xr$, we get $(1+xrb)(1-xr)=1$. Therefore, $r\in J\cap eRe$. Moreover, $r=ere\in eJe$.

Conversely, if $r\in eJe$, we show that for any $y\in eRe$, $e-yr$ is left invertible in $eRe$. Since $r\in eJe\subset J$, there is $x\in R$ such that $x(1-yr)=1$. then $$e=ex(1-yr)e=ex(e-yr)=exe\cdot(e-yr),$$ which is the desired result. Thus, $\mathrm{rad}(eRe)=J\cap(eRe)=eJe$.
For the other claim, the map $ere\mapsto \overline{e}\;\overline{r}\;\overline{e} $ is a well defined ring  epimorphism from $eRe$ onto $\overline{e}\;\overline{R}\;\overline{e}$. Clearly, the map vanishes on $eJe$. Also, if $\overline{e}\;\overline{r}\;\overline{e}=0$, this implies that $ere\in J\cap eRe=eJe$. Thus, the kernel is exactly $eJe$, and the map induces an isomorphism $eRe/eJe\cong \overline{e}\;\overline{R}\;\overline{e}$.

 \end{proof}

\begin{proposition}\label{21.18}Adapting the same notation as above, the following are equivalent.
\begin{enumerate}
\item[$(1)$] $e$ is a local idempotent.
\item[$(2)$] $\overline{e}\overline{R}$ is a minimal right ideal in $\overline{R}$  (simple $\overline{R}$-module).
\item[$(3)$] $eR/eJ$ is a simple right $R$-module.
\end{enumerate}
\end{proposition}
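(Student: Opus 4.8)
\noindent The plan is to prove $(2)\Leftrightarrow(3)$ by an explicit module identification, $(3)\Rightarrow(1)$ via Schur's Lemma, and $(1)\Rightarrow(3)$ by a short counting argument inside $\mathrm{End}_R(eR)$; the last implication is the only one needing real work. For $(2)\Leftrightarrow(3)$: the assignment $er\mapsto\overline{e}\,\overline{r}$ is a surjection of right $R$-modules $eR\to\overline{e}\,\overline{R}$ with kernel $\{er:er\in J\}$, and this kernel is exactly $eJ$ since $er\in J$ forces $er=e(er)\in eJ$; hence $eR/eJ\cong\overline{e}\,\overline{R}$. Because $J$ is a two-sided ideal we have $eRJ\subseteq eJ$, so $(eR/eJ)J=0$ and $eR/eJ$ is in fact an $\overline{R}=R/J$-module. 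As $R$ and $\overline{R}$ have the same simple modules (Proposition \ref{rad andsim}), $eR/eJ$ is simple over $R$ iff it is simple over $\overline{R}$, i.e. iff the right ideal $\overline{e}\,\overline{R}$ is minimal in $\overline{R}$; this is $(2)\Leftrightarrow(3)$.

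For $(3)\Rightarrow(1)$: if $eR/eJ\cong\overline{e}\,\overline{R}$ is simple, its $R$-endomorphism ring is a division ring by Schur's Lemma. Every $R$-endomorphism of a module annihilated by $J$ is automatically $\overline{R}$-linear, so this ring is $\mathrm{End}_{\overline{R}}(\overline{e}\,\overline{R})\cong\overline{e}\,\overline{R}\,\overline{e}$ (Corollary \ref{21.7} over the ring $\overline{R}$), and by Theorem \ref{21.10} that is $eRe/\mathrm{rad}(eRe)$. A ring whose quotient by its Jacobson radical is a division ring is local, so $eRe$ is local, i.e. $e$ is a local idempotent.

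The hard direction is $(1)\Rightarrow(3)$. I would first record: $eR$ is projective (a direct summand of $R_R$); $\mathrm{End}_R(eR)\cong eRe$ (Corollary \ref{21.7}); and $\mathrm{rad}(eR)=eJ$ (the right-module version of Proposition \ref{radmod}(6), or from $\mathrm{rad}(R_R)=\mathrm{rad}(eR)\oplus\mathrm{rad}((1-e)R)$). It then suffices to show $eR$ has a \emph{unique} maximal submodule, for that submodule is $\mathrm{rad}(eR)=eJ$, making $eR/eJ$ simple. Suppose instead $N_1\neq N_2$ are maximal submodules of $eR$. Then $N_1+N_2=eR$, so there is a surjection $q\colon eR\to eR/N_1\times eR/N_2$. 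For $i=1,2$, composing an endomorphism of $eR$ with the $i$-th quotient map gives, by projectivity of $eR$, a surjection $eRe=\mathrm{End}_R(eR)\to\mathrm{Hom}_R(eR,eR/N_i)$ whose kernel $I_i=\{\phi:\mathrm{Im}\,\phi\subseteq N_i\}$ is a \emph{right} ideal of $eRe$ (because $\mathrm{Im}(\phi\psi)\subseteq\mathrm{Im}\,\phi$) and is proper (as $\mathrm{id}_{eR}\notin I_i$); since $eRe$ is local, $I_i\subseteq\mathrm{rad}(eRe)$. Projectivity of $eR$ applied to $q$ shows the induced map $eRe\to eRe/I_1\times eRe/I_2$, $\phi\mapsto(\phi+I_1,\phi+I_2)$, is surjective, so some $\psi$ satisfies $1-\psi\in I_1$ and $\psi\in I_2$; then $1\in I_1+I_2\subseteq\mathrm{rad}(eRe)$, which is impossible. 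Hence $eR$ has at most one maximal submodule, and being nonzero and cyclic it has exactly one.

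I expect the single delicate point to be recognising that $I_i$ is a \emph{right} ideal of $eRe$ and not a left one: an endomorphism of $eR$ need not send the submodule $N_i$ into itself, so the naive left-ideal argument breaks down, and one must invoke $\mathrm{Im}(\phi\psi)\subseteq\mathrm{Im}\,\phi$ instead. The remaining steps are routine applications of Corollary \ref{21.7}, Theorem \ref{21.10}, Proposition \ref{radmod}, the projectivity of $eR$, and the fact that a ring is local once its radical quotient is a division ring.
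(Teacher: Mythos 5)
Your proof is correct. The parts $(2)\Leftrightarrow(3)$ and $(3)\Rightarrow(1)$ run along the same lines as the paper: the isomorphism $eR/eJ\cong\overline{e}\,\overline{R}$, then Schur's lemma together with Corollary \ref{21.7} and Theorem \ref{21.10} to conclude that $eRe/\mathrm{rad}(eRe)$ is a division ring, hence $eRe$ is local. Where you genuinely diverge is the hard direction. The paper obtains $(1)\Leftrightarrow(2)$ in one stroke by asserting that $\overline{e}\,\overline{R}$ is simple if and only if $\mathrm{End}_{\overline{R}}(\overline{e}\,\overline{R})\cong\overline{e}\,\overline{R}\,\overline{e}$ is a division ring; the forward half is Schur's lemma, but the converse half is not, and the text does not justify it. (It does hold because $\overline{R}$ is semiprimitive: if $0\neq\bar{x}\in\overline{e}\,\overline{R}$ had $\bar{x}\overline{R}\,\overline{e}=0$, then since $\bar{x}=\overline{e}\,\bar{x}$ one gets $(\bar{x}\overline{R})^2=0$, a nonzero nilpotent right ideal in a ring with zero radical; so some $\bar{x}\bar{r}\overline{e}$ is a nonzero, hence invertible, element of the division ring $\overline{e}\,\overline{R}\,\overline{e}$ and $\bar{x}\overline{R}=\overline{e}\,\overline{R}$. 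At the point where the proposition is actually applied, Proposition \ref{23.5}, $\overline{R}$ is even semisimple and the claim is immediate.) Your $(1)\Rightarrow(3)$ sidesteps this issue entirely: you show $eR$ has a unique maximal submodule using only the projectivity of $eR$, the identification $\mathrm{End}_R(eR)\cong eRe$ from Corollary \ref{21.7}, the computation $\mathrm{rad}(eR)=eJ$ (the right-hand analogue of Proposition \ref{radmod}), and the fact that a proper one-sided ideal of the local ring $eRe$ lies in $\mathrm{rad}(eRe)$; your observation that the kernels $I_i$ are right, not left, ideals of $\mathrm{End}_R(eR)$ is indeed the delicate point, and you handle it correctly. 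The trade-off: the paper's route is shorter but tacitly relies on the converse of Schur's lemma over $\overline{R}$, while yours is somewhat longer, fully self-contained in the results already proved in the chapter, and valid verbatim over an arbitrary ring $R$.
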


\begin{proof}
First, $\overline{e}\overline{R}$ is a simple right  $\overline{R}$-module if and only if $\mathrm{End}_{\overline{R}\;}(\overline{e}\overline{R})$ is a division ring, which is equivalent, by Corollary \ref{21.7}, to that $\overline{e}\;\overline{R}\;\overline{e}$ is a division ring. Now, from Theorem \ref{21.10}, $$eRe/\mathrm{rad}(eRe)\cong \overline{e}\;\overline{R}\;\overline{e},$$ hence $ \overline{e}\;\overline{R}\;\overline{e}$ is a division ring if and only if $eRe$ is a local ring. This gives $(1)\Longleftrightarrow(2)$.\\
For $(2)\Longleftrightarrow(3)$, we have an isomorphism  $\lambda:eR/eJ\rightarrow \overline{e}\overline{R}$ of right $\overline{R}$-modules.

\end{proof}

\begin{proposition}\label{21.20}\textbf{(Isomorphic Idempotents)}. The following are equivalent for any two idempotents $e,f\in R$.
\begin{enumerate}
\item[$(1)$] $eR\cong fR$ as right $R$-modules.
\item[$(1')$] $Re\cong Rf$ as left $R$-modules.
\item[$(2)$] There exist   $a\in eRf$ and $b\in fRe$ such that $e=ab$ and $f=ba$.
\item[$(3)$]  There exist   $a,b\in R$ such that $e=ab$ and $f=ba$.
\end{enumerate}
If two idempotents $e$ and $f$ satisfy any of these conditions, we say that they are isomorphic idempotents, written $e \cong f$.
\end{proposition}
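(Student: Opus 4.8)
The plan is to route the module isomorphisms in $(1)$ and $(1')$ through Peirce corners using Proposition \ref{21.6}, which identifies $\mathrm{Hom}_R(eR,fR)$ with $fRe$: concretely, every $R$-linear map $eR\to fR$ is left multiplication $x\mapsto bx$ by a unique $b\in fRe$, and every $R$-linear map $fR\to eR$ is $y\mapsto ay$ for a unique $a\in eRf$. Under this dictionary (combined with Corollary \ref{21.7}), composition of maps corresponds to multiplication of the corresponding corner elements, so an isomorphism pair becomes exactly a pair $a\in eRf$, $b\in fRe$ with $ab=e$ and $ba=f$.

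For $(1)\Rightarrow(2)$ I would take mutually inverse isomorphisms $\varphi:eR\to fR$, $\psi:fR\to eR$, put $b=\varphi(e)\in fRe$ and $a=\psi(f)\in eRf$, and read off $ab=\psi(\varphi(e))=e$ and $ba=\varphi(\psi(f))=f$ from $\psi\varphi=\mathrm{id}_{eR}$, $\varphi\psi=\mathrm{id}_{fR}$. For $(2)\Rightarrow(1)$, conversely, the maps $x\mapsto bx$ and $y\mapsto ay$ visibly land in $fR$ and $eR$ (since $b=fb$, $a=ea$) and are mutually inverse because $a(bx)=ex=x$ on $eR$ and $b(ay)=fy=y$ on $fR$. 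The implication $(2)\Rightarrow(3)$ is trivial.

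The one implication that needs a small idea is $(3)\Rightarrow(2)$. Given $a,b\in R$ with $e=ab$, $f=ba$, one first observes the normalization identities $ea=(ab)a=a(ba)=af$ and, symmetrically, $fb=be$. These say that the ``corrected'' elements $a_0:=eaf$ and $b_0:=fbe$ simplify to $a_0=af=ea\in eRf$ and $b_0=be=fb\in fRe$, and then $a_0b_0=afb=(ab)(ab)=e$ and $b_0a_0=bea=(ba)(ba)=f$, which is $(2)$. Finally $(1)\Leftrightarrow(1')$ is pure left--right symmetry: condition $(3)$ only mentions multiplication in $R$, hence is insensitive to passing to $R^{\mathrm{op}}$, while the left $R$-module $Re$ is precisely the right $R^{\mathrm{op}}$-module $eR^{\mathrm{op}}$; applying the already established equivalence $(1)\Leftrightarrow(3)$ inside $R^{\mathrm{op}}$ yields $(1')\Leftrightarrow(3)$, and the cycle closes. (Alternatively one could run the argument of $(1)\Leftrightarrow(2)$ verbatim with the left-module analogue of Proposition \ref{21.6}, $\mathrm{Hom}_R(Re,Rf)\cong eRf$, but the symmetry route is cleaner.)

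I expect the only real obstacle to be the bookkeeping in $(3)\Rightarrow(2)$ --- specifically recognizing that the single pair of identities $ea=af$, $fb=be$ is exactly what forces both $eaf$ and $fbe$ into the correct Peirce corners while still reproducing $e$ and $f$; everything else is a mechanical transcription through Proposition \ref{21.6}.
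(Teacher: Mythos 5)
Your proof is correct and follows essentially the same route as the paper's: the implication $(1)\Rightarrow(2)$ via $b=\varphi(e)$, $a=\psi(f)$ and the left--right symmetry argument for $(1')$ are exactly the paper's, and the underlying computations (e.g.\ the identities $be=fb$, $ea=af$) coincide. The only cosmetic difference is that you close the cycle by $(3)\Rightarrow(2)$ through the normalized elements $eaf\in eRf$, $fbe\in fRe$, whereas the paper goes $(3)\Rightarrow(1)$ directly, defining $\theta(er)=ber$ and $\theta'(fr)=afr$ with the unnormalized $a,b$; both versions are equally valid.
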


\begin{proof}The last two conditions are left-right  symmetric, so it is enough to prove that $(1)\Longrightarrow(2)\Longrightarrow(3)\Longrightarrow(1)$.\\
$(1)\Longrightarrow(2)$. Let $\theta: eR\rightarrow fR$ be an isomorphism. Taking $b=\theta(e)=\theta(e)e\in fRe$ and $a=\theta^{-1}(f)=\theta^{-1}(f)f\in eRf$, we have $$ab=\theta^{-1}(f)b=\theta^{-1}(fb)=\theta^{-1}(b)=\theta^{-1}(\theta(e))=e,$$ and similarly, $ba=f$.\\
$(2)\Longrightarrow(3)$. Nothing to prove.\\
$(3)\Longrightarrow(1)$. Given such $a$ an $b$, define $\theta: eR\rightarrow fR$ and $\theta': fR\rightarrow eR$ by $\theta(er)=ber =(ba)br=fbr\in fR$, and $\theta'(fr)=afr=(ab)ar=ear\in eR$. Then
\begin{align*}
\theta'\theta(e)&=\theta'(be)=abe=e^2=e,\\ \theta\theta'(f)&=\theta(af)=baf=f^2=f.
\end{align*}
Hence, $\theta'\theta=1$ and $\theta\theta'=1$, and $\theta$ is an R-isomorphism.

\end{proof}

Notice that, in a commutative ring, $e\cong f$ if and only if $e=f$. Another thing is the following. If $e'=1-e$ is the complementary idempotent to the idempotent $e$, we have the decomposition $R=Re\oplus Re'$, so $P=Re$ is a projective left $R$-module. For any ideal $I\subset R$, putting $\overline{R}=R/I$, it is easily verified that $$P/IP=Re/Ie\cong \overline{R}\;\overline{e}$$ as  $\overline{R}$-modules. Now, Proposition \ref{19.27} results in the following.
\begin{proposition}\label{21.21} Let $I\subset\mathrm{rad}R$ be an ideal. Then for any two idempotents $e,f\in R$, $e\cong f$ if and only if $\;\overline{e}\cong\overline{f}$ in $\overline{R}$.\end{proposition}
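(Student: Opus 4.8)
The plan is to reduce the statement to Proposition \ref{19.27} by passing from idempotents to the cyclic projective modules they cut out, using Proposition \ref{21.20} on both sides of the quotient map $R\to\overline{R}$. Recall from Proposition \ref{21.20} that for idempotents $e,f\in R$ one has $e\cong f$ if and only if $Re\cong Rf$ as left $R$-modules; the same proposition, applied inside $\overline{R}$ to the idempotents $\overline{e},\overline{f}$, gives $\overline{e}\cong\overline{f}$ in $\overline{R}$ if and only if $\overline{R}\,\overline{e}\cong\overline{R}\,\overline{f}$ as left $\overline{R}$-modules. So it suffices to show that $Re\cong Rf$ as $R$-modules precisely when $\overline{R}\,\overline{e}\cong\overline{R}\,\overline{f}$ as $\overline{R}$-modules.

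First I would note that $Re$ is a finitely generated projective left $R$-module, since $R=Re\oplus R(1-e)$ exhibits it as a direct summand of ${}_RR$; likewise $Rf$. Next, as recorded just before the statement, there is an isomorphism of $\overline{R}$-modules $Re/I(Re)=Re/Ie\cong\overline{R}\,\overline{e}$, induced by $re+Ie\mapsto\overline{r}\,\overline{e}$ (well-definedness and injectivity use $(re)e=re$, so that $re\in I$ forces $re\in Ie$); similarly $Rf/I(Rf)\cong\overline{R}\,\overline{f}$. Now apply Proposition \ref{19.27} with $P=Re$ and $Q=Rf$, which is legitimate since $I\subseteq\mathrm{rad}R$: it yields $Re\cong Rf$ as $R$-modules if and only if $Re/IRe\cong Rf/IRf$ as $\overline{R}$-modules, i.e. if and only if $\overline{R}\,\overline{e}\cong\overline{R}\,\overline{f}$. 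Chaining the three equivalences gives $e\cong f\iff\overline{e}\cong\overline{f}$.

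There is essentially no obstacle beyond bookkeeping: all the real content sits in Proposition \ref{19.27} (whose proof in turn rests on Nakayama's Lemma, available here because $I\subseteq\mathrm{rad}R$), and the only points requiring a moment's care are the identification $Re/Ie\cong\overline{R}\,\overline{e}$ together with the parallel invocation of Proposition \ref{21.20} on both sides of the quotient map. One should also observe that $\overline{e}$ and $\overline{f}$ are genuine idempotents of $\overline{R}$, so that Proposition \ref{21.20} is indeed applicable there.
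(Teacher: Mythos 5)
Your proposal is correct and follows essentially the same route as the paper: the text establishes that $Re$ is projective with $Re/Ie\cong\overline{R}\,\overline{e}$ and then deduces the proposition directly from Proposition \ref{19.27}, with Proposition \ref{21.20} supplying the translation between isomorphism of idempotents and isomorphism of the modules $Re$, $\overline{R}\,\overline{e}$. Your added checks (well-definedness and injectivity of $re+Ie\mapsto\overline{r}\,\overline{e}$ via $(re)e=re$) are exactly the "easily verified" details the paper leaves implicit.
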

The next proposition  treats the problem of lifting primitive idempotents, but before getting into it, the following  lemma is needed.

\begin{lemma}\label{neednow}Let $I\subset\mathrm{rad}R$ be an ideal and $\alpha,\beta$ be idempotents in $R$ such that $\alpha\beta\equiv\beta\alpha\equiv0\; \mathrm{(mod\;} I\mathrm{)}$ (i.e. $\overline{\alpha}$ and $\overline{\beta}$ are orthogonal in $\overline{R}$). There exists an idempotent $\beta'\in R$ orthogonal to $\alpha$ such that $\beta'\equiv\beta\; \mathrm{(mod\;} I\mathrm{)}$. \end{lemma}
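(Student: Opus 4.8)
The plan is to produce $\beta'$ \emph{inside} the corner ring $eRe$, where $e:=1-\alpha$; then orthogonality to $\alpha$ comes for free, since $\alpha e=e\alpha=0$. The one real difficulty is manufacturing a genuine idempotent (not merely one that is idempotent modulo $I$) without assuming that idempotents lift modulo $I$ — which would force $I$ to be nil. The trick that sidesteps this is to exploit the idempotent $\beta$ we already have: the element $\beta e\beta$ will turn out to be an honest \emph{unit} of the corner ring $\beta R\beta$, and that is enough to build the idempotent by a purely formal manipulation.

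Concretely, first I would set $e=1-\alpha$ and consider $\nu:=\beta e\beta$, an element of the ring $T:=\beta R\beta$ whose identity is $\beta$. A one-line computation gives $\nu=\beta e\beta=\beta-\beta\alpha\beta$. Since $\beta\alpha\in I$ and $I$ is an ideal, $\beta\alpha\beta\in I\cap\beta R\beta$, which by Theorem \ref{21.10} lies in $\mathrm{rad}(\beta R\beta)$; hence $\nu=1_T-\beta\alpha\beta$ is a unit of $T$, and I let $\nu^{-1}$ be its inverse there.

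Next I would define $\beta':=e\nu^{-1}e\in eRe$, so that $\alpha\beta'=\beta'\alpha=0$ automatically. For idempotency, rewrite $\beta'=(e\beta)\,\nu^{-1}\,(\beta e)$ (legitimate because $\beta$ is the identity of $T$ and $\nu^{-1}\in T$, so $\beta\nu^{-1}\beta=\nu^{-1}$) and put $a=e\beta$, $b=\beta e$. Then $ba=\beta e^2\beta=\beta e\beta=\nu$, so $\beta'^{\,2}=a\nu^{-1}(ba)\nu^{-1}b=a\nu^{-1}\nu\nu^{-1}b=a\nu^{-1}b=\beta'$. Finally, to check $\beta'\equiv\beta\pmod I$: one has $e\beta=\beta-\alpha\beta\equiv\beta$ and $\beta e=\beta-\beta\alpha\equiv\beta$ (using $\alpha\beta,\beta\alpha\in I$), while $\nu^{-1}-\beta=\nu^{-1}(\beta-\nu)=\nu^{-1}(\beta\alpha\beta)\in I$; multiplying, $\beta'=(e\beta)\nu^{-1}(\beta e)\equiv\beta\cdot\beta\cdot\beta=\beta\pmod I$, as required.

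The step I expect to be the crux — and the one a reader might think requires $I$ nil — is exactly the passage from "idempotent mod $I$" to "idempotent"; the resolution is the observation above that $\nu=\beta e\beta$ already differs from the identity of $\beta R\beta$ by an element of $\mathrm{rad}(\beta R\beta)$, so it is invertible there and the formula $a\nu^{-1}b$ delivers an honest idempotent. Everything else is routine computation in the Peirce corners, and the only external input needed is Theorem \ref{21.10} identifying the radical of a corner ring.
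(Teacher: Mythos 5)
Your proof is correct, but it takes a genuinely different route from the paper's. The paper never leaves $R$: since $\beta\alpha\in I\subset\mathrm{rad}R$, the element $1-\beta\alpha$ is a unit, and conjugation $\beta_0=(1-\beta\alpha)^{-1}\beta(1-\beta\alpha)$ yields an idempotent congruent to $\beta$ with $\beta_0\alpha=0$; the remaining defect ($\alpha\beta_0$ need not vanish) is repaired by setting $\beta'=(1-\alpha)\beta_0$, and idempotency, orthogonality and $\beta'\equiv\beta\ (\mathrm{mod}\ I)$ are then one-line computations, the only external input being that $1-x$ is a unit for $x\in\mathrm{rad}R$ (Lemma \ref{4.1lam}). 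You instead build $\beta'$ inside the Peirce corner $eRe$ with $e=1-\alpha$, so both orthogonality conditions hold by construction, and you manufacture the idempotent from the unit $\nu=\beta e\beta=\beta-\beta\alpha\beta$ of the corner ring $\beta R\beta$, whose invertibility needs Theorem \ref{21.10} to identify $I\cap\beta R\beta\subset\mathrm{rad}(\beta R\beta)$; the formula $\beta'=(e\beta)\nu^{-1}(\beta e)$ together with $(\beta e)(e\beta)=\nu$ gives idempotency formally, and $\beta'\equiv\beta\ (\mathrm{mod}\ I)$ follows since $e\beta$, $\beta e$ and $\nu^{-1}$ are each congruent to $\beta$ modulo the two-sided ideal $I$ (your identity $\nu^{-1}-\beta=\nu^{-1}(\beta\alpha\beta)$ is the right observation). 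What each approach buys: the paper's argument is shorter and uses less machinery; yours avoids any after-the-fact correction step and, as a bonus, exhibits $\beta'\cong\beta$ as idempotents in the sense of Proposition \ref{21.20} (with $a=e\beta$ and $b=\nu^{-1}\beta e$ one has $ab=\beta'$ and $ba=\beta$), a fact the paper only extracts later, via Proposition \ref{21.21}, in the proof of Proposition \ref{21.22}.
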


\begin{proof}First, $\beta\alpha\in I\subset\mathrm{rad}R$ implies that $1-\beta\alpha$ is a unit. Consider the idempotent $$\beta_0=(1-\beta\alpha)^{-1}\beta(1-\beta\alpha),$$
and check easily that $\overline{\beta}_0=\overline{\beta}$, and$$\beta_0\alpha=(1-\beta\alpha)^{-1}\beta(\alpha-\beta\alpha)=0.$$
The only trouble is that $\alpha\beta_0$ may not be zero. To overcome this, let $\beta'=(1-\alpha)\beta_0$. Since $\overline{\alpha\beta_0}=\overline{\alpha}\overline{\beta}=0$, we have $\overline{\beta'}=\overline{\beta}_0=\overline{\beta}$. Then, $\beta'\alpha=(1-\alpha)\beta_0\alpha=0$, and $\alpha\beta'=\alpha(1-\alpha)\beta_0=0$. Finally, $\beta'$ is an idempotent since $\beta^{'2}=(1-\alpha)\beta_0(1-\alpha)\beta_0=(1-\alpha)\beta_0^2=\beta'$.
 \end{proof}

\begin{proposition}\label{21.22}Let $I\subset\mathrm{rad}R$ be an ideal and $e\in R$ be an  idempotent. If $\overline{e}$ is primitive in $\overline{R}=R/I$, then $e$ is primitive. The converse holds if idempotents of $\overline{R}$ can be lifted to $R$.\end{proposition}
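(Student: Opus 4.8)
The plan is to work through the characterization of primitive idempotents by orthogonal decompositions (Proposition~\ref{21.8}), exploiting repeatedly that $\mathrm{rad}R$ — hence $I$ — contains no nonzero idempotent (if $\alpha\in\mathrm{rad}R$ is idempotent, then $1-\alpha$ is a unit while $\alpha(1-\alpha)=0$, so $\alpha=0$).

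For the first implication, suppose $\overline{e}$ is primitive in $\overline{R}$; then $\overline{e}\neq 0$, so $e\notin I$ and $e\neq 0$. If $e$ were not primitive, Proposition~\ref{21.8} would give a decomposition $e=\alpha+\beta$ with $\alpha,\beta$ nonzero orthogonal idempotents of $R$. Reducing modulo $I$ yields $\overline{e}=\overline{\alpha}+\overline{\beta}$ with $\overline{\alpha},\overline{\beta}$ orthogonal idempotents of $\overline{R}$, so primitivity of $\overline{e}$ forces one of them, say $\overline{\alpha}$, to be zero. Then $\alpha\in I\subseteq\mathrm{rad}R$ is a nonzero idempotent, a contradiction. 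Hence $e$ is primitive.

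For the converse, assume $e$ is primitive and that idempotents of $\overline{R}$ lift to $R$. First $\overline{e}\neq 0$, else $e\in\mathrm{rad}R$ would be a nonzero idempotent. Suppose $\overline{e}$ is not primitive, say $\overline{e}=g+h$ with $g,h$ nonzero orthogonal idempotents of $\overline{R}$. Lift $g$ to an idempotent $\alpha\in R$ and $h$ to an idempotent $\beta_0\in R$. Since $\overline{\alpha}\,\overline{\beta_0}=gh=0$ and $\overline{\beta_0}\,\overline{\alpha}=hg=0$, Lemma~\ref{neednow} produces an idempotent $\beta\in R$ orthogonal to $\alpha$ with $\overline{\beta}=\overline{\beta_0}=h$. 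Put $e'=\alpha+\beta$; this is an idempotent with $\overline{e'}=g+h=\overline{e}$. Since $\overline{e}=\overline{e'}$ we trivially have $\overline{e}\cong\overline{e'}$, so Proposition~\ref{21.21} (together with Proposition~\ref{21.20}) yields $a\in eRe'$ and $b\in e'Re$ with $ab=e$ and $ba=e'$. Now transport the decomposition of $e'$ across this isomorphism by setting $\alpha'=a\alpha b$ and $\beta'=a\beta b$. Using $\alpha e'=e'\alpha=\alpha$, $\beta e'=e'\beta=\beta$, and $\alpha\beta=\beta\alpha=0$, one checks that $\alpha',\beta'$ are orthogonal idempotents, that $\alpha'+\beta'=ae'b=ab=e$, and that $\alpha'\neq 0$ since $b\alpha'a=e'\alpha e'=\alpha\neq 0$ (similarly $\beta'\neq 0$). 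Thus $e=\alpha'+\beta'$ is a nontrivial orthogonal decomposition, contradicting primitivity of $e$. Hence $\overline{e}$ is primitive.

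The delicate point is the converse: lifting and orthogonalizing only produces an idempotent $e'$ congruent to $e$ modulo $I$, so the unwanted decomposition initially lives on $e'$ rather than $e$; the conjugation $\alpha'=a\alpha b$, $\beta'=a\beta b$ — legitimized by the mutually inverse multiplications realizing $eR\cong e'R$ from Propositions~\ref{21.20}--\ref{21.21} — is exactly what pulls it back onto $e$ itself. The rest is routine idempotent bookkeeping.
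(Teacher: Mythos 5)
Your proof is correct and follows essentially the same route as the paper: the forward direction rests on the observation that $\mathrm{rad}R$ contains no nonzero idempotent, and the converse lifts the two orthogonal idempotents, orthogonalizes via Lemma \ref{neednow}, forms $e'=\alpha+\beta$ with $\overline{e'}=\overline{e}$, and invokes Proposition \ref{21.21} to get $e\cong e'$. The only (cosmetic) difference is at the very end: the paper concludes at once that $e$ is not primitive because $e'$ is not (primitivity being invariant under isomorphism of idempotents, via Proposition \ref{21.8}), whereas you make this explicit by transporting the decomposition through $\alpha'=a\alpha b$, $\beta'=a\beta b$, which is a correct and slightly more self-contained finish.
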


\begin{proof}First of all, we have to pay attention to a little remark, that the only idempotent in $\mathrm{rad}R$ is $0$. To see this, suppose that $\alpha\in\mathrm{rad}R$ is an idempotent, then its complementary idempotent $1-\alpha$ is a unit, giving $1-\alpha=1$. Now we prove the proposition. Assume that $\overline{e}$ is primitive, and that  $e=\alpha+\beta$ is a decomposition of $e$ into nonzero  orthogonal idempotents. By the previous remark, $\overline{\alpha},\overline{\beta}$ are nonzero in $\overline{R}$. Thus, $\overline{e}=\overline{\alpha}+\overline{\beta}$ is a nontrivial orthogonal decomposition of $\overline{e}$, contradicting $\overline{e}$ is primitive.

Conversely, suppose that $\overline{e}=\overline{x}+\overline{y}$ is a nontrivial decomposition of $\overline{e}$ into orthogonal idempotents $\overline{x},\overline{y}$ in $\overline{R}$. Assuming that idempotents lift modulo $I$, let $\alpha$ and $\beta$ be idempotents, in $R$, lifting $\overline{x}$ and $\overline{y}$, respectively, that is, $\overline{\alpha}=\overline{x}$ and $\overline{\beta}=\overline{y}$. We then have $\alpha\beta\equiv\beta\alpha\equiv0\; \mathrm{(mod\;} I\mathrm{)}$, and Lemma \ref{neednow} provides a nonzero\footnote{Were it equal to zero, we would have had $\beta\in I$ and $\overline{y}=\overline{\beta}=0$, contradiction.} idempotent $\beta'\in R$ orthogonal to $\alpha$ such that $\overline{\beta'}=\overline{\beta}$. To complete the proof, define the idempotent $e'=\alpha+\beta'$, which is not primitive since $\alpha, \beta'\neq 0$. Besides, $\overline{e'}=\overline{\alpha}+\overline{\beta'}=\overline{\alpha}+\overline{\beta}=\overline{e}\;\; \text{in}\;\; \overline{R},$ and, by Proposition \ref{21.21}, $e'\cong e$ in $R$, thus $e$ is not primitive since $e'$ is not.\end{proof}

We conclude this section with the following proposition. In fact, most of the results  developed  hitherto were introduced to gently prove this  proposition without referring the reader to an external text.

\begin{proposition}\label{23.5} In a semiperfect ring $R$, any primitive idempotent is local.\end{proposition}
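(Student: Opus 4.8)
The plan is to combine the structure theorem for semiperfect rings with the lifting machinery just developed. Let $R$ be semiperfect with $J=\mathrm{rad}R$ and $\overline{R}=R/J$, and let $e$ be a primitive idempotent of $R$. By Proposition~\ref{21.9} it suffices to show that $eRe$ is a local ring, and by Theorem~\ref{21.10} we have $eRe/\mathrm{rad}(eRe)\cong\overline{e}\,\overline{R}\,\overline{e}$; so the goal reduces to showing that $\overline{e}\,\overline{R}\,\overline{e}$ is a division ring. Equivalently, by Corollary~\ref{21.7} applied over $\overline{R}$, it is enough to show that $\overline{e}\,\overline{R}$ is a simple right $\overline{R}$-module, i.e. that $\overline{e}$ is a primitive idempotent of the semisimple ring $\overline{R}$ whose corner happens to be a division ring (a local idempotent of $\overline{R}$).

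First I would reduce to a statement purely about $\overline{R}$. Since $R$ is semiperfect, idempotents lift modulo $J$; hence by Proposition~\ref{21.22} (the converse direction, whose hypothesis ``idempotents of $\overline{R}$ lift to $R$'' is exactly semiperfectness), the primitivity of $e$ in $R$ forces $\overline{e}$ to be primitive in $\overline{R}$. So it remains to prove the following: in a semisimple ring $\overline{R}$, every primitive idempotent $\overline{e}$ is local, that is, $\overline{e}\,\overline{R}\,\overline{e}$ is a division ring.

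For this last step I would invoke the Wedderburn--Artin theorem: $\overline{R}\cong\prod_{i=1}^k \mathbb{M}_{n_i}(D_i)$. A primitive idempotent $\overline{e}$ must lie in a single matrix block $\mathbb{M}_{n_i}(D_i)$ (an idempotent with nonzero components in two blocks splits as a sum of two nonzero orthogonal idempotents, contradicting primitivity via Proposition~\ref{21.8}(3)). Within $\mathbb{M}_{n_i}(D_i)$, which is a simple Artinian ring, the right module $\overline{e}\,\mathbb{M}_{n_i}(D_i)$ is nonzero and, being a submodule of the semisimple module $(\mathbb{M}_{n_i}(D_i))_{\mathbb{M}_{n_i}(D_i)}$, is a direct sum of copies of the unique simple right module $T_i'$; primitivity of $\overline{e}$ (indecomposability of $\overline{e}\,\overline{R}$, Proposition~\ref{21.8}(1)) forces exactly one copy, so $\overline{e}\,\overline{R}\cong T_i'$ is simple. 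Then $\overline{e}\,\overline{R}\,\overline{e}\cong\mathrm{End}_{\overline{R}}(\overline{e}\,\overline{R})$ is the endomorphism ring of a simple module, hence a division ring by Schur's lemma. Tracing back: $eRe/\mathrm{rad}(eRe)$ is a division ring, so $eRe$ is local, so $e$ is a local idempotent.

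The main obstacle is not any single hard computation but making sure the two halves of Proposition~\ref{21.22} are applied with the hypotheses actually available: we use the ``converse'' implication, and its side condition is precisely that idempotents of $\overline{R}$ lift to $R$ --- which is the defining property of a semiperfect ring, so there is no circularity. A secondary point to handle cleanly is the claim that a primitive idempotent of a finite product of rings is supported in one factor; this is immediate from the orthogonal-decomposition characterization in Proposition~\ref{21.8}(3) but should be stated explicitly rather than assumed.
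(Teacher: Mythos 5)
Your proof is correct and follows essentially the same route as the paper: semiperfectness supplies the lifting hypothesis needed for the converse direction of Proposition~\ref{21.22}, so $\overline{e}$ is primitive in the semisimple quotient $\overline{R}$, and from there one concludes that $e$ is local. The only difference is cosmetic: where you detour through the Wedderburn--Artin decomposition, Schur's lemma, Theorem~\ref{21.10} and Proposition~\ref{21.9}, the paper simply observes that over the semisimple ring $\overline{R}$ every indecomposable module is simple, so $\overline{e}\,\overline{R}$ is simple, and then cites Proposition~\ref{21.18} directly.
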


\begin{proof}Let $e$ be a primitive idempotent in $R$. By Proposition \ref{nillift}, idempotents lift modulo $\mathrm{rad}R$, and then Proposition \ref{21.22} says that $\overline{e}$ is primitive in $\overline{R}=R/\mathrm{rad}R$. Since $\overline{R}$ is semisimple, every $\overline{R}$-module is semisimple, and hence any indecomposable $\overline{R}$-module is simple. In particular, $\overline{e}\overline{R}$ is simple, and $e$ is local by Proposition \ref{21.18}.   \end{proof}

\section{Characters}\label{characters}

\begin{par} This section  is devoted to develop some elements from the theory of group characters. These objects, characters, shall turn out to be very crucial when plunging into our main problem. Here we are mainly following \cite{r7} and \cite{r3}.\end{par}
\subsection{Characters of Finite Abelian Groups}
\begin{par}Let $G$ be a finite abelian group. A group homomorphism $\pi:G\rightarrow\mathbb{C}^\times$ into the multiplicative group of complex numbers is called a  character  of $G$. The set of all characters of $G$ will be denoted $\widehat{G}$, thus $\widehat{G}=\mathrm{Hom}_\mathbb{Z}(G,\mathbb{C}^\times)$. $\widehat{G}$ is a finite abelian group under the operation $(\pi\theta)(x)=\pi(x) \theta(x)$, the inverses are obtained through complex conjugation: $\pi^{-1}(x)={\pi(x)}^{-1}=\overline{\pi(x)}/|\pi(x)|^2$. \end{par}

\begin{proposition}\label{char}Let $G$ be a finite abelian group with character group $\widehat{G}$. Then
\begin{enumerate}
\item[1.] $\widehat{G}\cong G$,
\item[2.] $G\cong \widehat{\widehat{G}}$,
\item[3.] $|\widehat{G}|=| G|$,
\item[4.] $(G_1\times G_2\widehat{)}\cong\widehat{G}_1\times\widehat{G}_2$;
\item[5.] $\sum_{x\in G}\pi(x)=\left\{\begin{matrix}
 |G|& \quad \text{if } \pi=1 \\
  \quad0 & \quad \text{if } \pi\neq 1
 \end{matrix}
     \right.$,

\item[6.]$\sum_{\pi\in \widehat{G}}\pi(x)=\left\{\begin{matrix}
 |G|& \quad \text{if } x=0 \\
  \quad0 & \quad \text{if } x\neq 0
 \end{matrix}
     \right.$.
\end{enumerate}
\end{proposition}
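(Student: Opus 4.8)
The plan is to prove the six claims of Proposition~\ref{char} in a logical order, using the structure theorem for finite abelian groups together with elementary character computations. First I would establish claim~4, the compatibility with direct products: a character of $G_1\times G_2$ restricts to a pair of characters on the two factors, and conversely a pair $(\pi_1,\pi_2)$ determines $(x_1,x_2)\mapsto\pi_1(x_1)\pi_2(x_2)$; these two assignments are mutually inverse group homomorphisms, so $(G_1\times G_2\widehat{)}\cong\widehat{G}_1\times\widehat{G}_2$. This is the bookkeeping lemma that lets everything else reduce to cyclic groups.

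Next I would treat the cyclic case directly. If $G=\langle g\rangle$ has order $n$, a character $\pi$ is determined by $\pi(g)$, which must be an $n$-th root of unity, and any such choice is legitimate; hence $\pi\mapsto\pi(g)$ is an isomorphism $\widehat{G}\cong\mu_n\cong\mathbb{Z}/n\mathbb{Z}\cong G$. Combining this with claim~4 and the structure theorem $G\cong\mathbb{Z}/n_1\mathbb{Z}\times\cdots\times\mathbb{Z}/n_k\mathbb{Z}$ gives claim~1, $\widehat{G}\cong G$, and in particular claim~3, $|\widehat{G}|=|G|$. Applying claim~1 twice yields $\widehat{\widehat{G}}\cong\widehat{G}\cong G$, but for claim~2 I would instead exhibit the \emph{canonical} map $G\to\widehat{\widehat{G}}$ sending $x$ to the evaluation character $\mathrm{ev}_x:\pi\mapsto\pi(x)$; this is clearly a homomorphism, it is injective because a nontrivial $x$ is not killed by every character (which one sees from the cyclic decomposition: some coordinate of $x$ is nonzero and can be detected by a coordinate character), and since both sides have the same order by claim~3 it is an isomorphism. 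The naturality here is the point worth stating, since evaluation is used later.

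For claim~5, fix $\pi\in\widehat{G}$. If $\pi=1$ the sum is visibly $|G|$. If $\pi\neq1$, pick $y\in G$ with $\pi(y)\neq1$; reindexing the sum by $x\mapsto x+y$ gives $\sum_{x}\pi(x)=\sum_x\pi(x+y)=\pi(y)\sum_x\pi(x)$, so $(1-\pi(y))\sum_x\pi(x)=0$ and the sum vanishes. Claim~6 is then the mirror image: apply claim~5 to the group $\widehat{G}$ in place of $G$, using that a nonzero $x\in G$ gives, via the canonical injection $G\hookrightarrow\widehat{\widehat{G}}$ of claim~2, a nontrivial character $\mathrm{ev}_x$ of $\widehat{G}$, so $\sum_{\pi\in\widehat{G}}\pi(x)=\sum_{\pi\in\widehat{G}}\mathrm{ev}_x(\pi)=0$ when $x\neq0$, and equals $|\widehat{G}|=|G|$ when $x=0$.

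I do not expect a genuine obstacle here; the only real content is organizing the reduction to cyclic groups cleanly and being careful that claim~6 needs claim~2 (the nondegeneracy of evaluation), not merely claim~1 --- invoking only the abstract isomorphism $\widehat{G}\cong G$ would not by itself guarantee that a specific nonzero $x$ is seen by some character. So the one place to be attentive is to route claim~6 through the canonical pairing established in claim~2 rather than through the non-canonical isomorphism of claim~1.
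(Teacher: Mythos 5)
Your proposal is correct and follows essentially the same route as the paper: reduce claim 1 to the cyclic case via claim 4 and the structure theorem, use the translation/shift trick for claim 5, and obtain claim 6 from the nondegeneracy of the evaluation pairing of claim 2 (the paper picks $\delta\in\widehat{G}$ with $\delta(x)\neq1$ and reindexes, which is the same computation as your application of claim 5 to $\widehat{G}$). Your only addition is to spell out the injectivity of the evaluation map, which the paper asserts without proof; that is a welcome bit of care but not a different argument.
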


\begin{proof}
1. Case1: Assume that  $G$ is cyclic, say $G=\langle g\rangle$ for some $g\in G$. Let $|G|=n$. Since the group $U_n$ of $n^\text{th}$ roots of unity in $\mathbb{C}$ is also cyclic of order $n$, we must have $G\cong U_n$ and it follows that  the only characters of $G$ are defined by $\pi_i:g\mapsto u_i$, where $i=1,\ldots,n$ and $U_n=\{u_1,\ldots,u_n\}$. Thereby,  $\widehat{G}\cong U_n\cong G$.\\
Case2: If $G$ is not cyclic, then it is well known that $G$ is isomorphic to a finite direct product of cyclic groups, $G\cong G_1\times\cdots\times G_k$, say. By part (4) and induction, $(G_1\times\cdots\times G_k\widehat{)}\cong \widehat{G}_1\times\cdots\times \widehat{G}_k$. Now, case1 applied to each $G_i$, we get $$\widehat{G}\cong(G_1\times\cdots\times G_k\widehat{)}\cong \widehat{G}_1\times\cdots\times \widehat{G}_k\cong G_1\times\cdots\times G_k \cong G.\\$$
2. By part (1). Also, there is a natural map from $G$ to $\widehat{\widehat{G}}$ defined by $$g\mapsto (\widehat{g}:\pi\mapsto \pi(g)).$$ This map is in fact an isomorphism.\\[0.5cm]
3. By part (1).\\[0.5cm]
4. Let $\phi$ be a character of $G_1\times G_2$ and define characters $\phi_1$ and $\phi_2$ of $G_1$ and $G_2$ respectively as follows: $\phi_1(g_1)=\phi(g_1,0)$ and $\phi_2(g_2)=\phi(0,g_2)$ for $g_1\in G_1$ and $g_2\in G_2$. Now,
\begin{align*}
 \phi(g_1,g_2)&=\phi((g_1,0)+(0,g_2))= \phi((g_1,0))\phi((0,g_2))\\
 &=\phi_1(g_1)\phi_2(g_2).
\end{align*} The reader may check that the map $\phi\mapsto(\phi_1,\phi_2)$  is an isomorphism.\\[0.5cm]
5. If $\pi=1$, then $\sum_{x\in G}\pi(x)=\sum_{x\in G}1=|G|.$ If $\pi\neq1$, then there is $g\in G$ such that $\pi(g)\neq1$. Set $S=\sum_{x\in G}\pi(x)$, then \begin{align*}
 \pi(g)S&=\pi(g)\sum_{x\in G}\pi(x)=\sum_{x\in G}\pi(x+g)\\
 &=\sum_{y\in G}\pi(y)=S.
\end{align*} Since $\pi(g)\neq1$, it follows that $S=0$.\\[0.5cm]
6. If $x=0$, then $\sum_{\pi\in \widehat{G}}\pi(0)=\sum_{\pi\in \widehat{G}}1=|\widehat{G}|=|G|$, part (3) used in the last equality. If $x\neq0$, then there exists \footnote{Recall  the natural isomorphism indicated above in (2).} $\delta\in \widehat{G}$ with $\delta(x)\neq1$. Set $S=\sum_{\pi\in \widehat{G}}\pi(x)$, then  \begin{align*}
\delta(x)S&=\delta(x)\sum_{\pi\in \widehat{G}}\pi(x)=\sum_{\pi\in \widehat{G}}\delta(x)\pi(x)\\
 &=\sum_{\pi\in \widehat{G}}\delta\pi(x)=\sum_{\theta\in \widehat{G}}\theta(x)=S.
\end{align*} Now, since $\delta(x)\neq1$, it follows that $S=0$.
\end{proof}

\begin{par}Given a finite abelian group $G$, define its dual group to be the group $G^*=\langle\mathrm{Hom}_\mathbb{Z}(G,\mathbb{Q/Z}),+\rangle$. A quite similar proof to that of (4) in the last proposition applies to show that $(G_1\times G_2)^*\cong G^*_1\times G^*_2$. Now, we show that $|G^*|=|G|$. By the previous observation we need only consider the case when $G$ is cyclic. If $G=\langle g\rangle$ and $|G|=m$ then for each $0\leq k\leq m-1$, the action $g\mapsto \frac{k}{m}+\mathbb{Z}\;$ defines one and only one homomorphism in $ \mathrm{Hom}_\mathbb{Z}(G,\mathbb{Q/Z})$. This shows that $|G^*|=|G|$. \\
The map $\mathbb{Q/Z}\rightarrow \mathbb{C}^\times$ defined by the exponential $x\mapsto\exp^{2\pi ix}$ is a group monomorphism. This monomorphism induces a monomorphism from the dual group into the character group, and by the above argument, this makes the two groups in fact isomorphic to each other. Thus we may refer to $\mathrm{Hom}_\mathbb{Z}(G,\mathbb{Q/Z})$ as the character group when an additive form for characters is preferred.\end{par}
\begin{par}We will use the following convention concerning character notation: multiplicative characters in $\mathrm{Hom}_\mathbb{Z}(-,\mathbb{C}^\times)$ will be denoted by the standard Greek letters $\pi,\theta,\phi$ and $\rho$, while those corresponding additive characters in\\ $\mathrm{Hom}_\mathbb{Z}(-,\mathbb{Q/Z})$ are to be denoted by variant Greek letters such as $\varpi,\vartheta,\varphi$ and $\varrho$ (respectively). Thus, for example, $\theta=\exp^{2\pi i\vartheta}$.\end{par}

\begin{par}We say that the characters $\chi_1, \ldots,\chi_n:G\rightarrow \mathbb{C}^\times$ are linearly independent whenever $c_1\chi_1(g)+\cdots+c_n\chi_n(g)=0$, for every $g\in G$, implies that all the constants $c_i\in\mathbb{C}$ are equal to zero.\end{par}

\begin{theorem}\label{linearindep}If $G$ is a finite abelian group, then every subset of  $\widehat{G}$ of distinct characters is linearly independent.\end{theorem}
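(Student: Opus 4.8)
The statement to prove is that distinct characters $\chi_1,\dots,\chi_n$ of a finite abelian group $G$ are linearly independent over $\mathbb{C}$. This is the classical Dedekind--Artin independence of characters, and I would prove it by induction on $n$. The base case $n=1$ is immediate: a character is nowhere zero (its values are roots of unity), so $c_1\chi_1(g)=0$ for some $g$ forces $c_1=0$.

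For the inductive step, suppose the result holds for any $n-1$ distinct characters, and assume
\[
c_1\chi_1(g)+\cdots+c_n\chi_n(g)=0 \quad\text{for all } g\in G.
\]
Since $\chi_1\neq\chi_n$, pick $h\in G$ with $\chi_1(h)\neq\chi_n(h)$. Replacing $g$ by $hg$ in the relation and using multiplicativity $\chi_i(hg)=\chi_i(h)\chi_i(g)$ gives a second relation
\[
c_1\chi_1(h)\chi_1(g)+\cdots+c_n\chi_n(h)\chi_n(g)=0 \quad\text{for all } g\in G.
\]
Now subtract $\chi_n(h)$ times the first relation from the second. The $\chi_n$ term cancels, and I am left with
\[
\sum_{i=1}^{n-1} c_i\bigl(\chi_i(h)-\chi_n(h)\bigr)\chi_i(g)=0 \quad\text{for all } g\in G,
\]
a linear dependence among the $n-1$ distinct characters $\chi_1,\dots,\chi_{n-1}$. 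By the induction hypothesis every coefficient vanishes; in particular $c_1(\chi_1(h)-\chi_n(h))=0$, and since $\chi_1(h)\neq\chi_n(h)$ we get $c_1=0$. By symmetry (choosing, for each $j<n$, an element $h$ separating $\chi_j$ from $\chi_n$, or simply relabelling) every $c_j=0$ for $j<n$, and then the original relation collapses to $c_n\chi_n(g)=0$, forcing $c_n=0$ as well.

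There is no real obstacle here; the only point requiring a little care is the very last bookkeeping step — making sure the cancellation argument is applied symmetrically so that \emph{all} coefficients, not just $c_1$, are shown to be zero. One clean way to phrase this, avoiding repeated relabelling, is: the argument above shows $c_1=0$; but the choice of index $1$ was arbitrary among $\{1,\dots,n-1\}$ (for index $j$ one instead picks $h$ with $\chi_j(h)\neq\chi_n(h)$, which exists since $\chi_j\neq\chi_n$), so $c_j=0$ for all $j<n$, whence $c_n=0$ too. Alternatively one could note that this linear-independence statement is just the fact that the $|G|$ functions in $\widehat G$ are a basis of the $|G|$-dimensional space $\mathbb{C}^G$ once one knows $|\widehat G|=|G|$ from Proposition~\ref{char}(3) together with the orthogonality relations in Proposition~\ref{char}(5)--(6); but the inductive proof is self-contained and does not even need finiteness of $G$.
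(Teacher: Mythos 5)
Your proof is correct and is essentially the same argument as the paper's: induction on $n$, translating the relation by an element $h$ with $\chi_1(h)\neq\chi_n(h)$, subtracting $\chi_n(h)$ times the original relation, and applying the induction hypothesis to the resulting dependence among $n-1$ characters. Your handling of the final bookkeeping (getting all $c_j$, $j<n$, and then $c_n$) is in fact slightly more careful than the paper's brief "similarly" at that step.
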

\begin{proof}The proof is by induction on the finite number $n$ of characters under consideration. For $n=1$, the result is trivial. Now, assume the statement is true for any set of $n-1$ distinct  characters. Let  $\chi_1, \ldots,\chi_n:G\rightarrow \mathbb{C}^\times$ be distinct characters of $G$ satisfying\begin{equation}\label{eq1.1charlin}c_1\chi_1(g)+\cdots+c_n\chi_n(g)=0\end{equation} for $c_i\in\mathbb{C}$ and for all $g\in G$. Since $\chi_1\neq\chi_n$, there exists $h\in G$ such that $\chi_1(h)\neq\chi_n(h)$. The equation above is true for every $g\in G$, hence, in particular, it works for $hg$, $$c_1\chi_1(h+g)+\cdots+c_n\chi_n(h+g)=c_1\chi_1(h)\chi_1(g)+\cdots+c_n\chi_n(h)\chi_n(g)=0.$$ Multiplying equation \ref{eq1.1charlin} by $\chi_n(h)$ and subtracting it from the previous equation one gets$$c_1(\chi_1(h)-\chi_n(h))\chi_1(g)+\cdots+c_n(\chi_{n-1}(h)-\chi_n(h))\chi_{n-1}(g)=0,$$ for every $g\in G$.  We have, by the induction hypothesis, $c_i(\chi_i(h)-\chi_n(h))=0$; $i=1,\ldots,n$. But $\chi_1(h)-\chi_n(h)\neq0$, hence $c_1=0$. Similarly, we prove that $c_i=0$ for $i=1,\ldots,n$.
\end{proof}

\begin{definition}(Annihilator)\label{annchar}. Let $H$ be a subgroup of $G$, we define the group $$(\widehat{G}:H):=\{\varpi\in\widehat{G}:\varpi(h)=0 \quad\quad \text{for\; all}h\in H\}$$ to be the annihilator of $H$.  The annihilator $(\widehat{G}:H)$ is isomorphic to $(G/H\widehat{)}$ and hence $|(\widehat{G}:H)|=|G|/|H|.$\end{definition}

\begin{proposition}\label{(g:h)}Let $H$ be a subgroup of $G$ such that $H\subset \mathrm{Ker}\varpi$ for every $\varpi\in \widehat{G}$. Then $H=0.$\end{proposition}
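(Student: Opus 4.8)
The plan is to read this off directly from the finite-abelian-group duality already collected in Proposition \ref{char}, together with the cardinality statement in Definition \ref{annchar}; there is no substantive work to do beyond bookkeeping. I would run the counting argument as the main line. The hypothesis that $H\subset\mathrm{Ker}\varpi$ for every $\varpi\in\widehat G$ says exactly that $(\widehat G:H)=\widehat G$. By Definition \ref{annchar} we have $|(\widehat G:H)|=|G|/|H|$, and by part 3 of Proposition \ref{char} we have $|\widehat G|=|G|$. Equating, $|G|=|G|/|H|$, so $|H|=1$, i.e. $H=0$.

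Alternatively, and perhaps more transparently, I would argue by contraposition via a separating character. Suppose $H\neq 0$ and fix some $h\in H$ with $h\neq 0$. By part 2 of Proposition \ref{char}, the natural map $G\to\widehat{\widehat G}$, $g\mapsto(\widehat g:\pi\mapsto\pi(g))$, is an isomorphism, so $\widehat h$ is a nontrivial element of $\widehat{\widehat G}$; hence there is a (multiplicative) character $\pi\in\widehat G$ with $\pi(h)\neq 1$. Passing to the additive incarnation $\varpi\in\mathrm{Hom}_{\mathbb Z}(G,\mathbb Q/\mathbb Z)$ corresponding to $\pi$ under the isomorphism recorded after Proposition \ref{char} (so that $\pi=\exp^{2\pi i\varpi}$), the relation $\pi(h)\neq 1$ forces $\varpi(h)\neq 0$, i.e. $h\notin\mathrm{Ker}\varpi$. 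This contradicts the hypothesis, so $H=0$. One can also get the needed separating character straight from part 6 of Proposition \ref{char}: for $h\neq 0$ one has $\sum_{\pi\in\widehat G}\pi(h)=0\neq|G|$, which would be impossible if every $\pi$ were trivial at $h$.

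I expect no genuine obstacle: the statement is a formal corollary of Pontryagin duality for finite abelian groups. The only point that needs a little care is the notational translation between the $\mathbb C^\times$-valued characters used in parts 2, 3 and 6 of Proposition \ref{char} and the $\mathbb Q/\mathbb Z$-valued characters in whose language the annihilator $(\widehat G:H)$ and the symbol $\mathrm{Ker}\varpi$ are phrased; the monomorphism $\mathbb Q/\mathbb Z\hookrightarrow\mathbb C^\times$, $x\mapsto\exp^{2\pi i x}$, which is an isomorphism $\mathrm{Hom}_{\mathbb Z}(G,\mathbb Q/\mathbb Z)\cong\mathrm{Hom}_{\mathbb Z}(G,\mathbb C^\times)$ onto $\widehat G$, makes this automatic, so I would just remark on it in one line and not belabor it.
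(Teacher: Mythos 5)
Your main counting argument is exactly the paper's proof: the hypothesis gives $(\widehat G:H)=\widehat G$, and then $|H|=|G|/|(\widehat G:H)|=|G|/|\widehat G|=1$ via Definition \ref{annchar} and $|\widehat G|=|G|$. The alternative separating-character argument is also fine (the paper itself hints at it), but it adds nothing needed; your proposal is correct and matches the paper's approach.
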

\begin{proof}Since $H\subset \mathrm{Ker}\varpi$ for every $\varpi\in \widehat{G}$, then $(\widehat{G}:H)=\widehat{G}$. Thus $|H|=|G|/|(\widehat{G}:H)|=|G|/|\widehat{G}|=1$.\end{proof} Actually, this proposition can be proved in view of the footnote in page 16.\\

\subsection{The Character Functor}

In the coming chapters we are always working on finite modules over finite rings. Let $R$ be a finite ring with unity. Regardless the ring action, a finite  $R$-module $M$ is, in the first place, a finite abelian group. Hence, we can consider the character group  $\widehat{M}$ of $M$.\\ If $M$ is a left $R$-module, then $\widehat{M}$  can be equipped with a right $R$-module structure given by:\\

(additive form)  $\quad(\varpi r)(m):=\varpi(rm), \qquad \varpi\in \widehat{M},\quad r\in R, \quad m\in M,$\\
(multiplicative form)  $\quad\pi^r(m):=\pi(rm),\qquad \pi\in \widehat{M},\quad r\in R, \quad m\in M$.\\

Reversely, if $M$ is a right $R$-module, then $\widehat{M}$ is equipped with a left $R$-module structure given by:\\

(additive form)  $\quad(r\varpi )(m):=\varpi(mr), \qquad \varpi\in \widehat{M},\quad r\in R, \quad m\in M,$\\
(multiplicative form)  $\quad^r\pi(m):=\pi(mr),\qquad \pi\in \widehat{M},\quad r\in R, \quad m\in M$.

\begin{lemma}\label{111111}Let $R$ be a finite ring, with $\widehat{R}$ its character bimodule. If $r\widehat{R}=0$, then $r=0$.\end{lemma}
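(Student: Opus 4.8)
The plan is to unwind the bimodule structure on $\widehat{R}$ and then reduce the claim to Proposition \ref{(g:h)} by evaluating characters at the identity $1\in R$.

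First I would recall, from the conventions fixed just above, that the left $R$-action on $\widehat{R}$ is the one induced from $R$ viewed as a \emph{right} module over itself; in additive notation this reads $(r\varpi)(x)=\varpi(xr)$ for $\varpi\in\widehat{R}$ and $x\in R$. Consequently the hypothesis $r\widehat{R}=0$ says precisely that $r\varpi$ is the trivial character for every $\varpi\in\widehat{R}$, i.e.
\[
\varpi(xr)=0\qquad\text{for all }x\in R,\ \varpi\in\widehat{R}.
\]
Next I would specialize to $x=1$, which yields $\varpi(r)=0$ for every $\varpi\in\widehat{R}$; equivalently $r\in\mathrm{Ker}\,\varpi$ for every character $\varpi$ of the finite abelian group $(R,+)$. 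Applying Proposition \ref{(g:h)} to the subgroup $H=\langle r\rangle\le(R,+)$ — or, alternatively, using the index formula $|(\widehat{R}:H)|=|R|/|H|$ from Definition \ref{annchar} together with the fact that $(\widehat{R}:H)=\widehat{R}$ here — forces $H=0$, hence $r=0$. (The symmetric computation, expanding the right action as $\varpi(rx)$ and again setting $x=1$, shows likewise that $\widehat{R}r=0$ implies $r=0$, so $\widehat{R}$ is a faithful bimodule on both sides; this is the form in which the lemma will actually be used.)

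I do not anticipate any genuine obstacle. The only step demanding care is the bookkeeping at the start: one must expand the left action $r\widehat{R}$ as $\varpi(xr)$ rather than $\varpi(rx)$, so that the evaluation at $x=1$ correctly isolates $\varpi(r)$ and lets the duality for finite abelian groups (Proposition \ref{char}, Proposition \ref{(g:h)}) finish the argument in one line.
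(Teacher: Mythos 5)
Your proof is correct and follows essentially the same route as the paper: unwind the left action as $(r\varpi)(x)=\varpi(xr)$ and then invoke Proposition \ref{(g:h)}, the only cosmetic difference being that you specialize at $x=1$ and apply the proposition to $\langle r\rangle$, whereas the paper notes directly that $Rr\subset\mathrm{Ker}\,\varpi$ for all $\varpi$ and concludes $Rr=0$.
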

\begin{proof}If $r\widehat{R}=0$, then for every $\varpi\in\widehat{R} $ and every $x\in R$, we have that $0=r\varpi(x)=\varpi(xr)$. Thus, $Rr \subset \mathrm{Ker}\varpi$ for every $\varpi\in\widehat{R} $. By Proposition \ref{(g:h)}  $Rr=0$ and hence $r=0$.  \end{proof}

\begin{proposition} The mapping $\;\widehat{ }\;$ taking the left $R$-module $M$ to $\widehat{M}$ is a contravariant  functor from the category of finitely generated left (right, respectively) $R$-modules to the category of finitely generated right (left, respectively) $R$-modules.\end{proposition}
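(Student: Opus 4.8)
The plan is to verify the two functorial axioms: that $\;\widehat{ }\;$ sends a morphism to a morphism in the correct category and with the correct variance, and that it respects identities and composition (contravariantly). I will treat the case of left $R$-modules; the right-module case is identical after swapping the side conventions fixed just above the statement.

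First I would handle the object part, which is essentially already in place: given a finitely generated (hence finite) left $R$-module $M$, the character group $\widehat{M}=\mathrm{Hom}_{\mathbb{Z}}(M,\mathbb{Q}/\mathbb{Z})$ is a finite abelian group with $|\widehat{M}|=|M|$ by Proposition \ref{char}, and the right action $(\varpi r)(m):=\varpi(rm)$ makes it a right $R$-module. I would check the module axioms quickly: additivity in $\varpi$ and in $r$ is immediate, and $(\varpi(rs))(m)=\varpi((rs)m)=\varpi(r(sm))=(\varpi r)(sm)=((\varpi r)s)(m)$, so $\varpi(rs)=(\varpi r)s$, which is exactly why the action lands on the \emph{right}. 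Finiteness of $\widehat{M}$ means it is certainly finitely generated, so $\widehat{M}$ is an object of the target category.

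Next, the morphism part. Given an $R$-linear map $f:M\to N$ of left $R$-modules, define $\widehat{f}:\widehat{N}\to\widehat{M}$ by $\widehat{f}(\varpi)=\varpi\circ f$ for $\varpi\in\widehat{N}$. I would check: (i) $\varpi\circ f$ is a group homomorphism $M\to\mathbb{Q}/\mathbb{Z}$, so it lies in $\widehat{M}$; (ii) $\widehat{f}$ is additive, since $(\varpi_1+\varpi_2)\circ f=\varpi_1\circ f+\varpi_2\circ f$; (iii) $\widehat{f}$ is right $R$-linear: for $r\in R$ and $m\in M$,
\[
\bigl(\widehat{f}(\varpi r)\bigr)(m)=(\varpi r)(f(m))=\varpi(r\,f(m))=\varpi(f(rm))=\bigl(\widehat{f}(\varpi)\bigr)(rm)=\bigl(\widehat{f}(\varpi)\,r\bigr)(m),
\]
where the third equality uses $R$-linearity of $f$; hence $\widehat{f}(\varpi r)=\widehat{f}(\varpi)r$. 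So $\widehat{f}$ is a morphism of right $R$-modules, pointing in the reversed direction, as required.

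Finally, functoriality: $\widehat{\mathrm{id}_M}(\varpi)=\varpi\circ\mathrm{id}_M=\varpi$, so $\widehat{\mathrm{id}_M}=\mathrm{id}_{\widehat{M}}$; and for $f:M\to N$, $g:N\to P$, one computes $\widehat{g\circ f}(\varpi)=\varpi\circ(g\circ f)=(\varpi\circ g)\circ f=\widehat{f}\bigl(\widehat{g}(\varpi)\bigr)$, i.e. $\widehat{g\circ f}=\widehat{f}\circ\widehat{g}$, the contravariant composition law. Honestly there is no real obstacle here — the only point requiring a moment's care is bookkeeping the left/right side conventions so that the action on $\widehat{M}$ genuinely lands on the opposite side and $\widehat{f}$ is linear for that structure; everything else is a one-line diagram chase. (One may remark that this is just the hom-functor $\mathrm{Hom}_{\mathbb{Z}}(-,\mathbb{Q}/\mathbb{Z})$ restricted to finite modules, with the $R$-action transported along the module structure, which is why contravariance is automatic.)
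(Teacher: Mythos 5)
Your proposal is correct and follows essentially the same route as the paper: define the dual map by precomposition, note it is right $R$-linear, and verify the identity and (contravariant) composition axioms. In fact you supply slightly more detail than the paper does, since the paper merely asserts that $\phi^*:g\mapsto g\circ\phi$ is an $R$-module homomorphism, whereas you write out the computation $\widehat{f}(\varpi r)=\widehat{f}(\varpi)r$ explicitly.
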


\begin{proof}We only have to show that $\mathrm{Hom}_\mathbb{Z}(-,\mathbb{Q/Z})$ is a contravariant functor.
\begin{par}Let $A,B$ be two left $R$-modules. For every $R$-module homomorphism $\phi:A\rightarrow B$, define $\phi^*:\mathrm{Hom}_\mathbb{Z}(B,\mathbb{Q/Z})\rightarrow\mathrm{Hom}_\mathbb{Z}(A,\mathbb{Q/Z})$ by $\phi^*:g\mapsto g\circ\phi $. Then $\phi^*$ is an $R$-module homomorphism. Now,
\begin{enumerate}
\item[$(i)$] $\widehat{A}=\mathrm{Hom}_\mathbb{Z}(A,\mathbb{Q/Z})$ is a right $R$-module.
\item[$(ii)$]For the homomorphism $1_A:A\rightarrow A$, $1^*_A(g)=g\circ 1_A=g$ for every group homomorphism $g\in \mathrm{Hom}_\mathbb{Z}(A,\mathbb{Q/Z})$. Thus $1^*_A=1_{\mathrm{Hom}_\mathbb{Z}(A,\mathbb{Q/Z})}$.
\item[$(iii)$]Suppose that $A\overset{\phi}{\rightarrow} B\overset{\psi}{\rightarrow} C$ is a sequence of $R$-module homomorphisms. Then, at the action of the $\;\widehat{ }\;$ (claimed) functor, we get $$\mathrm{Hom}_\mathbb{Z}(A,\mathbb{Q/Z})\overset{\phi^*}{\leftarrow}\mathrm{Hom}_\mathbb{Z}(B,\mathbb{Q/Z})
    \overset{\psi^*}{\leftarrow}\mathrm{Hom}_\mathbb{Z}(C,\mathbb{Q/Z}).$$

Now, for every $g\in\mathrm{Hom}_\mathbb{Z}(C,\mathbb{Q/Z})$, \begin{align*}(\psi\circ\phi)^*(g)&=g\circ(\psi\circ\phi)=(g\circ\psi)\circ\phi\\
&=\psi^*(g)\circ\phi=\phi^*(\psi^*(g))\\
&=(\phi^*\circ\psi^*)(g).\end{align*}
\end{enumerate}\end{par}

Thus, $\;\widehat{ }\;$ is a contravariant functor. \end{proof}

In the next we prove further properties of the character functor.\\

\begin{lemma} $\mathbb{Q/Z}$ is an injective $\mathbb{Z}$-module.\end{lemma}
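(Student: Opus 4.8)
The statement to prove is that $\mathbb{Q}/\mathbb{Z}$ is an injective $\mathbb{Z}$-module.

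The plan is to invoke Baer's criterion, which characterizes injectivity of a module over any ring $R$ by the requirement that every homomorphism from a left ideal $I\subseteq R$ into the module extends to a homomorphism from $R$ itself. Since $\mathbb{Z}$ is a principal ideal domain, every ideal of $\mathbb{Z}$ is of the form $n\mathbb{Z}$ for some integer $n\geq 0$, so I only need to check the extension condition for these ideals.

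First I would dispose of the case $n=0$ trivially, since the zero ideal presents no obstruction. For $n\geq 1$, suppose $f:n\mathbb{Z}\to\mathbb{Q}/\mathbb{Z}$ is a $\mathbb{Z}$-module homomorphism, and write $f(n)=q+\mathbb{Z}$ for some rational $q$. The key observation is that $\mathbb{Q}/\mathbb{Z}$ is a divisible group: for any element $x\in\mathbb{Q}/\mathbb{Z}$ and any positive integer $n$, there exists $y\in\mathbb{Q}/\mathbb{Z}$ with $ny=x$ (indeed if $x=\frac{a}{b}+\mathbb{Z}$ then $y=\frac{a}{nb}+\mathbb{Z}$ works). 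So I can choose $y\in\mathbb{Q}/\mathbb{Z}$ with $ny=f(n)$, and define $\tilde f:\mathbb{Z}\to\mathbb{Q}/\mathbb{Z}$ by $\tilde f(m)=my$. This is clearly a homomorphism, and it restricts to $f$ on $n\mathbb{Z}$ because $\tilde f(kn)=kny=kf(n)=f(kn)$. By Baer's criterion, $\mathbb{Q}/\mathbb{Z}$ is injective.

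I do not anticipate a genuine obstacle here; the only thing that requires a word of justification is divisibility of $\mathbb{Q}/\mathbb{Z}$ and the reduction to ideals of the form $n\mathbb{Z}$, both of which are elementary. If one prefers to avoid citing Baer's criterion, an alternative is to observe directly that a $\mathbb{Z}$-module is injective if and only if it is divisible (the standard characterization of injective abelian groups), and then simply verify that $\mathbb{Q}/\mathbb{Z}$ is divisible as noted above; I would mention this as the quick route and use Baer's criterion as the self-contained one.
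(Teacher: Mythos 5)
Your proof is correct. It differs slightly in packaging from the one in the text: there, the lemma is dispatched by observing that $\mathbb{Q}/\mathbb{Z}$ is divisible and then citing the standard fact (Lemma IV.3.11 of Hungerford) that a divisible abelian group is an injective $\mathbb{Z}$-module, whereas you run the argument through Baer's criterion and verify the extension condition by hand: every ideal of $\mathbb{Z}$ is $n\mathbb{Z}$, and divisibility lets you pick $y$ with $ny=f(n)$ and extend by $m\mapsto my$. In effect you are re-proving the ``divisible $\Rightarrow$ injective'' implication in the special case at hand, which makes your write-up more self-contained (modulo Baer's criterion itself, which is a legitimate standard citation), while the paper's version is shorter by outsourcing that implication to a reference; the alternative route you mention at the end is exactly the paper's proof. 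Both arguments are sound, and your verification that $\tilde f$ restricts to $f$ on $n\mathbb{Z}$ is the only point needing checking, which you do correctly.
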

\begin{proof}It is easily checked that the abelian group $\mathbb{Q/Z}$ is divisible. The result now follows by Lemma IV.3.11 \cite{hungr}. \end{proof}

\begin{proposition}\label{ex}The character functor $\;\widehat{ }\;$ is an exact functor.\end{proposition}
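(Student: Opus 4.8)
The plan is to show that the character functor $\widehat{\phantom{M}}$ preserves exactness of short exact sequences, from which exactness of the functor on arbitrary exact sequences follows by the standard splicing argument. Concretely, I want to start from a short exact sequence of finite left $R$-modules
\[
0 \longrightarrow A \overset{\phi}{\longrightarrow} B \overset{\psi}{\longrightarrow} C \longrightarrow 0,
\]
apply $\widehat{\phantom{M}} = \mathrm{Hom}_{\mathbb{Z}}(-,\mathbb{Q}/\mathbb{Z})$, and prove that
\[
0 \longrightarrow \widehat{C} \overset{\psi^*}{\longrightarrow} \widehat{B} \overset{\phi^*}{\longrightarrow} \widehat{A} \longrightarrow 0
\]
is exact. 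Since $\widehat{\phantom{M}}$ is already known (previous proposition) to be a contravariant functor, $\phi^*\circ\psi^* = (\psi\circ\phi)^* = 0$, so the composite is zero and I only need exactness at the three spots.

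First I would handle the easy two-thirds, which is just $\mathrm{Hom}$-nonsense valid over any ring. Exactness at $\widehat{C}$: if $g\in\widehat{C}$ with $\psi^*(g) = g\circ\psi = 0$, then $g$ vanishes on $\mathrm{Im}\,\psi = C$ (using surjectivity of $\psi$), so $g = 0$; hence $\psi^*$ is injective. Exactness at $\widehat{B}$: given $h\in\widehat{B}$ with $\phi^*(h) = h\circ\phi = 0$, then $h$ vanishes on $\mathrm{Im}\,\phi = \mathrm{Ker}\,\psi$, so $h$ factors through $B/\mathrm{Ker}\,\psi \cong C$; this factoring is a $\mathbb{Z}$-homomorphism $\bar h : C\to\mathbb{Q}/\mathbb{Z}$ with $\psi^*(\bar h) = h$, so $\mathrm{Ker}\,\phi^* \subseteq \mathrm{Im}\,\psi^*$, and the reverse inclusion is the composite-is-zero fact. (One should also note the maps $\psi^*,\phi^*$ are $R$-module maps, which was recorded in the previous proposition.)

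The one place where something must be invoked is exactness at $\widehat{A}$, i.e. surjectivity of $\phi^*:\widehat{B}\to\widehat{A}$. This is exactly where I would use that $\mathbb{Q}/\mathbb{Z}$ is an injective $\mathbb{Z}$-module (the lemma just proved): given any $g\in\widehat{A} = \mathrm{Hom}_{\mathbb{Z}}(A,\mathbb{Q}/\mathbb{Z})$, view $\phi:A\hookrightarrow B$ as an inclusion of abelian groups (it is a $\mathbb{Z}$-monomorphism since $\phi$ is injective), and apply the defining lifting property of the injective module $\mathbb{Q}/\mathbb{Z}$ to the diagram with top row $0\to A\overset{\phi}{\to} B$ and left map $g$; this yields $h\in\mathrm{Hom}_{\mathbb{Z}}(B,\mathbb{Q}/\mathbb{Z}) = \widehat{B}$ with $h\circ\phi = g$, i.e. $\phi^*(h) = g$. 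So $\phi^*$ is onto. This is the main (and essentially only) obstacle, and it is dispatched by the injectivity lemma; everything else is formal. Finally, to pass from short exact sequences to arbitrary exact sequences, I would break a long exact sequence into short exact pieces $0\to\mathrm{Ker}\to\text{middle}\to\mathrm{Im}\to 0$, apply the short-exact case to each, and reassemble; since the construction is identical for right $R$-modules, the same argument gives exactness of $\widehat{\phantom{M}}$ on finitely generated right $R$-modules as well.
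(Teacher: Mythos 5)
Your proof is correct and follows the same route as the paper: the paper's proof simply combines the preceding lemma (that $\mathbb{Q/Z}$ is an injective $\mathbb{Z}$-module) with a citation of Proposition IV.4.6 in Hungerford, which is exactly the statement you prove by hand. Writing out the standard diagram chase, with injectivity of $\mathbb{Q/Z}$ supplying surjectivity of $\phi^*$, is a fully adequate substitute for that citation, so there is nothing to fix.
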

\begin{proof} By the previous lemma, the result follows by Proposition IV.4.6 \cite{hungr}.\end{proof}

\begin{corollary}\label{Rhatinj}$\widehat{R}$ is an injective $R$-module.\end{corollary}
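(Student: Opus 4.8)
The plan is to reduce the statement ``$\widehat{R}$ is an injective $R$-module'' to the exactness of the character functor, which has just been established in Proposition \ref{ex}. Recall that for a finite ring $R$, the character bimodule $\widehat{R}=\mathrm{Hom}_\mathbb{Z}(R,\mathbb{Q}/\mathbb{Z})$ carries a left $R$-module structure via $(r\varpi)(x)=\varpi(xr)$. The natural move is to identify $\widehat{R}$, as a left $R$-module, with the image under $\;\widehat{\ }\;$ of the \emph{right} regular module $R_R$; that is, $\widehat{R}=\widehat{R_R}$, where on the right we apply the contravariant functor from finitely generated right $R$-modules to finitely generated left $R$-modules. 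This is immediate from the module-structure conventions recorded just before Lemma \ref{111111}.

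The core of the argument is then the following: applying an exact contravariant functor to a split (indeed to any) short exact sequence, and combining with the characterization of injectivity in Proposition \ref{projinj}(2), shows that $\;\widehat{\ }\;$ sends projectives to injectives. Concretely, I would first observe that $R_R$ is a free, hence projective, right $R$-module. Then, to check that $\widehat{R_R}$ is injective as a left $R$-module, I would take an arbitrary short exact sequence of left $R$-modules $0\rightarrow \widehat{R_R}\overset{f}{\rightarrow} C\overset{g}{\rightarrow} D\rightarrow 0$ and aim to show it splits. Dualizing with $\;\widehat{\ }\;$, which is exact by Proposition \ref{ex} and reverses arrows, yields an exact sequence of right $R$-modules $0\rightarrow \widehat{D}\overset{g^*}{\rightarrow}\widehat{C}\overset{f^*}{\rightarrow}\widehat{\widehat{R_R}}\rightarrow 0$. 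Using the natural isomorphism $R_R\cong \widehat{\widehat{R_R}}$ (the finite analogue of Pontryagin biduality, built as in Proposition \ref{char}(2) from the evaluation map $r\mapsto(\varpi\mapsto\varpi(r))$, which is an injective homomorphism between finite groups of equal order hence an isomorphism, and which is $R$-linear by a direct check), this sequence ends in the projective module $R_R$, so by Proposition \ref{projinj}(1) it splits: there is $s:R_R\rightarrow\widehat{C}$ with $f^*\circ s=\mathrm{id}$. Now apply $\;\widehat{\ }\;$ once more and precompose/postcompose with the biduality isomorphisms to manufacture a splitting $\widehat{s}$ of the original sequence; exactness guarantees all the relevant maps compose correctly. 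Hence the original sequence splits, and $\widehat{R_R}=\widehat{R}$ is injective.

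The main obstacle is bookkeeping rather than mathematical depth: one must be careful that the biduality isomorphism $M\cong\widehat{\widehat{M}}$ is natural (commutes with the maps $f^*,g^*$ and their duals) and is genuinely $R$-linear for the bimodule conventions in force, so that ``dualize twice and identify'' really produces a section of $g$, not merely of some isomorphic copy. A cleaner packaging, which I would prefer to write up, avoids this by arguing functor-abstractly: an additive contravariant functor that is exact and has an exact contravariant ``inverse-up-to-natural-isomorphism'' (here $\;\widehat{\ }\;$ is its own such quasi-inverse on finite modules) carries projectives to injectives, because $\mathrm{Hom}(-,\widehat{P})\cong\widehat{(P\otimes -)}$-type identities — or more simply, because $\widehat{\ }$ turns epimorphisms into monomorphisms and the lifting property for $P$ dualizes directly to the extension property for $\widehat{P}$. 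Either way, since $R$ is finite, $R_R$ is finitely generated free, and $\;\widehat{\ }\;$ is exact, the conclusion $\widehat{R}$ injective follows at once.
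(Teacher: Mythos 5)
Your proposal is correct and follows essentially the same route as the paper: the paper's proof of Corollary \ref{Rhatinj} is the one-line observation that the exact (contravariant) character functor takes the projective module $R_R$ to an injective module, which is exactly the fact you establish. The only difference is that you supply the details the paper leaves implicit (splitting an arbitrary sequence $0\rightarrow\widehat{R}\rightarrow C\rightarrow D\rightarrow 0$ via dualization, projectivity of $R_R$, and the natural biduality $M\cong\widehat{\widehat{M}}$ for finite modules), which is a legitimate expansion rather than a different method.
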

\begin{proof} An exact functor takes a projective module to an  injective module. \end{proof}

\begin{proposition}\label{hynf3}If $ M$ is a finite right $R$-module, then $$(\widehat{M}:M\mathrm{rad}R)=\mathrm{soc}(\widehat{M}),\text{\qquad and\qquad} (M/M\mathrm{rad}R\widehat{)}\cong\mathrm{soc}(\widehat{M}).$$\end{proposition}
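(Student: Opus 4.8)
The plan is to reduce the statement to Proposition \ref{socrad} after translating the left $R$-action on $\widehat{M}$ into a concrete vanishing condition. First I would observe that since $R$ is finite, the quotient $R/\mathrm{rad}R$ is finite, hence artinian; therefore Proposition \ref{socrad}, applied to the left $R$-module $\widehat{M}$, gives
$$\mathrm{soc}(\widehat{M})=\{\varpi\in\widehat{M}:(\mathrm{rad}R)\varpi=0\}.$$

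Next I would unwind the condition $(\mathrm{rad}R)\varpi=0$ using the definition of the left action on $\widehat{M}$, namely $(r\varpi)(m)=\varpi(mr)$. We have $r\varpi=0$ for every $r\in\mathrm{rad}R$ if and only if $\varpi(mr)=0$ for all $m\in M$ and all $r\in\mathrm{rad}R$, that is, if and only if $\varpi$ vanishes on the subgroup $M\mathrm{rad}R$; and this is precisely the assertion $\varpi\in(\widehat{M}:M\mathrm{rad}R)$. Combining this with the previous display yields $(\widehat{M}:M\mathrm{rad}R)=\mathrm{soc}(\widehat{M})$, the first equality. (It is worth noting in passing that $(\widehat{M}:M\mathrm{rad}R)$ is a left $R$-submodule of $\widehat{M}$, since $M\mathrm{rad}R$ is a right $R$-submodule of $M$, so $m'\in M\mathrm{rad}R$ implies $m'r\in M\mathrm{rad}R$; but this is in any case automatic once the equality with the socle is in hand.)

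For the second assertion I would invoke Definition \ref{annchar}: the annihilator $(\widehat{M}:M\mathrm{rad}R)$ is isomorphic to $(M/M\mathrm{rad}R\widehat{)}$, via the map sending a character $\varpi$ that vanishes on $M\mathrm{rad}R$ to the character $\overline{\varpi}$ it induces on the quotient $M/M\mathrm{rad}R$. A one-line check shows this bijection is compatible with the $R$-actions: for $r\in R$ both $\overline{r\varpi}$ and $r\overline{\varpi}$ send the coset $m+M\mathrm{rad}R$ to $\varpi(mr)$. Hence $(M/M\mathrm{rad}R\widehat{)}\cong(\widehat{M}:M\mathrm{rad}R)=\mathrm{soc}(\widehat{M})$ as left $R$-modules, which finishes the proof.

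The argument is essentially bookkeeping; the only points demanding a moment's care are the passage from the abstract annihilation condition $(\mathrm{rad}R)\varpi=0$ furnished by Proposition \ref{socrad} to the explicit annihilator $(\widehat{M}:M\mathrm{rad}R)$, and the verification that the annihilator–quotient duality of Definition \ref{annchar} is $R$-linear and not merely additive. Neither is a genuine obstacle, so I do not anticipate any real difficulty.
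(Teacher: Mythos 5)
Your proof is correct, but it follows a genuinely different route from the one in the text. You obtain the equality $(\widehat{M}:M\mathrm{rad}R)=\mathrm{soc}(\widehat{M})$ in one stroke by applying Proposition \ref{socrad} to the left module $_R\widehat{M}$ (legitimate, since $R$ finite makes $R/\mathrm{rad}R$ artinian, so the equality case of that proposition is available) and then translating $(\mathrm{rad}R)\varpi=0$ into the vanishing of $\varpi$ on the subgroup $M\mathrm{rad}R$; the second isomorphism then comes from the annihilator--quotient duality of Definition \ref{annchar}, for which you rightly check $R$-linearity. The text instead proves the second isomorphism first, applies the exact character functor to $0\rightarrow M\mathrm{rad}R\rightarrow M\rightarrow M/M\mathrm{rad}R\rightarrow0$, and gets the inclusion $(\widehat{M}:M\mathrm{rad}R)\subset\mathrm{soc}(\widehat{M})$ from the fact that $\widehat{\;}$ carries the semisimple module $M/M\mathrm{rad}R$ to a semisimple module (simples to simples, direct sums to direct sums), the reverse inclusion being the easy observation that $\mathrm{rad}R$ kills the socle. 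Your argument is shorter and bypasses the character-functor machinery entirely, at the cost of leaning on Proposition \ref{socrad}; the text's argument is longer but makes explicit the fact that the character module of a semisimple module is semisimple, which is reused later (e.g.\ in Proposition \ref{Rhat}). Both approaches ultimately rest on the same ingredients: semisimplicity of $R/\mathrm{rad}R$ and the coincidence of simple modules over $R$ and $R/\mathrm{rad}R$.
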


\begin{proof}\begin{par}Before proceeding in the proof, just remember that $\widehat{\widehat{M}}\cong M$ as groups, and easily check they are isomorphic as right $R$-modules. This, together with the established exactness, give that the character functor takes simple modules to simple modules. Also, the exactness  gives that direct sums are taken to direct sums (Theorem IV.1.18 \cite{hungr}). Another fact is that whenever $N_R\subset M_R$ is a submodule then $(\widehat{M}:N)$ is a left  submodule of $_R\widehat{M}$. We plunge into the proof with the following isomorphism of left $R$-modules.
\begin{align*}(M/M\mathrm{rad}R\widehat{)}&=\mathrm{Hom}_\mathbb{Z}(M/M\mathrm{rad}R,\mathbb{Q/Z})\\
                                                                     &\cong \{f:M \xrightarrow[\mbox{\tiny{hom.}}]{\mbox{\tiny{group}}}\mathbb{Q/Z} \;\;| f(M\mathrm{rad}R)=0\}\\
                                                                     &=(\widehat{M}:M\mathrm{rad}R).\end{align*}\end{par}
Applying the character functor to the short exact sequence  $$0\rightarrow M\mathrm{rad}R\rightarrow M\rightarrow M/M\mathrm{rad}R\rightarrow0,$$we obtain the following short exact sequence of left $R$-modules $$0\rightarrow(\widehat{M}:M\mathrm{rad}R)\rightarrow \widehat{M} \rightarrow(M\mathrm{rad}R\widehat{)}\rightarrow 0.$$
Now, since $\overline{R}=R/\mathrm{rad}R$ is a semisimple ring, all $\overline{R}$-modules are semisimple by Theorem \ref{semisimple rings}. In particular, $(M/M\mathrm{rad}R)_{\overline{R}}$ is semisimple as a right $\overline{R}$-module and is therefore the sum of simple $\overline{R}$-modules.  But Proposition \ref{rad andsim} shows that $R$  and $\overline{R}$ have the same simple modules, hence $(M/M\mathrm{rad}R)_R$ is a semisimple $R$-module, and is the finite direct sum of simple $R$-submodules. By the observations in the beginning of the proof,  $(\widehat{M}:M\mathrm{rad}R)$ is semisimple (being isomorphic to $(M/M\mathrm{rad}R\widehat{)}$, the character module of  a semisimple module). Thus $(\widehat{M}:M\mathrm{rad}R)\subset\mathrm{soc}(\widehat{M}).$\\
Conversely, $\mathrm{rad}R$ annihilates simple modules, hence $\mathrm{rad}R\;\mathrm{soc}(\widehat{M})=0$. This gives that $\mathrm{soc}(\widehat{M})\subset(\widehat{M}:M\mathrm{rad}R)$. The proof is complete.

\end{proof}

\subsection{The Character Module of a Socle}

In this part (and most of the coming parts, actually) let $R$ be a finite  ring (hence artinian), and $_RA$ be a finite left $R$-module. By the Wedderburn-Artin theorem and Wedderburn's little theorem we have an isomorphism $ \varphi$ of rings
\begin{equation}\label{mu}R/\mathrm{rad}R\overset{\varphi}{\cong }\mathbb{M}_{\mu_1}(\mathbb{F}_{q_1})\times\cdots\times\mathbb{M}_{\mu_n}(\mathbb{F}_{q_n}),\end{equation}where  $\mathbb{F}_{q_i}$ is a finite  field of order $q_i$ and $\mu_i$ is an integer.

Recall that $R$ and $R/\mathrm{rad}R$ have the same simple modules. Then, as mentioned in section \ref{wed}, after equation (\ref{action}), the modules $_RT_i$ form a complete list of simple left $R$-modules, and the modules ${T'_i}_R$ form a complete list of simple right $R$-modules. In fact, as $R$-modules,
 \begin{equation}\label{wedec}_R(R/\mathrm{rad}R)\cong \mu_1T_1\oplus\cdots\oplus \mu_nT_n\;\;\text{and}\;\;(R/\mathrm{rad}R)_R\cong \mu_1T'_1\oplus\cdots\oplus \mu_nT'_n.\end{equation}
The socle of the module $A$ is hence decomposed as  $$\mathrm{soc}(A)\cong s_1T_1\oplus\cdots\oplus s_nT_n,$$for some multiplicities $s_1,\ldots,s_n$.

\begin{lemma}\label{simui}Let $R$ be a finite ring and $A$ be a left $R$-module.  $\mathrm{soc}(A)$ is cyclic if and only if $s_i\leq\mu_i$ for $i=1,\ldots,n\;$, where the $\mu_i$'s are as in equation (\ref{mu}), and the $s_i$'s as above.\end{lemma}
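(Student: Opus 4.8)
The plan is to reduce to semisimple modules over $\overline{R}:=R/\mathrm{rad}R$ and then count multiplicities of simple constituents. Since $\mathrm{soc}(A)$ is semisimple, $\mathrm{rad}R$ annihilates it (it kills every simple module; cf.\ Proposition \ref{socrad}); hence $Rm=\overline{R}m$ for every $m\in\mathrm{soc}(A)$, so $\mathrm{soc}(A)$ is cyclic as an $R$-module exactly when it is cyclic as an $\overline{R}$-module, and the $R$- and $\overline{R}$-submodule structures on $\mathrm{soc}(A)$ coincide. Recall from (\ref{wedec}) that $_R(R/\mathrm{rad}R)\cong\mu_1T_1\oplus\cdots\oplus\mu_nT_n$. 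Because $R$ is finite, every module in sight has finite composition length, so by the Jordan--H\"older theorem (Theorem \ref{jordan}) the multiplicity $c_i(M)$ of the simple type $T_i$ in a semisimple module $M$ is well defined and additive over direct sums; in particular $c_i$ of any direct summand of $R/\mathrm{rad}R$ is at most $\mu_i$.

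For the ``if'' direction, assume $s_i\le\mu_i$ for all $i$. Then $R/\mathrm{rad}R\cong\bigl(\bigoplus_i s_iT_i\bigr)\oplus\bigl(\bigoplus_i(\mu_i-s_i)T_i\bigr)$, so there is an internal decomposition $R/\mathrm{rad}R=N\oplus N'$ with $N\cong\bigoplus_i s_iT_i\cong\mathrm{soc}(A)$. From the induced decomposition $1+\mathrm{rad}R=\bar e+\bar f$ of the identity of the ring $R/\mathrm{rad}R$ (exactly as in the proof of Theorem \ref{4.14lam}), we get $N=(R/\mathrm{rad}R)\bar e$ with $\bar e$ an idempotent, so $N$ is cyclic over $R/\mathrm{rad}R$, and hence over $R$ (the $R$-action factors through $R/\mathrm{rad}R$). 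Therefore $\mathrm{soc}(A)\cong N$ is a cyclic $R$-module.

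For the ``only if'' direction, suppose $\mathrm{soc}(A)=Rm$. The map $r\mapsto rm$ yields $\mathrm{soc}(A)\cong R/\mathrm{ann}(m)$, and $\mathrm{rad}R\subset\mathrm{ann}(m)$ since $\mathrm{rad}R$ annihilates $\mathrm{soc}(A)$; hence there is a short exact sequence $0\to K\to R/\mathrm{rad}R\to\mathrm{soc}(A)\to0$. Since $R$ is finite, $R/\mathrm{rad}R$ is a semisimple ring, so this sequence splits (Theorem \ref{semisimple rings}), giving $R/\mathrm{rad}R\cong K\oplus\mathrm{soc}(A)$. Comparing the multiplicity of $T_i$ on the two sides, $\mu_i=c_i(K)+s_i\ge s_i$, which is the desired inequality.

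The only nonformal ingredients are the well-definedness and additivity of simple-constituent multiplicities (Jordan--H\"older) and the fact --- already used in the proof of Theorem \ref{4.14lam} --- that a direct summand of the regular module is generated by an idempotent and hence cyclic; neither is a genuine obstacle. Alternatively, one may argue concretely: over $\mathbb{M}_{\mu_i}(\mathbb{F}_{q_i})$ the regular left module is $\mu_iT_i$, so its cyclic modules are precisely the quotients of $\mu_iT_i$, i.e.\ exactly the modules $sT_i$ with $s\le\mu_i$; and a module over the product ring in (\ref{mu}) is cyclic if and only if each of its components over the factors $\mathbb{M}_{\mu_i}(\mathbb{F}_{q_i})$ is cyclic. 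This makes the statement transparent, but the multiplicity argument above is shorter and self-contained.
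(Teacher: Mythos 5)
Your proof is correct and follows essentially the same route as the paper's: both directions hinge on $\mathrm{rad}R$ killing $\mathrm{soc}(A)$, the semisimplicity of $R/\mathrm{rad}R$, and comparing multiplicities of the $T_i$ after realizing $\mathrm{soc}(A)$ as an epimorphic image (hence, by splitting, a direct summand) of $_R(R/\mathrm{rad}R)$. The only cosmetic difference is that in the ``if'' direction you exhibit a cyclic generator via an idempotent for the summand $N\cong\bigoplus_i s_iT_i$, whereas the paper simply composes the evident epimorphisms $R\rightarrow R/\mathrm{rad}R\rightarrow\mathrm{soc}(A)$.
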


\begin{proof}First, just recall that $_RM$ is cyclic if and only if there is an epimorphism $_RR\rightarrow{}_RM$. If, for each $i$, $s_i\leq\mu_i$, then there is clearly an epimorphism  $R/\mathrm{rad}R\rightarrow \mathrm{soc}(A)$ of left $R$-modules. Then, we have an epimorphism $R\rightarrow R/\mathrm{rad}R\rightarrow \mathrm{soc}(A)$ and we  are done.

Conversely, assume that $\mathrm{soc}(A)=Ra$, for some $a\in \mathrm{soc}(A)$. The map $\theta:r\mapsto ra$ is an epimorphism from $R$ onto $\mathrm{soc}(A)$. Now, $$\mathrm{soc}(A)\cong R/\mathrm{Ker} \theta\cong(R/\mathrm{rad}R)/(\mathrm{Ker}\theta/\mathrm{rad}R),$$ where, of course, $\mathrm{rad}R\subset\mathrm{Ker}\theta$ since $\mathrm{soc}(A)$ is semisimple. But $R/\mathrm{rad}R$ is a semisimple left $R$-module, and  $\mathrm{Ker}\theta/\mathrm{rad}R$ is then a direct summand of $R/\mathrm{rad}R$. This gives a monomorphism $\mathrm{soc}(A)\rightarrow R/\mathrm{rad}R$, and the desired condition on the multiplicities holds indeed.

\end{proof}
Actually, the proof works for artinian rings. Also, we could have used Proposition \ref{socrad} directly.

\begin{lemma}\label{Mhat} Let $S$ be the ring $\mathbb{M}_n(\mathbb{F}_q)$, where $\mathbb{F}_q$ is a finite field. Let $M_S$ be the unique simple  right $S$-module ($M\cong \mathbb{F}_q^n$ as right $S$-modules), and let $_SN$ be the unique simple left $S$-module ($N\cong \mathbb{F}_q^n$ as left $S$-modules). Then \begin{center}$\widehat{M}\cong N \;\;\text{and}\;\; \widehat{N}\cong M.$\end{center}\end{lemma}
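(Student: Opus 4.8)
The plan is to identify the character group $\widehat{M}$ of the simple right $S$-module $M$ (where $S=\mathbb{M}_n(\mathbb{F}_q)$) as a left $S$-module, and to recognize it as $N$. Since $M\cong \mathbb{F}_q^n$ has $|M|=q^n$ elements, Proposition \ref{char}(3) gives $|\widehat{M}|=q^n$, so $\widehat{M}$ is a left $S$-module of the same cardinality as $N$. The key point is that $\widehat{M}$ is nonzero and that $S$ has — up to isomorphism — a unique simple left module $N$ with $|N|=q^n$; so it suffices to show $\widehat{M}$ is simple (equivalently, that it is nonzero and semisimple of the right size, or directly that it has no proper nonzero left submodules). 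Then $\widehat{M}\cong N$ follows, and by the symmetric argument (or by applying the functor again and using $\widehat{\widehat{M}}\cong M$ from the discussion preceding Proposition \ref{hynf3}) we get $\widehat{N}\cong M$ as well.

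First I would recall that the character functor $\;\widehat{ }\;$ is exact (Proposition \ref{ex}) and hence takes simple modules to simple modules — this fact is already noted in the proof of Proposition \ref{hynf3}. Indeed, if $M$ is simple as a right $S$-module, then any nonzero left $S$-submodule $L\subsetneq \widehat{M}$ would, under the (exact, hence mono- and epi-preserving) contravariant functor, yield a proper nonzero quotient $M\twoheadrightarrow \widehat{L}$ with $\widehat{L}\neq 0$, contradicting simplicity of $M$; so $\widehat{M}$ is simple. Thus $\widehat{M}$ is a simple left $S$-module, and since $S=\mathbb{M}_n(\mathbb{F}_q)$ has exactly one simple left module up to isomorphism, namely $N$, we conclude $\widehat{M}\cong N$. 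Running the identical argument with the roles of left and right interchanged gives $\widehat{N}\cong M$. Alternatively, once $\widehat{M}\cong N$ is established, apply $\;\widehat{ }\;$ once more: $\widehat{N}\cong\widehat{\widehat{M}}\cong M$, using the natural isomorphism $\widehat{\widehat{M}}\cong M$ of modules recorded earlier.

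The only genuine content is checking that the left $S$-module structure on $\widehat{M}$ (given by $(r\varpi)(m)=\varpi(mr)$) is the one making the above categorical argument apply, i.e. that $\;\widehat{ }\;$ really is a contravariant exact functor between the relevant module categories — but this is exactly Propositions on the character functor already proved in the excerpt, so there is nothing new to do. I expect the main (minor) obstacle to be purely bookkeeping: being careful that "simple right $S$-module" is sent to "simple left $S$-module" and not getting the sides confused, and noting that over $\mathbb{M}_n(\mathbb{F}_q)$ the simple module is unique on each side (this is the Wedderburn--Artin remark after equation (\ref{action})), so that a simple left module of the correct order is forced to be $N$. No delicate estimate or construction is required; the cardinality count from Proposition \ref{char}(3) plus exactness of $\;\widehat{ }\;$ does everything.
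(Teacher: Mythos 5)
Your proof is correct, but it follows a genuinely different route from the one in the text. You argue abstractly: using exactness of the character functor (Proposition \ref{ex}) and $\widehat{\widehat{M}}\cong M$, a proper nonzero left submodule $L\subset\widehat{M}$ would dualize to a surjection of $M$ onto the nonzero module $\widehat{L}$ with $|\widehat{L}|=|L|<|M|$, whose kernel would be a proper nonzero submodule of $M$; hence $\widehat{M}$ is a simple left $S$-module, and since $\mathbb{M}_n(\mathbb{F}_q)$ has a unique simple left module up to isomorphism, $\widehat{M}\cong N$, with $\widehat{N}\cong M$ by symmetry or by dualizing once more. The paper instead exhibits the isomorphism explicitly: choosing a nontrivial character $\chi$ of $\mathbb{F}_q$ and viewing elements of $M$ and $N$ as row and column vectors, it sends $v\mapsto\pi_v$ with $\pi_v(m)=\chi(mv)$, which is left $S$-linear since $\pi_v(ms)=\pi_{sv}(m)$, injective because $v\neq 0$ forces $\{mv\colon m\in M\}=\mathbb{F}_q$, and bijective by the count $|\widehat{M}|=|M|=q^n=|N|$ from Proposition \ref{char}. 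Your version costs almost no computation and is more general --- it really shows that the dual of any simple module over any finite ring is simple, which is the observation already recorded in the proof of Proposition \ref{hynf3} and is precisely what gives the corollary $\widehat{T'_i}\cong T_i$ following the lemma --- at the price of invoking the Wedderburn uniqueness of the simple module; the paper's construction buys a concrete formula for the pairing $(m,v)\mapsto\chi(mv)$, in the same explicit spirit as the generating-character arguments used later. Either argument is a complete proof of the statement.
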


\begin{proof} $\widehat{\mathbb{F}}_q$ is a 1-dimensional vector space over $\mathbb{F}_q$, let $\chi\in \widehat{\mathbb{F}}_q$ be a basis (any nonzero element). Viewing the elements of $M$ and $N$, respectively, as row and column vectors, the map $N\rightarrow \widehat{M}$, given by $v\mapsto \pi_v$, where $\pi_v(m)=\chi(mv), m\in M$, is an isomorphism of left $S$-modules. \end{proof}

This yields immediately the next corollary. ($T_i,T'_i$ are as in the beginning of the section.)
\begin{corollary}If $R$ is  a finite ring then $$\widehat{T'}_i\cong T_i \;\; \text{ and }\;\;  \widehat{T}_i\cong T'_i\; ,\quad i=1,\ldots,n.$$\end{corollary}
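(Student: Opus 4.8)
The plan is to reduce the statement to Lemma \ref{Mhat} by means of one elementary observation: the character functor commutes with restriction of scalars along a ring homomorphism. Write $\overline R = R/\mathrm{rad}R$ and, for each $i$, let $\psi_i : R \to \mathbb{M}_{\mu_i}(\mathbb{F}_{q_i})$ be the composite of the quotient map $R \to \overline R$, the isomorphism $\varphi$ of (\ref{mu}), and the $i$-th projection $\pi_i$. By the Remark following (\ref{action}), $T_i$ is exactly the restriction of scalars along $\psi_i$ of the unique simple left module over $\mathbb{M}_{\mu_i}(\mathbb{F}_{q_i})$, and $T'_i$ is the restriction of scalars along $\psi_i$ of the unique simple right module over $\mathbb{M}_{\mu_i}(\mathbb{F}_{q_i})$.

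First I would record the compatibility. Let $\psi : R \to S$ be a ring homomorphism, and for a right $S$-module $M$ let $\psi^* M$ denote the right $R$-module with the same underlying group and action $m\cdot r := m\,\psi(r)$. Then $\widehat{\psi^* M} = \psi^*(\widehat M)$ as left $R$-modules: both are $\mathrm{Hom}_\mathbb{Z}(M,\mathbb{Q}/\mathbb{Z})$ as groups, and for $r\in R$, $\varpi\in\widehat M$, $m\in M$ we have $(r\varpi)(m)=\varpi(m\cdot r)=\varpi(m\,\psi(r))=(\psi(r)\,\varpi)(m)$, so the left $R$-action on $\widehat{\psi^* M}$ is precisely the one pulled back from the left $S$-action on $\widehat M$. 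Interchanging the roles of left and right gives $\widehat{\psi^* N} = \psi^*(\widehat N)$ for any left $S$-module $N$. Since $\psi^*$ is a functor, it carries isomorphisms to isomorphisms.

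Now I would assemble the proof. Fix $i$ and apply the above with $\psi = \psi_i$ and $S = \mathbb{M}_{\mu_i}(\mathbb{F}_{q_i})$; let $M$ and $N$ be the unique simple right and left $S$-modules, so that $T'_i = \psi_i^* M$ and $T_i = \psi_i^* N$. Lemma \ref{Mhat} gives $\widehat M \cong N$ and $\widehat N \cong M$ as $S$-modules. Applying $\psi_i^*$ and the compatibility just established,
\[
\widehat{T'_i}=\widehat{\psi_i^* M}=\psi_i^*(\widehat M)\cong\psi_i^*(N)=T_i,\qquad
\widehat{T_i}=\widehat{\psi_i^* N}=\psi_i^*(\widehat N)\cong\psi_i^*(M)=T'_i,
\]
for $i=1,\dots,n$. (One could instead deduce the second isomorphism from the first together with $\widehat{\widehat{T_i}}\cong T_i$.)

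I do not expect a genuine obstacle here: the substance is entirely contained in Lemma \ref{Mhat}, and the only care needed is in tracking the module structures through the composite $R\twoheadrightarrow\overline R\xrightarrow{\varphi}\prod_j\mathbb{M}_{\mu_j}(\mathbb{F}_{q_j})\xrightarrow{\pi_i}\mathbb{M}_{\mu_i}(\mathbb{F}_{q_i})$, and in noting that Lemma \ref{Mhat}, although phrased with $\mathrm{Hom}_\mathbb{Z}(-,\mathbb{C}^\times)$, applies verbatim to $\mathrm{Hom}_\mathbb{Z}(-,\mathbb{Q}/\mathbb{Z})$ via the identification of the two character groups made in \S\ref{characters}.
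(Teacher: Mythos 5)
Your proposal is correct and is essentially the paper's own argument: the paper simply asserts that the corollary follows immediately from Lemma \ref{Mhat}, since $T_i$ and $T'_i$ are the pullbacks along $R\rightarrow R/\mathrm{rad}R\cong\prod_j\mathbb{M}_{\mu_j}(\mathbb{F}_{q_j})\rightarrow\mathbb{M}_{\mu_i}(\mathbb{F}_{q_i})$ of the simple matrix modules, and your observation that the character functor commutes with restriction of scalars is exactly the justification of that immediacy. Nothing is missing; the check of the module structures and the remark about $\mathbb{Q}/\mathbb{Z}$ versus $\mathbb{C}^\times$ are both handled correctly.
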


\begin{proposition}\label{Rhat}For any finite left $R$-module, $\mathrm{soc}(A)$ is cyclic if and only if $A$ can be embedded into $_R\widehat{R}$. \end{proposition}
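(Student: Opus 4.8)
The plan is to read off both implications from the structure results already in hand, using the character functor as the bridge between cyclicity of $\mathrm{soc}(A)$ and embeddability into $_R\widehat{R}$. First I would record the two decompositions from equations (\ref{wedec}): $(R/\mathrm{rad}R)_R \cong \mu_1 T'_1 \oplus \cdots \oplus \mu_n T'_n$, together with $\mathrm{soc}(A) \cong s_1 T_1 \oplus \cdots \oplus s_n T_n$. Applying the (exact, self-inverse up to natural iso) character functor to the first and using the corollary $\widehat{T'}_i \cong T_i$, I get $\widehat{(R/\mathrm{rad}R)} \cong \mu_1 T_1 \oplus \cdots \oplus \mu_n T_n$ as left $R$-modules. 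By Proposition \ref{hynf3} applied to the right module $R_R$, this is exactly $\mathrm{soc}(_R\widehat{R})$; so $\mathrm{soc}(_R\widehat{R}) \cong \mu_1 T_1 \oplus \cdots \oplus \mu_n T_n$. Now Lemma \ref{simui} says $\mathrm{soc}(A)$ is cyclic precisely when $s_i \le \mu_i$ for all $i$, i.e. precisely when $\mathrm{soc}(A)$ embeds into $\mathrm{soc}(_R\widehat{R})$.

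For the forward direction, assume $\mathrm{soc}(A)$ is cyclic, so $s_i \le \mu_i$ for all $i$ and there is a monomorphism $\mathrm{soc}(A) \hookrightarrow \mathrm{soc}(_R\widehat{R}) \subseteq {}_R\widehat{R}$. The point is to extend this to $A$: since $\widehat{R}$ is an injective $R$-module (Corollary \ref{Rhatinj}), the inclusion $\mathrm{soc}(A) \hookrightarrow A$ pushes the map $\mathrm{soc}(A) \to \widehat{R}$ forward to a homomorphism $g : A \to \widehat{R}$. It remains to check $g$ is injective; this is the standard fact that a homomorphism out of a module which is injective on the socle is injective, because $\ker g$ is a submodule meeting $\mathrm{soc}(A)$ trivially, and any nonzero submodule of a module over an artinian ring contains a simple submodule, hence meets the socle — so $\ker g = 0$.

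For the converse, suppose $\iota : A \hookrightarrow {}_R\widehat{R}$. Then $\iota$ restricts to a monomorphism $\mathrm{soc}(A) \hookrightarrow \mathrm{soc}(_R\widehat{R})$, because the socle is the sum of all simple submodules and a monomorphism carries simple submodules to simple submodules. Comparing multiplicities in the two semisimple decompositions computed above forces $s_i \le \mu_i$ for every $i$, and Lemma \ref{simui} then gives that $\mathrm{soc}(A)$ is cyclic.

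I expect the only genuinely delicate point to be the computation $\mathrm{soc}(_R\widehat{R}) \cong \mu_1 T_1 \oplus \cdots \oplus \mu_n T_n$ — i.e. making sure the character functor is applied on the correct side (to $R_R$, producing a left module) and that Proposition \ref{hynf3} is invoked with $M = R_R$ so that $M\,\mathrm{rad}R = \mathrm{rad}R$ and $M/M\mathrm{rad}R = (R/\mathrm{rad}R)_R$. The injectivity-of-$g$ argument and the multiplicity comparison are routine once that identification is pinned down, and both rely only on $R$ being artinian, which holds since $R$ is finite.
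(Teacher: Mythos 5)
Your proposal is correct and follows essentially the same route as the paper's own proof: compute $\mathrm{soc}(_R\widehat{R})\cong\bigoplus_i\mu_iT_i$ via Proposition \ref{hynf3} applied to $R_R$, exactness, and $\widehat{T'}_i\cong T_i$; use Lemma \ref{simui} and the multiplicity comparison for one direction; and for the other, extend the socle embedding by injectivity of $_R\widehat{R}$ (Corollary \ref{Rhatinj}) and kill the kernel by noting it meets $\mathrm{soc}(A)$ trivially. No gaps to report.
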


\begin{proof}Applying Proposition \ref{hynf3} to $R_R$, and that the exactness of the character functor (Proposition \ref{ex}), we get
\begin{align*}\mathrm{soc}(_R\widehat{R})\cong(R/\mathrm{rad}R\widehat{)_R}\cong (\overset{n}{\underset{i=1}{\oplus}}\mu_iT'_i\widehat{)}\cong \overset{n}{\underset{i=1}{\oplus}}\mu_i \widehat{T'}_i\cong \overset{n}{\underset{i=1}{\oplus}}\mu_iT_i.\end{align*}
If $A$ can be embedded into $_R\widehat{R}$, then $\mathrm{soc}(A)$ is isomorphic to a submodule of $\mathrm{soc}(_R\widehat{R})$, implying that $s_i\leq\mu_i$ for all $i$, and so $\mathrm{soc}(A)$ is cyclic.

Conversely, if $\mathrm{soc}(A)$ is cyclic, then from Lemma \ref{simui}, there is an embedding (monomorphism) $f:\mathrm{soc}(A)\rightarrow\mathrm{soc}(_R\widehat{R})\subset{ _R\widehat{R}}$.  But $_R\widehat{R}$ is injective by Corollary \ref{Rhatinj}, and hence the homomorphism (monomorphism, actually) $f$ extends to a homomorphism $F:A\rightarrow {_R\widehat{R}}$. We show that, further, $F$ is also a monomorphism.

Notice that $\mathrm{soc}(\mathrm{Ker}F)=\mathrm{Ker}F\cap\mathrm{soc}(A)=\mathrm{Ker} f=0$. Since all the modules here are finite, this gives at once that $\;\mathrm{Ker}F=0$.

\end{proof}

\chapter{Extension Theorem: \\Hamming Weight Setting}

\begin{par}The Extension Property (EP), with all its variations, stands as the main generator for the whole thesis. To start, we retract to the classical case in which codes are vector spaces over finite fields, where the early theorems of Florence J. MacWilliams hold. Two codes $C_1$ and $C_2$ may be thought referring to the same object in a couple of situations: (1) First, if $C_1$ and $C_2$ are isomorphic vector spaces via an isomorphism preserving Hamming weight, or, (2) if the two codes are monomially equivalent.\end{par}
\begin{par}In 1962, in her doctoral dissertation, MacWilliams showed the two notions are equivalent within the classical context over finite fields. Before proceeding, let's rephrase what is meant by saying the alphabet $_RA$ is MacWilliams (i.e. satisfying the Hamming weight EP):
$_RA$ is MacWilliams if any monomorphism $f:C\rightarrow A^n$ preserving Hamming weight extends to a monomial transformation. In this chapter we prove MacWilliams theorem following the character-theoretic proof introduced by H. Ward and J. Wood in \cite{r1996}. This kind of proofs turned out to be more dynamic, as we shall see.\end{par}
\section{Algebraic Coding}
\begin{par}Although the results will appear so abstract, the applied origin from coding theory is worth mention. It may now be the time to answer a question that probably haunted the reader: Why the objects are referred to as \emph{codes}? The few elements from coding theory quoted below provide a fine answer, and see \cite{Lint}, \cite{Mac book} for more.\end{par}
\begin{par} Imagine a message being sent as a sequence of $0|1$ bits from a transmitter, through some sort of channel (medium). Due to any potential disturbance, or a channel malfunction, an error may occur, causing, say, a 0-bit to be received as a 1-bit. The study of how messages (constituted as sequences from certain alphabets) are \emph{encoded} as elements of a special set, namely \emph{code}, in such a way that, when \emph{decoded} by the receiver, a correct message is delivered, is the subject of Coding Theory. A code, being a set, may have an algebraic structure, which codes are subject to our investigations here in the thesis. \end{par}

\begin{definition}(Linear Codes over $\mathbb{F}_q$). A \emph{linear code} $C$ of length $n$ over the finite field $\mathbb{F}_q$ is a subspace of the vector space $\mathbb{F}^n_q$. An element of $C$ is called a \emph{codeword}. If $C$ is a $k$-dimensional subspace of $\mathbb{F}^n_q$, we say that $C$ is an $[n,k]$ linear code over $\mathbb{F}_q$.\end{definition}

The following two definitions are used for representing an $[n,k]$ linear code by a matrix.
\begin{definition}(Generator Matrix). Let  $C\subset\mathbb{F}^n_q$ be an $[n,k]$ linear code. A $k\times n$ matrix $G$ over  $\mathbb{F}_q$ whose row space coincides with $C$ is called a generator matrix for $C$.\end{definition}
If $G$ is a generator matrix for $C$, then $C=\{xG|x\in \mathbb{F}^k_q\}$. A code may have more than one generator matrix. A generator matrix may be reduced to the form $G'=[I_k|A]$, where $I_k$ is the $k\times k$ identity matrix. In this case we say that $G'$ is in the standard form. If $G$ is in standard form, then the first $k$ entries of a codeword are called the information symbols. The remaining $n-k$ symbols are often called parity check symbols.
\begin{example}\label{eeee}Define the $[7,4]$ code $C$ over  $\mathbb{F}_2$ through the generator matrix
$$G=\begin{bmatrix}
       1&0&0&0&1&1&1 \\[0.3em]
       0&1&0&0&0&1&1\\[0.3em]
       0&0&1&0&1&0&1 \\[0.3em]
       0&0&0&1&1&1&0
     \end{bmatrix}.$$\end{example}
Thus, $G$ is in the standard form, and the coding scheme is the linear map
$\mathbb{F}^4_2\rightarrow\mathbb{F}^7_2$ given by$$(c_1,c_2,c_3,c_4)\mapsto(c_1,c_2,c_3,c_4,c_1+c_3+c_4,c_1+c_2+c_4,c_1+c_2+c_3).$$
\begin{definition}(Parity Check Matrix). Let $C\subset\mathbb{F}^n_q$ be an $[n,k]$ linear code. An $(n-k)\times n$ matrix $H$ over  $\mathbb{F}_q$, of rank $n-k$, is a \emph{parity check matrix} of $C$ if $C=\{c\;|\; Hc^T=0\}$. A code may have more than one parity check matrix.
\end{definition}

\begin{theorem}If $G=[I_k|A]$ is a generator matrix, in the standard form, for the $[n,k]$ linear code $C$, then $H=[-A^T|I_{n-k}]$ is a parity check matrix of $C$.\end{theorem}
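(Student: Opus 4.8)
The plan is to verify the two defining requirements of a parity check matrix: that $H$ has size $(n-k)\times n$ and rank $n-k$, and that the solution set of the homogeneous system $Hc^T=0$ coincides with $C$. The first point is immediate: since $A$ is $k\times(n-k)$, its transpose $A^T$ is $(n-k)\times k$, so $H=[-A^T\,|\,I_{n-k}]$ is indeed $(n-k)\times n$, and the presence of the identity block $I_{n-k}$ forces $\mathrm{rank}(H)=n-k$.

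For the inclusion $C\subseteq\{c:Hc^T=0\}$, I would write a generic codeword as $c=xG=(x,\,xA)$ for $x\in\mathbb{F}^k_q$, and compute directly
$$Hc^T=-A^Tx^T+(xA)^T=-A^Tx^T+A^Tx^T=0,$$
using $(xA)^T=A^Tx^T$. Equivalently, one checks the single block identity $GH^T=I_k(-A)+A\,I_{n-k}=0$, which says that every row of $G$, and hence every element of its row space $C$, lies in the kernel of the map $c\mapsto Hc^T$.

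For the reverse inclusion I would use a dimension count rather than solving the system explicitly. The solution set $\{c:Hc^T=0\}$ is the (right) null space of $H$, which by rank--nullity has dimension $n-\mathrm{rank}(H)=n-(n-k)=k$. Since $C$ is a $k$-dimensional subspace of $\mathbb{F}^n_q$ contained in this null space and the two subspaces have equal finite dimension, they must coincide. Therefore $C=\{c:Hc^T=0\}$, and together with the rank computation this shows $H$ is a parity check matrix of $C$.

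I do not anticipate a genuine obstacle here; the statement is essentially a bookkeeping exercise. The only points that require a little care are keeping the transposes straight in the block multiplication $GH^T$ and invoking rank--nullity over $\mathbb{F}_q$ so that the second inclusion follows by comparing dimensions instead of parametrizing all solutions of $Hc^T=0$ by hand.
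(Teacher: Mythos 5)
Your proposal is correct and follows essentially the same route as the paper: the block computation (the paper writes it as $HG^T=-A^T+A^T=0$) puts $C$ inside the null space of $c\mapsto Hc^T$, and then rank--nullity together with $\dim C=k$ forces equality. No gaps to report.
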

\begin{proof} We clearly have $HG^T=-A^T+A^T=0$. Thus, $C$ is in the kernel of $x\mapsto Hx^T$. Since $H$ has rank $n-k$, the kernel of this map has dimension $k$, which is the same as the dimension of $C$. Hence, $C=\{c\;|\; Hc^T=0\}$. \end{proof}

\begin{example} For the code in Example \ref{eeee}, a parity check matrix can be found using the previous theorem:
$$H=\begin{bmatrix}
       1&0&1&1&1&0&0 \\[0.3em]
       1&1&0&1&0&1&0\\[0.3em]
       1&1&1&0&0&0&1 \\[0.3em]

     \end{bmatrix}.$$\end{example}

\begin{definition}(Hamming Weight). Let $C\subset\mathbb{F}^n_q$ be a linear code of length $n$. The \emph{Hamming weight} of a codeword $c=c_1,\ldots,c_n\in C$, is the number of nonzero components in the vector $c$.
\end{definition}

In many problems in algebraic coding theory, we need to have a detailed knowledge of the weight distribution of a code, i.e. knowing how many codewords have one weight or another. This is what weight enumerators are defined for.
\begin{definition}(Hamming Weight Enumerator). The \emph{Hamming weight enumerator} of a linear code $C$ of length $n$ is given by the polynomial $$W_C(X)=\underset{n}{\overset{j=0}{\sum}}A_jX^j,$$where $A_j$ denotes the number of codewords with Hamming weight $j$. By substituting $Y/X$ for $X$ and multiplying by $X^n$, $W_C(X)$ is converted into the two-variable weight enumerator $$W_C(X,Y)=\underset{n}{\overset{j=0}{\sum}}A_jX^{n-j}Y^j.$$\end{definition}

Among the fundamental results in coding theory are the  MacWilliams identities, proved in 1962 \cite{Mac}. The identities relate the Hamming weight enumerators of a code to those of its dual code $C^\bot$, generated by a parity check matrix of $C$.

\begin{theorem}(MacWilliams Identities). Let $C$ be a linear code of length $n$ over a finite field $\mathbb{F}_q$. The Hamming weight enumerators of $C$ and $C^\bot$ satisfy the following,
\begin{center}$W_{C^\bot}(X)=\textstyle{\tfrac{(1+(q-1)X)^n}{|C|}}W_C(\textstyle{\tfrac{1-X}{1+(q-1)X}})$,\\
$W_{C^\bot}(X,Y)=\textstyle{\tfrac{1}{|C|}} W_C(X+(q-1)Y, X-Y)$.\end{center}\end{theorem}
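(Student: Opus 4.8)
The plan is to run the classical character-theoretic argument, leaning on the orthogonality relations of Proposition~\ref{char}. Fix a nontrivial additive character $\chi$ of the additive group $\mathbb{F}_q$; one exists since $|\widehat{\mathbb{F}_q}|=|\mathbb{F}_q|=q>1$. For a function $f\colon\mathbb{F}_q^n\to\mathbb{C}[X,Y]$, define its transform coefficientwise by $\widehat f(y)=\sum_{x\in\mathbb{F}_q^n}\chi(\langle x,y\rangle)\,f(x)$, where $\langle x,y\rangle=\sum_{i=1}^n x_i y_i$ is the standard pairing. The core of the proof is the Poisson summation formula
\[
\sum_{y\in C^\bot}\widehat f(y)=|C^\bot|\sum_{x\in C}f(x)=\tfrac{q^n}{|C|}\sum_{x\in C}f(x).
\]
I would prove this by interchanging the two summations: $\sum_{y\in C^\bot}\widehat f(y)=\sum_x f(x)\sum_{y\in C^\bot}\chi(\langle x,y\rangle)$, and for fixed $x$ the map $y\mapsto\chi(\langle x,y\rangle)$ is a character of the finite abelian group $C^\bot$, so by part~5 of Proposition~\ref{char} the inner sum is $|C^\bot|$ when that character is trivial and $0$ otherwise. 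It is trivial precisely when $x\in(C^\bot)^\bot$, and $(C^\bot)^\bot=C$ by the dimension count $\dim C^\bot=n-\dim C$ together with the trivial inclusion $C\subseteq(C^\bot)^\bot$. Combining this with $|C^\bot|=q^n/|C|$ gives the displayed identity.

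Next I would specialize to $f(x)=X^{\,n-\mathrm{wt}(x)}Y^{\mathrm{wt}(x)}=\prod_{i=1}^n g(x_i)$, where $g\colon\mathbb{F}_q\to\mathbb{C}[X,Y]$ is $g(0)=X$ and $g(a)=Y$ for $a\neq0$. Since $f$ is multiplicative in the coordinates and $\chi(\langle x,y\rangle)=\prod_i\chi(x_i y_i)$, the transform factors: $\widehat f(y)=\prod_{i=1}^n\widehat g(y_i)$, where $\widehat g(b)=\sum_{a\in\mathbb{F}_q}\chi(ab)g(a)$. A one-line computation gives $\widehat g(0)=X+(q-1)Y$, while for $b\neq0$, using $\sum_{a\in\mathbb{F}_q}\chi(ab)=0$ (again part~5 of Proposition~\ref{char}, as $a\mapsto\chi(ab)$ is then a nontrivial character), $\widehat g(b)=X+Y\bigl(\textstyle\sum_{a\neq0}\chi(ab)\bigr)=X-Y$. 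Hence $\widehat f(y)=(X+(q-1)Y)^{\,n-\mathrm{wt}(y)}(X-Y)^{\mathrm{wt}(y)}$, so $\sum_{y\in C^\bot}\widehat f(y)=W_{C^\bot}(X+(q-1)Y,\,X-Y)$, whereas $\sum_{x\in C}f(x)=W_C(X,Y)$.

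Substituting these into Poisson summation yields $W_{C^\bot}(X+(q-1)Y,X-Y)=\tfrac{q^n}{|C|}W_C(X,Y)$; applying the same relation with $C^\bot$ in place of $C$ (and using $(C^\bot)^\bot=C$, $|C^\bot|=q^n/|C|$) rearranges to the asserted two-variable identity $W_{C^\bot}(X,Y)=\tfrac1{|C|}W_C(X+(q-1)Y,X-Y)$. The one-variable form then follows by homogenization: put $X\mapsto1$, $Y\mapsto X$ and recall $W_C(X,Y)=X^n W_C(Y/X)$, so that factoring out $(1+(q-1)X)^n$ gives $W_{C^\bot}(X)=\tfrac{(1+(q-1)X)^n}{|C|}W_C\!\bigl(\tfrac{1-X}{1+(q-1)X}\bigr)$. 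The only genuinely delicate step is the Poisson summation formula itself — concretely, justifying the restricted orthogonality sum over the subgroup $C^\bot\le\mathbb{F}_q^n$ and the double-annihilator equality $(C^\bot)^\bot=C$; everything else is bookkeeping with the elementary transform $\widehat g$.
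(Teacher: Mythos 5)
The paper states this classical identity without proof (it is simply quoted from MacWilliams's 1962 dissertation), so there is no in-text argument to measure yours against; on its own terms your proof is correct and is the standard discrete Poisson-summation argument, which in fact sits comfortably beside the paper's own character toolkit. The key step, $\sum_{y\in C^\bot}\chi(\langle x,y\rangle)=|C^\bot|$ if $x\in(C^\bot)^\bot=C$ and $0$ otherwise, is exactly an application of the orthogonality relation in Proposition \ref{char}; the coordinatewise factorization of $f(x)=\prod_i g(x_i)$ with $\widehat g(0)=X+(q-1)Y$ and $\widehat g(b)=X-Y$ for $b\neq 0$ is computed correctly; and the swap $C\leftrightarrow C^\bot$ together with $|C^\bot|=q^n/|C|$ does rearrange your Poisson identity into the stated two-variable form, with the one-variable form following by the homogenization $W_C(X,Y)=X^nW_C(Y/X)$ that the paper's definition provides. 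Two small points you should write out if this were incorporated: (i) the claim that $y\mapsto\chi(\langle x,y\rangle)$ is trivial on $C^\bot$ \emph{precisely} when $x\in(C^\bot)^\bot$ needs the one-line observation that $\{\langle x,y\rangle:\,y\in C^\bot\}$ is an $\mathbb{F}_q$-subspace of $\mathbb{F}_q$, hence equals $0$ or all of $\mathbb{F}_q$, and $\chi$ is nontrivial on the latter; (ii) since the paper defines $C^\bot$ as the row space of a parity check matrix, the facts $\dim C^\bot=n-\dim C$, $C\subseteq (C^\bot)^\bot$, and hence $(C^\bot)^\bot=C$ should be tied to the relation $HG^T=0$ and the rank statement in the theorem on parity check matrices that precedes the identity. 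Neither point is a substantive gap.
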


\section{ MacWilliams Theorem for Codes over Finite Fields }
In this classical context, where the codewords have entries from a finite field $\mathbb{F}_q$, the extension problem arose when two notions of equivalence of codes were posed.

\begin{definition}(Weight). A \emph{weight} defined on the finite field $\mathbb{F}_q$ is a function $w: \mathbb{F}_q\rightarrow\mathbb{Q}$ with $w(0)=0$. This function is then extended naturally to a weight on $\mathbb{F}^n_q$ by defining $w(x_1,\ldots,x_n)=\overset{n}{\underset{i=1}{\sum}}w(x_i)$. In particular, the Hamming weight, $\mathrm{wt}$, may be defined by $\mathrm{wt}(a)=\left\{ \begin{smallmatrix}1&,\;a\neq0\\0&,\;a=0\end{smallmatrix}\right. $, for all $a\in\mathbb{F}_q$.
\end{definition}

\begin{definition}(Preservation of Weight). A function $f: \mathbb{F}^n_q\rightarrow\mathbb{F}^n_q$ is said to \emph{preserve the weight} $w$ if  $w(x)=w(f(x))$ for every $x\in\mathbb{F}^n_q$.
\end{definition}

\begin{definition}(Isometries).  Let $C_1$ and $C_2$ be two codes of length $n$ over  $ \mathbb{F}^n_q$. We say that $\phi:C_1\rightarrow C_2$ is an \emph{isometry} if  $\phi$ is a vector space isomorphism that preserves Hamming weight, and then we say $C_1$ and $C_2$ are\emph{ isometric}.
\end{definition}

Isometries give a notion for the equivalence of codes, another notion is that carried by \emph{monomial equivalence}.

\begin{definition}(Monomial Transformation). A linear map $T: \mathbb{F}^n_q\rightarrow\mathbb{F}^n_q$ is called a \emph{monomial transformation} if $$T(x_1,\ldots,x_n)=(\lambda_1(x_{\sigma(1)}),\ldots,\lambda_n(x_{\sigma(n)})),$$ where $\sigma$ is a permutation of $\{1,\ldots,n\}$, and the $\lambda_i\in\mathbb{F}_q$ and  $(x_1,\ldots,x_n)\in\mathbb{F}^n_q$. $C_1$ and $C_2$ are said to be \emph{monomially equivalent} if there is a monomial transformation $T$ such that $T(C_1)=C_2.$
\end{definition}

Apparently enough, any two monomially equivalent codes over $\mathbb{F}_q$ are also isometric, but is the converse true? In her doctoral thesis \cite{Mac}, MacWilliams showed the two notions are equivalent for codes over finite fields. The result is known by ``MacWilliams Extension Theorem'' or ``MacWilliams Equivalence Theorem''.

\begin{theorem}\label{mac}(MacWilliams Extension Theorem, 1962). Two linear codes of length $n$ over a finite field $\mathbb{F}_q$ are isometric if and only if they are monomially equivalent.\end{theorem}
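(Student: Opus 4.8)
The ``only if'' direction is the substantive one (the converse is immediate: a monomial transformation of $\mathbb{F}_q^n$ clearly preserves Hamming weight and is a linear isomorphism, so it restricts to an isometry $C_1 \to C_2$). So the plan is to take an isometry $f\colon C_1 \to C_2$ and manufacture a monomial transformation $T\colon \mathbb{F}_q^n \to \mathbb{F}_q^n$ extending it. The whole argument runs through the character theory developed in Section~\ref{characters}, following the Ward--Wood scheme.

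First I would set up coordinates. Identify $C_1$ with its image under the generator matrix, i.e.\ write the $i$-th coordinate of a codeword as a linear functional $\lambda_i \colon C_1 \to \mathbb{F}_q$, so that the embedding $C_1 \hookrightarrow \mathbb{F}_q^n$ is $c \mapsto (\lambda_1(c), \dots, \lambda_n(c))$; similarly let $\mu_1, \dots, \mu_n \colon C_2 \to \mathbb{F}_q$ be the coordinate functionals of $C_2$, and set $\nu_j = \mu_j \circ f \colon C_1 \to \mathbb{F}_q$. The goal becomes: show there is a permutation $\sigma$ of $\{1,\dots,n\}$ and nonzero scalars $c_j \in \mathbb{F}_q^\times$ with $\nu_j = c_j \lambda_{\sigma(j)}$ for all $j$. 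Granting this, the monomial transformation $T(x_1,\dots,x_n) = (c_1 x_{\sigma(1)}, \dots, c_n x_{\sigma(n)})$ does the job, because for $c \in C_1$ the $j$-th coordinate of $T(\text{image of }c)$ is $c_j \lambda_{\sigma(j)}(c) = \nu_j(c) = \mu_j(f(c))$, the $j$-th coordinate of the image of $f(c)$.

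The heart of the matter is translating ``$f$ preserves Hamming weight'' into a statement about the functionals, and here characters enter. Using $\mathrm{wt}(a) = 1 - \tfrac{1}{q}\sum_{\pi \in \widehat{\mathbb{F}_q}} \pi(a)$ (which follows from part~6 of Proposition~\ref{char}), the weight of a codeword $x \in \mathbb{F}_q^n$ can be written as $n - \tfrac1q \sum_{j=1}^n \sum_{\pi} \pi(x_j)$. Applying this to $x = $ image of $c$ and to $x' = $ image of $f(c)$ and equating, one gets, for every $c \in C_1$,
\[
\sum_{j=1}^n \sum_{\pi \in \widehat{\mathbb{F}_q}} \pi(\lambda_j(c)) \;=\; \sum_{j=1}^n \sum_{\pi \in \widehat{\mathbb{F}_q}} \pi(\nu_j(c)).
\]
Separating off the trivial character (which contributes $n$ to each side) and reorganizing by composing characters with the linear functionals, this becomes an identity of the form $\sum_{\chi} \chi = \sum_{\chi'} \chi'$ among characters of the group $C_1$, where the left sum runs over the characters $\pi \circ \lambda_j$ (for $\pi \neq 1$, $j = 1,\dots,n$) and the right over the $\pi \circ \nu_j$. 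By the linear independence of distinct characters (Theorem~\ref{linearindep}), two such formal sums of characters of $C_1$ are equal only if they agree as multisets. So the multiset $\{\pi \circ \lambda_j : \pi \neq 1,\ j\}$ equals the multiset $\{\pi \circ \nu_j : \pi \neq 1,\ j\}$.

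Finally I would extract the monomial equivalence from this multiset equality. For a fixed nonzero functional $\ell \colon C_1 \to \mathbb{F}_q$, the characters $\{\pi \circ \ell : \pi \in \widehat{\mathbb{F}_q}\setminus\{1\}\}$ depend only on the $\mathbb{F}_q^\times$-scaling class of $\ell$, and distinct classes give disjoint sets of characters (two functionals with the same associated character set must be scalar multiples of one another, since $\widehat{\mathbb{F}_q}$ separates points of $\mathbb{F}_q$). Each class contributes exactly $q-1$ characters. Hence the multiset equality above forces: for each $j$, the scaling class of $\nu_j$ coincides with the scaling class of $\lambda_{\sigma(j)}$ for some bijection $\sigma$ — matching up the $(q-1)$-element blocks. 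Choosing $c_j \in \mathbb{F}_q^\times$ with $\nu_j = c_j \lambda_{\sigma(j)}$ completes the construction. One should be slightly careful when some $\lambda_j$ or $\nu_j$ is the zero functional (these correspond to coordinates that are identically zero on the code); such coordinates can be matched to each other arbitrarily and assigned scalar $1$, and a counting argument shows the number of zero functionals on each side agrees. The main obstacle is precisely this bookkeeping step — getting from the character identity, via linear independence, to a clean multiset statement about scaling classes and then to an honest permutation $\sigma$; the character-sum manipulation itself is routine once the weight-to-character formula is in hand.
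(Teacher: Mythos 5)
Your proposal is correct, and it follows the same Ward--Wood character-theoretic strategy as the paper's proof of Theorem \ref{mac}: encode both codes by coordinate functionals on a common space, convert the Hamming-weight identity into an equation of characters via $\mathrm{wt}(a)=1-\tfrac{1}{q}\sum_{\pi}\pi(a)$, and exploit the linear independence of characters (Theorem \ref{linearindep}) together with the fact that $\ell\mapsto\exp\circ\ell$ identifies the dual space with the character group (your appeal to ``$\widehat{\mathbb{F}}_q$ separates points'' is the same fact, stated at the same level of rigor as the paper's). Where you genuinely diverge is the endgame. The paper argues greedily: it matches one term ($j=1$, $b=1$) to some $\exp a_1\lambda_{i_1}$ by linear independence, concludes $\psi_1=a_1\lambda_{i_1}$, cancels the whole inner sum over $\mathbb{F}_q^\times$ attached to that index, and iterates, building the permutation one index at a time. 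You instead extract, in one shot, a multiset equality among the characters $\pi\circ\lambda_j$ and $\pi\circ\nu_j$, note that each nonzero scaling class of functionals contributes a block of exactly $q-1$ characters with distinct classes giving disjoint blocks, and obtain $\sigma$ by counting indices per class (with identically-zero coordinates counted via the trivial character). Your counting version is clean, and your explicit handling of zero coordinate functionals addresses a point the paper glosses over; but it leans on features special to the field alphabet, namely that the units act simply transitively on nonzero scalars, so all blocks have equal size and are pairwise disjoint. The paper's greedy cancellation is the form of the argument that survives the generalization to module alphabets (Theorem \ref{FBI} and beyond), where the unit orbits in $\mathrm{Hom}_R(M,A)$ do not yield disjoint equal-size blocks of characters and one instead needs the partial ordering $\preceq$ and a maximal-orbit choice to make the matching step go through.
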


In 1978, another elementary proof of the theorem was presented by Bogart, Goldberg and Gordon in \cite{bogart}. The proof displayed below is due to J. Wood and H. Ward \cite{r1996}. This character theoretic proof was the generator for more proofs of the extension property in the context of general  module alphabets.\\

\setlength{\parindent}{0cm}\textbf{\emph{Proof of MacWilliams Extension Theorem.}}
\setlength{\parindent}{0.6cm}

First recall that if $(G,+)$ is a finite abelian group, a \emph{character} on $G$ is a group homomorphism $\pi:(G,+)\rightarrow(\mathbb{C}^\times,\cdot)$, where the latter is the multiplicative group of nonzero complex numbers. The set of all characters on $G$ forms another finite abelian group $\hat{G}$, with $|\hat{G}|=|G|$. From Proposition \ref{char}, if $\pi\in \hat{G}$ we know that

\[\underset{g\in G}{\sum}\pi(g)= \left\{
  \begin{array}{lr}
    |G|,& \pi=1,\\
    0, & \pi\neq1.
  \end{array}
\right.
\]

If $G=\mathbb{F}_q$, then $\hat{\mathbb{F}}_q$ has the structure of a $\mathbb{F}_q$-vector space, with scalar multiplication defined as $\pi^a(x)=\pi(ax), \;a,x \in \mathbb{F}_q$. Obviously, since $|\hat{\mathbb{F}}_q|=|\mathbb{F}_q|$, $\hat{\mathbb{F}}_q$ has dimension 1 over $\mathbb{F}_q$, and any nontrivial character forms a basis for $\hat{\mathbb{F}}_q$. For example, if char$(\mathbb{F}_q)=p$, and $\xi\in\mathbb{C}$ is a primitive $p^{\text{th}}$ root of unity, then the character $\exp x=\xi^{\text{tr}\;x} $ (with $\text{tr}:\mathbb{F}_q\rightarrow \mathbb{F}_p$ being the trace function $\alpha\mapsto\sum^{m-1}_{i=0}\alpha^i$ from $\mathbb{F}_q$ to its prime subfield $\mathbb{F}_p$, where $q=p^m$) is a nontrivial character. Thus, any other character on $\mathbb{F}_q$ has the form $\exp ax= \xi^{\text{tr}\;ax}$ for some $a\in\mathbb{F}_q$. Now we begin.

Given two linear codes $C_1,C_2\subset\mathbb{F}^n_q$, let $f:C_1\rightarrow C_2$be an $\mathbb{F}_q$-linear isomorphism that preserves Hamming weight. Assuming that $\mathrm{dim}_{\mathbb{F}_q}C_1=k=\mathrm{dim}_{\mathbb{F}_q}C_2$. Let $V$ be a $k$-dimensional vector space over $\mathbb{F}_q$ (a unified underlying module), then there exist linear embeddings $\Lambda=(\lambda_1,\lambda_2,\ldots,\lambda_n):V\rightarrow\mathbb{F}^n_q$ and $\Psi=(\psi_1,\psi_2,\ldots,\psi_n):V\rightarrow\mathbb{F}^n_q$ such that $\Lambda(V)=C_1$ and $\Psi(V)=C_2$, with $f\circ\Lambda=\Psi$. Since $f$ preserves Hamming weight, the following holds for every  $v\in V$, $$|\{i|\lambda_i(v)\neq 0\}|=|\{j|\psi_j(v)\neq 0\}|.$$
We use the equation mentioned earlier about the sum of character actions to write the above equation, for any $v\in V$, as
$$\underset{i=1}{\overset{n}{\sum}}\underset{a\in\mathbb{F}_q }{\sum} \xi^{\text{tr}\;a\lambda_i(v)}=\underset{j=1}{\overset{n}{\sum}}\underset{b\in\mathbb{F}_q }{\sum}\xi^{\text{tr}\;b\psi_j(v)},$$ giving rise to the following equation of characters$$\underset{i=1}{\overset{n}{\sum}}\underset{a\in\mathbb{F}_q }{\sum} \exp\;a\lambda_i=\underset{j=1}{\overset{n}{\sum}}\underset{b\in\mathbb{F}_q }{\sum}\exp\;b\psi_j.$$
Now, this is easily refined to \begin{equation}\label{here}\underset{i=1}{\overset{n}{\sum}}\underset{a\in\mathbb{F}^\times_q }{\sum} \exp\;a\lambda_i=\underset{j=1}{\overset{n}{\sum}}\underset{b\in\mathbb{F}^\times_q }{\sum}\exp\;b\psi_j,\end{equation} which is done by canceling $n$ times the function 1 on $V$ from both sides (the terms corresponding to $a=b=0$).

Since a group characters are linearly independent, it follows that, for $j=1$, $b=1$, there must be some $a_1\in\mathbb{F}^\times_q $ and $i_1$, such that $\exp\psi_1=\exp a_1\lambda_{i_1}$. Since the map from $V^\sharp$ (the $\mathbb{F}_q $-dual  of $V$) to $\widehat{V}$ given by $\lambda\mapsto \exp\lambda$ is an isomorphism, we get $\psi_1=a_1\lambda_{i_1}$.

Now, when multiplying the equation $\psi_1=a_1\lambda_{i_1}$ by $b\in\mathbb{F}^\times_q$, we get $b\psi_1=b\tau_1\lambda_{\sigma(1)}$, where $\tau_1$ is the automorphism $c\mapsto a_1c$ of $\mathbb{F}_q$, and $\sigma(1)=i_1$. Thus, $$\underset{b\in\mathbb{F}_q }{\sum}\exp\;b\psi_1=\underset{a\in\mathbb{F}_q }{\sum}\exp\;a_1\lambda_{\sigma(1)}.$$ Canceling this part from equation \ref{here}, the outer summation in the equation is reduced by 1. Proceeding in that way, we obtain a finite family of automorphisms $\tau_i$'s and a permutation $\sigma$ such that $\psi_i=\tau_i\lambda_{\sigma(i)}$.$\qquad\qquad\qquad\qquad\square$\\

To this end, a natural question arises: To what extent can this proof be generalized? Does it work for arbitrary rings? The answers of these questions are introduced in the coming sections.
\section{Codes over Finite Frobenius Rings\\ and Bi-modules}

\begin{par}A rich analysis of the extension problems for codes was carried out in many papers, by many authors. Among this study,  the deepest impact is, perhaps, accredited to the previous character theoretic proof of MacWilliams' theorem. This argument managed to survive in more general contexts, and appealing to it, much results began to appear.

One of the revealed facts was that, the extension problem is, so to speak, ``fastened'' to finite Frobenius rings, and to what these rings suggest for a general module for having similar properties. This section is devoted for this material. We start with the original definitions of T. Nakayama \cite{eniusean}, and then we follow \cite{Honold}, \cite{duality} to obtain (and use)  that useful characterization of finite Frobenius rings: A finite ring $R$ is Frobenius  $\Longleftrightarrow$ $_R\widehat{R}\cong {_RR} \Longleftrightarrow\widehat{R}_R\cong R_R$. As its weakest consequence, this characterization provided another answer to the question of Claasen and Goldbach \cite{Claasen} whether $_R\widehat{R}$ being cyclic implies its right counterpart for a finite ring (there is, however, an elementary proof of this).\end{par}

\begin{par} As T. Honold points out in \cite{Honold}, E. Lamprecht, in the 1950's, introduced the character module of a finite ring when he was working on Gaussian sums over finite rings  \cite{lmprcht1, lmprcht2, lmprcht3, lmprcht4}. However, it seems very clear that his papers are not well known among ring theorists. For example, the left-right symmetry question of Claasen and Goldbach, posed in 1992, has its elementary answer in $\mathrm{\cite[\S3.2, Lemma 1]{lmprcht2}}$  by Lamprecht (Proposition \ref{Lmprcht} below).  Moreover, Lamprecht  showed that a finite local ring has a generating character if and only if it is a local Frobenius ring \cite{lmprcht2}, and that a finite commutative ring has a generating character if  and only if it is Frobenius.

Unaware of these, finite rings with left generating character ($_R\widehat{R}$ is cyclic) were called \emph{right admissible} by Claasen and Goldbach \cite{Claasen}. They re-produced some of Lamprecht's results and determined more classes of left (right) admissible rings. More to say about expressions, the phrase \emph{generating character} is  due to Klemm \cite{Klemm}. Klemm also showed that the MacWilliams identities hold for finite commutative Frobenius rings \cite{Klemm23}. \end{par}
\begin{par}This prelude aimed to form, indirectly, a conception of how Frobenius rings were plunged into the world of coding theory. Lest digression, we conclude with a justified outline of the section. In \cite{duality}, the aforementioned chracterization of finite Frobenius rings is proved using Morita duality. Were we to follow this, we would have only stated (without proofs) the strong theorems to be used, violating our self-containedness policy. Instead, we shall first follow \cite{Honold} to prove that, for a finite ring $R$
\begin{align*}R\;\; \text{is Frobenius}&\Longleftrightarrow{_R(R/\mathrm{rad}R)}\cong\mathrm{soc}(_RR) \\ &\Longleftrightarrow (R/\mathrm{rad}R)_R\cong\mathrm{soc}(R_R) \Longleftrightarrow {_R\widehat{R}} \;\;\text{is cyclic}. \end{align*}\end{par}
Then we are back to the track of \cite{duality}.

\subsection{Frobenius and Quasi-Frobenius Rings}

\begin{par}Let $R$ be an artinian ring (not necessarily finite). By the Krull-Schmidt theorem, the left regular module $_RR$  has a  decomposition $$_RR=A_{11}\oplus\cdots\oplus A_{1k_1}\oplus\cdots\oplus A_{m1}\oplus\cdots\oplus A_{mk_m},$$ into indecomposable submodules (left ideals) $A_{ij}$, where we assume that $A_{ij}\cong A_{kl}$ if and only if $i=k$. Accordingly, there corresponds a decomposition of unity $1=e_{11}+\cdots+e_{1k_1}+\cdots+e_{m1}+\cdots+e_{mk_m}$, say. It is easily checked\footnote{Multiply the decomposition of unity by $e_{ij}$ from the left, and use the uniqueness of expressions in a direct sum.} that $\{e_{ij}\}_{i,j}$ are orthogonal idempotents and that $A_{ij}=Re_{ij}$. Further, these are primitive idempotents since the $Re_{ij}$'s are indecomposable.
Setting $e_i=e_{i1}$, write \begin{equation}\label{dec} _RR\cong \overset{m}{\underset{i=1}{\bigoplus}}k_i Re_i,\end{equation} where $k_i$ is the  multiplicity of the isomorphism type $Re_i$.\end{par}

\begin{par}Now,  recalling that artinian rings are semiperfect (Definition \ref{semiper}), the $e_{ij}$'s are also local idempotents by Proposition \ref{23.5}.   Then, by Proposition \ref{21.18}, each indecomposable $Re_i$ has a unique simple ``top quotient\footnote{In this we follow \cite{duality}.}''  $T(Re_i)=Re_i/\mathrm{rad} R \;e_i$.\end{par}

\begin{par}The parallel ``right'' argument can also  be established. From the decomposition $1=e_{11}+\cdots+e_{1k_1}+\cdots+e_{m1}+\cdots+e_{mk_m}$ of unity, we obtain $R_R=e_{11}R+\cdots+e_{1k_1}R+\cdots+e_{m1}R+\cdots+e_{mk_m}R$. This is also a direct sum\footnote{Without loss of generality, if $e_{11}r\in e_{12}R+\cdots+e_{1k_1}R+\cdots+e_{m1}R+\cdots+e_{mk_m}R $, then, once we multiply from the left by $e_{11}$, we get $e_{11}r=0$.}.
Besides, since for any two primitive idempotents $e$ and $f$, $Re\cong Rf$ if and only if $eR\cong fR$ (Proposition \ref{21.20}), we get \begin{equation}\label{rdec} R_R\cong \overset{m}{\underset{i=1}{\bigoplus}}k_i e_iR.\end{equation} The right counterparts of the modules $T(Re_i)$ are the modules $T(e_iR)=e_iR/e_i\mathrm{rad} R$. Further, $T(Re_i)\cong T(Re_j)$ if and only if $i=j$ by Proposition \ref{19.27}.
Now, with repeated use of Proposition \ref{radmod} (items (2),(3) and (6)), we get the following calculation.
\begin{equation}_R(R/\mathrm{rad} R)\cong\overset{m}{\underset{i=1}{\bigoplus}}k_i\;T(Re_i).\end{equation} \end{par}

However, there are more to know about these top quotients. By the uniqueness of a Krull-Schmidt decomposition, equations (\ref{wedec}) and (\ref{rdec}), together, imply that $n=m$, and, after some ordering, we may assume that $\mu_i=k_i$ and $T(Re_i)\cong T_i$ for $i=1,\ldots,n$. Similarly, $T(e_iR)\cong T'_i$. Thus,
\begin{equation}\label{now}_R(R/\mathrm{rad} R)\cong\overset{n}{\underset{i=1}{\bigoplus}}\mu_i\;T(Re_i)\;\;\text{and}\;\;
(R/\mathrm{rad} R)_R\cong\overset{n}{\underset{i=1}{\bigoplus}}\mu_i\;T(e_iR).\end{equation}

Following Nakayama's definitions \cite{eniusean}, the artinian ring $R$ is called \emph{quasi-Frobenius} (QF) if  there exists a permutation $\sigma$ of $\{1,\ldots,n\}$ such that, $$T(Re_i)\cong \mathrm{soc}(Re_{\sigma(i)})\quad\text{and}\quad \mathrm{soc}(e_iR)\cong T(e_{\sigma(i)}R),$$for every $i$. If, in addition, $\mu_{\sigma(i)}=\mu_i$, the ring is called \emph{Frobenius}. Note that in a QF ring, $Re_i$ contains a unique simple submodule (left ideal).\\

\begin{proposition}\label{frobrad}An artinian ring $R$ is Frobenius if and only if
\begin{equation}\label{F}_R(R/\mathrm{rad}R)\cong \mathrm{soc}(_RR)\;\;\text{and}\;\;(R/\mathrm{rad}R)_R\cong \mathrm{soc}(R_R)\end{equation}\end{proposition}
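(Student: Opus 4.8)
The plan is to prove the two implications separately, working throughout with the decompositions ${}_RR\cong\bigoplus_{i=1}^{n}\mu_iRe_i$ and $R_R\cong\bigoplus_{i=1}^{n}\mu_ie_iR$ and the identifications $T(Re_i)\cong T_i$, $T(e_iR)\cong T'_i$ from equations (\ref{rdec})--(\ref{now}), together with two elementary observations: socle commutes with finite direct sums, and $\mathrm{soc}(Re_i)=\mathrm{soc}({}_RR)\,e_i$ (because distinct idempotents in the chosen decomposition of $1$ are orthogonal), and symmetrically on the right. For the \emph{forward} implication I would just compute: if $R$ is Frobenius with Nakayama permutation $\sigma$, then $\mathrm{soc}({}_RR)=\bigoplus_i\mu_i\mathrm{soc}(Re_i)\cong\bigoplus_i\mu_iT(Re_{\sigma^{-1}(i)})$, and reindexing by $k=\sigma^{-1}(i)$ while using $\mu_i=\mu_{\sigma(k)}=\mu_k$ turns the right side into $\bigoplus_k\mu_kT(Re_k)$, which is ${}_R(R/\mathrm{rad}R)$ by (\ref{now}); the mirror computation gives $(R/\mathrm{rad}R)_R\cong\mathrm{soc}(R_R)$. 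This half is pure bookkeeping.

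\textbf{The converse.} Assume (\ref{F}). First I would pin down the socles of the indecomposable projectives by a length count: from $\mathrm{soc}({}_RR)=\bigoplus_i\mu_i\mathrm{soc}(Re_i)\cong{}_R(R/\mathrm{rad}R)\cong\bigoplus_i\mu_iT_i$ one gets $\sum_i\mu_i\,\ell(\mathrm{soc}(Re_i))=\sum_i\mu_i$, and since each $Re_i$ is a nonzero module of finite length, $\ell(\mathrm{soc}(Re_i))\ge1$, which forces $\ell(\mathrm{soc}(Re_i))=1$ for every $i$; hence $\mathrm{soc}(Re_i)\cong T_{\tau(i)}$ for some map $\tau$. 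Comparing multiplicities of the simple types in $\bigoplus_i\mu_iT_{\tau(i)}\cong\bigoplus_j\mu_jT_j$ gives $\sum_{i\in\tau^{-1}(j)}\mu_i=\mu_j>0$ for every $j$, so $\tau$ is onto, hence bijective, and $\mu_{\tau^{-1}(j)}=\mu_j$. The symmetric argument on $\mathrm{soc}(R_R)$ produces a bijection $\rho$ with $\mathrm{soc}(e_iR)\cong T'_{\rho(i)}$ and $\mu_{\rho^{-1}(j)}=\mu_j$. Setting $\sigma:=\tau^{-1}$, the left half of the Frobenius condition ($\mathrm{soc}(Re_{\sigma(i)})\cong T(Re_i)$) and the multiplicity condition ($\mu_{\sigma(i)}=\mu_i$) hold by construction, so everything reduces to showing that the right half holds for this \emph{same} $\sigma$, i.e.\ that $\rho=\tau^{-1}$.

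\textbf{The main obstacle.} That last point is where the content sits. I would recast it in terms of the corners $e_jRe_i$: using that $\mathrm{soc}({}_RR)=\{x\in R:(\mathrm{rad}R)x=0\}$ and $\mathrm{soc}(R_R)=\{x\in R:x(\mathrm{rad}R)=0\}$ (Proposition \ref{socrad}, applicable since $R/\mathrm{rad}R$ is semisimple, hence artinian), one checks that $\mathrm{soc}(Re_i)\cong T_j$ iff $e_j\,\mathrm{soc}(Re_i)\neq0$ iff $e_jRe_i$ contains a nonzero element killed on the left by $\mathrm{rad}R$, while $\mathrm{soc}(e_jR)\cong T'_i$ iff $e_jRe_i$ contains a nonzero element killed on the right by $\mathrm{rad}R$ — here one uses that for the chosen block idempotents $e_kT_\ell\neq0$ exactly when $k=\ell$. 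Thus it suffices to prove, for each pair $(i,j)$, that $e_j\,\mathrm{soc}({}_RR)\,e_i$ and $e_j\,\mathrm{soc}(R_R)\,e_i$ vanish together; note this is \emph{not} a formality, since $\{x\in e_jRe_i:(\mathrm{rad}R)x=0\}$ is strictly finer than the socle of $e_jRe_i$ over the local ring $e_jRe_j$. I expect to establish this left--right coherence of the Nakayama permutations from the self-injectivity of the character bimodule ${}_R\widehat R_R$ (Corollary \ref{Rhatinj}) and the exactness of $\widehat{\phantom{x}}$ (Proposition \ref{ex}), which trade the left and right annihilators of $\mathrm{rad}R$ for the socles of $\widehat R$ on the two sides (cf.\ Proposition \ref{hynf3}), or, barring a clean self-contained argument, by appealing to the standard Nakayama-permutation lemma for artinian rings in \cite{Lam}. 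Once $\rho=\tau^{-1}$ is known, $\sigma=\tau^{-1}$ is a single Nakayama permutation with $\mu_{\sigma(i)}=\mu_i$, so $R$ is Frobenius.
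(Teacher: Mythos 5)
Your forward implication and the first half of the converse are fine and match the paper: the length count forcing each $\mathrm{soc}(Re_i)$ (and each $\mathrm{soc}(e_iR)$) to be simple, and the multiplicity comparison producing a permutation $\tau$ with $\mu_{\tau^{-1}(j)}=\mu_j$, are exactly the opening moves of the paper's proof. The corner reformulation is also correct: $\rho=\tau^{-1}$ is equivalent to the statement that $e_j\,\mathrm{soc}({}_RR)\,e_i$ and $e_j\,\mathrm{soc}(R_R)\,e_i$ vanish together for all pairs $(i,j)$. But that is precisely where your proof stops being a proof: the left--right coherence is the entire content of the hard direction, and you only ``expect to establish'' it. Neither of your two fallback routes closes the gap. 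The character-bimodule route is not available at the generality of the statement, which concerns arbitrary artinian rings: the duality facts you invoke (Proposition \ref{hynf3}, the identification of socles of $\widehat{R}$, $\widehat{\widehat{M}}\cong M$, etc.) are developed in the thesis only for finite rings and finite modules, and even in the finite case you do not indicate how injectivity of $\widehat{R}$ would yield the statement about the two annihilators of $\mathrm{rad}R$ inside each corner $e_jRe_i$. The second route, ``appeal to the standard Nakayama-permutation lemma in \cite{Lam},'' begs the question: the coherence of the left and right permutations is exactly what this proposition is asserting, and the thesis's stated purpose here is to prove it from the original definition rather than quote it.

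For comparison, the paper bridges this gap in two moves, both available to you. First (Step 1) it proves $\mathrm{soc}({}_RR)=\mathrm{soc}(R_R)$ as two-sided ideals: by Proposition \ref{socrad}, $\mathrm{soc}({}_RR)=\mathrm{ann}_r(\mathrm{rad}R)$ is an ideal; since it contains a copy of $T(Re_i)\cong\overline{R}\bar e_i$ and $\bar e_i\overline{R}\bar e_i\neq 0$, one gets $e_i\cdot\mathrm{soc}({}_RR)\neq 0$, and this nonzero right submodule of $e_iR$ must contain the unique simple right submodule $\mathrm{soc}(e_iR)$; summing and using symmetry gives equality. Note that this single lemma, combined with your own corner reduction, already finishes your argument, since equality of the two socles makes $e_j\,\mathrm{soc}({}_RR)\,e_i=e_j\,\mathrm{soc}(R_R)\,e_i$ trivially. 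The paper then continues differently (Steps 2--4), showing via the $R$-dual $(-)^*=\mathrm{Hom}_R(-,R)$ that $(\mathrm{soc}(Re_i))^*\cong T(e_iR)$ by an explicit computation, and dualizing $\mathrm{soc}(Re_i)\cong T(Re_{\sigma(i)})$, using that a simple module embeds in its double dual, to transfer the permutation to the right side. So your skeleton is compatible with the paper's, and is arguably one short lemma away from a complete (even slightly leaner) proof -- but as written, the decisive step is missing, and the proposed substitutes would either not apply or assume the result.
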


\begin{proof}We only have to prove the ``if'' part. For this, notice that $ \mathrm{soc}(_RR)\cong  \overset{n}{\underset{i=1}{\bigoplus}}\mu_i\;\mathrm{soc}(Re_i)$,\footnote{It is an easy exercise that the socle of a direct sum is the direct sum of socles of summands.} and that the composition length of $\mathrm{soc}(Re_i)$, for each $i$,  is greater than or equal to one. In view of (\ref{now}), when comparing the composition lengths, we get that the $\mathrm{soc}(Re_i)$'s are simple (as the composition length is 1 for each). Thus, we obtain a permutation $\sigma$ such that $\mathrm{soc}(Re_i)\cong T(Re_{\sigma(i)})$ and $\mu_i=\mu_{\sigma(i)}$. Similarly, for the right counterpart, we get that the $\mathrm{soc}(e_iR)$'s are simple. Yet,  we do not know whether the permutation $\sigma$ plays the desired role for the right setting.  We proceed as follows, denoting $J=\mathrm{rad}R$. \\

\textbf{Step 1.} We show that $\mathrm{soc}(_RR)=\mathrm{soc}(R_R)$. Fixing $i$, we know from the above that $\mathrm{soc}(_RR)$ contains a copy of the simple $R$-module $T(Re_i)\cong Re_i/Je_i\cong  \overline{R}\bar{e}_i$.  $e_i\cdot\overline{R}\bar{e}_i=\bar{e}_i\overline{R}\bar{e}_i\neq0$, since otherwise $e\in eRe\subset J$ contradicting $\overline{R}\bar{e}_i$ is nonzero. As isomorphic modules have the same annihilators, we find that
 $e_i\cdot \mathrm{soc}(_RR)\neq 0$.  Now, $R$ is artinian and, by Proposition \ref{socrad},
$\mathrm{soc}(_RR)=\mathrm{ann}_r(J)$,  thus $\mathrm{soc}(_RR)$ is an ideal  and $e_i\cdot\mathrm{soc}(_RR)$ is a right submodule of $e_iR$. Again, since $R$ is artinian, $e_i\cdot\mathrm{soc}(_RR)$ contains a simple right submodule (minimal right ideal), namely $\mathrm{soc}(e_iR)$, being the only simple submodule of $e_iR$. Thus, for every $i$, $\mathrm{soc}(e_iR)\subset e_i\cdot \mathrm{soc}(_RR)\subset \mathrm{soc}(_RR)$, giving  $\mathrm{soc}(R_R)\subset\mathrm{soc}(_RR) $. By symmetry, $\mathrm{soc}(_RR)\subset\mathrm{soc}(R_R)$.

\textbf{Step 2.} For a left (resp. right) $R$-module $A$, set $A^*_R=\mathrm{Hom}_R({_RA},{_RR})$ (resp. $_RA^*=\mathrm{Hom}_R({A_R},{R_R})$), called the (first) dual of $A$. There is a natural homomorphism $A\rightarrow A^{**}$ given by $a\mapsto (\varphi\mapsto \varphi(a))$. For a simple module, this map is a monomorphism\footnote{Let $_RA$ be simple. For any nonzero element $a\in A$, $A=Ra$, and the map $ra\mapsto r$ is a  homomorphism in $\mathrm{Hom}_R({_RA},{_RR})$ such that $\varphi(a)=1\neq0.$}. \\
Fixing $i$, we show here that $$(\mathrm{soc}(Re_i))^*\cong T(e_iR).$$
Let $\phi:e_iR\rightarrow (\mathrm{soc}(Re_i))^*$ be the right $R$-homomorphism sending $re_i$ to the left multiplication by $re_i$ on $\mathrm{soc}(Re_i)$. From Step 1, and Proposition \ref{socrad}, we have $$\phi(e_iJ)(\mathrm{soc}(Re_i))=(\mathrm{soc}(Re_i))J\subset \mathrm{soc}(_RR)\cdot J=\mathrm{soc}(R_R)\cdot J=0.$$
Consequently, $\phi$ induces a homomorphism $e_iR/e_iJ\rightarrow (\mathrm{soc}(Re_i))^*$. But $e_iR/e_iJ$ is simple, and hence we are done if we could show that $\phi$ is an epimorphism.\\

\textbf{Step 3.} Let $s\in \mathrm{soc}(Re_i)$ corresponds to $\bar{e}_{\sigma(i)}$ under the isomorphism $\mathrm{soc}(Re_i)\cong T(Re_{\sigma(i)})\cong \overline{R}\bar{e}_{\sigma(i)}$. Being  simple, $\mathrm{soc}(Re_i)=Rs$, besides, $s=e_{\sigma(i)}s$, for $e_{\sigma(i)}s$ will also correspond to $e_{\sigma(i)}\bar{e}_{\sigma(i)}=\bar{e}_{\sigma(i)}$. Now, consider any nonzero $\theta\in  (\mathrm{soc}(Re_i))^*$, then $$\theta(s)=\theta(e_{\sigma(i)}s)=e_{\sigma(i)}\theta(s)\in \theta(\mathrm{soc}(Re_i))\subset \mathrm{soc}(_RR).$$ By Step 1, we have
$$s\in e_{\sigma(i)}R\cap \mathrm{soc}(Re_i)\subset e_{\sigma(i)}R\cap \mathrm{soc}(_RR)=e_{\sigma(i)}R\cap \mathrm{soc}(R_R)\subset \mathrm{soc}(e_{\sigma(i)}R),$$
and similarly $\theta(s)\in \mathrm{soc}(e_{\sigma(i)}R)$. Since $\mathrm{soc}(e_{\sigma(i)}R)$ is simple, we have $sR=\mathrm{soc}(e_{\sigma(i)}R)=\theta(s)R$. Thus, $\theta(s)=sr$ for some $r\in R$. Finally, as $Rs=\mathrm{soc}(Re_i)\subset Re_i$, we have, for any $x\in R$,  $$\theta(xs)=x\theta(s)=xsr=(xs)(e_ir).$$ This shows that $\theta$ is just a right multiplication by $e_ir$. Thus, $\phi$ is an isomorphism.\\
\textbf{Step 4.} The previous steps resulted in that $(\mathrm{soc}(Re_i))^*\cong T(e_iR)$, similarly, of course, we have  the right analogues, $(\mathrm{soc}(e_iR))^*\cong T(Re_i)$, for each $i$. Taking the duals for the isomorphism $\mathrm{soc}(Re_i)\cong T(Re_{\sigma(i)})$, we obtain $T(e_iR)=(\mathrm{soc}(Re_i))^*\cong (T(Re_{\sigma(i)}))^*\cong (\mathrm{soc}(e_{\sigma(i)}R))^{**}$. Furthermore, this shows that $(\mathrm{soc}(e_{\sigma(i)}R))^{**}$ is simple, and therefore the natural map  $$\mathrm{soc}(e_{\sigma(i)}R)\rightarrow (\mathrm{soc}(e_{\sigma(i)}R))^{**}$$ is an isomorphism.  Thus $T(e_iR)\cong\mathrm{soc}(e_{\sigma(i)}R)$.

\end{proof}

To keep good balances inside the text, many interesting characterizations and examples of QF and Frobenius rings had to be excluded, or just stated without proof. In \cite{eniusean}, Nakayama also proved the famous double annihilator\footnote{It should be mentioned that this idea is due to M. Hall \cite{MHall}, when he discovered it for semisimple algebras. Later, it was very crucial in  the theory of Frobenius and quasi-Frobenius rings.} characterization of QF rings, which, in many texts, is used as a definition of QF rings.

\begin{theorem}\label{doubleann}$\mathrm{\cite[\S4, Theorem \;6]{eniusean}}$.  A ring $R$ is QF if and only if $R$ is artinian and satisfies $$\mathrm{ann}_l(\mathrm{ann}_r(\mathfrak{A}))=\mathfrak{A}, \;\;\text{and}\;\;\mathrm{ann}_r(\mathrm{ann}_l(A))=A,$$ for every left ideal $\mathfrak{A}\subset R$ and every right ideal $A\subset R$.\end{theorem}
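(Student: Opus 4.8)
The plan is to drive both implications with a single engine: the duality functor $(-)^{*}=\mathrm{Hom}_R(-,R)$, the composition-length bookkeeping already used in the proof of Proposition \ref{frobrad}, and the elementary identification, for every left ideal $\mathfrak{A}$, of the right $R$-module $\mathrm{Hom}_R(R/\mathfrak{A},R)$ with $\mathrm{ann}_r(\mathfrak{A})$ — a left-module homomorphism $R/\mathfrak{A}\to R$ being precisely right multiplication by an element annihilating $\mathfrak{A}$ — together with its left/right mirror. Throughout, $\ell(-)$ denotes composition length.

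For ``QF $\Rightarrow$ double annihilator'' I would first push the argument of Steps 1--4 in the proof of Proposition \ref{frobrad} to the classical conclusion that a QF ring is self-injective on both sides: each indecomposable $Re_i$ has a simple, hence essential, socle, and one checks $Re_i$ is its own injective hull $E(\mathrm{soc}(Re_i))$, so that $_RR=\bigoplus_i\mu_iRe_i$ is injective, and symmetrically $R_R$. Granting this, $(-)^{*}$ is exact on finitely generated left modules, interchanges finitely generated left and right modules, carries simples to simples (the same argument yields $T(Re_i)^{*}\cong\mathrm{soc}(e_iR)$, which is simple), and therefore preserves length; in particular $\ell({_RR})=\ell(R_R)=:\ell$. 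Applying $(-)^{*}$ to $0\to\mathfrak{A}\to R\to R/\mathfrak{A}\to0$ and using the identification gives a short exact sequence $0\to\mathrm{ann}_r(\mathfrak{A})\to R_R\to\mathfrak{A}^{*}\to0$, so $\ell(\mathrm{ann}_r(\mathfrak{A}))=\ell-\ell(\mathfrak{A})$. Repeating with the mirror identification, now for the right ideal $\mathrm{ann}_r(\mathfrak{A})$, gives $\ell(\mathrm{ann}_l(\mathrm{ann}_r(\mathfrak{A})))=\ell-\ell(\mathrm{ann}_r(\mathfrak{A}))=\ell(\mathfrak{A})$; since $\mathfrak{A}\subseteq\mathrm{ann}_l(\mathrm{ann}_r(\mathfrak{A}))$ always and both have finite length, they coincide. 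The identity for right ideals is symmetric, and artinianness is part of the QF definition.

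For the converse, suppose $R$ is artinian and both double-annihilator identities hold; then $\mathfrak{A}\mapsto\mathrm{ann}_r(\mathfrak{A})$ and $A\mapsto\mathrm{ann}_l(A)$ are mutually inverse, inclusion-reversing bijections between the lattices of left and of right ideals of $R$. From this I would extract: (i) $R$ is Kasch on both sides, i.e.\ every simple left (right) module embeds in $_RR$ (in $R_R$) — for a maximal left ideal $\mathfrak{m}$ one cannot have $\mathrm{ann}_r(\mathfrak{m})=0$, since that would force $\mathfrak{m}=\mathrm{ann}_l(\mathrm{ann}_r(\mathfrak{m}))=\mathrm{ann}_l(0)=R$, so any $0\ne a\in\mathrm{ann}_r(\mathfrak{m})$ yields an embedding $R/\mathfrak{m}\hookrightarrow{}_RR$; (ii) each indecomposable $Re_i$ (and each $e_iR$) has a simple socle — two distinct minimal left ideals $L_1,L_2\subseteq Re_i$ would map to two distinct maximal right ideals both containing $\mathrm{ann}_r(Re_i)=(1-e_i)R$, contradicting that $R/(1-e_i)R\cong e_iR$ is indecomposable with a unique maximal submodule (Propositions \ref{21.8}, \ref{23.5}, \ref{21.18}). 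Then $\mathrm{soc}({_RR})=\bigoplus_i\mu_i\,\mathrm{soc}(Re_i)$ is a direct sum of simples in which, by the Kasch property, each of the $n$ isomorphism types appears, so $i\mapsto[\mathrm{soc}(Re_i)]$ is a permutation of $\{1,\dots,n\}$, and likewise on the right. It remains to see that one and the same permutation $\sigma$ realizes both $T(Re_i)\cong\mathrm{soc}(Re_{\sigma(i)})$ and $\mathrm{soc}(e_iR)\cong T(e_{\sigma(i)}R)$; via the lattice anti-isomorphism — which sends $\mathrm{soc}(Re_i)$ to the unique maximal right ideal above $(1-e_i)R$, whose quotient is $T(e_iR)$ — this reduces to the equality $\mathrm{soc}({_RR})=\mathrm{soc}(R_R)$, equivalently to the self-injectivity of $_RR$.

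The main obstacle, in both directions, is therefore the same classical input: that a QF ring is self-injective (the injective-hull identification $Re_i\cong E(\mathrm{soc}(Re_i))$), and conversely that an artinian ring with the double-annihilator property is self-injective. For the latter I would verify Baer's criterion by using the lattice anti-isomorphism to locate, for a homomorphism from a left ideal into $R$, the ring element that induces it; once self-injectivity is in hand on one side the matching of the left and right Nakayama permutations follows as in the forward direction, and everything else is the exactness of $(-)^{*}$ and counting lengths.
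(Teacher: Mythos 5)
First, a point of comparison: the thesis does not prove Theorem \ref{doubleann} at all --- it is quoted from Nakayama \cite{eniusean} --- so your argument has to stand entirely on its own, and as written it does not, because of one pivotal gap. In the forward direction everything is driven by the claim that a QF ring (in Nakayama's sense, which is the definition in force here) is self-injective on both sides, and your justification is only that ``one checks $Re_i$ is its own injective hull $E(\mathrm{soc}(Re_i))$''. That check is not routine: simplicity and essentiality of $\mathrm{soc}(Re_i)$ give only an embedding $Re_i\hookrightarrow E(\mathrm{soc}(Re_i))$, and showing this is onto (equivalently, that $_RR$ is injective) is essentially the equivalence ``QF $\Leftrightarrow$ artinian self-injective'', i.e.\ the Curtis--Reiner theorem \cite[Theorem 58.6]{curtis} which the thesis quotes immediately after Theorem \ref{doubleann} and explicitly declines to prove because of its length. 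So your proposal derives Nakayama's theorem from a result that is at least as deep and is nowhere established in this text; you yourself name it ``the main obstacle'' but do not overcome it. (Nakayama's theorem is classically proved directly from the permutation condition by composition-length counting, without injectivity.) The subsidiary assertions that $(-)^{*}$ carries simples to simples and that $T(Re_i)^{*}\cong\mathrm{soc}(e_iR)$ are also not ``the same argument'' as in Proposition \ref{frobrad}: those steps were carried out under the hypothesis $_R(R/\mathrm{rad}R)\cong\mathrm{soc}(_RR)$ and its right analogue, not from Nakayama's QF definition, so they would need independent justification.

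The converse half is in better shape: the lattice anti-isomorphisms, the Kasch property, and the simplicity of $\mathrm{soc}(Re_i)$ and $\mathrm{soc}(e_iR)$ are argued correctly from the double-annihilator identities. But the matching of the two permutations is only ``reduced to $\mathrm{soc}({_RR})=\mathrm{soc}(R_R)$, equivalently the self-injectivity of $_RR$'' --- the word ``equivalently'' is wrong (the socle equality is far weaker), and the Baer-criterion verification is left as a promise. In fact you need no injectivity there: the two annihilator maps are mutually inverse inclusion-reversing bijections of complete lattices, hence convert intersections to sums and coatoms to atoms, so $\mathrm{soc}({_RR})=\mathrm{ann}_r(\mathrm{rad}R)=\mathrm{ann}_r\bigl(\bigcap\mathfrak{m}\bigr)=\sum\mathrm{ann}_r(\mathfrak{m})$, the sum of all minimal right ideals, which is $\mathrm{soc}(R_R)$ (using Proposition \ref{socrad}); with that equality the single permutation follows from the criterion $e_iSe_j\neq0$, $S$ the common socle. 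If you insist on Baer, the cyclic case is $\mathrm{ann}_l(x)\subseteq\mathrm{ann}_l(f(x))$ together with $\mathrm{ann}_r(\mathrm{ann}_l(xR))=xR$, and the induction step uses $\mathrm{ann}_r(L_1\cap L_2)=\mathrm{ann}_r(L_1)+\mathrm{ann}_r(L_2)$ --- all provable from the hypotheses, but it must actually be written out. The genuine failure, however, is the unproved self-injectivity on which your entire forward direction rests.
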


(This appropriate notation for annihilators will be  used  more generally for subsets of left and right modules).
The proof of the next theorem is a very beautiful, thoughtfully constructed one; however, lest a degression would occur, it is just the statement that will be displayed.
\begin{theorem}$\mathrm{\cite[Theorem \;58.6]{curtis}}$. An artinian ring $R$ is QF if and only if $_RR$ is injective.\end{theorem}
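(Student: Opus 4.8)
The plan is to prove the two implications separately, using throughout that an artinian ring is semiperfect, so that the Krull--Schmidt decomposition ${}_RR\cong\bigoplus_{i}\mu_i\,Re_i$ of (\ref{dec})--(\ref{now}) into indecomposable (local) left ideals is available, and treating the double-annihilator identities of Theorem~\ref{doubleann} as the working form of the hypothesis ``$R$ is QF''.

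\emph{QF $\Rightarrow$ ${}_RR$ injective.} By the standard reduction (Baer's criterion) it suffices to show that every homomorphism of left modules $f\colon\mathfrak A\to{}_RR$, with $\mathfrak A\subseteq R$ a left ideal, is given by right multiplication by a fixed element of $R$ --- these being exactly the $R$-endomorphisms of ${}_RR$ --- for then $f$ extends to $R$. Since $R$ is artinian every left ideal is finitely generated, and I would induct on the number of generators. For a principal left ideal $\mathfrak A=Ra$: from $xa=0\Rightarrow xf(a)=f(xa)=0$ one gets $\mathrm{ann}_l(a)\subseteq\mathrm{ann}_l(f(a))$, whence, applying $\mathrm{ann}_r$ and using $\mathrm{ann}_l(a)=\mathrm{ann}_l(aR)$ together with the identity $\mathrm{ann}_r\mathrm{ann}_l(aR)=aR$, one obtains $f(a)\in aR$, say $f(a)=ar$; then $f(xa)=xf(a)=(xa)r$ for all $x$, so $f$ is right multiplication by $r$. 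For the inductive step write $\mathfrak A=\mathfrak A_0+Ra$ with $\mathfrak A_0$ requiring fewer generators, use the inductive hypothesis to subtract a right multiplication and reduce to the case $f|_{\mathfrak A_0}=0$, and seek $s\in\mathrm{ann}_r(\mathfrak A_0)$ with $as=f(a)$; such an $s$ gives $f=\text{(right mult.\ by }s)$ on all of $\mathfrak A$. Now $f(a)\in aR$ (by the principal case applied to $f|_{Ra}$), and putting $L=\{x\in R:xa\in\mathfrak A_0\}$ --- a left ideal containing $\mathrm{ann}_l(a)$ --- one has $f(a)\in\mathrm{ann}_r(L)$ because $f$ vanishes on $\mathfrak A_0$. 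The step is finished by the identity
\[
aR\cap\mathrm{ann}_r(L)=a\,\mathrm{ann}_r(\mathfrak A_0),
\]
whose substantial inclusion $\subseteq$ follows by first computing $\mathrm{ann}_l\!\bigl(a\,\mathrm{ann}_r(\mathfrak A_0)\bigr)=\{x:xa\in\mathrm{ann}_l\mathrm{ann}_r(\mathfrak A_0)\}=\{x:xa\in\mathfrak A_0\}=L$ (using $\mathrm{ann}_l\mathrm{ann}_r(\mathfrak A_0)=\mathfrak A_0$) and then applying $\mathrm{ann}_r\mathrm{ann}_l(-)=\mathrm{id}$ to the right ideal $a\,\mathrm{ann}_r(\mathfrak A_0)$; the inclusion $\supseteq$ is immediate. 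Thus $f(a)\in a\,\mathrm{ann}_r(\mathfrak A_0)$, producing the required $s$, and ${}_RR$ is injective.

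\emph{${}_RR$ injective $\Rightarrow$ QF.} Each summand $Re_i$ of ${}_RR$, being a direct summand of an injective module, is injective (Proposition~\ref{projinj}); it is indecomposable and artinian, and an artinian indecomposable injective module is the injective hull of its socle, which is therefore simple (a nonzero semisimple module that is not simple splits, contradicting indecomposability of the injective hull). Writing $\mathrm{soc}(Re_i)\cong T(Re_{\sigma(i)})$ via the classification of the simple left $R$-modules defines a map $\sigma$, which is injective (if $\mathrm{soc}(Re_i)\cong\mathrm{soc}(Re_j)$ then the injective hulls $Re_i$ and $Re_j$ are isomorphic, so $i=j$) and hence a permutation of $\{1,\dots,n\}$; this is the ``left'' half of Nakayama's conditions. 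Since $\sigma$ is a permutation, $\mathrm{soc}({}_RR)\cong\bigoplus_i\mu_i\,T(Re_{\sigma(i)})$ contains a copy of every simple left $R$-module, so the injective module ${}_RR$ separates points (it is a cogenerator): for a left ideal $\mathfrak A$ and $b\notin\mathfrak A$ there is a homomorphism $R/\mathfrak A\to{}_RR$ nonvanishing on $\bar b$, which amounts to an element $c\in\mathrm{ann}_r(\mathfrak A)$ with $bc\neq0$; hence $\mathrm{ann}_l\mathrm{ann}_r(\mathfrak A)=\mathfrak A$ for every left ideal $\mathfrak A$.

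The main obstacle is the passage to the ``right'' side --- obtaining that $R$ is also right artinian and that the second identity $\mathrm{ann}_r\mathrm{ann}_l(A)=A$ holds for right ideals $A$ (equivalently, the relation $\mathrm{soc}(e_iR)\cong T(e_{\sigma(i)}R)$) --- since left self-injectivity, through Baer's criterion, only controls left ideals. I would derive this by combining the cogenerator property just established with the left-artinian (hence, by Theorem~\ref{4.15lam}, left-noetherian) hypothesis: the operators $\mathfrak A\mapsto\mathrm{ann}_r(\mathfrak A)$ and $A\mapsto\mathrm{ann}_l(A)$ are order-reversing, the first identity makes $\mathrm{ann}_l\circ\mathrm{ann}_r$ the identity on left ideals, and a chain-condition argument is then used to force $\mathrm{ann}_r\circ\mathrm{ann}_l$ to be the identity on right ideals as well; this step is essentially the self-duality of QF rings and is the delicate part of the argument. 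Once both identities are in hand, Theorem~\ref{doubleann} identifies $R$ as QF, completing the proof.
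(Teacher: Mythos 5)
The paper never proves this statement: it is quoted from \cite[Theorem 58.6]{curtis} precisely to avoid a digression, so your attempt can only be compared with the standard arguments (and note that your first half leans on Theorem \ref{doubleann}, which the paper also states without proof). Granting that theorem, your direction ``QF $\Rightarrow$ ${}_RR$ injective'' is correct: the Baer-criterion induction on the number of generators, the principal case via $\mathrm{ann}_l(a)\subseteq\mathrm{ann}_l(f(a))$, and the key identity $\mathrm{ann}_r(L)=a\,\mathrm{ann}_r(\mathfrak A_0)$ obtained from the two double-annihilator conditions are exactly the classical computation, and each step checks out (finite generation of left ideals being available by Theorem \ref{4.15lam}).

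The genuine gap is in the converse, at the step you yourself flag. From injectivity of ${}_RR$ you correctly get that each $Re_i$ has simple socle, that $\sigma$ is a permutation, that ${}_RR$ is a cogenerator, and hence $\mathrm{ann}_l\mathrm{ann}_r(\mathfrak A)=\mathfrak A$ for every left ideal. But the remaining identity $\mathrm{ann}_r\mathrm{ann}_l(A)=A$ for right ideals cannot be ``forced by a chain-condition argument'' from this: the left identity only says $\mathrm{ann}_r$ is injective and $\mathrm{ann}_l$ is surjective onto left ideals, and no lattice or length comparison upgrades that to the right identity. Indeed, for artinian rings the one-sided annihilator condition is strictly weaker than QF (an example going back to Dieudonn\'e), so any correct argument must re-use the injectivity hypothesis rather than the cogenerator property alone. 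The standard repair is short: if $\mathrm{ann}_l(a)\subseteq\mathrm{ann}_l(b)$, then $xa\mapsto xb$ is a well-defined homomorphism $Ra\to{}_RR$, which by injectivity is right multiplication by some $c$, so $b=ac$; hence $\mathrm{ann}_r\mathrm{ann}_l(aR)=aR$ for every principal right ideal. Combining this with the Ikeda--Nakayama identity $\mathrm{ann}_r(\mathfrak A\cap\mathfrak B)=\mathrm{ann}_r(\mathfrak A)+\mathrm{ann}_r(\mathfrak B)$ (again an immediate consequence of injectivity, obtained by extending $x+y\mapsto xc$ on $\mathfrak A+\mathfrak B$) yields $\mathrm{ann}_r\mathrm{ann}_l(A)=A$ for all finitely generated, hence all, right ideals, since $R$ is artinian on both sides. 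With both identities in hand, Theorem \ref{doubleann} gives QF as you intended; alternatively one can obtain the two Nakayama socle isomorphisms directly by a $\mathrm{Hom}_R(-,R)$-duality argument in the spirit of Steps 2--4 of the paper's proof of Proposition \ref{frobrad}, which is essentially the route of \cite{curtis}.
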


Let $R$ be a finite ring. Let $S\subset R$ , $T\subset\widehat{R}$ be any subsets. We will use the following $*$-notation.  $S^*=\{\rho\in \widehat{R}\;|\; \rho(S)=1\}$, $T^*=\underset{\rho\in T}{\bigcap}\mathrm{Ker}\rho$.

Note that if  $T$ is a subgroup of $(\widehat{R},\cdot)$, then by Definition \ref{annchar} and the natural isomorphism in the proof of Proposition \ref{char},(2):
\begin{align*}
(\widehat{R}:T)&=\{\widehat{r}\in\widehat{\widehat{R}}\;|\;\widehat{r}(\rho)=1\;\;\text{for all}\;\; \rho\in T\}\\
                        &=\{\widehat{r}\in\widehat{\widehat{R}}\;|\; \rho(r)=1\;\;\text{for all}\;\; \rho\in T\}\\
                        &=\{\widehat{r}\in\widehat{\widehat{R}}\;|\; r\in T^*\}.
\end{align*}
It is then clear that, $|T^*|=|(\widehat{R}:T)|=|\widehat{R}|/|T|$.

Besides, One can see that, if $S\leq {_RR}$ is a left submodule, then $S^*$ is a right submodule of $\widehat{R}_R$; also, if $T\leq\widehat{R}_R$ is a right submodule, then $T^*$ is a left submodule of $_RR$. The maps $S\mapsto S^*, T\mapsto T^*$ form mutually inverse lattice anti-isomorphisms between the lattice of left ideals in $R$ and the lattice of right submodules of $\widehat{R}_R$. If $T\leq \widehat{R}_R$ is a right submodule, then
\begin{align*} \mathrm{ann}_l(T)&=\{r\in R\;|\;r\rho=1 \;\;\text{for all}\;\; \rho\in T\}\\
                                                    &=\{r\in R\;|\;\rho(Rr)=1\;\;\text{for all}\;\; \rho\in T\}\\
                                                    &=\{r\in R\;|\;\pi(r)=1 \;\;\text{for all}\;\; \pi\in TR=T\}=T^*.\end{align*}

\begin{proposition}\label{Lmprcht}$\mathrm{\cite[\S3.2, Lemma 1]{lmprcht2}}$ Every character $\rho\in\widehat{R}$ satisfies that $ | \mathrm{ann}_l(\rho)|=| \mathrm{ann}_r(\rho)|$ and $|R\rho|=|\rho R|$. In particular, $\rho$ is a left generating character if and only if $\rho$ is a right generating character.

\end{proposition}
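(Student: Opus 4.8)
The two cardinality identities are equivalent to each other, and I would prove them together with the generating-character statement. Begin with the elementary counting: the map $R\to\widehat R$, $r\mapsto r\rho$, is a homomorphism of additive groups whose image is the cyclic submodule $R\rho$ and whose kernel is $\mathrm{ann}_l(\rho)$, so $|R\rho|\,|\mathrm{ann}_l(\rho)|=|R|$; symmetrically $|\rho R|\,|\mathrm{ann}_r(\rho)|=|R|$. Hence $|\mathrm{ann}_l(\rho)|=|\mathrm{ann}_r(\rho)|$ if and only if $|R\rho|=|\rho R|$, and it is enough to prove the latter.

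To get at $|R\rho|=|\rho R|$ I would pass through the $*$-operation on the right submodule $\rho R\le\widehat R_R$. The one point that needs checking is that $\mathrm{ann}_l(\rho)=\mathrm{ann}_l(\rho R)$: if $r\rho=1$ then for all $s,x\in R$ one computes $(r\cdot\rho s)(x)=(\rho s)(xr)=\rho(sxr)=(r\rho)(sx)=1$, so $r$ kills all of $\rho R$ on the left; the reverse inclusion is immediate. Then the identity $\mathrm{ann}_l(T)=T^*$ for right submodules $T\le\widehat R_R$, established in the paragraph preceding the statement, applied to $T=\rho R$ gives $\mathrm{ann}_l(\rho)=(\rho R)^*$; and by the counting formula quoted there, $|(\rho R)^*|=|\widehat R|/|\rho R|=|R|/|\rho R|$. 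Comparing with $|\mathrm{ann}_l(\rho)|=|R|/|R\rho|$ from the first step yields $|R\rho|=|\rho R|$, hence also $|\mathrm{ann}_l(\rho)|=|\mathrm{ann}_r(\rho)|$.

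For the last claim, $\rho$ is a left generating character precisely when $_R\widehat R=R\rho$, i.e. when $|R\rho|=|\widehat R|=|R|$; by the equality just proved this is equivalent to $|\rho R|=|R|$, i.e. to $\widehat R_R=\rho R$, i.e. to $\rho$ being a right generating character.

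I do not expect a genuine obstacle: all the needed dictionary between submodules of $\widehat R$, ideals of $R$, and the $*$/annihilator maps is in place just above the statement. The only care required is bookkeeping of the left versus right bimodule actions in the identity $\mathrm{ann}_l(\rho R)=\mathrm{ann}_l(\rho)$ and in seeing that $\mathrm{ann}_l(T)=T^*$ is exactly the lever to pull. A fully symmetric alternative that avoids the submodule $\rho R$ altogether: check directly that $\mathrm{ann}_r(\rho)=(R\rho)^*$ --- since $(s\rho)(x)=\rho(xs)$, requiring this to equal $1$ for every $s$ says $\rho(xR)=1$, which is exactly $\rho x=1$ --- and then invoke the mirror $*$-duality between left submodules of $_R\widehat R$ and right ideals of $R$, with the same cardinality count.
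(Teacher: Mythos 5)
Your proposal is correct and follows essentially the same route as the paper: both arguments compute $|\mathrm{ann}_l(\rho)|$ twice, once as $|R|/|R\rho|$ via $R/\mathrm{ann}_l(\rho)\cong R\rho$ and once as $|(\rho R)^*|=|R|/|\rho R|$ via the identity $\mathrm{ann}_l(\rho)=\mathrm{ann}_l(\rho R)=(\rho R)^*$ and the counting $|T^*|=|\widehat{R}|/|T|$, then deduce $|R\rho|=|\rho R|$, the annihilator equality, and the left--right symmetry of generating characters by comparing with $|R|=|\widehat{R}|$. Your explicit check that $r\rho=1$ forces $r$ to annihilate all of $\rho R$ is exactly the computation the paper performs in the remarks preceding the proposition, so there is nothing to add.
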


\begin{proof}
By the remarks above, we know that $|\rho R|\cdot|(\rho R)^*|=|\widehat{R}|=|R|$, and $\mathrm{ann}_l(\rho)=\mathrm{ann}_l(\rho R)=(\rho R)^*$, whence $|\mathrm{ann}_l(\rho)|=|R|/|\rho R|$. But, from another way, $R/\mathrm{ann}_l(\rho)\cong R\rho$ as left $R$-modules, and thus $|\mathrm{ann}_l(\rho)|=|R|/|R\rho|$. Hence, $|R\rho|=|\rho R|$, and $|\mathrm{ann}_l(\rho)|=|\mathrm{ann}_r(\rho)|$, that's because $R/\mathrm{ann}_r(\rho)\cong \rho R$ as right $R$-modules.

\end{proof}
\textbf{Remark:} As $|R|=|\widehat{R}|$, it follows that $\widehat{R}=R\rho$ if and only if $\mathrm{ann}_l(\rho)=0$. But,
$\mathrm{ann}_l(\rho)=\mathrm{ann}_l(\rho R)=(\rho R)^*=\{r\in R\;|\; \rho(Rr)=1\}$, hence $\widehat{R}=R\rho$ if and only if $\mathrm{Ker}\rho$ contains no nonzero left ideals.\\

In the next part we prove that, for a finite ring $R$, $_R(R/\mathrm{rad}R)\cong \mathrm{soc}(_RR)$ if and only if $(R/\mathrm{rad}R)_R\cong \mathrm{soc}(R_R)$. In \cite{supp}, Nakayama showed this is true for finite dimensional algebras over fields, he also provided a counter-example to show that it does not hold for all artinian rings. W. Xue \cite{xue} proved the assertion for finite local rings. The question whether for finite rings $_R(R/\mathrm{rad}R)\cong \mathrm{soc}(_RR)$ implies its right counterpart was left open in \cite{Honnech}, and was asked again in \cite{gref}. The proof was finally given in \cite{Honold}, which we mainly follow in Theorem \ref{hon}, except when proving  (2)$\Longrightarrow$(1).

\begin{theorem}\label{hon}$\mathrm{\cite[\S4, Theorem 1]{Honold}}$. The following statements are equivalent for a finite ring $R$.
\begin{enumerate}
\item[$(1)$]$_R\widehat{R}$ is cyclic.
\item[$(2)$]$\mathrm{soc}(_RR)$  is cyclic.
\item[$(3)$]$_R(R/\mathrm{rad}R)\cong \mathrm{soc}(_RR)$.
\item[$(4)$]$|I|\cdot|\mathrm{ann}_r(I)|=|R|$ for every left ideal $I$ in $R$.
\end{enumerate}
\end{theorem}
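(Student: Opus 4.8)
The plan is to establish $(1)\Leftrightarrow(2)$ and $(1)\Rightarrow(3)\Rightarrow(2)$ directly from the character-functor machinery of the preceding sections, then $(1)\Rightarrow(4)$ by converting a generating character into a non-degenerate pairing, and finally to close the circle with $(4)\Rightarrow(2)$, which is the substantial step.

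For $(1)\Leftrightarrow(2)$ I would apply Proposition \ref{Rhat} with $A={_RR}$: it says $\mathrm{soc}(_RR)$ is cyclic if and only if $_RR$ embeds into $_R\widehat{R}$. Since $|{_RR}|=|R|=|\widehat{R}|=|{_R\widehat{R}}|$ by Proposition \ref{char}, any such embedding is an isomorphism $_RR\cong{_R\widehat{R}}$; and $_R\widehat{R}\cong{_RR}$ is equivalent to $_R\widehat{R}$ being cyclic, because a cyclic left module of order $|R|$ is a quotient of $_RR$ of the same cardinality, hence isomorphic to $_RR$. For $(1)\Rightarrow(3)$ I would take socles in $_R\widehat{R}\cong{_RR}$ and invoke the computation already carried out inside the proof of Proposition \ref{Rhat} — namely Proposition \ref{hynf3} applied to $R_R$, the isomorphisms $\widehat{T'}_i\cong T_i$, and the Wedderburn decomposition (\ref{wedec}) — which together give $\mathrm{soc}(_RR)\cong\mathrm{soc}(_R\widehat{R})\cong\bigoplus_i\mu_iT_i\cong{_R(R/\mathrm{rad}R)}$. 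Finally $(3)\Rightarrow(2)$ is immediate, since $_R(R/\mathrm{rad}R)$ is a quotient of $_RR$.

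For $(1)\Rightarrow(4)$: by $(1)$ there is a left generating character $\varrho\in\widehat{R}$ (by the Remark following Proposition \ref{Lmprcht} these are exactly the characters whose kernel contains no nonzero left ideal), and by Proposition \ref{Lmprcht} $\varrho$ is simultaneously a right generating character; equivalently, the biadditive map $\beta\colon R\times R\to\mathbb{Q}/\mathbb{Z}$, $\beta(x,y)=\varrho(xy)$, is non-degenerate in each variable. For a left ideal $I$ one checks $y\in\mathrm{ann}_r(I)\iff Iy=0\iff\varrho(Iy)=0\iff\beta(I,y)=0$, using that a subgroup killed by every character is zero (Proposition \ref{(g:h)}) together with $\varrho$ being generating and $RI=I$; hence $\mathrm{ann}_r(I)=I^{\perp}$, the orthogonal complement of $I$ under $\beta$. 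Since $\beta$ is non-degenerate on the finite group $(R,+)$, one has $|I|\cdot|I^{\perp}|=|R|$, which is exactly $(4)$.

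The hard part is $(4)\Rightarrow(2)$. Applying $(4)$ to $I=\mathrm{rad}R$ and using $\mathrm{ann}_r(\mathrm{rad}R)=\mathrm{soc}(_RR)$ (Proposition \ref{socrad}, which applies since $R$ is finite) gives $|\mathrm{soc}(_RR)|=|R/\mathrm{rad}R|$. Writing $\mathrm{soc}(_RR)\cong\bigoplus_is_iT_i$ and $_R(R/\mathrm{rad}R)\cong\bigoplus_i\mu_iT_i$, the goal is to upgrade this cardinality equality to $s_i=\mu_i$ for all $i$, for then Lemma \ref{simui} yields $(2)$ (in fact $(3)$). A lower bound $s_i\ge\mu_i$ comes almost for free: for a minimal left ideal $M$ of type $T_i$, the map $R\to\mathrm{Hom}_R(M,{_RR})$, $y\mapsto(x\mapsto xy)$, has kernel $\mathrm{ann}_r(M)$, and its target has cardinality $q_i^{s_i}$ (any homomorphism out of the simple module $M$ lands in $\mathrm{soc}(_RR)$, so $\mathrm{Hom}_R(M,{_RR})\cong\mathrm{Hom}_R(T_i,\bigoplus_js_jT_j)\cong\mathrm{End}_R(T_i)^{s_i}$ with $\mathrm{End}_R(T_i)\cong\mathbb{F}_{q_i}$); comparing $|\mathrm{ann}_r(M)|\ge|R|/q_i^{s_i}$ with the value $|R|/|M|=|R|/q_i^{\mu_i}$ forced by $(4)$ gives $s_i\ge\mu_i$. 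What remains — that no simple type is missing from $\mathrm{soc}(_RR)$ and that the multiplicities are also bounded from above — should be extracted by a counting argument over the lattice of left ideals: $(4)$ makes $I\mapsto\mathrm{ann}_r(I)$ injective (since then $\mathrm{ann}_l(\mathrm{ann}_r(I))=I$), and reconciling this with the cardinality identity and the lower bound just obtained closes the gap; this is the portion that follows \cite{Honold}. I expect this last combinatorial reconciliation, carried out while keeping the left and right character-module structures straight, to be the main obstacle; the other implications are formal.
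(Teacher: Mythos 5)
Your formal steps are essentially sound, and in places you route differently from the text: you get $(1)\Leftrightarrow(2)$ directly from Proposition \ref{Rhat} applied to $A={}_RR$ together with $|R|=|\widehat{R}|$, and $(1)\Rightarrow(3)$ from the computation $\mathrm{soc}({}_R\widehat{R})\cong\bigoplus_i\mu_iT_i$ already carried out in the proof of Proposition \ref{Rhat}, whereas the paper proves $(2)\Leftrightarrow(3)$ by a composition-length comparison and only uses Proposition \ref{Rhat} for $(2)\Rightarrow(1)$; your $(1)\Rightarrow(4)$ via the pairing $\beta(x,y)=\varrho(xy)$, with $\mathrm{ann}_r(I)=I^{\perp}$ and $|I|\cdot|I^{\perp}|=|R|$ from Definition \ref{annchar}, is a correct repackaging of the paper's identity $I^{*}=\mathrm{ann}_r(I)\rho$. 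None of that is the problem.

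The genuine gap is in $(4)\Rightarrow(2)$, which is the implication that carries the theorem, and it is precisely the point you defer. Your lower bound $s_i\geq\mu_i$ presupposes a minimal left ideal $M\cong T_i$ inside $R$, i.e. $s_i\geq1$; if the type $T_i$ is absent from $\mathrm{soc}({}_RR)$ the argument is vacuous, and the cardinality identity $|\mathrm{soc}({}_RR)|=|R/\mathrm{rad}R|$ cannot by itself exclude a missing type compensated by inflated multiplicities elsewhere (with two types, $\mu_1=\mu_2=1$ and $q_1=q_2$, the pattern $s_1=0$, $s_2=2$ satisfies the count). So ``no simple type is missing'' is not bookkeeping: it is the step where hypothesis $(4)$ must be invoked a second time, and your appeal to the injectivity of $I\mapsto\mathrm{ann}_r(I)$ is never turned into a proof of it. The paper closes exactly this point: choose a maximal left ideal $I$ with $R/I\cong T_i$; homomorphisms $R/I\to{}_RR$ correspond bijectively to elements of $\mathrm{ann}_r(I)$, and $(4)$ gives $|\mathrm{ann}_r(I)|=|R|/|I|=|T_i|>1$, so there is a nonzero homomorphism, which is injective by simplicity, producing a minimal left ideal isomorphic to $T_i$, i.e. $s_i\geq1$. (Equivalently: any nonzero $a\in\mathrm{ann}_r(I)$ has $\mathrm{ann}_l(a)=I$ by maximality, so $Ra\cong T_i$.) Once $s_i\geq1$ for every $i$, your Hom-count gives $s_i\geq\mu_i$ termwise, and the cardinality identity then forces $s_i=\mu_i$, which is $(3)$ and a fortiori $(2)$. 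Until that step is written out, your cycle $(1)\Rightarrow(4)\Rightarrow(2)\Rightarrow(1)$ is not closed.
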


\begin{proof}$(2)\Longleftrightarrow(3)$. As constructed in the beginning of this section, $_RR$ has a decomposition $_RR=Re_{11}\oplus\cdots\oplus Re_{1\mu_1}\oplus\cdots\oplus Re_{n1}\oplus\cdots\oplus Re_{n\mu_n}$. Each of these indecomposable summands contains at least one simple submodule (since all are finite). Thus, $\mathrm{soc}(_RR)$ has composition length greater than or equal to that of $_R(R/\mathrm{rad}R)$ (see equation \ref{now}). Now, $\mathrm{soc}(_RR)$ is cyclic if and only if  it is an epimorphic image of $_RR$, if and only if it is an epimorphic image of $_R(R/\mathrm{rad}R)$, which, by the previous, is equivalent to $\mathrm{soc}(_RR)\cong {_R(R/\mathrm{rad}R)}$.\\

$(1)\Longrightarrow(4)$. Assume that $\widehat{R}=R\rho$. By the remark after Proposition \ref{Lmprcht}, this is equivalent to that $\mathrm{Ker}\rho$ contains no nonzero left ideals. Consequently, for a left ideal $I$ and any $r\in R$, $Ir=0$ if and only if $(r\rho)(I)=\rho(Ia)=1$, that is, $I^*=\mathrm{ann}_r(I) \rho$. Hence, $|\mathrm{ann}_r(I)|=|\mathrm{ann}_r(I)\rho|=|I^*|=|R|/|I|$.\\

$(4)\Longrightarrow(2)$. The ring $R$ is artinian (being finite), and hence, by Proposition \ref{socrad}, $\mathrm{soc}(_RR)=\mathrm{ann}_r(\mathrm{rad}R)$. For the multiplicities, suppose that $\mathrm{soc}(_RR)\cong \overset{n}{\underset{i=1}{\bigoplus}}s_i T_i$. By the hypothesis, $|\mathrm{soc}(_RR)|=|R|/|\mathrm{rad}R|=|R/\mathrm{rad}R|$. If $I$ is a left ideal of $R$, consider the map $\theta:R_R\rightarrow \mathrm{Hom}_R(_RI,_RR)$ defined by $a\mapsto \theta_a$, where $\theta_a:x\mapsto xa$ for $x\in I$. Clearly $\mathrm{Ker}\theta= \mathrm{ann}_r(I)$, hence $|\mathrm{Im}\theta|=|R|/|\mathrm{ann}_r(I)|=|I|$. Consequently, $|I|\leq |\mathrm{Hom}_R(_RI,_RR)|$. In particular, for the homogeneous component $S_i=n_iT_i$ of the socle, we get $|\mathbb{M}_{\mu_i\times s_i}(\mathbb{F}_{q_i})|=|S_i|\leq|\mathrm{Hom}_R(_RS_i,_RR)|=|\mathrm{End}_R(_RS_i)|=|\mathbb{M}_{s_i}(\mathbb{F}_{q_i})|$. Hence, either $s_i=0$ or $\mu_i\leq s_i $. Since $T_i$ is a simple left $R$-module, there exists a (maximal) left ideal $I$, such that $T_i\cong R/I$. Now, notice that each element $a\in \mathrm{ann}_r(I)$   determines exactly one homomorphism  in $\mathrm{Hom}_R(R/I,_RR)$, defined by $x+I\mapsto xa$.

 Conversely, for any $\lambda\in\mathrm{Hom}_R(R/I,_RR)$, $\lambda(1)\in \mathrm{ann}_r(I)$.
This makes $|\mathrm{Hom}_R(_RT_i,_RR)|=|\mathrm{Hom}_R(R/I,_RR)|=|\mathrm{ann}_r(I)|=|R|/|I|=|T_i|>1$. This discovered nonzero homomorphism $_RT_i\rightarrow _RR$ must be a monomorphism since $T_i$ is simple, and hence $R$ contains a minimal left ideal $J\cong T_i$. This implies that $s_i\neq 0$\footnote{Let $\underset{i\in I}{\bigoplus}N_i$ be a direct sum of a family of simple left $R$-modules. It is easy to prove that if $M$ is a simple left $R$-module such that $M$ can be embedded into $\underset{i\in I}{\bigoplus}N_i$, then $M\cong N_j$ for some $j$.}, and $\mu_i\leq s_i$. This result, together with that $|\mathrm{soc}(_RR)|=|R/\mathrm{rad}R|$, give that $\mu_i=s_i$ for every $i$. Whence, we have $\mathrm{soc}(_RR)\cong {_R(R/\mathrm{rad}R)}$.\\

$(2)\Longrightarrow(1)$. By Proposition \ref{Rhat}, $\mathrm{soc}(_RR)$ is cyclic if and only if $_RR$ can be embedded into $_R\widehat{R}$, which is equivalent to $_R\widehat{R}\cong{_RR}$, and hence implies $_R\widehat{R}$ is cyclic.

\end{proof}
Notice that the first condition in the theorem is left-right symmetric by Proposition \ref{Lmprcht}, hence, in view of Proposition \ref{frobrad} and Theorem \ref{hon}, the following corollary is a direct consequence.
\begin{corollary}\label{honcor} A finite ring $R$ is Frobenius if and only if, as left modules, $_R(R/\mathrm{rad}R)\cong \mathrm{soc}(_RR)$, if and only if $_R\widehat{R}$ is cyclic.\end{corollary}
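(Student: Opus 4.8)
The plan is to assemble the corollary directly from three results already established: the socle characterization of Frobenius rings (Proposition \ref{frobrad}), Honold's theorem (Theorem \ref{hon}), and the left-right symmetry of generating characters (Proposition \ref{Lmprcht}). No new argument is needed; the work is purely in chaining these together and in noting that one step uses a mirrored (right-handed) version of Theorem \ref{hon}.

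I would first dispose of the second ``if and only if'' in the statement: by Theorem \ref{hon}, conditions (1) and (3) are equivalent, so ``$_R\widehat{R}$ is cyclic'' $\Longleftrightarrow$ ``$_R(R/\mathrm{rad}R)\cong\mathrm{soc}(_RR)$'' is immediate. It then remains to prove that these are equivalent to $R$ being Frobenius. For the easy direction, if $R$ is Frobenius then Proposition \ref{frobrad} gives $_R(R/\mathrm{rad}R)\cong\mathrm{soc}(_RR)$ outright (together with its right-hand companion, which we simply discard here).

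For the converse, assume $_R(R/\mathrm{rad}R)\cong\mathrm{soc}(_RR)$. By Theorem \ref{hon}, $(3)\Longrightarrow(1)$, so $_R\widehat{R}$ is cyclic, i.e.\ $R$ possesses a left generating character $\rho$. By Proposition \ref{Lmprcht}, $\rho$ is then also a right generating character, so $\widehat{R}_R$ is cyclic as well. Now I apply the right-hand analogue of Theorem \ref{hon}, $(1)\Longrightarrow(3)$, to conclude $(R/\mathrm{rad}R)_R\cong\mathrm{soc}(R_R)$. With both isomorphisms in hand, Proposition \ref{frobrad} yields that $R$ is Frobenius, completing the cycle.

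The only point that needs a word of care -- and the nearest thing to an obstacle -- is the invocation of the right-handed Theorem \ref{hon}: one must observe that neither the statement nor its proof of $(3)\Longrightarrow(1)$ is intrinsically one-sided (every occurrence of ``left'' -- in the character module, the socle, the radical, the annihilators -- can be replaced by ``right'' throughout), so it applies verbatim to right modules. Proposition \ref{Lmprcht} is precisely the bridge that lets the left-sided hypothesis feed the right-sided conclusion, and once this symmetry is acknowledged the corollary follows with essentially no further computation.
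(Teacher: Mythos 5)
Your proposal is correct and follows exactly the route the paper intends: Theorem \ref{hon} for the equivalence of cyclicity of $_R\widehat{R}$ with $_R(R/\mathrm{rad}R)\cong\mathrm{soc}(_RR)$, Proposition \ref{Lmprcht} to transfer the left generating character to a right one, the right-handed analogue of Theorem \ref{hon} to obtain $(R/\mathrm{rad}R)_R\cong\mathrm{soc}(R_R)$, and Proposition \ref{frobrad} to conclude Frobenius. The paper states this as a ``direct consequence'' of precisely these three results; you have simply written out the chain, including the left-right symmetry point the paper leaves implicit.
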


In \cite{duality}, this characterization of finite Frobenius rings was used to prove the following extension theorem for linear codes over finite rings. It stands as a good example of how the proof of Theorem \ref{mac} produces more generalities. Anyway, the theorem will be stated without proof, as we shall display the proof only once in its more general setting of Frobenius bimodules.

\begin{theorem}\label{du1}$\mathrm{\cite[Theorem\; 6.3]{duality}}$. Let $R$ be a finite Frobenius ring. Then $R$  has the extension property with respect to Hamming weight.\end{theorem}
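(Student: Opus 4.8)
The plan is to reproduce the character-theoretic argument used for Theorem \ref{mac}, with the role of the nontrivial character of $\mathbb{F}_q$ now played by a \emph{generating character} of $R$. First I would invoke Corollary \ref{honcor}: since $R$ is finite and Frobenius, $_R\widehat{R}$ is cyclic, so there is $\rho\in\widehat{R}$ with $\widehat{R}=R\rho$; by Proposition \ref{Lmprcht} this $\rho$ is simultaneously a right generating character, and by the remark following that proposition $\mathrm{Ker}\,\rho$ contains no nonzero one-sided ideal of $R$. Now let $C_1,C_2\subseteq R^n$ be codes and $f\colon C_1\to C_2$ an $R$-linear isomorphism preserving Hamming weight. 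Set $C=C_1$, and let $\lambda_1,\dots,\lambda_n\in\mathrm{Hom}_R(C,{}_RR)$ be the coordinate functionals of the inclusion $C\hookrightarrow R^n$, and $\psi_1,\dots,\psi_n\in\mathrm{Hom}_R(C,{}_RR)$ those of $f$. Since $\mathrm{End}_R({}_RR)$ is right multiplication by $R$, the right symmetry group of the Hamming weight on ${}_RR$ is all of $\mathrm{Aut}_R({}_RR)$, i.e.\ right multiplication by units; hence it suffices to exhibit a permutation $\sigma$ of $\{1,\dots,n\}$ and units $u_1,\dots,u_n$ of $R$ with $\psi_i=\lambda_{\sigma(i)}\cdot u_i$ for all $i$, where $(\nu\cdot r)(c):=\nu(c)r$ is the right $R$-module structure on $\mathrm{Hom}_R(C,{}_RR)$; indeed $x\mapsto(x_{\sigma(1)}u_1,\dots,x_{\sigma(n)}u_n)$ is then a monomial transformation of $R^n$ restricting to $f$ on $C_1$.

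The second step converts weight preservation into an identity of characters of the finite abelian group $C$. By Proposition \ref{char}(6) applied to $R$ and $\rho$, one has $\sum_{r\in R}\rho(ar)=|R|$ for $a=0$ and $0$ otherwise, so $a\mapsto 1-|R|^{-1}\sum_{r\in R}\rho(ar)$ on $R$ equals $1$ when $a\neq0$ and $0$ when $a=0$. Summing over coordinates, the hypothesis $|\{i:\lambda_i(c)\neq0\}|=|\{j:\psi_j(c)\neq0\}|$ for all $c\in C$ (which is exactly preservation of Hamming weight by $f$) becomes, after cancelling the constant $n$ and clearing the factor $|R|^{-1}$,
\[
\sum_{i=1}^{n}\sum_{r\in R}\rho\circ(\lambda_i\cdot r)\;=\;\sum_{j=1}^{n}\sum_{r\in R}\rho\circ(\psi_j\cdot r),
\]
an identity of functions $C\to\mathbb{C}^{\times}$. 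The assignment $\mathrm{Hom}_R(C,{}_RR)\to\widehat{C}$, $\nu\mapsto\rho\circ\nu$, is an injective homomorphism of abelian groups: if $\rho\circ\nu$ is trivial then $\nu(C)$, being a left ideal of $R$ contained in $\mathrm{Ker}\,\rho$, is zero. Hence, comparing coefficients in the displayed identity via linear independence of distinct characters (Theorem \ref{linearindep}), the multiset $\{\lambda_i\cdot r:1\le i\le n,\ r\in R\}$ coincides with the multiset $\{\psi_j\cdot r:1\le j\le n,\ r\in R\}$ in $\mathrm{Hom}_R(C,{}_RR)$.

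The last step is to read off the permutation and the units from this multiset identity, which I would carry out by descending induction on $n$ exactly as in the proof of Theorem \ref{mac} (after the routine reduction to the case where no $\lambda_i$ or $\psi_j$ vanishes identically): $\psi_1=\psi_1\cdot1$ is a member of the right-hand multiset, hence of the left-hand one, so $\psi_1=\lambda_{i_1}\cdot u_1$ for some $i_1$ and some $u_1\in R$; one then deletes the block $\{\psi_1\cdot r:r\in R\}=\{\lambda_{i_1}\cdot(u_1r):r\in R\}$ from both sides and proceeds with $n-1$ coordinates. I expect the essential difficulty to be precisely the step that is free over a field: there, $\psi_1\neq0$ already forces the multiplier to be invertible, whereas over $R$ one only obtains $u_1\in R$ and must show it may be replaced by a genuine unit (equivalently, that the matched functionals generate the same cyclic right $R$-submodule of $\mathrm{Hom}_R(C,{}_RR)$, so that the blocks can be matched and removed consistently). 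This is where the Frobenius hypothesis must really be used --- concretely via the counting identity $|I|\cdot|\mathrm{ann}_r(I)|=|R|$ of Theorem \ref{hon}(4), applied to the images of the functionals, which controls the cardinalities of the blocks $\{\nu\cdot r:r\in R\}$; for non-Frobenius rings this extraction breaks down, consistently with the partial converses mentioned in the introduction.
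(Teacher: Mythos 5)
Your first two steps are sound and coincide with the paper's own route: the paper does not prove this theorem separately but deduces it from Theorem \ref{FBI} (the Frobenius bimodule case, here specialized to $A={}_RR_R$), whose proof is exactly your setup --- generating character $\rho$ from Corollary \ref{honcor}, conversion of Hamming-weight preservation into a character identity, injectivity of $\nu\mapsto\rho\circ\nu$ because $\mathrm{Im}\,\nu$ is a left ideal inside $\mathrm{Ker}\,\rho$, and linear independence of characters (Theorem \ref{linearindep}). The genuine gap is in your extraction step. Starting from $\psi_1$ you only obtain $\psi_1=\lambda_{i_1}\cdot u_1$ with $u_1\in R$ arbitrary, and the block you propose to delete, $\{\psi_1\cdot r: r\in R\}=\{\lambda_{i_1}\cdot(u_1r): r\in R\}$, is \emph{not} the block $\{\lambda_{i_1}\cdot r: r\in R\}$ appearing on the other side of the multiset identity unless $u_1$ is a unit; cancelling it when $u_1$ is a non-unit destroys the identity, so the descending induction does not proceed. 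Your proposed remedy, the counting identity $|I|\cdot|\mathrm{ann}_r(I)|=|R|$ of Theorem \ref{hon}, is only a pointer: the obstruction is not the cardinality of the blocks but the fact that the matched functionals need not generate the same cyclic right submodule of $\mathrm{Hom}_R(C,{}_RR)$, and no argument is given for replacing $u_1$ by a unit.

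What the paper uses, and what is missing from your sketch, is the partial order $\preceq$ on the unit-orbits of $\mathrm{Hom}_R(C,{}_RR)$ ($\lambda\preceq\psi$ iff $\lambda=\psi r$ for some $r\in R$), whose antisymmetry is Lemma \ref{beneath}, proved via Bass's Lemma \ref{bass} for semilocal rings. One does not start the induction at $\psi_1$: one starts at an element of the whole list $\lambda_1,\dots,\lambda_n,\psi_1,\dots,\psi_n$ whose orbit is $\preceq$-maximal, say $\lambda_1$; linear independence then gives $\lambda_1=\psi_j\cdot s$, i.e.\ $\lambda_1\preceq\psi_j$, and maximality plus antisymmetry force $\psi_j=\lambda_1\cdot u$ with $u$ a unit, so the two blocks genuinely coincide and can be cancelled. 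Your counting idea can be incorporated --- choosing a functional with $|\nu(C)|$ maximal, Theorem \ref{hon} gives $|\nu R|=|\nu(C)|$, whence $\lambda_1R=\psi_jR$ --- but to conclude that two generators of the same finite cyclic module differ by a unit you again need Lemma \ref{beneath}, so the maximality-plus-Bass ingredient cannot be bypassed; as written, your proof is incomplete at precisely the step you flagged. (Your remark about symmetry groups is fine: $\mathrm{Aut}_R({}_RR)$ is right multiplication by units, so a $G_{\text{rt}}$-monomial transformation is the classical notion, as in Theorem \ref{mac}.)
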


In \cite{gref}, Greferath and Schmidt  gave another proof of this result. It is noteworthy  that, also in \cite{duality}, J. Wood proved the following  partial converse of the above theorem.

\begin{theorem}\label{du2}$\mathrm{\cite[Theorem\; 6.4]{duality}}$. Suppose $R$ is a finite commutative ring, and that $R$ has the extension property with respect to Hamming weight. Then $R$ is Frobenius.\end{theorem}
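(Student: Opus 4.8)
The plan is to prove the contrapositive: if a finite commutative ring $R$ is not Frobenius, then $R$, viewed as a module alphabet over itself, fails the extension property for Hamming weight. \textbf{Step 1 (structural reduction).} By Theorem \ref{hon} together with Corollary \ref{honcor}, $R$ is Frobenius if and only if $\mathrm{soc}(_RR)$ is cyclic. Since $R$ is commutative, the Wedderburn--Artin theorem and Wedderburn's little theorem force every matrix size in equation (\ref{mu}) to be $1$, so $R/\mathrm{rad}R\cong\mathbb{F}_{q_1}\times\cdots\times\mathbb{F}_{q_n}$, the simple modules are $T_i=R/\mathfrak{m}_i$, and each $\mu_i=1$. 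Writing $\mathrm{soc}(_RR)\cong\bigoplus_i s_iT_i$, Lemma \ref{simui} says that $\mathrm{soc}(_RR)$ is cyclic exactly when $s_i\le 1$ for all $i$. Hence, assuming $R$ is not Frobenius, fix $j$ with $s:=s_j\ge 2$, set $\mathfrak{m}:=\mathfrak{m}_j$, $q:=|R/\mathfrak{m}|$, and let $S:=\{x\in\mathrm{soc}(_RR):\mathfrak{m}x=0\}$ be the $\mathfrak{m}$-homogeneous component of the socle. Then $\mathfrak{m}$ annihilates $S$, so $S$ is an $\mathbb{F}_q$-vector space of dimension $s\ge 2$ on which $R$ acts through the surjection $\chi:R\twoheadrightarrow R/\mathfrak{m}=\mathbb{F}_q$; in particular every unit $u\in\mathcal{U}(R)$ acts on $S$ as the scalar $\chi(u)\in\mathbb{F}_q^{\times}$.

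\textbf{Step 2 (reduction to a question over $\mathbb{F}_q$).} Because $R$ acts on $S$ through $\chi$, every $\mathbb{F}_q$-subspace of $S^n\subseteq R^n$ is automatically an $R$-linear code over $R$, every $\mathbb{F}_q$-linear map between two such subspaces is $R$-linear, and the Hamming weight of $R^n$ restricted to $S^n$ just counts the nonzero $S$-coordinates. Moreover, the right symmetry group of the Hamming weight is all of $\mathrm{Aut}_R(R)=\mathcal{U}(R)$, so a monomial transformation of $R^n$ has the form $T(x)_i=u_ix_{\sigma(i)}$ with $u_i\in\mathcal{U}(R)$, $\sigma\in S_n$; on a subcode $C\subseteq S^n$ it therefore acts as $x\mapsto(\chi(u_i)\,x_{\sigma(i)})_i$, a coordinate permutation combined with $\mathbb{F}_q^{\times}$-scalings of the $S$-blocks. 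Consequently it is enough to exhibit, for some $n$, two $\mathbb{F}_q$-subspaces $C_1,C_2\subseteq(\mathbb{F}_q^s)^n$ and an $\mathbb{F}_q$-isomorphism $C_1\to C_2$ preserving Hamming weight, such that no permutation-plus-$\mathbb{F}_q^{\times}$-scaling transformation of $(\mathbb{F}_q^s)^n$ carries $C_1$ onto $C_2$.

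\textbf{Step 3 (the counterexample).} This is the only place where $s\ge 2$ enters, and it is the field-coefficient analogue of the matrix-module lemma used for the general (non-commutative) converse. To illustrate the mechanism, take $q=2$, $s=2$ (when $s>2$, work inside a two-dimensional subspace of $S$), $n=3$, and a basis $e_1,e_2$: let $C_1$ be spanned by $(e_1,e_1,0)$ and $(0,e_2,e_2)$, and $C_2$ by $(e_1,e_2,0)$ and $(0,e_1,e_2)$. Both codes have Hamming-weight multiset $\{0,2,2,3\}$, the weight-$3$ word of each being $(e_1,e_1+e_2,e_2)$, so the $\mathbb{F}_2$-isomorphism matching the listed generators preserves Hamming weight; but the three coordinates of $(e_1,e_1+e_2,e_2)$ are pairwise distinct, so the identity is the only coordinate permutation fixing it, and the identity does not send $C_1$ to $C_2$ — while over $\mathbb{F}_2$ there are no non-trivial scalings. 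For general $q$ and $s\ge 2$ the construction must also defeat the $\mathbb{F}_q^{\times}$-scalings and is correspondingly more delicate; here I would invoke Wood's original construction in \cite{duality} (and its refinement in \cite{r6}) rather than reproduce the combinatorics.

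\textbf{Conclusion and main obstacle.} Realizing the codes $C_1,C_2$ of Step 3 inside $S^n\subseteq R^n$ yields, by Step 2, two $R$-linear codes over $R$ together with an $R$-linear, Hamming-weight-preserving isomorphism that does not extend to a monomial transformation of $R^n$; hence $R$ fails the Hamming-weight extension property, which proves the contrapositive. The genuine difficulty is concentrated entirely in Step 3 — constructing the counterexample over an arbitrary finite field $\mathbb{F}_q$ with arbitrary multiplicity $s\ge 2$ — whereas the reduction Steps 1 and 2 are purely structural and follow from Theorem \ref{hon}, Corollary \ref{honcor}, Lemma \ref{simui}, and the definitions already in place.
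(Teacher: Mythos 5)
Your Steps 1 and 2 are sound, and they reproduce exactly the reduction the paper itself uses for the necessity direction (the Dinh--L\'{o}pez-Permouth strategy): commutativity forces $\mu_i=1$, Corollary \ref{honcor} and Lemma \ref{simui} convert ``not Frobenius'' into a homogeneous socle component $S\cong \mathbb{F}_q^{\,s}$ with $s\ge 2$ on which $R$ acts through $R/\mathfrak{m}=\mathbb{F}_q$, and monomial transformations of $R^n$ restrict on $S^n$ to permutations combined with $\mathbb{F}_q^{\times}$-scalings. Your $q=2$, $s=2$, $n=3$ example is also correct as far as it goes (both codes have a unique weight-$3$ word, it is the same word with pairwise distinct entries, so only the identity permutation could work, and it does not).

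The genuine gap is Step 3, and you have named it yourself: for arbitrary $q$ and arbitrary $s\ge 2$ you do not construct the pair of Hamming-isometric, non-monomially-equivalent codes, but instead ``invoke Wood's original construction.'' That construction is precisely the mathematical heart of the theorem; in this thesis it is Theorem \ref{wood}, whose proof (counting column reduced echelon matrices with Gaussian coefficients and applying the Cauchy binomial theorem, then observing that $C_+$ has an identically zero coordinate while $C_-$ does not) occupies several pages. A proof that defers exactly this step to the paper being proved is circular in spirit: nothing in your toy case generalizes on its own, since for $q>2$ one must also defeat the scalings, and for $s>2$ the ``unique maximal-weight word with distinct entries'' trick breaks down. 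The gap is fillable within the framework you set up: apply Theorem \ref{wood} with $m=1$, i.e.\ $R'=\mathbb{F}_q$, $A'=\mathbb{M}_{1\times s}(\mathbb{F}_q)=\mathbb{F}_q^{\,s}$ (your hypothesis $s\ge 2>1=m$ is exactly $k>m$); inequivalence there is under the larger group of $GL_s(\mathbb{F}_q)$-monomial transformations, hence a fortiori under your smaller permutation-plus-$\mathbb{F}_q^{\times}$ group, and the pullback through $R\rightarrow R/\mathrm{rad}R\rightarrow \mathbb{F}_q$ then completes your contrapositive just as in the paper's general necessity theorem. But as written, your proposal establishes the theorem only for the single case it computes, and the general counterexample --- the part the paper actually proves in detail --- is missing.
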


However, it remained a question whether the full converse is true, that is, whether a finite ring satisfying the property with respect to Hamming weight is necessarily Frobenius. In 2000, in \cite{gref}, Greferath and Schmidt  gave an example of a QF ring that did not satisfy the property. Thus, the full converse was likely true. In \cite{r1}, Dinh and L\'{o}pez-Permouth showed that the converse holds within the class of finite local rings and homogeneous  semilocal rings. Also, in their paper \cite{r2}, they attempted to characterize generally the modules satisfying the extension property. The general context of arbitrary modules will be exposed in the next parts.

\subsection{Frobenius Bi-modules}

In the previous work we characterized Finite Frobenius rings to be those rings $R$ satisfying $_RR\cong{_R\widehat{R}}$ and the proved-equivalent condition $R_R\cong \widehat{R}_R$. This suggests some generalization, namely, Frobenius bimodules. Though it turns out to be true, we do not pre-assume in the definition that the left and right conditions imply each other. Here we're mainly following \cite{r7} and \cite{2004}.

\begin{definition}(Frobenius Bi-module). Let $_RA_R$ be a finite (R,R)-bimodule\footnote{The hyphen in the word `bi-module' will often be dropped.} over the finite ring $R$. We say that $A$ is a \emph{Frobenius bi-module} if $_RA\cong{_R\widehat{R}}$ and $A_R\cong \widehat{R}_R$. \end{definition}
Clearly, the character bimodule $_R\widehat{R}_R$ is Frobenius.  Besides, it is important to see that a Frobenius bimodule need not be isomorphic, as a bimodule,  to $_R\widehat{R}_R$.

\begin{lemma}\label{fb1.2.4} Let $_RA_R$ be  a Frobenius bimodule. Then
\begin{center}$_R\widehat{A}\cong {_RR}\;\;$   and  $\;\; \widehat{A}_R\cong R_R.$\end{center}\end{lemma}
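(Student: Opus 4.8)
The plan is to pass to character modules and use the established exactness and double-dual properties of the character functor. Recall that the character functor $\widehat{\;\;}$ is contravariant and exact (Proposition \ref{ex}), it sends finite left $R$-modules to finite right $R$-modules (and vice versa), and for any finite module $M$ one has a natural isomorphism $\widehat{\widehat{M}}\cong M$ (this is exactly the observation used at the start of the proof of Proposition \ref{hynf3}, and it respects the module structure on each side).

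First I would apply the character functor to the two given bimodule isomorphisms. Since $_RA\cong{_R\widehat{R}}$ is an isomorphism of left $R$-modules, applying $\widehat{\;\;}$ yields an isomorphism of right $R$-modules $\widehat{A}_R\cong(\widehat{R}\widehat{)}_R$; but by the natural double-dual isomorphism $(\widehat{R}\widehat{)}\cong R$ as right $R$-modules, so $\widehat{A}_R\cong R_R$. Symmetrically, from $A_R\cong\widehat{R}_R$ as right $R$-modules I get an isomorphism of left $R$-modules $_R\widehat{A}\cong{_R(\widehat{R}\widehat{)}}\cong{_RR}$, again using the double-dual isomorphism, this time for left modules. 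That is the whole argument; the two displayed isomorphisms in the lemma follow, one from each of the two defining isomorphisms of a Frobenius bimodule.

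The only thing that needs a little care — and the step I expect to be the main (minor) obstacle — is verifying that the natural isomorphism $\widehat{\widehat{M}}\cong M$, $m\mapsto(\widehat{m}:\varpi\mapsto\varpi(m))$, is not merely a group isomorphism but an isomorphism of $R$-modules on the appropriate side: if $M$ is a left $R$-module then $\widehat{M}$ is a right $R$-module, $\widehat{\widehat{M}}$ is again a left $R$-module, and one must check the canonical evaluation map intertwines the $R$-actions. This is the same routine check already invoked (without spelling it out) in Proposition \ref{hynf3} and in the proof of Theorem \ref{hon}, so I would simply cite it; unwinding the definitions of the induced actions in the additive form, for $r\in R$ and $m\in M$ one has $\widehat{rm}(\varpi)=\varpi(rm)=(\varpi r)(m)=\widehat{m}(\varpi r)=(r\widehat{m})(\varpi)$, confirming $r\mapsto$ evaluation commutes with the action. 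With that in hand the proof is a two-line application of functoriality.
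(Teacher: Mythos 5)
Your proposal is correct and matches the paper's proof, which is exactly "apply the character functor to the two defining isomorphisms" together with the (routine, $R$-linear) double-dual identification $\widehat{\widehat{M}}\cong M$. Your verification that the evaluation map intertwines the $R$-actions is the only detail the paper leaves implicit, and your computation of it is right.
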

\begin{proof}Immediate from the definition of a Frobenius bimodule,  upon applying the character functor.\end{proof}

In view of the previous lemma, we define generating characters of  Frobenius bimodules.
\begin{definition}(Generating Character). We say that a character $\varrho\in\widehat{A}$ is a \emph{left generating character} of $A$ if
 \begin{center}$\bullet\varrho:_RR\rightarrow{_R\widehat{A}},\;\;$ defined by $\;r\mapsto r\varrho$, \end{center} is an isomorphism. \\
 Similarly, $\varrho$ is a \emph{ right generating character}  of $A$ if
  \begin{center}$\varrho\bullet:R_R\rightarrow\widehat{A}_R,\;\;$ defined by $\;r\mapsto \varrho r$, \end{center} is an isomorphism. \end{definition}

Notice  that, by definition, $_RA_R$ is Frobenius if and only if it admits left and right generating characters.

\begin{lemma}\label{fb1.2.6} Let $_RA_R$ be a Frobenius bimodule. Let $\varrho\in\widehat{A}$.  If $\mathrm{Ker}\varrho$ contains no nonzero left (respectively, right) $R$ submodules of $A$, then $\varrho$ is a left (respectively, right) generating character of $A$.  \end{lemma}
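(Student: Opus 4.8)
The plan is to exploit the cardinality count $|A| = |\widehat{R}| = |R|$, which holds because $_RA \cong {_R\widehat{R}}$ and $|\widehat{R}| = |R|$ by Proposition~\ref{char}(3). Consider the left-module homomorphism $\bullet\varrho : {_RR} \to {_R\widehat{A}}$ defined by $r \mapsto r\varrho$. Since $\widehat{A} \cong {_RR}$ by Lemma~\ref{fb1.2.4}, the target has cardinality $|R|$ as well, so it suffices to prove that $\bullet\varrho$ is injective; a injective map between finite sets of equal size is automatically bijective.

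First I would unwind the kernel of $\bullet\varrho$. An element $r \in R$ lies in $\ker(\bullet\varrho)$ precisely when $r\varrho = 0$ in $\widehat{A}$, i.e.\ $(r\varrho)(a) = \varrho(ar) = 1$ (multiplicative notation; $=0$ additively) for every $a \in A$. Equivalently, $\varrho$ annihilates the set $Ar$. Now $Ar$ is a left $R$-submodule of $A$: it is closed under addition, and for $s \in R$ we have $s(ar) = (sa)r \in Ar$. Thus $\ker(\bullet\varrho)$ is nonzero exactly when there is some $r \neq 0$ with $Ar \neq 0$ a nonzero left submodule contained in $\ker\varrho$. But then I must also rule out the degenerate possibility that $r \neq 0$ yet $Ar = 0$; this is handled because $A_R \cong R_R$ is a faithful right module (if $Ar = 0$ then $r$ annihilates $R_R$, forcing $r = 0$ — or one simply notes $1 \cdot r = r$ after transporting along the isomorphism $A_R \cong R_R$). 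Therefore, under the hypothesis that $\ker\varrho$ contains no nonzero left $R$-submodule of $A$, the only $r$ with $Ar \subseteq \ker\varrho$ is $r = 0$, so $\bullet\varrho$ is injective, hence an isomorphism, hence $\varrho$ is a left generating character. The right-hand statement follows by the symmetric argument, replacing $Ar$ by $rA$ and using $_RA \cong {_R\widehat{R}}$ for the faithfulness input.

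The main obstacle, such as it is, is bookkeeping the distinction between "$\varrho$ kills the submodule $Ar$" and "$Ar$ is actually nonzero", which is why the faithfulness of $A$ as a one-sided module (a consequence of $A_R \cong R_R$, in turn part of the Frobenius bimodule definition) is doing essential work; without it one could have a spurious element of the kernel. The rest is the clean finite-cardinality trick already used repeatedly in the excerpt (e.g.\ in the proof of Proposition~\ref{Rhat} and in Theorem~\ref{hon}).
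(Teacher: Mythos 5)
Your argument follows the same route as the paper's: use $|\widehat{A}|=|R|$ (via Lemma \ref{fb1.2.4}) to reduce to injectivity of $\bullet\varrho$, observe that $r\in\mathrm{Ker}(\bullet\varrho)$ forces $Ar\subset\mathrm{Ker}\varrho$ with $Ar$ a left submodule, and then kill the degenerate case $Ar=0$ by faithfulness of $A$ as a right module. The one genuine flaw is your justification of that faithfulness: you assert $A_R\cong R_R$ and even call it ``part of the Frobenius bimodule definition.'' It is not; the definition gives $A_R\cong\widehat{R}_R$ (it is $\widehat{A}_R$, not $A_R$, that is isomorphic to $R_R$, by Lemma \ref{fb1.2.4}). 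Moreover $A_R\cong R_R$ is simply false in general: $\widehat{R}_R\cong R_R$ holds precisely when $R$ is Frobenius, while Frobenius bimodules (e.g.\ $\widehat{R}$ itself) exist over every finite ring. So your ``transport $1\cdot r=r$'' shortcut is not available as stated.

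The repair is immediate and is exactly what the paper does: since $A_R\cong\widehat{R}_R$, the submodule $Ar$ corresponds to $\widehat{R}r$ under this isomorphism, and the right analogue of Lemma \ref{111111} says $\widehat{R}r=0$ implies $r=0$; hence $Ar\neq0$ whenever $r\neq0$, and the rest of your argument goes through unchanged. Note that in your sketch of the right-sided case you cite the correct isomorphism $_RA\cong{_R\widehat{R}}$ for faithfulness of $rA$ -- the left-sided case should be handled symmetrically with $A_R\cong\widehat{R}_R$, not with $R_R$.
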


\begin{proof}Suppose that $\varrho$ is not a left generating character, then $\bullet\varrho$ is not an isomorphism. Since $|R|=|\widehat{A}|$, this is equivalent to that $\bullet\varrho$ is not injective. Let $r\in \mathrm{Ker}(\bullet\varrho), r\neq0$. Then $0=(r\varrho)(a)=\varrho(ar)$ for every $a\in A_R$. Therefore, $Ar\subset \mathrm{Ker}\varrho$. Since $A_R\cong \widehat{R}_R$, and $r\neq0$, it follows from the right analogue of Lemma \ref{111111} that $Ar\neq0$. Thus, $Ar$ is a nonzero left $R$-module lying inside $\mathrm{Ker}\varrho$.
\end{proof}

\begin{lemma}\label{fb1.2.7} If $\varrho$ is a left (respectively, right) generating character of $A$, then $\mathrm{Ker}\varrho$ contains no nonzero right (respectively, left) $R$-submodules of $A$.
\end{lemma}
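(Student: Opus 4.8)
The plan is to argue by contradiction, using only the surjectivity half of the generating-character isomorphism together with Proposition \ref{(g:h)}. Suppose $\varrho$ is a left generating character of $A$, so that $\bullet\varrho \colon {}_RR \to {}_R\widehat{A}$, $r \mapsto r\varrho$, is bijective; assume for contradiction that $\mathrm{Ker}\varrho$ contains a nonzero right $R$-submodule $B \subseteq A$. First I would unwind the relevant module structure: since $A$ is a right $R$-module, $\widehat{A}$ carries the left structure $(r\varrho)(a) = \varrho(ar)$. Surjectivity of $\bullet\varrho$ then says that every $\psi \in \widehat{A}$ has the form $\psi = r\varrho$ for some $r \in R$, i.e. $\psi(a) = \varrho(ar)$ for all $a \in A$.

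Next I would bring in that $B$ is a right submodule: for any $b \in B$ and any $r \in R$ we have $br \in B \subseteq \mathrm{Ker}\varrho$, hence $\varrho(br) = 0$. Combining this with the previous step, $\psi(b) = \varrho(br) = 0$ for every $\psi \in \widehat{A}$ and every $b \in B$; equivalently, $B \subseteq \mathrm{Ker}\psi$ for all $\psi \in \widehat{A}$. Applying Proposition \ref{(g:h)} to the finite abelian group $A$ and its subgroup $B$ forces $B = 0$, contradicting the choice of $B$. This settles the left case.

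Finally, the parenthetical statement follows by the mirror-image argument: if $\varrho$ is a right generating character, then $\varrho\bullet \colon R_R \to \widehat{A}_R$ is surjective, $(\varrho r)(a) = \varrho(ra)$, and for a nonzero left submodule $B \subseteq \mathrm{Ker}\varrho$ one has $rb \in B$ for all $r \in R$, so $\psi(b) = \varrho(rb) = 0$ for every $\psi \in \widehat{A}$, whence $B = 0$ again by Proposition \ref{(g:h)}. I do not expect any real obstacle here; the proof is short. The only point needing care is the left/right bookkeeping on the bimodule $A$ and on $\widehat{A}$ — in particular that a right generating character controls left submodules and vice versa — and the observation that only surjectivity of the generating-character map (not injectivity) is actually used.
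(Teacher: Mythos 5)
Your proof is correct and is essentially the paper's own argument: write an arbitrary character of $A$ as $r\varrho$ using the generating-character map, use that $B$ is a right (resp.\ left) submodule inside $\mathrm{Ker}\varrho$ to conclude $B$ lies in the kernel of every character, and invoke Proposition \ref{(g:h)} to get $B=0$. The only difference is cosmetic (contradiction framing and the explicit remark that only surjectivity is needed), so there is nothing to add.
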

\begin{proof}Assume that $B_R\leq A_R$ is a right submodule such that $B\subset \mathrm{Ker}\varrho$. Let $\varpi=r\varrho\in \widehat{A}$ be any character, where $r\in R$. For any $b\in B$, $\varpi(b)=(r\varrho)(b)=\varrho(br)=0$. Thus, $B\subset\mathrm{Ker}\varpi$ for every $\varpi\in \widehat{A}$, and hence $B=0$ by Proposition \ref{(g:h)}. Similarly, the kernel of a right generating character contains no nonzero left submodule of $A$.\end{proof}

The last two lemmas give rise to the following corollary.

\begin{corollary}\label{genkerempty}If $_RA_R$ is a Frobenius bimodule, then a character $\varrho\in \widehat{A}$ is a left generating character if and only if $\varrho$ is a right generating character, if and only if $\mathrm{Ker}\varrho$ contains no nonzero left submodules of $A$.\end{corollary}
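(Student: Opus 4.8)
The plan is to close a short logical cycle among the three assertions: (a) $\varrho$ is a left generating character of $A$; (b) $\varrho$ is a right generating character of $A$; and (c) $\mathrm{Ker}\varrho$ contains no nonzero left $R$-submodule of $A$. The entire argument consists in invoking Lemmas \ref{fb1.2.6} and \ref{fb1.2.7} with the correct left/right parity, noting that each of those lemmas \emph{swaps} the side (left/right) between its hypothesis on $\mathrm{Ker}\varrho$ and its conclusion on $\varrho$.

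First I would prove (a)$\Rightarrow$(b). If $\varrho$ is left generating, then by Lemma \ref{fb1.2.7} its kernel contains no nonzero \emph{right} submodule of $A$; feeding this into the right-hand version of Lemma \ref{fb1.2.6} yields that $\varrho$ is right generating. Next, (b)$\Rightarrow$(c): by the right-hand version of Lemma \ref{fb1.2.7}, a right generating character has kernel containing no nonzero \emph{left} submodule of $A$, which is precisely (c). Finally, (c)$\Rightarrow$(a) is Lemma \ref{fb1.2.6} (left version) read verbatim. Chaining these three implications gives (a)$\Leftrightarrow$(b)$\Leftrightarrow$(c), which is the statement of the corollary.

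I expect no genuine obstacle: all the substance lives in the two preceding lemmas (whose own proofs rest on $|R|=|\widehat{A}|$, on Lemma \ref{111111}, and on Proposition \ref{(g:h)}), and the corollary itself is pure bookkeeping. The only point requiring care is the parity shift — one cannot pass directly from ``left generating'' to ``right generating'', but must route through the intermediate condition ``$\mathrm{Ker}\varrho$ contains no nonzero right submodule of $A$''. By the same token that intermediate condition is itself a fourth equivalent statement, symmetric to (c), though it is not needed for the corollary as phrased.
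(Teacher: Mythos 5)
Your proposal is correct and takes essentially the same route as the paper, which obtains the corollary directly by combining Lemmas \ref{fb1.2.6} and \ref{fb1.2.7}, exactly as your cycle (a)$\Rightarrow$(b)$\Rightarrow$(c)$\Rightarrow$(a) does. One small slip in your commentary: Lemma \ref{fb1.2.6} does \emph{not} swap sides (a kernel with no nonzero left submodules gives a \emph{left} generating character) --- only Lemma \ref{fb1.2.7} swaps --- but the three implications in your chain apply both lemmas with their correct parities, so the argument itself stands.
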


The previous corollary prompts us to extend the definition of a generating character to comply with one-sided modules \cite{woodappl}.

\begin{definition}\label{exactly}Let $A$ be a finite left (respectively, right) $R$-module. A character $\varpi$ of $A$ is a generating character of $A$ if $\mathrm{Ker}\varrho$ contains no nonzero left (respectively, right) submodule of $A$.\end{definition}

The next three results first appeared in \cite{woodappl}, p.228.
\begin{lemma}\label{lemma1.2.9}Let $A$ be a finite left $R$-module, and let $B\subset A$ be a submodule. If $A$ admits a generating character, then also $B$ admits a generating character.\end{lemma}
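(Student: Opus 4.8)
The plan is to use the character functor together with the earlier observation (Lemma \ref{111111}) that characters separate elements. Suppose $\varrho\in\widehat{A}$ is a generating character of $A$, meaning $\mathrm{Ker}\varrho$ contains no nonzero left $R$-submodule of $A$. Since $B\subset A$, the restriction $\varrho|_B$ is a character of the abelian group $B$; I claim it is a generating character of the module $B$. The key point is that any left $R$-submodule $C\subset B$ is, a fortiori, a left $R$-submodule of $A$; hence if $C\subset\mathrm{Ker}(\varrho|_B)=B\cap\mathrm{Ker}\varrho$, then $C\subset\mathrm{Ker}\varrho$, and therefore $C=0$ by the defining property of $\varrho$. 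This is essentially the whole argument, and I expect it to be routine: the only thing to be careful about is that a submodule of $B$ really is a submodule of $A$ with the same $R$-action, which is immediate since $B$ carries the restricted action.

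First I would fix $\varrho$ a generating character of $A$ and set $\varrho_B=\varrho|_B$, noting $\varrho_B\in\widehat{B}$. Second, I would take an arbitrary left $R$-submodule $C\subset B$ contained in $\mathrm{Ker}\varrho_B$, observe $C$ is a left $R$-submodule of $A$ lying in $\mathrm{Ker}\varrho$, and invoke the hypothesis on $\varrho$ to conclude $C=0$. That establishes that $\mathrm{Ker}\varrho_B$ contains no nonzero left submodule of $B$, which is exactly Definition \ref{exactly} applied to $B$. So $\varrho_B$ is a generating character of $B$, as desired.

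There is really no obstacle of substance here — the statement is a direct consequence of the definition of generating character being phrased purely in terms of "no nonzero submodule lies in the kernel," a condition that clearly only gets easier to satisfy when one passes to a submodule (the kernel shrinks and the collection of candidate submodules shrinks as well). If one prefers a more structural phrasing, one could instead observe that restriction of characters corresponds under the character functor to the surjection $\widehat{A}\twoheadrightarrow\widehat{B}$ dual to the inclusion $B\hookrightarrow A$ (using exactness, Proposition \ref{ex}), and that a generating character of $A$ maps to a generating character of $B$; but the elementary kernel argument above is shorter and self-contained, so that is the route I would write up.
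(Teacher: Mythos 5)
Your proposal is correct and is essentially identical to the paper's argument: restrict the generating character of $A$ to $B$ and observe that any submodule of $B$ inside the restricted kernel is a submodule of $A$ inside $\mathrm{Ker}\varrho$, hence zero. Nothing further is needed.
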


\begin{proof}When restricting the generating character of $A$ to $B$, we obtain a character od $B$, besides, this is a generating character of $B$ since its kernel cannot contain a submodule  $C\leq B\leq A$.\end{proof}

\begin{lemma}\label{lemma1.2.10}Let $R$ be a finite ring with unity. Define a group homomorphism $\varrho:\widehat{R}\rightarrow \mathbb{Q/Z}$ by $\varrho(\varpi)=\varpi(1),\;\;\varpi\in \widehat{R}$. Then $\varrho$ is a generating character of $\widehat{R}$. \end{lemma}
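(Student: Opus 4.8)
The plan is to unwind the definition of a generating character (Definition \ref{exactly}) directly; no machinery is needed beyond the module conventions for $\widehat{R}$ recorded in the paragraph before Lemma \ref{111111}. First I would check that $\varrho$ is what the statement claims, namely a group homomorphism $\widehat{R}\to\mathbb{Q/Z}$, hence an element of $\widehat{\widehat{R}}$: this is immediate from $(\varpi_1+\varpi_2)(1)=\varpi_1(1)+\varpi_2(1)$. It is worth remarking that $\varrho$ is precisely the image of $1\in R$ under the natural isomorphism $R\cong\widehat{\widehat{R}}$, which is exactly why one should expect it to be a generator. I would then recall that, read as a left $R$-module, $\widehat{R}$ carries the action $(r\varpi)(x)=\varpi(xr)$ inherited from $R_R$.

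To verify the generating-character property, I would take an arbitrary left submodule $T\leq{_R\widehat{R}}$ with $T\subseteq\mathrm{Ker}\,\varrho$ and show $T=0$. Fix $\varpi\in T$; for every $r\in R$ the element $r\varpi$ again lies in $T\subseteq\mathrm{Ker}\,\varrho$, so $0=\varrho(r\varpi)=(r\varpi)(1)=\varpi(1\cdot r)=\varpi(r)$. Since $r$ ranges over all of $R$, this forces $\varpi=0$, hence $T=0$, which is exactly what Definition \ref{exactly} requires. Running the identical computation with the right action $(\varpi r)(x)=\varpi(rx)$ shows the kernel likewise contains no nonzero right submodule, so $\varrho$ generates $\widehat{R}$ no matter which side one places the module structure on (this also matches the fact, noted after Corollary \ref{genkerempty}, that the generating-character condition is really one-sided by symmetry).

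There is no genuine obstacle in this argument; the only point requiring care — and thus the nearest thing to a pitfall — is the bookkeeping with the side conventions on $\widehat{R}$: one must use the action for which evaluating $r\varpi$ at the identity $1$ collapses to $\varpi(r)$, so that membership of $T$ in $\mathrm{Ker}\,\varrho$ is strong enough to annihilate $\varpi$ on all of $R$. (Morally this step is just the contrapositive of the triviality that a homomorphism $\varpi$ defined on all of $R$ with $\varpi(R)=0$ is zero, i.e. the same phenomenon underlying Lemma \ref{111111} and Proposition \ref{(g:h)}.)
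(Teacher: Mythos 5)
Your argument is correct and is essentially the paper's own proof: you take a left submodule of $\widehat{R}$ inside $\mathrm{Ker}\,\varrho$, evaluate $0=\varrho(r\varpi)=(r\varpi)(1)=\varpi(r)$ for all $r\in R$, and conclude $\varpi=0$, exactly as in Lemma \ref{lemma1.2.10} (the paper phrases it with the cyclic submodule $R\varpi_0$). The extra remarks (identifying $\varrho$ with the image of $1$ under $R\cong\widehat{\widehat{R}}$ and the right-sided variant) are correct but not needed.
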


\begin{proof} Suppose that for some  character $\varpi_0\in \widehat{R}$, $R\varpi_0\subset\mathrm{Ker}\varrho$. Then, $0=\varrho(r\varpi_0)=(r\varpi_0)(1)=\varpi_0(r)$ for all $r\in R$, hence $\varpi_0=0$. Thus $\mathrm{Ker}\varrho$ contains no nonzero left submodule of $\widehat{R}$.
\end{proof}

The following characterizations of modules with generating character will become very crucial when proving the extension theorems.
\begin{proposition}\label{prop1.2.11}Let $A$ be a finite left $R$-module. Then $A$ admits a left generating character if and only if $A$ can be embedded into $_R\widehat{R}$.\end{proposition}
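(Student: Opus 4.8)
## Proof Proposal for Proposition \ref{prop1.2.11}

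The plan is to prove both directions directly, using the character functor and its exactness together with the cyclicity results already established. For the ``only if'' direction, suppose $A$ admits a left generating character $\varrho \in \widehat{A}$, so that $\mathrm{Ker}\,\varrho$ contains no nonzero left $R$-submodule of $A$. I would use $\varrho$ to build a candidate embedding $A \to {}_R\widehat{R}$. The natural thing to try is the map $\Phi: A \to \widehat{R}$ sending $a \mapsto \varrho_a$, where $\varrho_a(r) := \varrho(ra)$ for $r \in R$; one checks this is a left $R$-module homomorphism, where $\widehat{R}$ carries its left $R$-module structure $(r\varpi)(x) = \varpi(xr)$. Its kernel is $\{a \in A : \varrho(Ra) = 0\} = \{a : Ra \subseteq \mathrm{Ker}\,\varrho\}$; since each such $Ra$ is a left submodule of $A$ contained in $\mathrm{Ker}\,\varrho$, it must be zero, so $a \in Ra$ forces $a = 0$ (as $R$ has unity). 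Hence $\Phi$ is injective, giving the embedding $A \hookrightarrow {}_R\widehat{R}$.

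For the ``if'' direction, suppose $f: A \hookrightarrow {}_R\widehat{R}$ is an embedding. By Lemma \ref{lemma1.2.10}, the character $\varrho_0$ on $\widehat{R}$ defined by $\varrho_0(\varpi) = \varpi(1)$ is a generating character of $_R\widehat{R}$, i.e.\ its kernel contains no nonzero left submodule of $\widehat{R}$. I would then pull this back: set $\varrho = \varrho_0 \circ f$, a character of $A$. Its kernel is $f^{-1}(\mathrm{Ker}\,\varrho_0)$. If $B \subseteq \mathrm{Ker}\,\varrho$ is a left $R$-submodule of $A$, then $f(B)$ is a left $R$-submodule of $\widehat{R}$ contained in $\mathrm{Ker}\,\varrho_0$, hence $f(B) = 0$, and since $f$ is injective, $B = 0$. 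Thus $\varrho$ is a generating character of $A$. (Alternatively, this direction also follows immediately from Lemma \ref{lemma1.2.9} applied to the submodule $f(A) \cong A$ of $_R\widehat{R}$, together with Lemma \ref{lemma1.2.10}.)

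I expect the main subtlety — rather than a genuine obstacle — to be getting the module-structure bookkeeping exactly right in the ``only if'' direction: one must verify that $\Phi(a) = \varrho_a$ genuinely lands in $\widehat{R}$ (clear, since it is $\mathbb{Z}$-linear) and that $\Phi$ respects the left $R$-actions, namely $\Phi(sa) = s\Phi(a)$ for $s \in R$, which unwinds to $\varrho(rsa) = \varrho_a(rs) = (s\varrho_a)(r)$ using the convention $(s\varpi)(r) = \varpi(rs)$. Everything else is a short formal argument. The only external inputs needed are Lemma \ref{lemma1.2.10} (existence of a generating character for $_R\widehat{R}$) and the basic fact that $R$ has a unity so that $a \in Ra$; both are available. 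No finiteness beyond what is already assumed is required for either direction, so the proof is clean and essentially self-contained.
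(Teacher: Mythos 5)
Your proposal is correct and follows essentially the same route as the paper: the embedding $a\mapsto\bigl(r\mapsto\varrho(ra)\bigr)$ with the kernel argument $Ra\subset\mathrm{Ker}\,\varrho\Rightarrow Ra=0\Rightarrow a=0$ is exactly the paper's map, and the converse is obtained, as in the paper, from Lemmas \ref{lemma1.2.9} and \ref{lemma1.2.10} (your explicit pull-back of $\varrho_0$ along $f$ is just an unwinding of the same argument). No gaps.
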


\begin{proof}If $A$ can be embedded into $_R\widehat{R}$, then $A$ admits a generating character by Lemmas \ref{lemma1.2.9} and \ref{lemma1.2.10}.\\
Conversely, let $\varrho$ be a generating character of $A$. Define $f:A\rightarrow\widehat{R}$ such that for $a\in A$, $f(a)(r)= \varrho(ra)$, $r\in R$. Then $f$ is an $R$-monomorphism. Indeed,$f(a)\in \widehat{R}$, and upon recalling the left$R$-module structure of $\widehat{R}$, it is easily seen that $f$ is an $R$-module homomorphism. Moreover, if $a\in \mathrm{Ker}f$, then $0=f(a)(r)=\varrho(ra)$ for all $r\in R$. Thus $Ra\subset \mathrm{Ker}\varrho$, and since $\varrho$ is a generating character, it must be that $Ra=0$, whence $a=0$.  \end{proof}

By Proposition \ref{Rhat}, the next corollary becomes direct.
\begin{corollary}$A$ admits a left generating character if and only if $\mathrm{soc}(A)$ is cyclic.
\end{corollary}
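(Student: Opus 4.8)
The plan is to obtain the statement as a direct composition of two equivalences already established in the excerpt. Proposition~\ref{prop1.2.11} asserts that a finite left $R$-module $A$ admits a left generating character if and only if $A$ can be embedded into $_R\widehat{R}$. Proposition~\ref{Rhat} asserts that, for any finite left $R$-module $A$, the socle $\mathrm{soc}(A)$ is cyclic if and only if $A$ can be embedded into $_R\widehat{R}$. Chaining these two biconditionals through the common middle term ``$A$ embeds into $_R\widehat{R}$'' yields exactly: $A$ admits a left generating character $\iff$ $\mathrm{soc}(A)$ is cyclic. So the entire proof is one line once the two cited propositions are in hand; that is why the text calls it a corollary.

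If instead one wanted to spell out the two directions without quoting Proposition~\ref{Rhat} verbatim, the steps would run as follows. For the ``only if'' direction: given a generating character, Proposition~\ref{prop1.2.11} produces an $R$-monomorphism $f\colon A\hookrightarrow {_R\widehat{R}}$, hence $\mathrm{soc}(A)$ embeds into $\mathrm{soc}({_R\widehat{R}})$; but $\mathrm{soc}({_R\widehat{R}})\cong{_R(R/\mathrm{rad}R)}$ by Proposition~\ref{hynf3}, so the homogeneous multiplicities of $\mathrm{soc}(A)$ are bounded by the $\mu_i$, and Lemma~\ref{simui} gives cyclicity. For the ``if'' direction: if $\mathrm{soc}(A)$ is cyclic, Lemma~\ref{simui} embeds $\mathrm{soc}(A)$ into $\mathrm{soc}({_R\widehat{R}})\subset{_R\widehat{R}}$, and injectivity of $_R\widehat{R}$ (Corollary~\ref{Rhatinj}) extends this to $F\colon A\to{_R\widehat{R}}$; since $\mathrm{soc}(\mathrm{Ker}F)=\mathrm{Ker}F\cap\mathrm{soc}(A)=0$ and $A$ is finite, $F$ is a monomorphism, so by Proposition~\ref{prop1.2.11} the pullback of the generating character of $\widehat{R}$ along $F$ is a generating character of $A$.

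There is essentially no obstacle here: the only mildly delicate points are the two already handled inside Propositions~\ref{Rhat} and \ref{prop1.2.11}, namely that the constructed maps are genuine $R$-module homomorphisms (using the left $R$-module structure on $\widehat{R}$) and that finiteness lets one deduce $\mathrm{Ker}F=0$ from the vanishing of its socle. Given those, the corollary is purely formal.
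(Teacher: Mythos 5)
Your proposal is correct and matches the paper exactly: the text proves this corollary by combining Proposition~\ref{prop1.2.11} (generating character $\iff$ embedding into $_R\widehat{R}$) with Proposition~\ref{Rhat} (cyclic socle $\iff$ embedding into $_R\widehat{R}$), which is precisely your one-line chaining argument. Your optional expanded version simply re-derives the content of Proposition~\ref{Rhat}, so nothing new or problematic arises there.
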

Though it may seem redundant, it may be good to re-display the terms and notions pertaining to the extension problem, but this time in the more general context of arbitrary finite modules over a finite ring with unity.
Let $R$ be a finite ring with unity, and $_RA$ be a finite left $R$-module. As we used to do in chapter 1, denote by $\mathcal{U}$ the group of units of $R$. From now on, the convention for left (respectively, right) $R$-homomorphisms will be that inputs are to the left (respectively, right).

\begin{definition}(Monomial Transformation). A monomial transformation of $A^n$ is an $R$-automorphism $T$ of $A^n$ such that, for some permutation $\sigma$ of $\{1,\ldots,n\}$ and automorphisms $\tau_1,\ldots,\tau_n\in \mathrm{Aut}_R(A)$, $T$ has the special component-wise action $$(a_1,\ldots,a_n)T=(a_{\sigma(1)}\tau_1,\ldots,a_{\sigma(n)}\tau_n),$$ where $(a_1,\ldots,a_n)\in A^n$.\\
If $\tau_1,\ldots,\tau_n\in G\leq \mathrm{Aut}_R(A)$ (i.e. $G$ is a subgroup of $\mathrm{Aut}_R(A)$), we shall say then that $T$ is a $G$-monomial transformation.  \end{definition}

We define a weight on $A$ just as we defined it before for a finite field.
\begin{definition}\label{weight}(Weight). A weight $w$ on $A$ is a function $w:A\rightarrow \mathbb{Q}$ with $w(0)=0$. \end{definition}
This is naturally extended to $A^n$ by defining$$w(a_1,\ldots,a_n)=\underset{i=1}{\overset{n}{\sum}}w(a_i).$$
The Hamming weight is denoted by $\mathrm{wt}$.
\begin{definition}(Symmetry Groups). Given a weight $w$ on $A$, define the left and right symmetry groups of $w$ as follows.\begin{align*}G_{\text{lt}}&:=\{u\in \mathcal{U}: w(ua)=w(a),\;\text{for all}\;\;a\in A\},\\
G_{\text{rt}}&:=\{\tau\in \mathrm{Aut}_R(A): w(a\tau)=w(a),\;\text{for all}\;\;a\in A\}.\end{align*}If $w$ has $G_{\text{lt}}=\mathcal{U}$ and $G_{\text{rt}}=\mathrm{Aut}_R(A)$, we say that $w$ is maximally symmetric.\end{definition}

Note that, for instance, the Hamming weight is maximally symmetric.

\begin{definition}(Monomial Equivalence). Let $_RA$ be a left $R$-module serving as an alphabet. Consider a weight $w$ on $A$ with symmetry groups $G_{\text{rt}}$ and $G_{\text{lt}}$ . Two codes $C_1,C_2\subset A^n$ are said to be \emph{monomially equivalent} if there exists a $G_{\text{rt}}$-monomial transformation $T$ of $A^n$ with $C_1T=C_2$.\end{definition}

Notice that whenever $C_1,C_2$ and $T$ are as in the definition above, then $T|_{C_1}$ is an isomorphism $C_1\rightarrow C_2$ preserving $w$.

\begin{definition}(Extension Property). The alphabet $A$ has the \emph{extension property} (EP) with respect to the weight $w$ if, for any $n$, for any two codes $C_1,C_2\subset A^n$, any weight preserving $R$-isomorphism $f:C_1\rightarrow C_2$ extends to a $G_{\text{rt}}$-monomial transformation $T$ of $A^n$.   \end{definition}

Now, our objective will be studying necessary and sufficient conditions for an alphabet $_RA$ to satisfy the EP with respect to Hamming weight, and later with respect to symmetrized weight compositions.

\begin{lemma}\label{bass}$\mathrm{\cite[H.\; Bass,\;Lemma\;6.4]{bass}}$. Let $R$ be a semilocal ring. If $b\in R$ and $\mathfrak{A}$ is a left ideal such that $Rb+\mathfrak{A}=R$, then $b+\mathfrak{A}$ contains a unit.\end{lemma}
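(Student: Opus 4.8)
The plan is to reduce the statement to the semisimple quotient $\bar R := R/\mathrm{rad}\,R$ and then to settle it there by unwinding the Wedderburn--Artin structure. First I would set $J=\mathrm{rad}\,R$ and note that $\bar R$ is semisimple, by the very definition of a semilocal ring. Applying the canonical projection to the hypothesis $Rb+\mathfrak A=R$ yields $\bar R\,\bar b+\bar{\mathfrak A}=\bar R$, where $\bar{\mathfrak A}$ is the image of $\mathfrak A$, which is again a left ideal. The point of the reduction is that it suffices to find $\bar a\in\bar{\mathfrak A}$ with $\bar b+\bar a$ a unit of $\bar R$: then any $a\in\mathfrak A$ mapping to $\bar a$ gives an element $b+a\in b+\mathfrak A$ whose image in $\bar R$ is a unit, hence $b+a\in\mathcal U$ by Proposition~\ref{rad andsim}. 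So from now on I may assume $R$ itself is semisimple and look for a unit inside $b+\mathfrak A$.

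For the semisimple case I would first use that $\mathfrak A$, being a direct summand of ${}_RR$, has the form $\mathfrak A=Re$ for an idempotent $e$, and then invoke the Wedderburn--Artin theorem to write $R\cong S_1\times\cdots\times S_k$ with each $S_i\cong\mathrm{End}_{D_i}(V_i)$ for a division ring $D_i$ and $\dim_{D_i}V_i=n_i<\infty$. Left ideals, units, and the equation $Rb+\mathfrak A=R$ all decompose coordinatewise, so it is enough to treat a single factor $S=\mathrm{End}_D(V)$ with $\dim_D V=n<\infty$, an element $b\in S$, and a left ideal $\mathfrak A=Se_0$ for an idempotent $e_0\in S$. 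Here $\mathfrak A=\{f\in S:f|_W=0\}$ with $W:=\ker e_0$, while $Sb=\{f\in S:f|_{\ker b}=0\}$; since every element of $Sb+\mathfrak A$ annihilates $\ker b\cap W$ whereas $\mathrm{id}_V$ does not, the hypothesis $Sb+\mathfrak A=S$ forces $\ker b\cap W=0$, that is, $b|_W$ is injective.

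To finish I would fix a complement $V=W\oplus X$. For every $a\in\mathfrak A$ one has $(b+a)|_W=b|_W$, which is injective with image $Y:=b(W)$ of dimension $\dim W$, while the restriction $a|_X$ may be prescribed arbitrarily (and determines $a$, since $a|_W=0$). Choosing $a|_X$ so that $(b+a)|_X$ is an isomorphism of $X$ onto a chosen complement $Z$ of $Y$ — possible because $\dim X=n-\dim W=n-\dim Y=\dim Z$ — makes $b+a$ injective, hence bijective because $\dim_D V<\infty$, hence a unit of $S$. Reassembling the coordinates produces the required unit in $b+\mathfrak A$. I do not expect a genuine conceptual obstacle; the only thing requiring care is the bookkeeping — verifying that reduction modulo $\mathrm{rad}\,R$ and then passage to the Wedderburn factors preserve both the hypothesis $Rb+\mathfrak A=R$ and the conclusion, and correctly matching one-sided ideals of $\mathrm{End}_D(V)$ with subspaces of $V$. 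The argument uses nothing beyond the Wedderburn--Artin theorem and the elementary facts about $\mathrm{rad}\,R$ already established (Proposition~\ref{rad andsim}, Theorem~\ref{4.14lam}).
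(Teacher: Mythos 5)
Your argument is correct and follows essentially the same route as the paper's proof: reduce modulo $\mathrm{rad}\,R$ via Proposition~\ref{rad andsim}, pass to a Wedderburn matrix factor $\mathrm{End}_D(V)$, write $\mathfrak{A}=Re$ as the annihilator of $W=\mathrm{Ker}\,e$, deduce $W\cap\mathrm{Ker}\,b=0$ from $Rb+\mathfrak{A}=R$, and build the unit by extending $b|_W$ by an isomorphism of a complement of $W$ onto a complement of $b(W)$. The only difference is expository: you spell out the coordinatewise reduction over the product of matrix rings and the verification that $W\cap\mathrm{Ker}\,b=0$, which the paper asserts more briefly.
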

\begin{proof}By Proposition \ref{rad andsim}, $x\in R$ is a unit $\Longleftrightarrow$ $\bar{x}\in R/\mathrm{rad}R$ is a unit, so, we may assume $R$ is semisimple. Then, by the Wedderburn theorem, we can pass to the case $R=\mathbb{M}_n(D)$ where $D$ is a division ring. In that case, $R$ may be viewed as the endomorphism ring $\mathrm{End}_D(V)$ of the $D$-vector space $V=D^n$. Now, it is known that $\mathfrak{A}=Re$, where $e$ is an idempotent \cite[Theorem IX.3.7]{hungr}. Consider the subspace $W=\mathrm{Ker}e$. Using the decompositions $R=Re\oplus R(1-e)\;\text{and}\; V=\mathrm{Im}e\oplus \mathrm{Ker}e$, one can check that $\mathfrak{A}$ is exactly the set of endomorphisms in $R=\mathrm{End}_D(V)$ annihilating $W$. The assumption that $Rb+\mathfrak{A}=R$ implies that $W\cap \mathrm{Ker}b=0$. Now, writing $V=W\oplus W'=bW\oplus U$ and noting that $W\cong bW$, we can construct an automorphism $u$ (a unit of $R$) such that $u|_W=b|_W$ and $u(W')=U$. Finally, since $a=u-b$ annihilates $W$, we get $a\in \mathfrak{A}$.\end{proof}
A main step in the generalized character theoretic proofs of the extension theorem is defining a partial ordering on an $R$-module. The previous lemma of Bass is used in proving the following \cite[\S 5]{duality}.

\begin{lemma}\label{beneath}Let $_RM$ be a left $R$-module. If $x,y\in M$ satisfy $y=ax$ and $x=by$ for some $a,b\in R$, then $x=uy$ for some unit $u\in R$.\end{lemma}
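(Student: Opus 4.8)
The plan is to produce the desired unit inside the coset $b + \mathfrak{A}$, where $\mathfrak{A}=\{r\in R : ry=0\}$ is the left annihilator of $y$ in $R$. This coset is tailor-made for the job: every element of it sends $y$ to $by=x$, so it suffices to show $b+\mathfrak{A}$ contains a unit. To extract such a unit I would invoke Bass's Lemma (Lemma \ref{bass}); it is available here because $R$ is finite, hence semilocal, and $\mathfrak{A}$ is a left ideal (if $ry=0$ and $s\in R$ then $(sr)y=s(ry)=0$).

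First I would combine the two hypotheses the ``right'' way. Substituting $x=by$ into $y=ax$ gives $y=a(by)=(ab)y$, hence $(1-ab)y=0$, that is, $1-ab\in\mathfrak{A}$. Since $ab\in Rb$, this yields $1=ab+(1-ab)\in Rb+\mathfrak{A}$, so $Rb+\mathfrak{A}=R$.

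Now Bass's Lemma applies to $b$ and the left ideal $\mathfrak{A}$ and produces a unit $u\in b+\mathfrak{A}$. Writing $u=b+c$ with $c\in\mathfrak{A}$, i.e. $cy=0$, I compute $uy=by+cy=by=x$, which is exactly the claim $x=uy$.

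The only point that needs a moment's thought is the choice to annihilate $y$ through $1-ab$ rather than to annihilate $x$ through $1-ba$: it is the former that places $1$ in $Rb+\mathfrak{A}$ and lets Bass's Lemma fire. Once this is seen, there is no genuine obstacle; the remaining steps are the routine verifications that $\mathfrak{A}$ is a left ideal and that $u-b$ kills $y$.
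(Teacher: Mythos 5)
Your proof is correct and follows essentially the same route as the paper: both reduce the claim to Bass's Lemma \ref{bass} applied to a coset of a left ideal that annihilates one of the two elements. The paper applies it to $a$ and the ideal $R(1-ba)$, obtaining $y=ux$ and then passing to $u^{-1}$, whereas you apply it to $b$ and the left annihilator of $y$, landing on $x=uy$ directly -- a cosmetic difference only.
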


\begin{proof}First, $R=Rba+R(1-ba)$ gives $R=Ra+R(1-ba)$. By Lemma \ref{bass}, $a+R(1-ba)$ contains a unit. Thus there is $r\in R$ such that $u=a+r(1-ba)$ is a unit. Now, $ux=ax+r(1-ba)x=ax=y$ (use that $(1-ba)x=0$).
\end{proof}

We can define an equivalence relation $\approx$ on $M$ by $x\approx y$ if $x=uy$ for some unit $u\in R$. Now, on the $\approx$-equivalence classes of $M$, we can define $y\preceq x$ if $y=ax$ for some $a\in R$. This relation is well-defined, reflexive, and transitive. Besides, it is anti-symmetric by Lemma \ref{beneath}. Thus we have the following.

\begin{corollary}The relation $\preceq$  is a partial ordering on the $\approx$-equivalence classes of a left $R$-module $M$.\end{corollary}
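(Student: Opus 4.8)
The plan is to verify that $\preceq$ is well-defined on $\approx$-classes, and then check reflexivity, transitivity, and antisymmetry in turn; the antisymmetry is the only nontrivial point, and it is already handed to us by Lemma~\ref{beneath}.

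First I would check that $\preceq$ does not depend on the choice of representatives. Suppose $y \approx y'$ and $x \approx x'$, say $y' = vy$ and $x' = wx$ with $v,w$ units, and suppose $y = ax$ for some $a \in R$. Then $y' = vy = vax = (vaw^{-1})x' = a'x'$ with $a' = vaw^{-1} \in R$, so $y' \preceq x'$. Hence the relation descends to $\approx$-classes. Reflexivity is immediate: $x = 1\cdot x$, so $x \preceq x$. Transitivity is equally routine: if $z = ay$ and $y = bx$ with $a,b \in R$, then $z = (ab)x$, so $z \preceq x$.

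The substantive step is antisymmetry. Suppose $y \preceq x$ and $x \preceq y$, i.e.\ there exist $a,b \in R$ with $y = ax$ and $x = by$. This is exactly the hypothesis of Lemma~\ref{beneath} (which in turn rests on Bass's Lemma~\ref{bass} and the fact that a finite ring, being artinian, is semilocal), so we conclude $x = uy$ for some unit $u \in R$, that is, $x \approx y$. Thus the two classes coincide, and $\preceq$ is antisymmetric. Combining the four properties, $\preceq$ is a partial order on the set of $\approx$-equivalence classes of $M$.

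I do not anticipate any real obstacle here: the only place where something could go wrong is antisymmetry, and that has been isolated into the preceding lemma precisely so that this corollary is a bookkeeping exercise. The one point worth a moment's care is the well-definedness check above, since $\preceq$ is phrased in terms of elements but asserted on classes; writing $a' = vaw^{-1}$ explicitly makes clear that conjugating the "multiplier" $a$ by units keeps it inside $R$, which is all that is needed.
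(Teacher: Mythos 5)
Your proof is correct and follows the same route as the paper, which simply notes that $\preceq$ is well-defined, reflexive, and transitive and invokes Lemma~\ref{beneath} for antisymmetry; you have merely written out the routine verifications (the representative-independence via $a'=vaw^{-1}$ is exactly the needed observation). Nothing is missing.
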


The next theorem is a generalization of Theorem \ref{du1}. It was first proved by Greferath, Nechaev and Wisbauer in \cite{2004} (2004). However, we will display Wood's proof appearing in \cite{r7}, which is similar to that of the main theorem in \cite{duality}.

\begin{theorem}\label{FBI} Let $R$ be a finite ring, and $_RA_R$ be a Frobenius bimodule. Then $A$ has the extension property with respect to Hamming weight.
\end{theorem}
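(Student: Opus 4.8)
The plan is to run the character-theoretic argument used above for MacWilliams' theorem, with a generating character of $A$ playing the role of the character $\exp$ of $\mathbb{F}_q$, and to replace the trivial step ``scaling by a nonzero field element is an automorphism'' by the partial-order machinery built around Lemma \ref{beneath} (Bass's Lemma \ref{bass}).

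First I would fix a left generating character $\varrho$ of $A$; this exists because $A$ is a Frobenius bimodule, and by Corollary \ref{genkerempty} it is simultaneously a right generating character, its key feature being that $\mathrm{Ker}\,\varrho$ contains no nonzero one-sided $R$-submodule of $A$. Given codes $C_1,C_2\subset A^n$ and a Hamming-weight-preserving $R$-isomorphism $f\colon C_1\to C_2$, put $M=C_1$, let $\Lambda=(\lambda_1,\dots,\lambda_n)\colon M\hookrightarrow A^n$ be the inclusion and $\Psi=(\psi_1,\dots,\psi_n)=f\circ\Lambda$, so the coordinate maps lie in $N=\mathrm{Hom}_R({}_RM,{}_RA)$, a right $R$-module via $(\phi r)(m)=\phi(m)r$ (legitimate because $A$ is a bimodule, so right multiplication by $r$ is a left-$R$-endomorphism of $A$). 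Since $\varrho$ generates, $\{r\varrho:r\in R\}$ is exactly $\widehat A$, with $(r\varrho)(a)=\varrho(ar)$, so the orthogonality relation (Proposition \ref{char}) gives $\,[a\neq 0]=1-\tfrac1{|A|}\sum_{r\in R}\varrho(ar)$. Writing $\mathrm{wt}(\Lambda(m))=\mathrm{wt}(\Psi(m))$ for all $m\in M$, cancelling the constant $n$ and the factor $1/|A|$, and observing that $m\mapsto\varrho(\phi(m)r)$ is a character of $M$ for each $\phi\in N$, I obtain an identity of characters of $M$,
\[
\sum_{i=1}^n\sum_{r\in R}\varrho\circ(\lambda_i r)\;=\;\sum_{j=1}^n\sum_{r\in R}\varrho\circ(\psi_j r).
\]
Because $\mathrm{Ker}\,\varrho$ contains no nonzero submodule, $\phi\mapsto\varrho\circ\phi$ is injective on $N$; hence linear independence of distinct characters (Theorem \ref{linearindep}), after collecting like terms, yields the combinatorial identity
\[
\#\{(i,r):\lambda_i r=\phi\}\;=\;\#\{(j,r):\psi_j r=\phi\}\qquad\text{for every }\phi\in N.\qquad(\star)
\]

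Next I would extract the permutation and the automorphisms from $(\star)$. On $N$, regarded as a right $R$-module, set $\phi'\preceq\phi$ iff $\phi'=\phi r$ for some $r\in R$, and $\phi\approx\phi'$ iff $\phi'=\phi u$ for a unit $u$; by the right-module form of Lemma \ref{beneath}, $\preceq$ descends to a partial order on $\approx$-classes, and right multiplication by a unit of $R$ is precisely an element of $\mathrm{Aut}_R(A)$. I then induct on $n$ to prove: two lists $(\lambda_i),(\psi_j)$ of equal length satisfying $(\star)$ admit a bijection $\sigma$ and units $u_j$ with $\psi_j=\lambda_{\sigma(j)}u_j$. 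If all the $\lambda_i,\psi_j$ are zero the lengths agree and we take $\sigma$ arbitrary, $u_j=1$. Otherwise pick a $\preceq$-maximal element among the nonzero ones; relabelling and, if necessary, swapping the roles of $\Lambda,\Psi$ (the identity $(\star)$ and the character identity are symmetric), say it is $\psi_1\neq 0$. Applying $(\star)$ with $\phi=\psi_1$, the right-hand count is at least $1$ (take $j=1,r=1$), so $\lambda_{i_1}r_1=\psi_1$ for some $i_1,r_1$; then $\lambda_{i_1}\neq 0$ and $\psi_1\preceq\lambda_{i_1}$, so maximality of $\psi_1$ forces $\psi_1\approx\lambda_{i_1}$, i.e.\ $\psi_1=\lambda_{i_1}u_1$ with $u_1$ a unit. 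Since $r\mapsto u_1r$ permutes $R$, the $i_1$-summand on the left of the displayed character identity equals the $1$-summand on the right; subtracting them leaves an identity of the same shape for the two length-$(n-1)$ lists obtained by deleting $\lambda_{i_1}$ and $\psi_1$, so the induction hypothesis finishes. Assembling $\sigma$ and $\tau_j:=(a\mapsto au_j)\in\mathrm{Aut}_R(A)$, the map $T\colon(a_1,\dots,a_n)\mapsto(a_{\sigma(1)}\tau_1,\dots,a_{\sigma(n)}\tau_n)$ is an $\mathrm{Aut}_R(A)$-monomial transformation of $A^n$ with $\Lambda(m)T=\Psi(m)$, hence $T|_{C_1}=f$; as the Hamming weight is maximally symmetric, $\mathrm{Aut}_R(A)=G_{\mathrm{rt}}$ and $A$ has the extension property.

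The routine parts are the character bookkeeping (orthogonality, injectivity of $\phi\mapsto\varrho\circ\phi$, cancellation of the constant terms) and the verification that $T$ is a well-defined invertible monomial map restricting to $f$. The step I expect to be the real obstacle is upgrading the weak relation $\lambda_{i_1}r_1=\psi_1$ delivered by $(\star)$ to a relation $\psi_1=\lambda_{i_1}u_1$ with $u_1$ \emph{a unit}: over a field this was free, while over a general finite ring it needs the $\preceq$-maximality device together with the anti-symmetry provided by Lemma \ref{beneath}, plus care with the zero coordinate maps and with the left/right conventions (the $\lambda_i$ sit in a right $R$-module $N$, whereas automorphisms of the left module $A$ act on the right).
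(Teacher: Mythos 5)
Your proof is correct and follows essentially the same route as the paper's: the generating character of the Frobenius bimodule turns Hamming-weight preservation into an identity of characters of $M$, injectivity of $\phi\mapsto\varrho\circ\phi$ (via Corollary \ref{genkerempty}) together with linear independence of characters matches coefficients, and the $\preceq$-maximality device built on Lemma \ref{beneath} upgrades the relation $\psi_1=\lambda_{i_1}r_1$ to one involving a unit. Packaging this as the full multiplicity identity $(\star)$ followed by induction (with explicit handling of zero coordinate maps) is only an organizational variant of the paper's term-by-term cancellation of the character sums.
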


\begin{proof}Let $C_1,C_2\subset A^n$ be two linear codes of length $n$, and let $f:C_1\rightarrow C_2$ be an $R$-isomorphism preserving Hamming weight. Let $M$ be a common underlying $R$-module of $C_1$ and $C_2$. Consider the embeddings $\Lambda=(\lambda_1,\ldots,\lambda_n):M\rightarrow A^n$ and $\Psi=(\psi_1,\ldots,\psi_n):M\rightarrow A^n$ whose images are $C_1$ and $C_2$ respectively, where $\lambda_i,\psi_i\in \mathrm{Hom}_R(M,A)$. By the assumption, it follows that for every $m\in M$ $$|\{i\;|\;m\lambda_i\neq0\}|=|\{j\;|\;m\psi_j\neq0\}|.$$
By Proposition \ref{char}, this can be rewritten as,\begin{equation}\label{iknow}\overset{n}{\underset{i=1}{\sum}}\underset{\pi\in \widehat{A}}{\sum}\;\pi(m\lambda_i)=\overset{n}{\underset{j=1}{\sum}}\underset{\theta\in \widehat{A}}{\sum}\;\theta(m\psi_j),\quad m\in M.\end{equation}
Let $\rho$ be a left generating character of $A$ (in multiplicative form). Thus any character of $A$ is given by $^r\rho$ for some $r\in R$. We can now rewrite equation (\ref{iknow}) as follows, $$\overset{n}{\underset{i=1}{\sum}}\underset{r\in R}{\sum}{\;^r \rho}(m\lambda_i)=\overset{n}{\underset{j=1}{\sum}}\underset{s\in R}{\sum}{\;^s\rho}(m\psi_j),\quad m\in M,$$
which is, \begin{equation}\label{iknow2}
          \overset{n}{\underset{i=1}{\sum}}\underset{r\in R}{\sum}\; \rho(m\lambda_ir)=\overset{n}{\underset{j=1}{\sum}}\underset{s\in R}{\sum}\;\rho(m\psi_j s),\quad m\in M,\end{equation}
where it is understood that $\mathrm{Hom}_R(M,A)$ is given the standard right $R$-module structure. Now it is time to use the pre-described partial ordering $\preceq$, defined on the right $\mathcal{U}$-orbits in $\mathrm{Hom}_R(M,A)$ by $\lambda\preceq\psi$ if $\lambda=\psi r$ for some $r\in R$.\\
Among the list of  elements $\lambda_1,\ldots,\lambda_n;\psi_1,\ldots,\psi_n$, choose one whose orbit is maximal with respect to $\preceq$. Without loss of generality, let that element be $\lambda_1$. Consider that term, on the left side of equation (\ref{iknow2}), with $r=1, i=1$. By the linear independence of characters of $M$, since the equation holds for all $m\in M$, there exists $j=\sigma(1)$ and an element $s\in R$ such that $\rho(m\lambda_1)=\rho(m\psi_js)$ for all $m\in M$. Thus $\rho(m\lambda_1-m\psi_js)=1$  for all $m\in M$ and $\mathrm{Im}(\lambda_1-\psi_js)\subset\mathrm{Ker}\rho$. Fortunately, $\mathrm{Im}(\lambda_1-\psi_js)$ is a left $R$-submodule of $A$, and hence, by Corollary \ref{genkerempty}, $\mathrm{Im}(\lambda_1-\psi_js)=0$. This shows that $\lambda_1=\psi_js$, and thus $\lambda_1\preceq \psi_j$. The maximality assumption on $\lambda_1$ implies that $\psi_j=\lambda_1u_1$ for some unit $u_1\in\mathcal{U} \subset R$. Thus, $$\underset{s\in R}{\sum}\;\rho(m\psi_j s)=\underset{s\in R}{\sum}\;\rho(m\lambda_1u_1 s)=\underset{r\in R}{\sum}\; \rho(m\lambda_1r),\quad m\in M.$$
This allows to reduce the outer summation in equation (\ref{iknow2}) by one. Proceeding this way, we obtain a permutation $\sigma$ of $\{1,\ldots,n\}$, and units $u_1,\ldots,u_n\in \mathcal{U}$ with $\lambda_iu_i=\psi_{\sigma(i)}$. This provides the desired monomial transformation.
 \end{proof}

\section{One-Sided Modules and MacWilliams' Theorem}

In this part we begin by introducing a result of Dinh and L\'{o}pez-Permouth, proving that a left module satisfies the extension property with respect to Hamming weight for codes of length 1 if and only if the module is pseudo-injective \cite[Proposition 3.2]{r1}.

\begin{lemma}\label{psss}Let $_RA$ be a finite left $R$-module, and let $B\subset A$ be a submodule. If $f: B\rightarrow A$ is a monomorphism that extends to a homomorphism $\bar{f}:A\rightarrow A$, then $f$ can be extended to an automorphism of $A$.\end{lemma}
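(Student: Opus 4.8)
The plan is to use the finiteness of $A$: an injective endomorphism of a finite module is automatically surjective (compare orders), hence an automorphism, so it suffices to extend $f$ to a \emph{monomorphism} $A\to A$. Among all homomorphisms $A\to A$ restricting to $f$ on $B$ — a nonempty, finite set by hypothesis — I would pick one, $\bar f$, with $|\ker\bar f|$ as small as possible, and aim to show $\ker\bar f=0$.

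Suppose not, so $K:=\ker\bar f\neq 0$; since $f=\bar f|_B$ is injective, $K\cap B=0$. If some $a\in A\setminus B$ satisfies $(B+Ra)\cap K=0$, then $\bar f|_{B+Ra}$ is still a monomorphism extending $f$ and still extends to the endomorphism $\bar f$ of $A$; since $|A/(B+Ra)|<|A/B|$, an induction on $|A/B|$ (with the trivial base case $B=A$) lets us replace $B$ by $B+Ra$. So we may assume $B$ is maximal among submodules of $A$ with $B\cap K=0$, and then a standard argument shows that $B\oplus K$ is \emph{essential} in $A$.

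The core step is to perturb $\bar f$ into a monomorphism without disturbing it on $B$. I would seek a homomorphism $h:A\to A$ with $h|_B=0$, with $h$ injective on $K$, and with $h(K)\cap f(B)=0$. For $b\in B$ and $k\in K$ one has $(\bar f+h)(b+k)=f(b)+h(k)$, so these three properties force $(\bar f+h)(b+k)=0$ to imply $b=0$ and $k=0$; hence $\ker(\bar f+h)\cap(B\oplus K)=0$, and since $B\oplus K$ is essential, $\ker(\bar f+h)=0$. Then $\bar f+h$ is a monomorphism — hence an automorphism of $A$ — extending $f$, which is exactly what we want (and contradicts the minimality of $|\ker\bar f|$).

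Producing such an $h$ is the step I expect to be the main obstacle. Since $h$ must kill $B$ it factors as $A\twoheadrightarrow A/B\xrightarrow{\,h_0\,}A$; because $B\oplus K$ is essential in $A$, its image $(B\oplus K)/B\cong K$ is essential in $A/B$, so any $h_0$ injective on this copy of $K$ is automatically a monomorphism. Thus it suffices to embed $A/B$ into $A$ so that the distinguished copy of $K$ lands in a submodule meeting $f(B)$ trivially; I would build this embedding inside $\mathrm{soc}(A)$ and its homogeneous components, where submodules are semisimple hence direct summands and where a Jordan--H\"older count shows that $f(B)\cong B$ and $K$ leave just enough room, and then extend the map to all of $A/B$. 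In the case of main interest here, $A$ semisimple, the whole reduction is immediate: $B$ and $f(B)$ are direct summands, $B\oplus K=A$, and $f(B)$ admits a complement isomorphic to $K$ by Jordan--H\"older, which yields $h$ at once.
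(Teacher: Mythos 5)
Your overall framework is fine as far as it goes: the minimal-kernel choice, the induction making $B$ maximal with $B\cap K=0$, the essentiality of $B\oplus K$, and the check that $\bar f+h$ is injective once your $h$ exists are all correct. But the step you defer as ``the main obstacle'' --- producing $h:A\rightarrow A$ with $h|_B=0$, $h$ injective on $K=\ker\bar f$, and $h(K)\cap f(B)=0$ --- is the entire content of the lemma, and it cannot be extracted from the stated hypotheses: building it amounts to extending a map defined on a piece of $\mathrm{soc}(A)$ (or embedding $A/B$ into $A$) to all of $A$, which is an injectivity-type property of $A$ that you are not given. In fact no argument can close this gap, because the statement for a single $f$ is false as literally stated. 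Take $R=\mathbb{Z}/p^2$, $A=R\oplus R/pR$, $B=0\oplus R/pR$, and $f(0,\bar x)=(px,0)$. Then $f$ is a monomorphism and extends to the endomorphism $\bar f(a,\bar x)=(px,0)$, yet no automorphism $\sigma$ of $A$ extends $f$: since $\sigma(pA)=pA$, the equation $\sigma(0,\bar 1)=(p,0)\in pA$ would force $(0,\bar 1)\in pA$, which is false. In this example $K=\ker\bar f=R\oplus 0\cong\mathbb{Z}/p^2$ and $f(B)=pA$, so for any $h$ injective on $K$ one has $0\neq p\,h(K)\subset pA=f(B)$, i.e.\ your third requirement on $h$ is unattainable; likewise every endomorphism extending $f$ has nonzero kernel, so the minimal-kernel strategy stalls exactly at this point.

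For comparison, the thesis argues by socles: if $\mathrm{soc}(B)=\mathrm{soc}(A)$, then $\ker\bar f\cap\mathrm{soc}(A)=\ker f\cap\mathrm{soc}(B)=0$, and finiteness forces $\ker\bar f=0$, so $\bar f$ itself is the required automorphism; if $\mathrm{soc}(B)\neq\mathrm{soc}(A)$, it writes $\mathrm{soc}(A)=\mathrm{soc}(B)\oplus M=f(\mathrm{soc}(B))\oplus N$ with $M\cong N$ and $B\cap M=0$, enlarges $f$ to the monomorphism $h:B\oplus M\rightarrow A$, $b+m\mapsto bf+mg$, whose socle is all of $\mathrm{soc}(A)$, and then falls back on the first case. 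Note, however, that invoking the first case requires the enlarged $h$ to extend to an endomorphism of $A$; this is an appeal to pseudo-injectivity of $A$ (every monomorphism from a submodule extends), which is exactly the setting in which the lemma is actually used (Corollary \ref{pseudoiff}, Lemma \ref{lemma18888}), and in the counterexample above $A$ is not pseudo-injective. So your closing remark handles the semisimple case, but in general your plan can only be completed after strengthening the hypothesis to pseudo-injectivity of $A$ --- and with that hypothesis available, the socle-complement construction of the paper already supplies the perturbation you were trying to build. As written, the proposal has a genuine gap at the construction of $h$.
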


\begin{proof}\begin{par}Case1: $\mathrm{soc}B=\mathrm{soc}A$. In this case $\mathrm{Ker} \bar{f}\cap\mathrm{soc}A=\mathrm{Ker} \bar{f}\cap\mathrm{soc}B= \mathrm{Ker} f\cap\mathrm{soc}B=0$, which implies, since $A$ is finite, that $\mathrm{Ker} \bar{f}=0$. Thus $\bar{f}$ is the desired automorphism.\end{par}
\begin{par}Case2: $\mathrm{soc}B\neq\mathrm{soc}A$. Then there are submodules $M,N\subset \mathrm{soc}A$ such that $\mathrm{soc}A=\mathrm{soc}B\oplus M=f(\mathrm{soc}B)\oplus N$, whence $M\cong N$. Again, since all are finite modules, the previous gives $B\cap M=0$. Extend $f:B\rightarrow A$ to the injective map $h:B\oplus M\rightarrow A$ defined by $(b+m)h=bf+mg$, for all $b\in B$ and $m\in M$. But $\mathrm{soc}(B\oplus M)=\mathrm{soc}A$, hence, by case1, $h$ can be extended to an automorphism of $A$.\end{par}\end{proof}

\begin{definition}\label{1234}(Pseudo-injective modules) Let $R$ be any ring (possibly infinite). A left $R$-module $M$ is said to be \emph{pseudo-injective} if, for any submodule $B\subset M$, every monomorphism $f:B\rightarrow M$ extends to an endomorphism of $M$.\end{definition}
Thus, Lemma \ref{psss} can be restated as follows (this is exactly Proposition 3.2, \cite{r1}).
\begin{corollary}\label{pseudoiff}The alphabet $A$ has the extension property for length 1 codes with respect to Hamming weight if and only if $A$ is a pseudo-injective module.\end{corollary}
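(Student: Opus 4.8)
The plan is to unwind the definitions on both sides and then invoke Lemma~\ref{psss} for the one nontrivial implication. First I would record what the extension property says in the case $n=1$. A code $C\subset A^1$ is just a submodule $C\subset A$; a monomial transformation of $A^1$ is, by definition, an $R$-automorphism $T$ acting as $(a)T=(a\tau_1)$ for some $\tau_1\in\mathrm{Aut}_R(A)$, hence nothing other than an element of $\mathrm{Aut}_R(A)$; and since the Hamming weight is maximally symmetric, $G_{\mathrm{rt}}=\mathrm{Aut}_R(A)$. Moreover on $A^1=A$ the Hamming weight takes the value $1$ on every nonzero element and $0$ on $0$, so any $R$-isomorphism $f:C_1\to C_2$ between submodules of $A$ preserves Hamming weight automatically, being injective and sending $0$ to $0$. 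Thus the statement ``$A$ has the extension property for length-$1$ codes with respect to Hamming weight'' simplifies to: every isomorphism between two submodules of $A$ extends to an automorphism of $A$.

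Next I would prove ($\Rightarrow$). Assume $A$ has the extension property for length-$1$ codes, and let $B\subset A$ be a submodule equipped with a monomorphism $f:B\to A$. Setting $C_1=B$ and $C_2=\mathrm{Im}\,f$, the map $f:C_1\to C_2$ is an isomorphism of submodules of $A$, hence weight-preserving by the reduction above, so the extension property provides a monomial transformation of $A^1$ extending it, i.e.\ an automorphism, and a fortiori an endomorphism, of $A$ restricting to $f$ on $B$. Therefore $A$ is pseudo-injective.

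Then I would prove ($\Leftarrow$). Assume $A$ is pseudo-injective, and take submodules $C_1,C_2\subset A$ together with an $R$-isomorphism $f:C_1\to C_2$ (automatically weight-preserving). Composing with the inclusion $C_2\hookrightarrow A$ yields a monomorphism $C_1\to A$, which pseudo-injectivity extends to an endomorphism $\bar f:A\to A$. Since $A$ is finite, Lemma~\ref{psss} applies directly and upgrades this to an automorphism of $A$ extending $f$; read as a monomial transformation of $A^1$, this is precisely what the extension property for length-$1$ codes demands.

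I do not expect a genuine obstacle here: once length-$1$ monomial transformations are identified with automorphisms of $A$ and Hamming-weight preservation is seen to be vacuous for isomorphisms of one-dimensional codes, both implications are formal, the only real input being Lemma~\ref{psss}, which is already established, for turning the extending endomorphism supplied by pseudo-injectivity into an extending automorphism.
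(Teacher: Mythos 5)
Your proof is correct and follows essentially the same route as the paper, which treats the corollary as a direct restatement of Lemma~\ref{psss}: length-$1$ monomial transformations are automorphisms of $A$, Hamming-weight preservation is automatic for isomorphisms of submodules, and Lemma~\ref{psss} upgrades the extending endomorphism given by pseudo-injectivity to an automorphism. Your write-up merely makes these identifications explicit, which is exactly what the paper leaves implicit.
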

In \cite{r7}, J. Wood was able to provide sufficient conditions (that were shown necessary, in the same paper) for a left-module alphabet $_RA$ to satisfy the extension property with respect to Hamming weight.

\begin{theorem}\label{sufficient}An alphabet $_RA$ has the extension property with respect to Hamming weight if \begin{enumerate}
\item[$(1)$] $_RA$ is pseudo-injective, and
\item[$(2)$] $\mathrm{soc}(A)$ is cyclic.
\end{enumerate}\end{theorem}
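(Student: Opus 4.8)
The plan is to adapt the character-theoretic argument of Theorem \ref{FBI} to the setting of a pseudo-injective module with cyclic socle, using the fact (Corollary to Proposition \ref{prop1.2.11}, equivalently Proposition \ref{Rhat}) that cyclicity of $\mathrm{soc}(A)$ is precisely what guarantees $A$ admits a generating character, and that by Proposition \ref{prop1.2.11} we may therefore view $A$ as an $R$-submodule of $_R\widehat{R}$. So let $\varrho\in\widehat{A}$ be a generating character (in multiplicative form), so that every character of $A$ has the form $\pi(a)=\varrho(ra)$ for some $r\in R$, and $\mathrm{Ker}\varrho$ contains no nonzero left submodule of $A$. Given codes $C_1,C_2\subset A^n$ and a Hamming-weight-preserving isomorphism $f:C_1\to C_2$, take a common underlying module $M$ and embeddings $\Lambda=(\lambda_1,\dots,\lambda_n)$, $\Psi=(\psi_1,\dots,\psi_n)$ with $\Lambda(M)=C_1$, $\Psi(M)=C_2$, $f\circ\Lambda=\Psi$, where $\lambda_i,\psi_j\in\mathrm{Hom}_R(M,A)$.

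First I would record the weight-preservation identity exactly as in Theorem \ref{FBI}: for all $m\in M$,
\[
\sum_{i=1}^{n}\sum_{r\in R}\varrho(m\lambda_i r)=\sum_{j=1}^{n}\sum_{s\in R}\varrho(m\psi_j s),
\]
using Proposition \ref{char}(6) to convert the count of nonzero coordinates into a character sum and absorbing the scalar action of $R$ on $\widehat{A}$ via $\varrho$. Next I would put the partial ordering $\preceq$ on the right $\mathcal U$-orbits in $\mathrm{Hom}_R(M,A)$ (legitimate by the Corollary following Lemma \ref{beneath}), pick $\lambda_1$ of maximal orbit among $\lambda_1,\dots,\lambda_n,\psi_1,\dots,\psi_n$, isolate the $(i=1,r=1)$ term on the left, and invoke linear independence of characters of $M$ (Theorem \ref{linearindep}) to find $j=\sigma(1)$ and $s\in R$ with $\varrho(m\lambda_1)=\varrho(m\psi_j s)$ for all $m$; hence $\mathrm{Im}(\lambda_1-\psi_j s)\subset\mathrm{Ker}\varrho$, and since this image is a left submodule of $A$, it is $0$, giving $\lambda_1=\psi_j s$, so $\lambda_1\preceq\psi_j$; maximality then forces $\psi_{\sigma(1)}=\lambda_1 u_1$ for a unit $u_1$. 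Peeling off the matching pair of inner sums and iterating yields a permutation $\sigma$ and units $u_1,\dots,u_n$ with $\lambda_i u_i=\psi_{\sigma(i)}$ for all $i$.

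The catch — and the place where hypothesis (1), pseudo-injectivity, enters, and where this proof genuinely diverges from Theorem \ref{FBI} — is that $\lambda_i u_i=\psi_{\sigma(i)}$ only tells us the coordinate functionals agree up to units, which gives an isometry on the codes but not yet an honest \emph{monomial transformation of $A^n$}: we need each $u_i$ to be realized as an element of $\mathrm{Aut}_R(A)=G_{\mathrm{rt}}$ acting on the alphabet, whereas $u_i$ is a unit of $R$ acting on the image. The remedy is Lemma \ref{psss} (equivalently Corollary \ref{pseudoiff}): multiplication by $u_i$ restricts to a monomorphism between submodules of $A$ (namely the images of the relevant coordinate maps), and because it is the restriction of the endomorphism ``left multiplication by $u_i$'' on the ambient $A$, pseudo-injectivity upgrades it to an automorphism $\tau_i\in\mathrm{Aut}_R(A)$. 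Assembling $T(a_1,\dots,a_n)=(a_{\sigma(1)}\tau_1,\dots,a_{\sigma(n)}\tau_n)$ gives the required $G_{\mathrm{rt}}$-monomial transformation extending $f$. I expect the main obstacle to be precisely this last point — carefully setting up the submodules of $A$ on which the units act so that Lemma \ref{psss} applies cleanly (one must check the domains are genuine submodules, that the maps really are the restrictions of endomorphisms of $A$, and that the $\tau_i$ one obtains are consistent with the equation $\lambda_i\tau_i=\psi_{\sigma(i)}$ on the relevant images); the character-sum bookkeeping, by contrast, is essentially identical to the proof of Theorem \ref{FBI}.
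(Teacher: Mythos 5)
Your overall architecture is the right one — cyclic socle gives a generating character and the embedding $A\subset{_R\widehat{R}}$ (Propositions \ref{prop1.2.11}, \ref{Rhat}), and pseudo-injectivity is what must convert the resulting data into elements of $\mathrm{Aut}_R(A)$ — and your claim that every character of $A$ has the form $a\mapsto\varrho(ra)$ is correct (dualize the embedding built from $\varrho$ and use exactness of the character functor). But the central character-theoretic step does not go through as written, because $A$ is only a one-sided module: the expressions $m\lambda_i r$, $\psi_j s$, $\lambda_i u_i$, and the partial order $\preceq$ on the $\mathcal{U}$-orbits of $\mathrm{Hom}_R(M,A)$ all presuppose a right $R$-action on $A$ (equivalently, a right $R$-module structure on $\mathrm{Hom}_R(M,A)$), which is available in Theorem \ref{FBI} precisely because there $A$ is a bimodule. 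If you repair the parametrization using the left action, replacing $\varrho(m\lambda_i r)$ by $(\varrho r)(m\lambda_i)=\varrho\bigl(r\,(m\lambda_i)\bigr)$, then linear independence only yields $\varrho(m\lambda_1)=\varrho\bigl(s\,(m\psi_j)\bigr)$ for all $m\in M$; the difference map $m\mapsto m\lambda_1-s\,(m\psi_j)$ is a group homomorphism but \emph{not} $R$-linear (it sends $rm$ to $r(m\lambda_1)-(sr)(m\psi_j)$, not to $r$ times its value at $m$), so its image is merely a subgroup of $A$, and the defining property of a generating character — no nonzero \emph{submodules} inside $\mathrm{Ker}\varrho$ — cannot be used to kill it. Your intended use of pseudo-injectivity also misfires: left multiplication by a unit $u_i\in R$ is not an $R$-endomorphism of $A$ unless $u_i$ is central, so Lemma \ref{psss} does not apply to it; and if the scalings really were right multiplications on a bimodule, they would already lie in $\mathrm{Aut}_R(A)$ and no pseudo-injectivity would be needed at all (none is needed in Theorem \ref{FBI}), which signals that this is not where hypothesis $(1)$ can do its work.

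The paper's proof sidesteps exactly this obstruction: using Proposition \ref{Rhat} it views $C_1,C_2\subset A^n\subset\widehat{R}^n$ as codes over the Frobenius bimodule $\widehat{R}$ and applies Theorem \ref{FBI} \emph{there}, obtaining a monomial transformation $F$ of $\widehat{R}^n$ with components $\tau_i\in\mathrm{Aut}_R(\widehat{R})$ — which need not preserve $A$. Writing $F=PD$ with $P$ the permutation part and $D$ diagonal, and setting $C_3=C_1P\subset A^n$, each $\tau_i$ restricts to an $R$-linear isomorphism between the $i$-th coordinate projections $C_3^{(i)},C_2^{(i)}\subset A$, and Corollary \ref{pseudoiff} (pseudo-injectivity is exactly the extension property for length-$1$ codes) extends this restriction to some $\tau_i'\in\mathrm{Aut}_R(A)$; assembling the $\tau_i'$ with $\sigma$ gives the required monomial transformation of $A^n$ extending $f$. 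So the salvageable version of your plan is: run the character argument inside $\widehat{R}$, where the right scaling and the partial order are legitimate, and then perform this coordinatewise repair with pseudo-injectivity — which is precisely the paper's proof.
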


\begin{proof}Assume that conditions (1) and (2) hold for the alphabet $_RA$. Let $c_1,C_2\subset A^n$ be two $R$-linear codes, and  $f:C_1\rightarrow C_2$ be an isomorphism preserving Hamming weight. We show that $f$ can be extended into a monomial transformation.\\
By Proposition \ref{Rhat}, $_RA$ can be considered as a submodule of $_R\widehat{R}$, and thereby, $C_1,C_2\subset \widehat{R}^n$ can be considered  as two $R$-linear codes over $_R\widehat{R}$. Of course, the Hamming weight of a codeword stays the same under this assumption. Now, since $\widehat{R}$ is a Frobenius $(R,R)$-bimodule, by Theorem \ref{FBI}, $f$ extends to a monomial transformation $F:\widehat{R}^n\rightarrow\widehat{R}^n$.
Thus there exist $\tau_1,\ldots,\tau_n\in \mathrm{Aut}_R(_R\widehat{R})$ and a permutation $\sigma$ such that
$$(x_1,\ldots,x_n)F=(x_{\sigma(1)}\tau_1,\ldots,x_{\sigma(n)}\tau_n),\quad (x_1,\ldots,x_n)\in \widehat{R}^n.$$

Now, notice that a monomial transformation can split into two components, a diagonal component $D$, and a permutation component $P$, acting as follows:
\begin{align*}(x_1,\ldots,x_n)D&=(x_1\tau_1,\ldots,x_n\tau_n),\; \text{and}\\
(x_1,\ldots,x_n)P&=(x_{\sigma(1)},\ldots,x_{\sigma(n)}),\quad (x_1,\ldots,x_n)\in \widehat{R}^n.\end{align*}
Then $xF=xPD, x\in \widehat{R}^n$.\\
Set $C_3=C_1P\subset A^n\subset \widehat{R}^n$, then $D$ is an $R$-linear isomorphism from $C_3$ to $C_2$ preserving Hamming weight. We consider the components of $D$. For each $i$, project $C_3,C_2$ to codes $c_3^{(i)},C_2^{(i)}\subset A\subset\widehat{R}$. Now, the transformation $D^{(i)}:\widehat{R}\rightarrow \widehat{R}$ defined by $xD^{(i)}=x\tau_i,$ for $x\in \widehat{R}$ is an $R$-linear isomorphism taking $C_3^{(i)}$ to $C_2^{(i)}$, besides, it preserves Hamming weight. Since $A$ is pseudo-injective, it follows from Corollary \ref{pseudoiff} that $D^{(i)}:C_3^{(i)}\rightarrow C_2^{(i)}$ extends to an automorphism $\tau'_i\in \mathrm{Aut_R(A)}$. These automorphisms of $A$ can be used to produce a monomial transformation $F'$ of $A^n$ extending $f$ in a clear manner.

 \end{proof}

As we saw in Theorem \ref{du1}, finite Frobenius rings satisfy the EP with respect to Hamming weight. In the same paper, \cite{duality}, a partial converse was proved, stated in Theorem \ref{du2}. In \cite{r6}, in 2008, the full converse was provided, following a strategy of Dinh and L\'{o}pez-Permouth \cite{r1} (it should be mentioned that in \cite{grefff} T. Honold and M. Greferath made a  similar construction for codes over $\mathbb{Z}_m$ that predated that in \cite{duality}).
The strategy has three parts. (1) If a finite ring is not Frobenius, its socle contains a matrix module of a particular type. (2) Provide a counter-example to the EP in the context of linear codes over this special module. (3) Show that this counter-example over the matrix module pulls back to give a counter-example over the original ring. The proof was easily adapted in \cite{r7} (2009) to prove that a module alphabet $_RA$ has the EP for Hamming weight if and only if $A$ is pseudo-injective with cyclic socle. This is what we prove next. For the technical results in combinatorics used here, see \cite{Lintcomb} and \cite{stanley}.

\begin{definition}($q$-Binomial Coefficients). The $q$-binomial coefficients, also known as Gaussian coefficients, and Gaussian numbers, are defined and denoted as follows$$\begin{bmatrix}k\\l\end{bmatrix}_q=\frac{(1-q^k)(1-q^{k-1})\cdots (1-q^{k-l+1})}{(1-q^l)(1-q^{l-1})\cdots(1-q)}.$$\end{definition}

\begin{lemma}\label{gauss}The $q$-binomial coefficient $\begin{bmatrix}k\\l\end{bmatrix}_q$ is exactly the number of row reduced echelon matrices (RRE's) of size $l\times k$ and rank $l$ over $\mathbb{F}_q$, which also equals the number of column reduced echelon matrices (CRE's) of size $k\times l$ and rank $l$ over $\mathbb{F}_q$. It is also the number of $\mathbb{F}_q$-linear subspaces of dimension $l$ in a $\mathbb{F}_q$-vector space of dimension $k$.  \end{lemma}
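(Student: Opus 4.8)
The plan is to reduce everything to a single count --- the number of $l$-dimensional subspaces of $\mathbb{F}_q^k$ --- and to obtain this count by the standard ``ordered bases divided by ordered bases'' argument.

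First I would set up the bijections between the three quantities. The key linear-algebra fact is that every $l$-dimensional subspace $W\subseteq\mathbb{F}_q^k$ has a \emph{unique} basis which, written as the rows of an $l\times k$ matrix, is in row reduced echelon form; this matrix has rank $l$. Conversely the row space of any rank-$l$ RRE matrix of size $l\times k$ is an $l$-dimensional subspace of $\mathbb{F}_q^k$, and passing to the unique RRE basis of that row space returns the original matrix. Hence ``take the row space'' is a bijection between rank-$l$ RRE matrices of size $l\times k$ and $l$-dimensional subspaces of $\mathbb{F}_q^k$. Transposition is a bijection carrying rank-$l$ RRE matrices of size $l\times k$ to rank-$l$ CRE matrices of size $k\times l$ (it preserves rank and turns the row-echelon shape into the column-echelon shape), so all three counts in the statement coincide.

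Next I would count the $l$-dimensional subspaces. The number of ordered $l$-tuples $(v_1,\dots,v_l)$ of linearly independent vectors in $\mathbb{F}_q^k$ is $(q^k-1)(q^k-q)\cdots(q^k-q^{l-1})$: having chosen $v_1,\dots,v_{i-1}$, which span an $(i-1)$-dimensional subspace, the vector $v_i$ may be any of the $q^k-q^{i-1}$ vectors outside it. Each $l$-dimensional subspace arises as the span of exactly $(q^l-1)(q^l-q)\cdots(q^l-q^{l-1})$ such tuples, namely its ordered bases, counted by the same argument with $k$ replaced by $l$. Therefore the number of $l$-dimensional subspaces equals
$$\frac{(q^k-1)(q^k-q)\cdots(q^k-q^{l-1})}{(q^l-1)(q^l-q)\cdots(q^l-q^{l-1})}.$$
Finally I would check that this ratio equals $\begin{bmatrix}k\\l\end{bmatrix}_q$: extracting the factor $q^{i}$ from $q^m-q^i$ in the $(i+1)$-st factor of numerator and denominator, the powers of $q$ (both equal to $q^{0+1+\cdots+(l-1)}$) cancel, leaving $\dfrac{(q^k-1)(q^{k-1}-1)\cdots(q^{k-l+1}-1)}{(q^l-1)(q^{l-1}-1)\cdots(q-1)}$, which agrees with the displayed definition of $\begin{bmatrix}k\\l\end{bmatrix}_q$ after multiplying top and bottom by $(-1)^l$.

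I do not expect a serious obstacle. The only point needing care is the claimed uniqueness of the row reduced echelon basis of a given subspace --- that Gauss--Jordan elimination yields a canonical form depending only on the row space --- which I would justify by induction on the pivot columns (the first pivot column is forced as the least coordinate not identically zero on $W$, its pivot row is then forced, and one passes to the complementary subspace). An alternative to the subspace count would be to enumerate RRE matrices directly by choosing the $l$ pivot columns $1\le c_1<\cdots<c_l\le k$ and counting the $q^{\sum_i (k-c_i-l+i)}$ free entries, but summing that expression over pivot patterns is exactly the inversion-statistic formula for $\begin{bmatrix}k\\l\end{bmatrix}_q$ and is messier, so I would keep it only as a remark.
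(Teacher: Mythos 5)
Your argument is correct, and in fact the thesis does not prove this lemma at all: it is stated as a known combinatorial fact, with the reader referred to van Lint--Wilson and Stanley for the details. What you supply is exactly the standard argument from those sources: the bijection ``rank-$l$ RRE matrix $\leftrightarrow$ its row space,'' resting on the uniqueness of the reduced echelon basis of a subspace; transposition to identify RRE's with CRE's; and the orbit count (ordered independent $l$-tuples divided by ordered bases) giving $\frac{(q^k-1)(q^k-q)\cdots(q^k-q^{l-1})}{(q^l-1)(q^l-q)\cdots(q^l-q^{l-1})}$, which after cancelling the common power $q^{0+1+\cdots+(l-1)}$ and adjusting signs by $(-1)^l$ matches the paper's definition with factors $(1-q^i)$. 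All steps check out, including the uniqueness of the RRE representative, which you rightly flag as the one point needing justification; your pivot-column induction (or an appeal to Gauss--Jordan canonicity) settles it. So the proposal correctly fills a gap the paper deliberately outsources, and nothing further is needed.
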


\begin{lemma}\label{cauchy}$\mathrm{(Cauchy\;Binomial\;Theorem)}.$

$$\overset{k-1}{\underset{i=0}{\prod}}(1+xq^i)=\overset{k}{\underset{j=0}{\sum}}\begin{bmatrix}k\\j\end{bmatrix}_q
q^{{j\choose 2}}x^j.$$\end{lemma}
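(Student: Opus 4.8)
The plan is to prove the identity by induction on $k$, with the engine being the $q$-analogue of Pascal's rule for Gaussian coefficients. For the base case one checks $k=0$ directly (the left side is the empty product $1$ and the right side is $\begin{bmatrix}0\\0\end{bmatrix}_q q^0 x^0 = 1$), or equivalently $k=1$, where both sides equal $1+x$.

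For the inductive step, assuming the formula for $k$, I would multiply both sides by $(1+xq^k)$, so the left side becomes $\prod_{i=0}^{k}(1+xq^i)$. On the right side I would distribute, reindex the $xq^k$-part via $j\mapsto j-1$, and collect: for $1\le j\le k$ the coefficient of $x^j$ becomes $\begin{bmatrix}k\\j\end{bmatrix}_q q^{\binom{j}{2}} + \begin{bmatrix}k\\j-1\end{bmatrix}_q q^{\binom{j-1}{2}+k}$. Factoring out $q^{\binom{j}{2}}$ and using $\binom{j-1}{2}+k-\binom{j}{2}=k-j+1$, this is $q^{\binom{j}{2}}\bigl(\begin{bmatrix}k\\j\end{bmatrix}_q + q^{k+1-j}\begin{bmatrix}k\\j-1\end{bmatrix}_q\bigr)$. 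The remaining task is to confirm the $q$-Pascal rule $\begin{bmatrix}k+1\\j\end{bmatrix}_q = \begin{bmatrix}k\\j\end{bmatrix}_q + q^{k+1-j}\begin{bmatrix}k\\j-1\end{bmatrix}_q$: writing $[n]!_q := \prod_{i=1}^{n}(1-q^i)$ and $\begin{bmatrix}k\\j\end{bmatrix}_q = [k]!_q/([j]!_q[k-j]!_q)$, I would bring the two summands over the common denominator $[j]!_q[k-j+1]!_q$, whereupon the numerator is $[k]!_q\bigl((1-q^{k-j+1})+q^{k+1-j}(1-q^j)\bigr)=[k]!_q(1-q^{k+1})=[k+1]!_q$, as required. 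The endpoint coefficients are handled separately: the $x^0$-coefficient is $1=\begin{bmatrix}k+1\\0\end{bmatrix}_q$, and the $x^{k+1}$-coefficient is $q^{\binom{k}{2}+k}=q^{\binom{k+1}{2}}$, matching $\begin{bmatrix}k+1\\k+1\end{bmatrix}_q q^{\binom{k+1}{2}}$ since $\binom{k+1}{2}=\binom{k}{2}+k$.

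There is no genuine obstacle: the only care needed is the bookkeeping of the $q$-exponents through the reindexing and making sure the $q$-Pascal identity is used in the precise form dictated by where the factor $q^k$ lands (the complementary form $\begin{bmatrix}k+1\\j\end{bmatrix}_q = q^{j}\begin{bmatrix}k\\j\end{bmatrix}_q + \begin{bmatrix}k\\j-1\end{bmatrix}_q$ would go with expanding $1+xq^0$ instead). An alternative purely combinatorial route would expand $\prod_{i=0}^{k-1}(1+xq^i)$ to see the coefficient of $x^j$ as $\sum_{0\le i_1<\cdots<i_j\le k-1} q^{i_1+\cdots+i_j}$, substitute $i_m=\lambda_m+(m-1)$ to peel off $q^{\binom{j}{2}}$, and recognize $\sum_{\lambda}q^{|\lambda|}$ over partitions $\lambda$ inside a $j\times(k-j)$ box as $\begin{bmatrix}k\\j\end{bmatrix}_q$ (cf.\ Lemma \ref{gauss} and \cite{stanley}); I would note this as a remark but present the inductive proof as the main argument.
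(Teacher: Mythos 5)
Your induction is correct: the base cases check out, the reindexing bookkeeping ($\binom{j-1}{2}+k-\binom{j}{2}=k-j+1$) is right, you use the $q$-Pascal rule in the correct form $\begin{bmatrix}k+1\\j\end{bmatrix}_q=\begin{bmatrix}k\\j\end{bmatrix}_q+q^{k+1-j}\begin{bmatrix}k\\j-1\end{bmatrix}_q$, your common-denominator verification of that rule is sound, and the endpoint coefficients are handled properly. Be aware, though, that the thesis itself does not prove this lemma at all: it is stated as a known combinatorial fact, with the reader referred to van Lint--Wilson and Stanley for the technical background, and it is then used only as an ingredient in the proof of Theorem \ref{wood} (to evaluate $L_+\pm L_-$ and $\Delta(I_s')$ at $x=\pm1$). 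So your write-up is not an alternative to an argument in the paper but a self-contained filling-in of an omitted proof; that is a reasonable thing to supply, and your choice of the inductive route with the combinatorial partition-counting argument relegated to a remark (which is essentially the proof one finds in the cited references, and which connects nicely to Lemma \ref{gauss}) is a sensible way to present it.
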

The next theorem proves the necessary conditions for a module alphabet to satisfy the extension property with respect to Hamming weight.

\begin{theorem}\label{wood}Let $R=\mathbb{M}_m(\mathbb{F}_q)$ and $A=\mathbb{M}_{m\times k}(\mathbb{F}_q)$. If $k>m$, there exist linear codes $C_+,C_-\subset A^N, N=\overset{k-1}{\underset{i=1}{\prod}}(1+q^i)$, and an $R$-linear isomorphism $f:C_+\rightarrow C_-$ that preserves Hamming weight, yet there is no monomial transformation extending $f$.\end{theorem}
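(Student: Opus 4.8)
The plan is to build the counterexample directly, without invoking character theory. As a left $R=\mathbb{M}_m(\mathbb{F}_q)$-module, $A=\mathbb{M}_{m\times k}(\mathbb{F}_q)$ is a direct sum of $k$ copies of the unique simple left module $T=\mathbb{F}_q^m$, so $\mathrm{End}_R(A)\cong\mathbb{M}_k(\mathbb{F}_q)$ acting on $A$ by right multiplication and $\mathrm{Aut}_R(A)\cong\mathrm{GL}_k(\mathbb{F}_q)$; since Hamming weight is maximally symmetric, a monomial transformation of $A^N$ may use all of $\mathrm{GL}_k(\mathbb{F}_q)$ in its diagonal part. I would take the common underlying module of the two codes to be $M=A$ itself, so that each coordinate homomorphism is $\rho_X\colon a\mapsto aX$ for some $X\in\mathbb{M}_k(\mathbb{F}_q)$, with $aX=0$ precisely when the row space of $a$ is contained in the left null space $N_X=\{v\in\mathbb{F}_q^k:vX=0\}$. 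Thus a length-$N$ code over $A$ whose embedding $\Lambda=(\rho_{X_1},\dots,\rho_{X_N})$ is injective is recorded by a multiset $\{N_{X_1},\dots,N_{X_N}\}$ of subspaces of $\mathbb{F}_q^k$ with $\bigcap_iN_{X_i}=0$. Given two such codes $C_+$, $C_-$ recorded by multisets $\mathcal W^+$, $\mathcal W^-$, the isomorphism $f=\Psi\circ\Lambda^{-1}$ preserves Hamming weight provided that, for every subspace $U\subseteq\mathbb{F}_q^k$ with $\dim U\le m$ (these are exactly the row spaces of elements of $A$), $\mathcal W^+$ and $\mathcal W^-$ contain equally many members $\supseteq U$; and since right multiplication by an element of $\mathrm{GL}_k(\mathbb{F}_q)$ leaves $N_X$ unchanged, if $f$ extends to a monomial transformation then $\mathcal W^+=\mathcal W^-$ as multisets.

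So the core of the construction is an integer-valued function $c$ on the subspaces of $\mathbb{F}_q^k$ with $\sum_{W\supseteq U}c(W)=0$ for all $U$ with $\dim U\le m$, but $c\not\equiv 0$. I would take $c(W)=(-1)^{k-\dim W}q^{\binom{k-\dim W}{2}}$, the value $\mu(W,\mathbb{F}_q^k)$ of the M\"{o}bius function of the subspace lattice. For a fixed $U$, the subspaces $W\supseteq U$ correspond to subspaces $\bar W$ of $\mathbb{F}_q^{\,k-\dim U}\cong\mathbb{F}_q^k/U$ through $k-\dim W=(k-\dim U)-\dim\bar W$; counting these by dimension with Lemma \ref{gauss} and writing $e=k-\dim W$ turns $\sum_{W\supseteq U}c(W)$ into $\sum_{e=0}^{\,k-\dim U}(-1)^e q^{\binom{e}{2}}\begin{bmatrix}k-\dim U\\e\end{bmatrix}_q$, which equals $\prod_{i=0}^{\,k-\dim U-1}(1-q^i)$ by the Cauchy Binomial Theorem (Lemma \ref{cauchy}) at $x=-1$. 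This product vanishes as soon as $k-\dim U\ge 1$, and $k>m\ge\dim U$ guarantees precisely that; so the weight-preservation condition holds (the case $U=0$ included), while $c$ is plainly nonzero.

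Next, split $c=c_+-c_-$ by the parity of the codimension $e=k-\dim W$: let $\mathcal W^+$ list each even-codimension subspace $W$ with multiplicity $q^{\binom{k-\dim W}{2}}$, and $\mathcal W^-$ each odd-codimension subspace with the analogous multiplicity. Counting subspaces by dimension with Lemma \ref{gauss} and using Lemma \ref{cauchy} at $x=1$ and $x=-1$, both multisets have the same total size $\sum_{e\text{ even}}\begin{bmatrix}k\\e\end{bmatrix}_q q^{\binom{e}{2}}=\sum_{e\text{ odd}}\begin{bmatrix}k\\e\end{bmatrix}_q q^{\binom{e}{2}}=\tfrac12\prod_{i=0}^{k-1}(1+q^i)=\prod_{i=1}^{k-1}(1+q^i)=N$, the claimed length. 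To assemble the codes, for each subspace $W$ on the two lists choose $X\in\mathbb{M}_k(\mathbb{F}_q)$ with $N_X=W$ (for instance a projection with kernel $W$), form $\Lambda=(\rho_{X_1^+},\dots,\rho_{X_N^+})$ and $\Psi=(\rho_{X_1^-},\dots,\rho_{X_N^-})$, and set $C_+=\Lambda(A)$, $C_-=\Psi(A)$, $f=\Psi\circ\Lambda^{-1}$. Each of $\mathcal W^+$, $\mathcal W^-$ contains either the zero subspace or every line of $\mathbb{F}_q^k$ (and $k\ge2$ since $k>m\ge1$), so $\bigcap_iN_{X_i^{\pm}}=0$, $\Lambda$ and $\Psi$ are embeddings, $C_{\pm}$ have underlying module $A$, and $f$ is an $R$-isomorphism; $f$ preserves Hamming weight by the identity above; and $f$ does not extend to a monomial transformation because the zero subspace occurs in exactly one of $\mathcal W^+$, $\mathcal W^-$, so $\mathcal W^+\ne\mathcal W^-$.

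The step I expect to be the main obstacle is verifying that the chosen $c$ kills every up-set $\{W:W\supseteq U\}$ with $\dim U\le m$ rather than only the one for $U=0$: this is where one is forced to pass to the quotient $\mathbb{F}_q^k/U$ and invoke the Cauchy Binomial Theorem there, and where the hypothesis $k>m$ is genuinely used. A second, more technical, matter is arranging that the length comes out to exactly $\prod_{i=1}^{k-1}(1+q^i)$ --- again the Cauchy Binomial Theorem, this time at $x=\pm1$ --- together with the routine checks that $\mathrm{Aut}_R(A)$ acts on coordinate homomorphisms by right multiplication by $\mathrm{GL}_k(\mathbb{F}_q)$, that this action fixes left null spaces (so that monomial equivalence really does force equality of the subspace multisets), and that the intersection conditions making $\Lambda$ and $\Psi$ injective are satisfied.
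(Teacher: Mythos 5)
Your construction is correct and is essentially the paper's own proof in dual bookkeeping: listing the left null spaces of even (resp.\ odd) codimension $e$ with multiplicity $q^{\binom{e}{2}}$ is exactly the paper's vector $v_+$ (resp.\ $v_-$) of column reduced echelon matrices of even (resp.\ odd) rank, since a column-equivalence class of $k\times k$ matrices is determined by its left null space (codimension $=$ rank) and your projections are just different representatives of those classes. Both arguments rest on the same Cauchy binomial identity --- your count of codimension-$e$ subspaces containing a fixed $U$ of dimension $s\le m<k$, namely $\begin{bmatrix}k-s\\e\end{bmatrix}_q$, is the paper's count of rank-$e$ CRE's annihilated by $I_s'$ --- and both conclude non-extendability from the observation that exactly one of the two codes has an identically zero coordinate.
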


\begin{proof}Within the $N$-tuples of $\mathbb{M}_k(\mathbb{F}_q)^N$, whose entries are $k\times k$ matrices, we consider two special vectors $v_+,v_-$: The entries of the vector $v_+$ are all $k\times k$ CRE's of even rank,  each matrix is repeated $q^{{r\choose 2}}$ times within $v_+$, where $r$ is the rank of the indicated matrix; the order of the entries is not significant. In particular, the zero matrix occurs only once in $v_+$, as ${0\choose 2}=0$. Thus, the length $L_+$ of $v_+$ is given by $$L_+=\overset{k}{\underset{\begin{smallmatrix}r=0\\r\;\text{even}\end{smallmatrix}}{\sum}}q^{{r\choose 2}}
\begin{bmatrix}k\\r\end{bmatrix}_q.$$
Similarly, $v_-$ consists of all CRE's of odd rank, following the same repetition rule. The length $L_-$ of $v_-$ is given by
$$L_-=\overset{k}{\underset{\begin{smallmatrix}r=1\\r\;\text{odd}\end{smallmatrix}}{\sum}}q^{{r\choose 2}}
\begin{bmatrix}k\\r\end{bmatrix}_q.$$
Applying Cauchy theorem for $x=-1$ and $x=1$ yield, respectively, that $$L_+-L_-=0\quad\text{and}\quad L_++L_-=\overset{k-1}{\underset{i=0}{\prod}}(1+q^i).$$
Since the $i=0$ term in the product equals $2$, we get $$L_+=L_-=\overset{k-1}{\underset{i=1}{\prod}}(1+q^i)=N,$$as claimed.\\
Now, define the $R$-linear homomorphisms $g_\pm:A\rightarrow A^N$ by $Xg_\pm=Xv_\pm, X\in A$, where $Xv_\pm$ denotes entry-wise matrix multiplication. Consider the two codes $C_\pm=Ag_\pm\subset A^N$, for which we prove the following two claims:
\begin{enumerate}
\item[$1-$] For $X\in A$, $\mathrm{wt}(Xg_+)=\mathrm{wt}(Xg_-)$ (Hamming weight preserved).
\item[$2-$] The map $f:C_+\rightarrow C_-$ defined by $Xg_+\mapsto Xg_-$ (i.e. $g_-=g_+\circ f$) is an $R$-linear isomorphism that preserves Hamming weight, yet, there is no monomial transformation extending $f$.
\end{enumerate}
Setting $\Delta(X)=\mathrm{wt}(Xg_+)-\mathrm{wt}(Xg_-)$, we prove the first claim by showing that $\Delta(X)=0$ for any $X\in A$. We have
$$\Delta(X)=\overset{k}{\underset{\begin{smallmatrix}r=0\\r\;\text{even}\end{smallmatrix}}{\sum}}q^{{r\choose 2}}
\underset{\begin{smallmatrix}\lambda \;\text{CRE}\\\text{rank}\; r\end{smallmatrix}}{\sum}\delta(X\lambda)-\overset{k}{\underset{\begin{smallmatrix}r=0\\r\;\text{odd}\end{smallmatrix}}
{\sum}}q^{{r\choose 2}}
\underset{\begin{smallmatrix}\lambda \;\text{CRE}\\\text{rank}\; r\end{smallmatrix}}{\sum}\delta(X\lambda),$$
where $\delta(Y)=\left\{\begin{smallmatrix}1,&\; Y\neq0\\0,&\; Y=0\end{smallmatrix}\right.$. The terms in the equation can be re-collected as

$$\Delta(X)=\overset{k}{\underset{r=0}{\sum}}(-1)^r q^{{r\choose 2}}
\underset{\begin{smallmatrix}\lambda \;\text{CRE}\\\text{rank}\; r\end{smallmatrix}}{\sum}\delta(X\lambda).$$
We now show that $\Delta(X)$ depends only on the rank of $X$. Given that $X\in A$ is of rank $s, 1\leq s\leq m $, then $$X=P\begin{pmatrix}I_s&0\\0&0\end{pmatrix}Q,$$ for some $P\in GL(m, \mathbb{F}_q)$ and $Q\in GL(k, \mathbb{F}_q)$, i.e. $P$ and $Q$ are invertible matrices of sizes $m\times m$ and $k\times k$ respectively. For simplicity we shall denote $\left(\begin{smallmatrix}I_s&0\\0&0\end{smallmatrix}\right)$ by $I_s'$, thus $X=PI_s'Q$.\\
It is clear that $\delta(PY)=\delta(Y)=\delta(YQ)$ for any  $Y\in A, P\in GL(m, \mathbb{F}_q)$, and $Q\in GL(k, \mathbb{F}_q)$. It follows that $\Delta(X)=\Delta(I_s'Q)$. In computing $\Delta(I_s'Q)$ we encounter the inner sum $$\underset{\begin{smallmatrix}\lambda \;\text{CRE}\\\text{rank}\; r\end{smallmatrix}}{\sum}\delta(I_s'Q\lambda).$$
To dissolve this one, we appeal to the following remark. Every matrix is column (row) equivalent to a unique CRE (RRE), whence $Q\lambda=\lambda'Q'$ for some CRE matrix $\lambda'$, and some $Q'\in GL(k, \mathbb{F}_q)$ (depending on $\lambda$). Moreover, if $\lambda_1,\lambda_2$ are two CRE's sharing the same $\lambda'$ in this manner, that is, $Q\lambda_1=\lambda'Q_1'$ and $Q\lambda_2=\lambda'Q_2'$, then $\lambda_1=\lambda_2 Q_2'^{-1}Q_1'$ and $\lambda_1$ is column equivalent to $\lambda_2$,
implying $\lambda_1=\lambda_2$ as they are CRE's. Thus, $$\underset{\begin{smallmatrix}\lambda \;\text{CRE}\\\text{rank}\; r\end{smallmatrix}}{\sum}\delta(I_s'Q\lambda)=
\underset{\begin{smallmatrix}\lambda' \;\text{CRE}\\\text{rank}\; r\end{smallmatrix}}{\sum}\delta(I_s'\lambda'Q')=\underset{\begin{smallmatrix}\lambda' \;\text{CRE}\\\text{rank}\; r\end{smallmatrix}}{\sum}\delta(I_s'\lambda').$$
Notice that although each $Q'$ depends on its corresponding $\lambda$, they disappeared from the last summation as they don't affect $\delta$. This shows that $\Delta(X)=\Delta(I_s')$, and that $\Delta(X)$ depends only on the rank of $X$. We still need to prove that $\Delta(I_s')=0$, for all $ 1\leq s\leq m$.
If $\lambda$ is a CRE of rank $r$, the rows of $I_s'\lambda$ consist of the first $s$ rows of $\lambda$ followed by $k-s$ zero rows. Thus, $I_s'\lambda=0$ if and only if the first $s$ rows of $\lambda$ are zero. There are exactly $\begin{bmatrix}k-s\\r\end{bmatrix}_q$ column reduced echelon matrices of rank $r$ with first $s$ rows only zeros\footnote{If $r\leq k$, the number $\begin{bmatrix}k\\r\end{bmatrix}_q$ counts the number of CRE's of size $k\times k$ of rank $r$ over $\mathbb{F}_q$. However,  the number of nonzero columns in such a matrix is equal to $r$, so, whenever $r<k$ the matrix will have the last $k-r$ columns equal to zero. This shows that $\begin{bmatrix}k\\r\end{bmatrix}_q$ is also the number of $k\times r$ CRE's of rank $r$ over $\mathbb{F}_q$. If $r>k$, the number $\begin{bmatrix}k\\r\end{bmatrix}_q$ is equal to zero.}. Thus, in the summation $$\underset{\begin{smallmatrix}\lambda \;\text{CRE}\\\text{rank}\; r\end{smallmatrix}}{\sum}\delta(I_s'\lambda)$$ there are $\begin{bmatrix}k\\r\end{bmatrix}_q$ terms, out of which there are $\begin{bmatrix}k-s\\r\end{bmatrix}_q$ zero terms and the remaining terms equal to 1. This gives that
\begin{align*}\Delta(I_s')&=\overset{k}{\underset{r=1}{\sum}}(-1)^r q^{{r\choose 2}}
\left\{\begin{bmatrix}k\\r\end{bmatrix}_q-\begin{bmatrix}k-s\\r\end{bmatrix}_q\right\}\\
&=
\overset{k}{\underset{r=1}{\sum}}(-1)^r q^{\left(\begin{smallmatrix}
r\\2\end{smallmatrix}\right)}\begin{bmatrix}k\\r\end{bmatrix}_q-\overset{k-s}{\underset{r=1}{\sum}}(-1)^r q^{{r\choose 2}}\begin{bmatrix}k-s\\r\end{bmatrix}_q=\end{align*}
\begin{align*}&=\left(\overset{k-1}{\underset{i=0}{\prod}}(1-q^i)-1\right)-\left(\overset{k-s-1}{\underset{i=0}{\prod}}(1-q^i)-1\right)\\
&=(0-1)-(0-1)=0,\end{align*}
use has been made of Cauchy theorem (Theorem \ref{cauchy}), and that $\begin{bmatrix}k\\r\end{bmatrix}_q$ is equal to zero when  $r>k$. Note that the assumption $k>m$ guaranteed that the sum  containing $\begin{bmatrix}k-s\\r\end{bmatrix}_q$ is nontrivial. Thus, $\Delta(X)=0$ for all $X\in A$, this  proves the first claim.\\
For the second claim, it is easily verified that the map $f$, defined as  in the statement of the claim, is an $R$-homomorphism. If $X\neq0$, then $Xv_\pm\neq0$ since  the identity matrix $I_k$ is, after all, a column reduced echelon matrix that will appear in either $v_+$ or $v_-$ and hence, using the above preservation of Hamming weight, $\mathrm{wt}(Xv_+)=\mathrm{wt}(Xv_-)\neq0$. This shows that $f$ is a monomorphism. Also, this previous argument shows that $|C_+|=|C_-|$. Thus $f:C_+\rightarrow C_-$ is an $R$-isomorphism preserving Hamming weight. Finally, $v_+$ contains the zero matrix in one component, and hence there is a fixed component that is zero in all the codewords of $C_+$, which is not the case for $C_-$ (there is no component that is zero in $Xv_-$ for all $X\in A$). This prevents the two codes $C_+,C_-$ of being monomially equivalent.
\end{proof}

The conditions in Theorem \ref{sufficient} are necessary by the following theorem \cite{r7}.
\begin{theorem}
If the  alphabet $_RA$ has the extension property with respect to Hamming weight then \begin{enumerate}
\item[$(1)$] $_RA$ is pseudo-injective, and
\item[$(2)$] $\mathrm{soc}(A)$ is cyclic.
\end{enumerate}
\end{theorem}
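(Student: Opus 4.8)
The plan is to establish (1) by specialising the extension property to codes of length one, and to establish (2) by proving the contrapositive: if $\mathrm{soc}(A)$ is not cyclic, the counterexample of Theorem~\ref{wood} transports to $_RA$.

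For (1): assuming $_RA$ has the extension property with respect to Hamming weight, in particular it has the extension property for codes of length $1$. On $A^1=A$ the Hamming weight is the function $a\mapsto \mathrm{wt}(a)$, and any $R$-isomorphism $f\colon C_1\to C_2$ between submodules of $A$ automatically preserves it, since $f(a)=0$ if and only if $a=0$; moreover a monomial transformation of $A^1$ is nothing but an element of $\mathrm{Aut}_R(A)=G_{\mathrm{rt}}$, the Hamming weight being maximally symmetric. Hence every $R$-isomorphism between submodules of $A$ extends to an automorphism of $A$, which is precisely the statement that $A$ has the extension property for length $1$ codes. By Corollary~\ref{pseudoiff} this is equivalent to $_RA$ being pseudo-injective, giving (1).

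For (2): suppose $\mathrm{soc}(A)$ is not cyclic. By Lemma~\ref{simui} there is an index $i$ with $s_i>\mu_i$; write $q=q_i$, $m=\mu_i$, $k=s_i$, so $k>m$, and let $R'=\mathbb{M}_m(\mathbb{F}_q)$ be the $i$-th factor of $R/\mathrm{rad}R$ in equation~(\ref{mu}). Let $A'\subseteq\mathrm{soc}(A)\subseteq A$ be the $T_i$-homogeneous component of the socle, $A'\cong s_iT_i$, an $R$-submodule of $A$. Since $\mathrm{rad}R$ annihilates $\mathrm{soc}(A)$ (Proposition~\ref{socrad}) and every Wedderburn factor other than the $i$-th annihilates a $T_i$-isotypic module, the $R$-action on $A'$ factors through $R\twoheadrightarrow R'$, and as an $R'$-module $A'\cong\mathbb{M}_{m\times k}(\mathbb{F}_q)$. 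Thus $R'$ and $A'$ are exactly the ring and alphabet of Theorem~\ref{wood}, which provides linear codes $C_+,C_-\subseteq (A')^N$ (with $N=\prod_{i=1}^{k-1}(1+q^i)$) and an $R'$-isomorphism $f\colon C_+\to C_-$ preserving Hamming weight that admits no monomial transformation extending it. Now view $C_\pm$ inside $A^N$ through the inclusion $A'\hookrightarrow A$: they are $R$-submodules of $(A')^N\subseteq A^N$, the $R$-action on them factors through $R'$ so $f$ is an $R$-isomorphism, and $f$ still preserves Hamming weight because the inclusion is injective. If $_RA$ had the extension property for length $N$, then $f$ would extend to a monomial transformation $T$ of $A^N$ with a permutation $\sigma$ and automorphisms $\tau_1,\dots,\tau_N\in\mathrm{Aut}_R(A)$. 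Every element of $\mathrm{Aut}_R(A)$ preserves $\mathrm{soc}(A)$ and carries $T_i$-isotypic submodules to $T_i$-isotypic ones, hence fixes $A'$ setwise; and since the $R$-action on $A'$ factors through the surjection $R\twoheadrightarrow R'$, an $R$-automorphism of $A'$ is the same as an $R'$-automorphism of $A'$. Therefore $T$ restricts to a monomial transformation of $(A')^N$ over $R'$ extending $f$, contradicting Theorem~\ref{wood}. Hence $\mathrm{soc}(A)$ is cyclic.

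The only genuinely delicate part is the bookkeeping in the final step: checking that $\mathrm{Aut}_R(A)$ stabilises the homogeneous component $A'$ (characteristicity of the socle and of its isotypic pieces) and that restriction lands inside $\mathrm{Aut}_{R'}(A')$, so that the restricted $T$ is a monomial transformation in the exact sense required by Theorem~\ref{wood}. Everything else is a routine transport of structure along the embedding $A'\hookrightarrow A$ together with the fact that the $R$-action on $A'$ factors through the $i$-th Wedderburn component.
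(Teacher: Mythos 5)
Your proposal is correct and follows essentially the same route as the paper: part (1) via Corollary \ref{pseudoiff}, and part (2) by pulling back the matrix-module counterexample of Theorem \ref{wood} along $R\rightarrow R/\mathrm{rad}R\rightarrow \mathbb{M}_{\mu_i}(\mathbb{F}_{q_i})$ once Lemma \ref{simui} gives an index with $s_i>\mu_i$. The only (harmless) difference is the final step: you exclude an extension by restricting a hypothetical $\mathrm{Aut}_R(A)$-monomial transformation to the characteristic homogeneous component $(A')^N$ and contradicting Theorem \ref{wood}, whereas the paper relies on the observation (made explicit in the proof of Theorem \ref{main}) that $C_+$ has an identically zero coordinate while $C_-$ does not, so the two codes cannot be monomially equivalent over any alphabet.
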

\begin{proof}
In view of Corollary \ref{pseudoiff}, we need only to check the second condition for necessity. Suppose that $\mathrm{soc}(A)\cong s_1T_1\oplus\cdots\oplus s_nT_n$ is not cyclic, then by Lemma \ref{simui} there is an index $i$ such that $s_i>\mu_i$ (we use the same notation as in the Lemma). So, we may assume  $s_iT_i\subset \mathrm{soc}(A)\subset A$. Recall that $T_i$  is the pullback to $R$ of the matrix module $_{\mathbb{M}_{\mu_i}(\mathbb{F}_{q_i})}\mathbb{M}_{\mu_i\times 1}(\mathbb{F}_{q_i})$, so that $s_iT_i$ is the pullback to $R$ of the $\mathbb{M}_{\mu_i}(\mathbb{F}_{q_i})$-module $B=\mathbb{M}_{\mu_i\times s_i}(\mathbb{F}_{q_i})$. Theorem \ref{wood} implies the existence of $\mathbb{M}_{\mu_i}(\mathbb{F}_{q_i})$-linear codes $C_+,C_-\subset B^N$, and an isomorphism $f:C_+\rightarrow C_-$ that preserves Hamming weight, yet $f$ can not extend to any monomial transformation. The projections $$R\rightarrow R/\mathrm{rad}R\rightarrow \mathbb{M}_{\mu_i}(\mathbb{F}_{q_i})$$ allow us to consider  $C_\pm$ as $R$-modules. Also, $f$ becomes an $R$-isomorphism preserving Hamming weight. Thus, we have produced two $R$-linear codes $C_\pm\subset (s_iT_i)^N\subset \mathrm{soc}(A)^N\subset A^N$ that are isomorphic through an isomorphism which preserves Hamming weight, yet the two codes are not monomially equivalent.
\end{proof}

In particular, by Theorem \ref{hon}, a finite ring $R$ has the extension property with respect to Hamming weight if and only if $R$ is Frobenius, this result first appeared in \cite{r6}.

\section{Parameterized Codes and Multiplicity Functions}
\subsection{Parameterized Codes}
In this section, the theory of the extension theorem for linear codes is displayed in an alternative way, a more abstract one, and the necessary and sufficient conditions are derived in terms of a certain mapping $W$ pertaining to the weight function in consideration.

\begin{definition}(Parameterized Code). Let $_RA,_RM$ be  finite left $R$-modules. A pair $(M,\Lambda)$, where $\Lambda:M\rightarrow A^n$ is an $R$-homomorphism, is called a \emph{parameterized code} of length $n$ over the alphabet $A$. Clearly, $M$ serves as an information space and $\Lambda$ serves as an encoder. The actual code is $C=M\Lambda\subset A^n$. \end{definition}
This way of defining a code is justified by the context of the extension problem. As we will see, the information space $M$ and the map $\Lambda$ are the tools needed in this alternative tackling.\\
Note that a code $C\subset A^n$ may be referred through more than one parameterized code. $\Lambda$ can be uniquely identified by its list of  coordinate functionals, thus $\Lambda=(\lambda_1,\ldots,\lambda_n), \lambda_i\in \mathrm{Hom}_R(M,A)$.\\
Fixing $M$ as an information space, $\mathcal{C}_n(M)$ will denote the set of all parameterized codes $(M,\Lambda)$ of length $n$ over the (fixed) alphabet $A$. For completeness, define $\mathcal{C}_0(M)$ to be the singleton containing only the empty code of length 0. On the disjoint union $\mathcal{C}(M)=\bigcup_{n\geq0}\mathcal{C}_n(M)$, we can define a binary operation, giving rise to a monoid structure, the operation is just the following \emph{concatenation}.
\begin{center}$\mathcal{C}_{n_1}(M)\times\mathcal{C}_{n_2}(M)\rightarrow \mathcal{C}_{n_1+n_2}(M),$\\
$((M,\Lambda_1),(M,\Lambda_2))\mapsto (M,\Lambda_1|\Lambda_2),$\end{center}
where $\Lambda_1|\Lambda_2=(\lambda_{11},\ldots,\lambda_{1n_1};\lambda_{21},\ldots,\lambda_{2n_2})$.

\begin{proposition}$\mathcal{C}(M)$ is a monoid (associative semigroup with identity) under concatenation. In particular, the identity is the empty code in $\mathcal{C}_0(M)$.\end{proposition}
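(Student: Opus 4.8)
The plan is to verify the three monoid axioms (closure/well-definedness, associativity, existence of a two-sided identity) directly from the definition of concatenation, reducing everything to the elementary fact that concatenation of finite lists of coordinate functionals is associative and has the empty list as a unit. First I would check well-definedness: given $(M,\Lambda_1)\in\mathcal{C}_{n_1}(M)$ and $(M,\Lambda_2)\in\mathcal{C}_{n_2}(M)$ with coordinate lists $\Lambda_1=(\lambda_{11},\ldots,\lambda_{1n_1})$ and $\Lambda_2=(\lambda_{21},\ldots,\lambda_{2n_2})$ where every $\lambda_{ij}\in\mathrm{Hom}_R(M,A)$, the juxtaposed list $\Lambda_1|\Lambda_2=(\lambda_{11},\ldots,\lambda_{1n_1};\lambda_{21},\ldots,\lambda_{2n_2})$ again consists of $R$-homomorphisms $M\to A$ and hence defines an $R$-homomorphism $M\to A^{n_1+n_2}$; thus $(M,\Lambda_1|\Lambda_2)\in\mathcal{C}_{n_1+n_2}(M)\subset\mathcal{C}(M)$, so concatenation is a genuine binary operation on $\mathcal{C}(M)$.

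Next, for associativity, I would take three parameterized codes $(M,\Lambda_i)\in\mathcal{C}_{n_i}(M)$, $i=1,2,3$, with coordinate lists $(\lambda_{i1},\ldots,\lambda_{in_i})$, and observe that both $(\Lambda_1|\Lambda_2)|\Lambda_3$ and $\Lambda_1|(\Lambda_2|\Lambda_3)$ have exactly the same coordinate-functional list, namely $(\lambda_{11},\ldots,\lambda_{1n_1};\lambda_{21},\ldots,\lambda_{2n_2};\lambda_{31},\ldots,\lambda_{3n_3})$, since list concatenation is associative. Because a parameterized code $(M,\Lambda)$ over the fixed alphabet $A$ is completely determined by $M$ together with the ordered list of its coordinate functionals, the two parameterized codes coincide, establishing associativity.

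Finally, for the identity, I would let $e=(M,\Lambda_\emptyset)\in\mathcal{C}_0(M)$ be the empty code, whose encoder is the unique $R$-homomorphism $M\to A^0=\{0\}$ and whose coordinate list is empty; this shows in particular that $\mathcal{C}_0(M)$ is nonempty. For any $(M,\Lambda)\in\mathcal{C}_n(M)$ with coordinate list $(\lambda_1,\ldots,\lambda_n)$, prepending or appending the empty list returns $(\lambda_1,\ldots,\lambda_n)$, so $e\cdot(M,\Lambda)=(M,\Lambda)=(M,\Lambda)\cdot e$, i.e. $e$ is a two-sided identity. Together these three points show that $(\mathcal{C}(M),|)$ is a monoid with unit the empty code in $\mathcal{C}_0(M)$. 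There is no genuine obstacle here; the only minor points demanding care are the index bookkeeping under concatenation and the trivial remark that the empty tuple does yield a well-defined map into $A^0$, which is what makes $\mathcal{C}_0(M)$ available to supply the unit.
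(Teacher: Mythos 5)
Your proof is correct: checking closure, associativity via the associativity of list concatenation, and the empty code as a two-sided identity is exactly the routine verification intended, and the paper itself dispatches this proposition with the single word ``Clear.'' Nothing in your argument diverges from that intent; you have simply written out the details the paper leaves to the reader.
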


\begin{proof}Clear.\end{proof}

Let $w:A\rightarrow \mathbb{Q}$ be a weight function with symmetry groups $G_{\text{rt}}, G_{\text{lt}}$. Sometimes we need to focus on $G_{\text{rt}}$-monomial transformations of a specific length $n$, so we set $$\mathcal{G}_n=\{T\;|\;T\;\text{is a $G_{\text{rt}}$-monomial transformation of $A^n$}\}.$$ Clearly, for any $n$, $\mathcal{G}_n$ is a group. Actually, $\mathcal{G}_n$ has a right action on $\mathcal{C}_n(M)$, $\Lambda T:=\Lambda\circ T$, where it is understood that $\Lambda\in \mathrm{Hom}_R(M,A^n)$ stands for the parameterized code $(M,\Lambda)$ and $T\in\mathcal{G}_n$. The orbit of $(M,\Lambda)$ is thus the set $\{(M,\Lambda\circ T)\;|\;T\in\mathcal{G}_n\}$. Let $\overline{\mathcal{C}}_n(M)$ denote the orbit space of $\mathcal{C}_n(M)$ under this action, and let $\overline{\mathcal{C}}(M)$ be the disjoint union $\bigcup_{n\geq0}\overline{\mathcal{C}}_n(M)$.

\begin{proposition}\label{next} On $\overline{C}(M)$, concatenation is a well-defined operation, making it a commutative monoid.\end{proposition}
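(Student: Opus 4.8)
The plan is to transport the monoid structure of $\mathcal{C}(M)$ down to the orbit space $\overline{\mathcal{C}}(M)$ along the canonical surjection $q\colon\mathcal{C}(M)\to\overline{\mathcal{C}}(M)$, and then separately to verify commutativity of the quotient. The running observation is that every $\mathcal{G}_n$ contains all coordinate permutations of $A^n$ (take all the automorphism components equal to $1_A$, which lies in $G_{\mathrm{rt}}$ by definition of the right symmetry group), and that a $G_{\mathrm{rt}}$-monomial transformation of $A^{n_1}$ and one of $A^{n_2}$ can be amalgamated into a block-diagonal $G_{\mathrm{rt}}$-monomial transformation of $A^{n_1+n_2}$.

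First I would settle well-definedness. If $(M,\Lambda_i')=(M,\Lambda_iT_i)$ with $T_i\in\mathcal{G}_{n_i}$ for $i=1,2$, form the block-diagonal transformation $T_1\oplus T_2\in\mathcal{G}_{n_1+n_2}$, whose permutation acts by $\sigma_1$ on the first $n_1$ coordinates and by $\sigma_2$ on the last $n_2$, and whose automorphism components are those of $T_1$ followed by those of $T_2$. Reading off coordinate functionals and using $\Lambda T=\Lambda\circ T$ together with the component-wise action $(a_1,\dots,a_n)T=(a_{\sigma(1)}\tau_1,\dots,a_{\sigma(n)}\tau_n)$ yields $(\Lambda_1|\Lambda_2)(T_1\oplus T_2)=\Lambda_1'|\Lambda_2'$. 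Hence $\overline{(M,\Lambda_1)}\cdot\overline{(M,\Lambda_2)}:=\overline{(M,\Lambda_1|\Lambda_2)}$ is independent of the chosen representatives, concatenation descends to a binary operation on $\overline{\mathcal{C}}(M)$, and $q$ satisfies $q(\Lambda_1|\Lambda_2)=q(\Lambda_1)\cdot q(\Lambda_2)$. Associativity of $\cdot$ then follows by applying $q$ to the identity $(\Lambda_1|\Lambda_2)|\Lambda_3=\Lambda_1|(\Lambda_2|\Lambda_3)$ valid in $\mathcal{C}(M)$; and since $\mathcal{G}_0$ is trivial, the class of the empty code is a one-element orbit which, being the identity of $\mathcal{C}(M)$, remains a two-sided identity in $\overline{\mathcal{C}}(M)$.

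The only genuinely new point is commutativity. Given $(M,\Lambda_1)\in\mathcal{C}_{n_1}(M)$ and $(M,\Lambda_2)\in\mathcal{C}_{n_2}(M)$, take $T\in\mathcal{G}_{n_1+n_2}$ to be the pure permutation transformation (all automorphism components $1_A$) whose permutation $\sigma$ is the block transposition $i\mapsto i+n_2$ for $1\le i\le n_1$ and $i\mapsto i-n_1$ for $n_1<i\le n_1+n_2$; this lies in $\mathcal{G}_{n_1+n_2}$ because $1_A\in G_{\mathrm{rt}}$. A direct computation of coordinate functionals gives $(\Lambda_1|\Lambda_2)\,T=\Lambda_2|\Lambda_1$, so applying $q$ shows $\overline{(M,\Lambda_1)}\cdot\overline{(M,\Lambda_2)}=\overline{(M,\Lambda_2)}\cdot\overline{(M,\Lambda_1)}$, and $\overline{\mathcal{C}}(M)$ is a commutative monoid. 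I do not anticipate a real obstacle here; the care needed is only in the index bookkeeping of the two coordinate-functional verifications (well-definedness and commutativity) and in recording the two structural facts about $\mathcal{G}_n$ — that it contains all coordinate permutations, and that block-diagonal amalgams of its members again belong to it — both of which come straight from $1_A\in G_{\mathrm{rt}}$ and the definition of a monomial transformation.
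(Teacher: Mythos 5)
Your argument is correct and follows essentially the same route as the paper's proof: well-definedness via amalgamating the two monomial transformations into a block-diagonal element of $\mathcal{G}_{n_1+n_2}$, and commutativity via a pure coordinate permutation, which is a $G_{\mathrm{rt}}$-monomial transformation because $1_A\in G_{\mathrm{rt}}$. The only nit is bookkeeping: under the paper's convention $(a_1,\ldots,a_n)T=(a_{\sigma(1)}\tau_1,\ldots,a_{\sigma(n)}\tau_n)$, the block transposition you wrote down is the inverse of the one satisfying $(\Lambda_1|\Lambda_2)T=\Lambda_2|\Lambda_1$ (one needs $\sigma(i)=n_1+i$ for $i\le n_2$ and $\sigma(i)=i-n_2$ otherwise), a harmless swap since the inverse is again a pure permutation in $\mathcal{G}_{n_1+n_2}$.
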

\begin{proof}Let $(M,\Lambda_1),(M,\Lambda_2)$ be two representatives of the same element $\overline{\Lambda}\in \overline{\mathcal{C}}_{n_1}(M)\subset\overline{\mathcal{C}}(M)$; similarly, let $\Theta_1,\Theta_2$ be two representatives of the same element $\overline{\Theta}\in \overline{\mathcal{C}}_{n_2}(M)\subset\overline{\mathcal{C}}(M)$. Thus, for some $T_\Lambda\in \mathcal{G}_{n_1}$ and $T_\Theta\in\mathcal{G}_{n_2}$, $\Lambda_2=\Lambda_1\circ T_\Lambda$ and $\Theta_2=\Theta_1\circ T_\Theta$. We shall show that $(M, \Lambda_1|\Theta_1)$ and $(M,\Lambda_2|\Theta_2)$ lie in the same orbit in $\overline{\mathcal{C}}_{n_1+n_2}(M)$. Assume that $T_\Lambda$ is built up by $\tau_1,\ldots,\tau_{n_1}\in \mathrm{Aut}_R(A)$, $\sigma\in S_{n_1}$, and that $T_\Theta$ is built up by $\zeta_1,\ldots,\zeta_{n_2}\in \mathrm{Aut}_R(A)$, $\pi\in S_{n_2}$. Consider the
map $T:A^{n_1+n_2}\rightarrow A^{n_1+n_2}$ defined by \begin{align*}(x_1,\ldots,x_{n_1+n_2})T&=((x_1,\ldots,x_{n_1})T_\Lambda;(x_{n_1+1},\ldots,x_{n_1+n_2})T_\Theta)\\
&=(x_{\sigma(1)}\tau_1,\ldots,x_{\sigma(n_1)}\tau_{n_1};x_{\pi(1)+n_1}\zeta_1,\ldots,x_{\pi(n_2)+n_1}\zeta_{n_2}).\end{align*}
It is easily seen that $T\in \mathcal{G}_{n_1+n_2}$, and that $\Lambda_2|\Theta_2=(\Lambda_1|\Theta_1)\circ T$. This establishes that concatenation is well defined on $\overline{\mathcal{C}}(M)$. Commutativity of concatenation on $\overline{C}(M)$ follows since any permutation of the coordinate functionals represents the action of a $G_{\text{rt}}$-monomial transformation.
\end{proof}
Notice that the two codes $(M,\Lambda)$ and $(M,\Lambda|0)$ are essentially the same though they do not have the same length. Thus, concatenating with the zero code $(M,0)\in \mathcal{C}_1(M)$ provides an injection from $\mathcal{C}_n(M)$ into $\mathcal{C}_{n+1}(M)$, defined by $\Lambda\mapsto \Lambda|0$. Besides, these injections are still well-defined from $\overline{\mathcal{C}}_n(M)$ into $\overline{\mathcal{C}}_{n+1}(M)$ for all $n$. Identifying any two elements of $\overline{\mathcal{C}}(M)$ whenever they differ only by concatenating zero codes, we obtain the identification space $\mathcal{E}(M)$, whose elements are parameterized codes with no zero components and identified up to $G_{\text{rt}}$-monomial transformations. The proof of the following proposition straightforward in view of Proposition \ref{next}.
\begin{proposition}On $\mathcal{E}(M)$, concatenation is a well-defined operation, making it a commutative monoid.\end{proposition}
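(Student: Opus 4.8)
The plan is to present $\mathcal{E}(M)$ as the quotient of the commutative monoid $\overline{\mathcal{C}}(M)$ of Proposition \ref{next} by a congruence, and then to appeal to the general principle that the quotient of a monoid by a congruence is again a monoid, with commutativity passing to the quotient. Explicitly, on $\overline{\mathcal{C}}(M)$ declare $\overline{\Lambda}\sim\overline{\Lambda'}$ whenever $\overline{\Lambda}$ and $\overline{\Lambda'}$ become equal after concatenating to each a suitable number of copies of the zero code $(M,0)\in\mathcal{C}_1(M)$; this is exactly the relation whose quotient is, by definition, $\mathcal{E}(M)$. That $\sim$ is an equivalence relation is immediate: reflexivity and symmetry are clear, and transitivity uses associativity of concatenation on $\overline{\mathcal{C}}(M)$ together with the fact that concatenating zero codes can only be undone by removing zero codes.

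The one substantive step I would carry out is checking that $\sim$ is a congruence for concatenation, i.e.\ that $\overline{\Lambda_1}\sim\overline{\Lambda_2}$ and $\overline{\Theta_1}\sim\overline{\Theta_2}$ imply $\overline{\Lambda_1|\Theta_1}\sim\overline{\Lambda_2|\Theta_2}$. Here the key observation is that zero-padding commutes with concatenation up to reordering of coordinate functionals: if we concatenate $a$ copies of $(M,0)$ to $\Lambda$ and $b$ copies to $\Theta$ and then concatenate the results, we obtain a parameterized code whose list of coordinate functionals is a permutation of the list obtained by forming $\Lambda|\Theta$ and then appending $a+b$ zero functionals. Since any permutation of coordinates is realized by a $G_{\text{rt}}$-monomial transformation (take all $\tau_i=\mathrm{id}$), these two parameterized codes lie in the same $\mathcal{G}_{n}$-orbit and hence represent one and the same element of $\overline{\mathcal{C}}(M)$. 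Combined with Proposition \ref{next}, which already gives that concatenation is well defined and commutative on $\overline{\mathcal{C}}(M)$, the congruence property follows.

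Granting this, concatenation descends to $\mathcal{E}(M)=\overline{\mathcal{C}}(M)/\!\sim$ by $[\overline{\Lambda_1}]\cdot[\overline{\Lambda_2}]:=[\overline{\Lambda_1|\Lambda_2}]$, with independence of representatives being precisely the congruence property. Associativity and commutativity on $\mathcal{E}(M)$ are inherited from $\overline{\mathcal{C}}(M)$ by passing to classes, and the class of the empty code of $\mathcal{C}_0(M)$ --- which coincides with the class of every all-zero code $(M,0)|\cdots|(M,0)$ --- is a two-sided identity. Thus $\mathcal{E}(M)$ is a commutative monoid. The whole argument is formal bookkeeping; the only place where carelessness could cause trouble is the compatibility of zero-padding with concatenation, which is exactly where the freedom to permute coordinates inside a $G_{\text{rt}}$-monomial transformation is used, so I would spell that out and leave the rest terse.
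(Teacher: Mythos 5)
Your proof is correct and follows the same route the paper intends: the paper leaves this as "straightforward in view of Proposition \ref{next}", and your argument simply spells out that bookkeeping, viewing $\mathcal{E}(M)$ as the quotient of $\overline{\mathcal{C}}(M)$ by the zero-padding relation and checking it is a congruence via the fact that coordinate permutations are $G_{\text{rt}}$-monomial transformations (the same observation the paper uses for commutativity in Proposition \ref{next}). No gaps.
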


\subsection{Multiplicity Functions}
The group $G_{\text{rt}}$ acts from the right on $\mathrm{Hom}_R(M,A)$, the orbit $[\lambda]$ of an element $\lambda\in\mathrm{Hom}_R(M,A)$ is given by $[\lambda]=\{\lambda\circ \tau\;|\;\tau\in G_{\text{rt}}\}$. Let $O^\sharp$ denote the orbit space of this action. The idea for multiplicity functions is to count, for a code parameterized by $\Lambda$, the number of coordinate functionals in each orbit. Set $F(O^\sharp,\mathbb{N})$ to be the set of all functions from $O^\sharp$ to $\mathbb{N}$. $F(O^\sharp,\mathbb{N})$ is a commutative monoid under point-wise addition.  Now, consider the sub-monoid
$$F_0(O^\sharp,\mathbb{N})=\{\eta:O^\sharp\rightarrow\mathbb{N}\;|\; \eta([0])\}$$ of $F(O^\sharp,\mathbb{N})$. The following theorem verifies that we can effectively identify codes with their corresponding multiplicity functions.

\begin{theorem}\label{muf}The following are true for a finite left $R$-module $M$:
\begin{enumerate}
\item[$(1)$]$\overline{\mathcal{C}}(M)$ and $F(O^\sharp,\mathbb{N})$ are isomorphic as monoids.
\item[$(2)$]$\mathcal{E}(M)$ and $F_0(O^\sharp,\mathbb{N})$ are isomorphic as monoids.
\end{enumerate}\end{theorem}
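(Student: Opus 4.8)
The plan is to realize both isomorphisms through a single explicit \emph{multiplicity map}. For a parameterized code $(M,\Lambda)$ with $\Lambda=(\lambda_1,\dots,\lambda_n)$, define $\mu_\Lambda\in F(O^\sharp,\mathbb{N})$ by letting $\mu_\Lambda([\lambda])$ be the number of indices $i$ with $\lambda_i\in[\lambda]$. Since $M$ and $A$ are finite, $\mathrm{Hom}_R(M,A)$ and hence $O^\sharp$ are finite, so every element of $F(O^\sharp,\mathbb{N})$ automatically has finite support and $n=\sum_{[\lambda]}\mu_\Lambda([\lambda])$. I would then verify, in this order: (a) $\mu$ is invariant under the $\mathcal{G}_n$-action, hence descends to a map $\overline{\mu}\colon\overline{\mathcal{C}}(M)\to F(O^\sharp,\mathbb{N})$; (b) $\overline{\mu}$ carries concatenation to pointwise addition and the empty code to the zero function, so it is a homomorphism of monoids; (c) $\overline{\mu}$ is surjective; (d) $\overline{\mu}$ is injective. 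Parts (a)--(c) are quick. For (a): if $T\in\mathcal{G}_n$ has permutation $\sigma$ and automorphisms $\tau_1,\dots,\tau_n\in G_{\text{rt}}$, then the $i$-th coordinate functional of $\Lambda\circ T$ is $\lambda_{\sigma(i)}\tau_i\in[\lambda_{\sigma(i)}]$, and since $\sigma$ is a bijection one gets $\mu_{\Lambda T}=\mu_\Lambda$. For (b): the coordinate-functional list of $\Lambda_1|\Lambda_2$ is the concatenation of the two lists. For (c): given $\eta$, concatenate $\eta([\rho])$ copies of a chosen representative $\rho$ over the finitely many orbits $[\rho]$.

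The substantive step is (d), and this is exactly where one sees why the orbit space $O^\sharp$ must be formed using $G_{\text{rt}}$ rather than all of $\mathrm{Aut}_R(A)$. Suppose $\mu_{\Lambda_1}=\mu_{\Lambda_2}$ with $\Lambda_1\in\mathcal{C}_{n_1}(M)$ and $\Lambda_2\in\mathcal{C}_{n_2}(M)$; summing values forces $n_1=n_2=:n$. Equality of the multiplicity functions means the multiset $\{[\lambda_{1,i}]\}_i$ equals the multiset $\{[\lambda_{2,j}]\}_j$, so there is a permutation $\pi$ of $\{1,\dots,n\}$ with $[\lambda_{1,i}]=[\lambda_{2,\pi(i)}]$ for all $i$, and hence for each $i$ an element $\tau_i\in G_{\text{rt}}$ with $\lambda_{1,i}=\lambda_{2,\pi(i)}\circ\tau_i$. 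Using the convention that inputs of right $R$-homomorphisms are written on the left, a short computation shows that the $G_{\text{rt}}$-monomial transformation $T$ of $A^n$ with permutation $\pi$ and automorphism data $(\tau_1,\dots,\tau_n)$ satisfies $\Lambda_1=\Lambda_2\circ T$; thus $(M,\Lambda_1)$ and $(M,\Lambda_2)$ lie in the same $\mathcal{G}_n$-orbit, so $\overline{\mu}$ is injective. Because each $\tau_i$ was found inside $G_{\text{rt}}$, the transformation $T$ is genuinely $G_{\text{rt}}$-monomial, which is precisely what is required. This proves part (1).

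For part (2) I would push $\overline{\mu}$ through the identification that defines $\mathcal{E}(M)$. A zero code $(M,0)\in\mathcal{C}_1(M)$ has its single coordinate functional equal to $0$, whose $G_{\text{rt}}$-orbit is the singleton $[0]$, so concatenating a zero code adds $1$ to the value at $[0]$ and changes nothing else. Hence, under the isomorphism $\overline{\mu}$, the congruence on $\overline{\mathcal{C}}(M)$ generated by identifying each class with its concatenation by $(M,0)$ corresponds to the congruence on $F(O^\sharp,\mathbb{N})$ generated by $\eta\sim\eta+\mathbf{1}_{[0]}$, where $\mathbf{1}_{[0]}$ is the function equal to $1$ at $[0]$ and $0$ elsewhere. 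Since a homomorphism of monoids that is bijective identifies quotients by corresponding congruences, $\overline{\mu}$ induces a monoid isomorphism $\mathcal{E}(M)\cong F(O^\sharp,\mathbb{N})/{\sim}$, and the latter quotient is visibly $F(O^\sharp\setminus\{[0]\},\mathbb{N})=F_0(O^\sharp,\mathbb{N})$ (a function on $O^\sharp$ vanishing at $[0]$ is the same datum as a function on $O^\sharp\setminus\{[0]\}$); explicitly the isomorphism sends $[\Lambda]$ to the function that agrees with $\mu_\Lambda$ off $[0]$ and vanishes at $[0]$. Surjectivity can also be read off directly from (c): for $\eta$ with $\eta([0])=0$ the code built there has no zero coordinate functional, hence already represents an element of $\mathcal{E}(M)$. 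Apart from this, the whole argument is routine bookkeeping; the one point with real content — and the conceptual reason for using $G_{\text{rt}}$ — is the reconstruction of a $G_{\text{rt}}$-monomial transformation in step (d).
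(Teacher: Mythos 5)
Your proposal is correct and follows essentially the same route as the paper: the paper defines exactly this multiplicity map $[\Lambda]\mapsto\eta_\Lambda$ with $\eta_\Lambda([\mu])=|\{i\mid\lambda_i\in[\mu]\}|$ and asserts that it is "easily verified" to be a monoid isomorphism, with part (2) following from the same map. You have simply written out the verification (invariance under the $\mathcal{G}_n$-action, additivity, surjectivity, and the reconstruction of a $G_{\text{rt}}$-monomial transformation for injectivity, plus the quotient by the zero-component congruence) that the paper leaves implicit, and these details are all sound.
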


\begin{proof}The orbit of $\Lambda\in\mathrm{Hom}_R(M,A^n)$, under the right action of $\mathcal{G}_n$, is given by $[\Lambda]=\{\Lambda\circ T\;|\; T\in \mathcal{G}_n\}$. Assume that $\Lambda=(\lambda_1,\ldots,\lambda_n)$, $\lambda_i\in\mathrm{Hom}_R(M,A), i=1,\ldots,n $. Define the function $\eta_\Lambda:O^\sharp\rightarrow\mathbb{N}$ by $$\eta_\Lambda:[\mu]\mapsto |\{i\;|\;\lambda_i\in [\mu]\}|.$$
Now consider the map $\Phi:\overline{\mathcal{C}}(M)\rightarrow F(O^\sharp,\mathbb{N})$ defined by $[\Lambda]\mapsto \eta_\Lambda$. This map, as easily verified, is an isomorphism of monoids. Besides, the second isomorphism is clear in view of such a map.
\end{proof}
\subsection{The Weight Mapping}

Consider an alphabet $A$, on which a weight $w$ is defined as in Definition \ref{weight}. The weight $w$ gives rise to a well-defined map on $\overline{\mathcal{C}}(M)$ and $\mathcal{E}(M)$, defined in terms of multiplicity functions as follows:
\begin{center}$W: F(O^\sharp,\mathbb{N})\rightarrow F(M,\mathbb{Q})$
\begin{equation}\label{W}\eta\mapsto(x\mapsto\underset{[\lambda]\in O^\sharp}{\sum}w(x\lambda)\eta(\lambda))\end{equation}\end{center}
Notice that, with no ambiguity, we can sometimes remove  orbit brackets and use the same element symbol, just as done for the input of $\eta$ in the last definition. Also, if $\eta_\Lambda$ is the multiplicity function corresponding to a parameterized code $(M,\Lambda)$, as in Theorem \ref{muf}, then $\underset{[\lambda]\in O^\sharp}{\sum}w(x\lambda)\eta_\Lambda(\lambda)=w(x\Lambda)$. In the next proposition it is implicitly understood that $F(M,\mathbb{Q})$ is a monoid under point-wise addition, just as we mentioned earlier for $F(O^\sharp,\mathbb{N})$.
\begin{proposition}\label{Wp}The map $W$ is  well-defined, additive, and satisfies that $W(\eta)(0)=0$ for any $\eta\in F(O^\sharp,\mathbb{N})$. Moreover, the image of $W$ is contained within the $G_{\mathrm{lt}}$-invariant functions from $M$ to $\mathbb{Q}$.\end{proposition}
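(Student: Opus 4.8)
The plan is to verify each of the four assertions in turn, all of which follow from the definition \eqref{W} of $W$ together with elementary bookkeeping about the right action of $G_{\mathrm{rt}}$ on $\mathrm{Hom}_R(M,A)$ and the left action of $G_{\mathrm{lt}}$ on $A$.

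First I would address well-definedness. The formula for $W(\eta)(x)$ is written as a sum over orbits $[\lambda]\in O^\sharp$, but the summand $w(x\lambda)$ refers to a representative $\lambda$, so I must check this does not depend on the choice. If $\lambda' = \lambda\circ\tau$ with $\tau\in G_{\mathrm{rt}}$, then $w(x\lambda') = w(x\lambda\tau) = w((x\lambda)\tau) = w(x\lambda)$ by the very definition of the right symmetry group $G_{\mathrm{rt}} = \{\tau\in\mathrm{Aut}_R(A): w(a\tau)=w(a)\ \text{for all}\ a\in A\}$, applied to the element $a = x\lambda\in A$. Hence each summand is constant on its orbit, and $W(\eta)$ is a well-defined function $M\to\mathbb{Q}$. (The sum is finite since $M$, $A$, hence $\mathrm{Hom}_R(M,A)$ and $O^\sharp$, are all finite, and $\eta$ has finite support in the relevant cases; in general $O^\sharp$ is finite so there is no convergence issue.)

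Next, additivity: for $\eta_1,\eta_2\in F(O^\sharp,\mathbb{N})$ and every $x\in M$,
\[
W(\eta_1+\eta_2)(x) = \sum_{[\lambda]\in O^\sharp} w(x\lambda)\,(\eta_1+\eta_2)(\lambda) = \sum_{[\lambda]} w(x\lambda)\eta_1(\lambda) + \sum_{[\lambda]} w(x\lambda)\eta_2(\lambda),
\]
which is $W(\eta_1)(x) + W(\eta_2)(x)$; since the addition on $F(M,\mathbb{Q})$ is point-wise, $W(\eta_1+\eta_2) = W(\eta_1) + W(\eta_2)$. For the vanishing at $0$, note $0\cdot\lambda = 0\in A$ for every $\lambda\in\mathrm{Hom}_R(M,A)$, and $w(0)=0$ by Definition \ref{weight}, so $W(\eta)(0) = \sum_{[\lambda]} w(0)\eta(\lambda) = 0$ for every $\eta$.

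Finally, $G_{\mathrm{lt}}$-invariance of $W(\eta)$: fix $u\in G_{\mathrm{lt}}\subseteq\mathcal{U}$ and $x\in M$. Since each $\lambda$ is a left $R$-homomorphism with inputs on the left, $(ux)\lambda = u(x\lambda)$, so
\[
W(\eta)(ux) = \sum_{[\lambda]\in O^\sharp} w\bigl(u(x\lambda)\bigr)\eta(\lambda) = \sum_{[\lambda]} w(x\lambda)\eta(\lambda) = W(\eta)(x),
\]
where the middle equality uses $w(ua)=w(a)$ for all $a\in A$, which is precisely the defining property of $G_{\mathrm{lt}}$, applied to $a = x\lambda$. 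Thus $W(\eta)$ lies in the submonoid of $G_{\mathrm{lt}}$-invariant functions $M\to\mathbb{Q}$, as claimed. I do not anticipate a genuine obstacle here; the only point requiring care is keeping the left/right conventions straight (inputs of left $R$-homomorphisms written on the left), so that $(ux)\lambda = u(x\lambda)$ and $(x\lambda)\tau = x(\lambda\tau)$ hold as stated — everything else is a direct unwinding of definitions.
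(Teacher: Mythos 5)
Your proof is correct and follows essentially the same route as the paper's: well-definedness via $w(x(\lambda\circ\tau))=w((x\lambda)\tau)=w(x\lambda)$ from the definition of $G_{\mathrm{rt}}$, the same termwise expansion for additivity, and the same one-line computation $w((ux)\lambda)=w(u(x\lambda))=w(x\lambda)$ for $G_{\mathrm{lt}}$-invariance. The only difference is that you spell out $W(\eta)(0)=0$ via $0\lambda=0$ and $w(0)=0$, which the paper leaves as ``direct from the definition''; this is a harmless elaboration, not a different argument.
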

\begin{proof}$W$ is  well-defined, simply by observing that whenever $\lambda,\lambda'$ are in the same orbit, say $\lambda'=\lambda\circ\tau$ with $\tau\in G_{\text{rt}}$,  then $w(x\lambda)=w(x(\lambda\circ\tau))=w(x\lambda')$; and that $\eta(\lambda)=\eta(\lambda')$. Let $\eta_1,\eta_2\in F(O^\sharp,\mathbb{N})$, then
\begin{align*}W(\eta_1+\eta_2)(x)&=\underset{[\lambda]\in O^\sharp}{\sum}w(x\lambda)(\eta_1+\eta_2)(\lambda)\\
&=\underset{[\lambda]\in O^\sharp}{\sum}w(x\lambda)(\eta_1(\lambda)+\eta_2(\lambda))\\
&=\underset{[\lambda]\in O^\sharp}{\sum}w(x\lambda)\eta_1(\lambda)+\underset{[\lambda]\in O^\sharp}{\sum}w(x\lambda)\eta_2(\lambda)\\
&=W(\eta_1)(x)+W(\eta_2)(x).\end{align*}
That $W(\eta)(0)=0$ for any $\eta\in F(O^\sharp,\mathbb{N})$ is direct from the definition of $W$. We finally prove the assertion concerning the image. Recall that $G_{\text{lt}}=\{u\in \mathcal{U}: w(ua)=w(a),\;\text{for all}\;\;a\in A\}$, and let $u\in G_{\text{lt}}$ and $x\in M$. Then
\begin{align*}W(\eta)(ux)&=\underset{[\lambda]\in O^\sharp}{\sum}w((ux)\lambda)\eta(\lambda)\\
&=\underset{[\lambda]\in O^\sharp}{\sum}w(u(x\lambda))\eta(\lambda)\\
&=\underset{[\lambda]\in O^\sharp}{\sum}w(x\lambda)\eta(\lambda)\\
&=W(\eta)(x).\end{align*}\end{proof}

It is important now to consider the left action of $G_{\text{lt}}$ on $M$ given by the left $R$-module structure of $M$. Thus, the orbit of $m\in M$ under this action is $\{um\;|\;u\in G_{\text{lt}}\}$. Denoting the orbit space of this action by $O$, we get that the set of  $G_{\mathrm{lt}}$-invariant functions from $M$ to $\mathbb{Q}$ is the same as the set $F(O,\mathbb{Q})$. Under the  point-wise addition, the space $F(O,\mathbb{Q})$ is a $\mathbb{Q}$-vector space of dimension $|O|$, a conceivable basis is  the family of functions assigning $1$ to only one orbit and 0 for all other orbits. Let $F_0(O,\mathbb{Q})$ denote the set of functions for which the image of the zero orbit is zero, then $F_0(O,\mathbb{Q})$ is a $\mathbb{Q}$-subspace of $F(O,\mathbb{Q})$, of dimension $|O|-1$. By Proposition \ref{Wp}, $W$ maps $F(O^\sharp,\mathbb{N})$ to $F_0(O,\mathbb{Q})$. The next theorem establishes the relation between $W$ and the extension property with respect to $w$.

\begin{theorem}\label{imppar}For an alphabet $A$, $A$ has the extension property with respect to the weight $w$ if the restriction $W:F_0(O^\sharp,\mathbb{N})\rightarrow F_0(O,\mathbb{Q})$ is injective for every finite left $R$-module $_RM$.\\
Besides, the converse holds if, for every $n$, the weight of any nonzero codeword in $A^n$ is nonzero. \end{theorem}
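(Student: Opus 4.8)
The plan is to translate the extension property, via the monoid isomorphisms of Theorem \ref{muf} identifying parameterized codes with multiplicity functions, into the injectivity of the weight mapping $W$, and then to prove the two implications separately. In neither direction is the partial ordering $\preceq$ needed: the multiplicity-function framework replaces the character-theoretic argument, and matching up coordinate functionals inside $G_{\mathrm{rt}}$-orbits is elementary.

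For the sufficiency direction, suppose $W$ is injective on $F_0(O^\sharp,\mathbb{N})$ for every finite left $R$-module. Given codes $C_1,C_2\subset A^n$ and a weight-preserving $R$-isomorphism $f:C_1\to C_2$, I would take $M=C_1$, let $\Lambda:M\hookrightarrow A^n$ be the inclusion and $\Psi=f$ viewed as a map $M\to A^n$ with image $C_2$, so that $f\circ\Lambda=\Psi$ and weight preservation reads $w(m\Lambda)=w(m\Psi)$ for all $m$, i.e. $W(\eta_\Lambda)=W(\eta_\Psi)$. After discarding the zero coordinate functionals of $\Lambda$ and of $\Psi$ (each contributing $w(0)=0$) one obtains $\Lambda_0,\Psi_0$ with $\eta_{\Lambda_0},\eta_{\Psi_0}\in F_0(O^\sharp,\mathbb{N})$ and $W(\eta_{\Lambda_0})=W(\eta_{\Psi_0})$; injectivity (applied to $M=C_1$) forces $\eta_{\Lambda_0}=\eta_{\Psi_0}$. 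Reading off this equality, the numbers of nonzero coordinates of $\Lambda$ and $\Psi$ agree, hence so do the numbers of zero coordinates, and within each $G_{\mathrm{rt}}$-orbit the coordinate functionals of $\Lambda_0$ and $\Psi_0$ can be matched bijectively, say $\psi_j=\lambda_{\beta(j)}\gamma_j$ with $\gamma_j\in G_{\mathrm{rt}}$. Assembling $\beta$, extended arbitrarily over the zero coordinates, together with the automorphisms $\gamma_j$ (identity on the zero coordinates) produces a $G_{\mathrm{rt}}$-monomial transformation $T$ of $A^n$ with $\Lambda T=\Psi$, that is, $T|_{C_1}=f$.

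For the necessity direction, assume $A$ has the extension property and that every nonzero element of each $A^n$ has nonzero weight. Fix a finite $M$ and $\eta_1,\eta_2\in F_0(O^\sharp,\mathbb{N})$ with $W(\eta_1)=W(\eta_2)$. Represent $\eta_i$ by parameterized codes without zero coordinates (Theorem \ref{muf}(2)), pad both by zero coordinates up to a common length $n$, and call the resulting encoders $\Lambda',\Psi':M\to A^n$; then $w(m\Lambda')=w(m\Psi')$ for all $m$. This is exactly where the extra hypothesis enters: if $m\Lambda'=0$ then $w(m\Psi')=w(m\Lambda')=0$, so $m\Psi'=0$, and symmetrically, whence $\ker\Lambda'=\ker\Psi'$. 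Therefore $f:M\Lambda'\to M\Psi'$, $m\Lambda'\mapsto m\Psi'$, is a well-defined weight-preserving $R$-isomorphism of codes in $A^n$; by the extension property it extends to a $G_{\mathrm{rt}}$-monomial transformation $T$ of $A^n$, giving $\Lambda' T=\Psi'$. Hence $[\Lambda']=[\Psi']$ in $\overline{\mathcal{C}}_n(M)$, so $\eta_{\Lambda'}=\eta_{\Psi'}$ by Theorem \ref{muf}(1), and passing to $F_0(O^\sharp,\mathbb{N})$ yields $\eta_1=\eta_2$.

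The routine parts are the manipulations with coordinate functionals and the monoid isomorphisms of Theorem \ref{muf}; the point demanding care is the bookkeeping of zero coordinates. One must stay inside $F_0$ so that the injectivity hypothesis applies, confirm that deleting and padding zero coordinates leaves all relevant weights unchanged, and check that the two codes have the same length (equivalently, the same number of zero coordinates) so that the pieced-together map is genuinely a monomial transformation of $A^n$ extending $f$. In the necessity direction, the well-definedness of $f$ is precisely what the additional hypothesis on weights of nonzero codewords buys.
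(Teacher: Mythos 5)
Your proposal is correct and follows essentially the same route as the paper: both directions pass through Theorem \ref{muf} to identify codes with multiplicity functions, using injectivity of $W$ to force equality of multiplicity functions (hence a common $\mathcal{G}_n$-orbit) for sufficiency, and using the extension property together with the nonzero-weight hypothesis (to equate kernels and make $f=\Lambda_1^{-1}\circ\Lambda_2$ well defined) for necessity. The only difference is presentational: you spell out the zero-coordinate/padding bookkeeping and the explicit orbit-by-orbit matching that the paper absorbs into its use of $\mathcal{E}(M)$ and the orbit description of $\overline{\mathcal{C}}_n(M)$.
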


\begin{proof}Assume that $W$ is injective for every finite left $R$-module $M$, and let $f:C_1\rightarrow C_2$ be a weight preserving $R$-isomorphism between the two codes $C_1,C_2\subset A^n$. Let $M$ be the underlying module of the two codes. Suppose $C_1$ and $C_2$ are parameterized in $\mathcal{E}(M)$, respectively, as $(M,\Lambda_1)$ and $(M,\Lambda_2)$. Thus $C_1$ and $C_2$ are, respectively, the images, in $A^n$, of $\Lambda_1$ and $\Lambda_2$; in particular, $\Lambda_2=\Lambda_1\circ f$. Let $\eta_1$ and $\eta_2$ be the multiplicity functions associated, respectively, with $(M,\Lambda_1)$ and $(M,\Lambda_2)$ (use Theorem \ref{muf}). As $f$ preserves $w$, we get $W(\eta_1)=W(\eta_2)$. Then, $\eta_1=\eta_2\in F_0(O^\sharp,\mathbb{N})$ since $W$ is injective. Thus, $\Lambda_1$ and $\Lambda_2$ must be in the same orbit, that is, there exists a $G_{\text{rt}}$-monomial transformation $T$ of $A^n$ (i.e. $T\in\mathcal{G}_n$) such that $\Lambda_2=\Lambda_1\circ T$. $A$ is thus proved to have the extension property with respect to $w$.\\

For the second converse assertion, assume that $A$ has the extension property with respect to $w$ and that for every $n$, the weight of any nonzero codeword in $A^n$ is nonzero. Let $M$ be a finite left $R$-module and that for some multiplicity functions $\eta_1,\eta_2\in F_0(O^\sharp,\mathbb{N})$ we have $W(\eta_1)=W(\eta_2)$. Each of $\eta_1$ and $\eta_2$ correspond to a parameterized code, let these be $(M,\Lambda_1)$ and $(M,\Lambda_2)$, with index correspondence. Since $W(\eta_1)=W(\eta_2)$, we have $w(x\Lambda_1)=w(x\Lambda_2)$ for all $x\in M$. Now the assumption about the weight of nonzero codewords implies that $\mathrm{Ker}\Lambda_1=\mathrm{Ker}\Lambda_2$. However, we may assume the kernels are zero. Were these kernels nonzero, we could pass to the quotients $M/\mathrm{Ker}\Lambda_1=M/\mathrm{Ker}\Lambda_2$, and consider it as the underlying module, nothing affecting the multiplicity functions. Thus we're assuming $\Lambda_1,\Lambda_2$ are injective. Let $C_1=M\Lambda_1$ and $C_2=M\Lambda_2$, and let $f:C_1\rightarrow C_2$ be $f=\Lambda_1^{-1}\circ\Lambda_2$. Thus, $f$ is an $R$-isomorphism, moreover, $f$ preserves $w$ since $w(x\Lambda_1)=w(x\Lambda_2), x\in M$. By the extension property assumed for $A$, $f$ extends to a $G_{\text{rt}}$-monomial transformation $T$ of $A^n$ taking $C_1$ to $C_2$, hence $\Lambda_1$ and $\Lambda_2$ are in the same orbit, and by Theorem \ref{muf}, $\eta_1=\eta_2\in F_0(O^\sharp,\mathbb{N})$. This shows $W$ is injective.
\end{proof}

\subsection{Completion Over $\mathbb{Q}$: Virtual Codes}
When completing the space $F_0(O^\sharp,\mathbb{N})$ to $F_0(O^\sharp,\mathbb{Q})$, we obtain a $\mathbb{Q}$-vector space, this enables us, after proving some claims, to use theorems from linear algebra to decide the satisfaction of the extension property. As the elements of the new space $F_0(O^\sharp,\mathbb{Q})$ do not have to correspond to ``actual'' multiplicity functions standing for ``actual'' codes, we shall call these elements ``virtual codes''.

\begin{proposition}For any alphabet $A$ and finite $R$-module $M$,
\begin{enumerate}
\item[$(1)$] The mapping $W:F_0(O^\sharp,\mathbb{N})\rightarrow F_0(O,\mathbb{Q})$ extends to a linear transformation  $W:F_0(O^\sharp,\mathbb{Q})\rightarrow F_0(O,\mathbb{Q})$ of finite dimensional $\mathbb{Q}$-vector spaces.
\item[$(2)$] This extension is injective if and only if $W:F_0(O^\sharp,\mathbb{N})\rightarrow F_0(O,\mathbb{Q})$ is injective.
\item[$(3)$] Theorem \ref{imppar} still holds with the new map $W:F_0(O^\sharp,\mathbb{Q})\rightarrow F_0(O,\mathbb{Q})$ instead of $W:F_0(O^\sharp,\mathbb{N})\rightarrow F_0(O,\mathbb{Q})$.
\end{enumerate}
\end{proposition}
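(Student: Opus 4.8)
The plan is to dispatch the three parts in order, with essentially all of the substance sitting in part (2); parts (1) and (3) are bookkeeping once the right formula and the right equivalence are in place.

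For part (1), I would observe that the defining formula (\ref{W}) makes sense verbatim for any $\mathbb{Q}$-valued function $\eta$ on $O^\sharp$, since $M$ and $A$ are finite and hence $O^\sharp$ is a finite set; so one simply sets $W(\eta)(x)=\sum_{[\lambda]\in O^\sharp}w(x\lambda)\,\eta(\lambda)$ for $\eta\in F_0(O^\sharp,\mathbb{Q})$. The computations already carried out in Proposition \ref{Wp}, with the scalars now allowed to range over $\mathbb{Q}$, show that this map is additive and $\mathbb{Q}$-homogeneous, that $W(\eta)(0)=0$ (because $w(0\cdot\lambda)=w(0)=0$), and that $W(\eta)$ is $G_{\mathrm{lt}}$-invariant, i.e.\ lies in $F(O,\mathbb{Q})$; together these say $W$ is a linear map $F_0(O^\sharp,\mathbb{Q})\rightarrow F_0(O,\mathbb{Q})$. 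Finiteness of $M$ and $A$ gives $\dim_{\mathbb{Q}}F_0(O^\sharp,\mathbb{Q})=|O^\sharp|-1$ and $\dim_{\mathbb{Q}}F_0(O,\mathbb{Q})=|O|-1$, so both spaces are finite-dimensional, and by construction the restriction of this $W$ to the sub-monoid $F_0(O^\sharp,\mathbb{N})$ is the original map.

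For part (2), the forward implication is immediate: the $\mathbb{N}$-valued map is literally the restriction of the $\mathbb{Q}$-linear extension to the subset $F_0(O^\sharp,\mathbb{N})\subset F_0(O^\sharp,\mathbb{Q})$, and a restriction of an injection is an injection. For the reverse implication, suppose $W$ is injective on $F_0(O^\sharp,\mathbb{N})$ and let $\zeta\in F_0(O^\sharp,\mathbb{Q})$ lie in the kernel of the extension. Choose a positive integer $d$ clearing all denominators, so $d\zeta$ is integer-valued and still vanishes on $[0]$, and split it into its positive and negative parts, $\eta_1([\mu])=\max\{(d\zeta)([\mu]),0\}$ and $\eta_2([\mu])=\max\{-(d\zeta)([\mu]),0\}$, so that $\eta_1,\eta_2\in F_0(O^\sharp,\mathbb{N})$ and $d\zeta=\eta_1-\eta_2$. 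By linearity $W(\eta_1)-W(\eta_2)=W(d\zeta)=d\,W(\zeta)=0$, hence $W(\eta_1)=W(\eta_2)$; injectivity on $F_0(O^\sharp,\mathbb{N})$ forces $\eta_1=\eta_2$, so $d\zeta=0$ and $\zeta=0$. Thus $\ker W=0$ and the extension is injective.

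Part (3) is then purely formal: the hypothesis of Theorem \ref{imppar} (and, under the extra assumption that nonzero codewords have nonzero weight, its converse direction) is exactly the injectivity of $W:F_0(O^\sharp,\mathbb{N})\rightarrow F_0(O,\mathbb{Q})$ for every finite left $R$-module $_RM$, and part (2) says this holds precisely when the $\mathbb{Q}$-linear extension is injective for every such $M$; substituting the equivalent condition into the statement of Theorem \ref{imppar} gives the claim. The only step that is not routine bookkeeping is the reverse implication of part (2), and the sole idea it needs is the standard trick of clearing denominators and decomposing a signed integer-valued function into its positive and negative parts so as to transport an assertion about the $\mathbb{Q}$-span back to the monoid $F_0(O^\sharp,\mathbb{N})$; that is the one place where I would take care.
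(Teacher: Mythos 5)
Your argument is correct, and for the heart of the matter (part (2)) it is the same as the paper's: clear denominators with a positive integer, split the resulting $\mathbb{Z}$-valued function into its positive and negative parts in $F_0(O^\sharp,\mathbb{N})$, use additivity to convert $W(\zeta)=0$ into an equality of the images of the two parts, and invoke injectivity on $F_0(O^\sharp,\mathbb{N})$; part (3) is handled identically in both. The one place you genuinely diverge is part (1): the paper defines the extension by writing an arbitrary $\eta\in F_0(O^\sharp,\mathbb{Q})$ as a rational multiple $\tfrac{a}{b}\eta'$ of an $\mathbb{N}$-valued function and setting $W(\eta)=\tfrac{a}{b}W'(\eta')$, whereas you simply read off that the defining formula (\ref{W}) already makes sense for $\mathbb{Q}$-valued $\eta$ and check linearity, vanishing at the zero orbit, and $G_{\mathrm{lt}}$-invariance exactly as in Proposition \ref{Wp}. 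Your route is cleaner and in fact more robust: the paper's scaling decomposition only produces functions proportional to nonnegative ones, so it does not literally cover $\eta$ taking both signs (one would have to add a positive/negative splitting there too, or argue via a basis), while defining $W$ directly by the formula sidesteps this and makes the restriction to $F_0(O^\sharp,\mathbb{N})$ tautologically the original map. So: same key idea where it counts, with a slightly better construction of the extension itself.
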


\begin{proof}(1). To remove any ambiguity while proving, denote the old map $F_0(O^\sharp,\mathbb{N})\rightarrow F_0(O,\mathbb{Q})$ by $W'$, and keep $W$ to denote the claimed extension $F_0(O^\sharp,\mathbb{Q})\rightarrow F_0(O,\mathbb{Q})$. Consider any $\eta\in F_0(O^\sharp,\mathbb{Q})$. Since the image of $\eta$ is finite ($O^\sharp$ is finite), there exist $a,b\in\mathbb{Z}-\{0\}$ and $\eta'\in F_0(O^\sharp,\mathbb{N})$ such that $\eta=\frac{a}{b}\eta'$ (take $a$ to be the g.c.d. of numerators of values of $\eta$, and $b$ the l.c.m. of denominators). Define $W(\eta):=\frac{a}{b} W'(\eta')$, $W$ is then a linear transformation of $\mathbb{Q}$-vector spaces.\\
(2). Using the same notation followed in (1), it is clear that $W'$ is injective whenever $W$ is injective, for we have $F_0(O^\sharp,\mathbb{N})\subset F_0(O^\sharp,\mathbb{Q})$. Conversely, suppose that $W'$ is injective, and suppose that $\eta\in \mathrm{Ker}W\subset F_0(O^\sharp,\mathbb{Q})$. Consider the function $k\eta$, where $k$ is any positive integer clearing the denominators of values of $\eta$, thus $k\eta\in F_0(O^\sharp,\mathbb{Z})$. We can write $k\eta$ as $k\eta=\eta_+-\eta_-$, where $\eta_+,\eta_-\in F_0(O^\sharp,\mathbb{N})$ (simply consider the positive and negative components of $k\eta$). By Proposition \ref{Wp}, $W$ is additive, and since $W(\eta)=0$ we get $0=W(k\eta)=W(\eta_+)-W(\eta_-)$, and $W(\eta_+)=W(\eta_-)$. But, then $W'(\eta_+)=W(\eta_+)=W(\eta_-)=W'(\eta_-)$, and the injectivity of $W'$ implies that $\eta_+=\eta_-$. Hence, $k\eta=0$ and $\eta=0$. This shows that $W$ injective. \\

(3). Follows directly from (2).\end{proof}

We can use these results to represent $W:F_0(O^\sharp,\mathbb{Q})\rightarrow F_0(O,\mathbb{Q})$ as a matrix, and to do this, we need to work on a bases of the $\mathbb{Q}$-vector spaces $F_0(O^\sharp,\mathbb{Q})$ and $F_0(O,\mathbb{Q})$, whose dimensions are $|O^\sharp|-1$ and $|O|-1$, respectively. Consider the elements $_\lambda\delta\in F_0(O^\sharp,\mathbb{Q})$, where $\lambda\in O^\sharp$, defined by \begin{center}$_\lambda\delta(\nu)=\left\{\begin{matrix}1&\text{if}\; \nu=\lambda,\\0&\text{if}\; \nu\neq\lambda.\end{matrix}\right.$\end{center}
Similarly, consider the elements $\delta_x\in F_0(O,\mathbb{Q})$, where $x\in O$, defined by \begin{center}$\delta_x(y)=\left\{\begin{matrix}1&\text{if}\; y=x,\\0&\text{if}\; y\neq x.\end{matrix}\right.$\end{center}
It is easily verified that these two families are natural bases for their corresponding spaces. Any $\eta\in F_0(O^\sharp,\mathbb{Q})$ is expressed as $\eta=\underset{\lambda\in O^\sharp}{\sum}\eta(\lambda){_\lambda\delta}$, and hence $\eta$ may be viewed as a column vector indexed by the nonzero elements of $O^\sharp$, with the $\lambda$-entry being $\eta(\lambda)$. Also, any $h\in F_0(O,\mathbb{Q})$ is expressed as $h=\underset{x\in O}{\sum}h(x)\delta_x$, and may be viewed as a row vector indexed by the nonzero elements of $O$, with the $x$-entry being $h(x)$.\\
Now we can represent $W$ by a matrix (also denoted $W$) of size $(|O|-1)\times(|O^\sharp|-1)$ with rows indexed by the nonzero elements of $O$ and columns indexed by the nonzero elements of $O^\sharp$, the entry of $W$ in row $x\in O$ and column $\lambda\in O^\sharp$ is $W_{x,\lambda}=w(x\lambda)$. The entries are well-defined by the definitions of symmetry groups. By equation \ref{W}, this matrix indeed represents $W$.

\subsection{Matrix Module Case}
Here we project the previous manipulations to the matrix module case. Let $R=\mathbb{M}_m(\mathbb{F}_q)$, the ring of all $m\times m$ matrices over $\mathbb{F}_q$. Let $A=\mathbb{M}_{m\times k}(\mathbb{F}_q)$ be our alphabet ($A$ is a left $R$-module). We are concerned here with the case of $\mathrm{wt}$, the Hamming weight. Then $G_{\text{lt}}=\mathcal{U}(R)=GL_m(\mathbb{F}_q)$, the group of all nonsingular $m\times m$ matrices over $\mathbb{F}_q$; and $G_{\text{rt}}=\mathrm{Aut}_R(A)=GL_k(\mathbb{F}_q)$. Let $M$ be a finite left $R$-module. Since $R$ is a simple ring, it follows that for some $l$, $M\cong \mathbb{M}_{m\times l}(\mathbb{F}_q)$, so we consider $M=\mathbb{M}_{m\times l}(\mathbb{F}_q)$. $\mathrm{Hom}_R(M,A)\cong \mathbb{M}_{l\times k}(\mathbb{F}_q)$, which acts on $M$ from the right by matrix multiplication. $O^\sharp$, the orbit space of $\mathrm{Hom}_R(M,A)$ under the right action of $G_{\text{rt}}=GL_k(\mathbb{F}_q)$, is represented by the column reduced echelon matrices of size $l\times k$. The orbit space $O$, of $M$ under the left action of $G_{\text{lt}}=GL_m(\mathbb{F}_q)$, is represented by the row reduced echelon matrices of size $m\times l$. Thus:
\begin{enumerate}\item[(i)] $|O|=$ the number of row reduced echelon matrices of size $m\times l$.
\item[(ii)] $|O^\sharp|=$ the number of column reduced echelon matrices of size $l\times k$.
\end{enumerate}

If $k>m$, then $|O^\sharp|>|O|$ and $\mathrm{dim}F_0(O^\sharp,\mathbb{Q})>\mathrm{dim}F_0(O,\mathbb{Q})$. Thus, $W$ cannot be injective and $\mathrm{Ker}W\neq0$.  Note that if $\eta\in\mathrm{Ker}W $ is nonzero, we can split $\eta$ into its positive and negative components, $\eta_+$ and $\eta_-$, with $\eta=\eta_+-\eta_-$. Then $W(\eta_+)=W(\eta_-)$, hence $\eta_+,\eta_-$ define isometric codes that are not monomially equivalent (as $\eta\neq0$).

\newpage
.
\newpage

\chapter{Extension Theorem: \\Symmetrized Weight Compositions' Setting}
\section{Again, Frobenius Rings Marked out}
\begin{par}In this section, we're mainly studying another extension property, this time with respect to \emph{symmetrized weight compositions}. It turns out, again, that Frobenius rings (bimodules) are significant in this situation.\end{par}
\subsection{Averaging Characters}

\begin{par}We begin by reviewing some facts about characters on finite abelian groups, then an averaging process of characters is discussed. \end{par}
\begin{par}Let $G$ be a finite abelian group, written with additive notation. As defined in section \ref{characters}, the group of characters of $G$ is the group $\widehat{G}=\mathrm{Hom}_\mathbb{Z}(G,\mathbb{C}^\times)$. $\widehat{G}$ is a finite abelian group  under pointwise multiplication (the inverses are obtained through complex conjugation: $\pi^{-1}(x)={\pi(x)}^{-1}=\overline{\pi(x)}/|\pi(x)|^2$ ). \end{par}
Let $\mathcal{F}=\{f:G\rightarrow\mathbb{C}\}$ be the vector space of all complex-valued functions on $G$; $\mathrm{dim}_\mathbb{C}\mathcal{F}=|G|$. We can equip $\mathcal{F}$ with an inner product \begin{equation}\label{number}\langle f,g\rangle=\frac{1}{|G|}\underset{x\in G}{\sum}f(x) {g(x)}^{-1}.\end{equation}

The following lemma is standard, and the reader may refer to \cite{serre} for a proof.
\begin{lemma} \label{lemma6}$\widehat{G}$ forms an orthonormal basis for $\mathcal{F}$, where orthonormality is defined in terms of the inner product in equation \ref{number}.  \end{lemma}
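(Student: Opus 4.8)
The plan is to prove Lemma \ref{lemma6}, namely that the character group $\widehat{G}$ is an orthonormal basis for $\mathcal{F}$, in two stages: first orthonormality, then spanning (which, combined with $|\widehat{G}| = |G| = \dim_\mathbb{C}\mathcal{F}$, already gives a basis).

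First I would verify orthonormality directly from the inner product in \eqref{number}. Given $\pi,\theta \in \widehat{G}$, compute
\[
\langle \pi,\theta\rangle = \frac{1}{|G|}\sum_{x\in G}\pi(x)\,\theta(x)^{-1} = \frac{1}{|G|}\sum_{x\in G}(\pi\theta^{-1})(x),
\]
using that $\theta(x)^{-1} = \overline{\theta(x)}$ and that $\pi\theta^{-1}$ is again a character of $G$. Now invoke part (5) of Proposition \ref{char}: the sum $\sum_{x\in G}\varpi(x)$ equals $|G|$ if $\varpi = 1$ and $0$ otherwise. Since $\pi\theta^{-1} = 1$ exactly when $\pi = \theta$, this gives $\langle \pi,\theta\rangle = 1$ if $\pi = \theta$ and $0$ if $\pi \neq \theta$, which is precisely orthonormality.

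Next I would deduce linear independence: an orthonormal set in an inner product space is automatically linearly independent, since if $\sum_i c_i \pi_i = 0$ then pairing with $\pi_j$ yields $c_j = 0$. (Alternatively one could cite Theorem \ref{linearindep}, but the orthonormality argument is self-contained.) Then I would compare cardinalities: $|\widehat{G}| = |G|$ by part (3) of Proposition \ref{char}, and $\dim_\mathbb{C}\mathcal{F} = |G|$ since $\mathcal{F}$ has the obvious basis of indicator functions $\{\mathbf{1}_{\{x\}} : x \in G\}$. A linearly independent set of size equal to the dimension of a finite-dimensional vector space is a basis, so $\widehat{G}$ spans $\mathcal{F}$ and the proof is complete.

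I do not expect any serious obstacle here — the lemma is explicitly flagged as standard, and every ingredient (the character sum formula, the cardinality equality $|\widehat{G}| = |G|$, the elementary linear algebra fact that orthonormal sets are independent and that a maximal independent set is a basis) is either already available in the excerpt via Proposition \ref{char} or is routine. The only point requiring a moment's care is the bookkeeping with complex conjugation versus inversion, i.e. confirming $\theta(x)^{-1} = \overline{\theta(x)}$ because $|\theta(x)| = 1$ (each $\theta(x)$ is a root of unity as $G$ is finite); this is exactly the identity already recorded in the text just before the lemma. If one prefers to avoid the dimension-counting shortcut, an alternative is to prove spanning directly by expanding an arbitrary $f \in \mathcal{F}$ as $f = \sum_{\pi \in \widehat{G}}\langle f,\pi\rangle\,\pi$ and checking the two sides agree at every point using part (6) of Proposition \ref{char}, but the cardinality argument is cleaner.
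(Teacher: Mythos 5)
Your proof is correct. The paper itself does not prove this lemma at all --- it is stated as ``standard'' and the reader is referred to Serre's book --- so your argument actually fills in a gap rather than paralleling an existing proof. Everything you use is available in the text: orthonormality follows exactly as you say from $\pi(x)\theta(x)^{-1}=(\pi\theta^{-1})(x)$ together with part (5) of Proposition \ref{char} applied to the character $\pi\theta^{-1}$, linear independence follows from orthonormality by pairing against each $\pi_j$ (linearity in the first slot of the form in equation \ref{number} is all that is needed there), and the count $|\widehat{G}|=|G|=\dim_{\mathbb{C}}\mathcal{F}$ from part (3) of Proposition \ref{char} plus the indicator-function basis upgrades independence to a basis. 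Your parenthetical care about $\theta(x)^{-1}=\overline{\theta(x)}$ (values are roots of unity) is exactly the point that makes the formula in \ref{number} a legitimate Hermitian pairing on characters, and your alternative spanning argument via part (6) is the Fourier-inversion route one would find in Serre; either variant is fine.
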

\begin{par}Fixing a subgroup $U$ of the automorphism group of $G$, consider the right action on $G$ given by  $x\mapsto xu, u\in U,x\in G$. Denote by $\bar{x}$ the orbit of $x\in G$. This right action of $U$ on $G$ induces a left action of $U$ on $\mathcal{F}$, setting $uf={^uf}$, where $^uf(x)=f(xu)$. We define the $U$-\emph{invariant functions} to be the elements of the following subset
$$\mathcal{F}^U=\{f\in \mathcal{F}: f(xu)=f(x), u\in U, x\in G\}\; ;$$thus they are the fixed points under this left action and are constant on any orbit of $U$ (orbits for the first action). A projection $P:\mathcal{F}\rightarrow\mathcal{F}^U$ may be defined as follows. For $f\in\mathcal{F}$,
\begin{align}(Pf)x&=\frac{1}{|\bar{x}|}\underset{y\in \bar{x}}{\sum} f(y)=\frac{1}{U}\underset{u\in U}{\sum} f(xu)\\\label{proj}
& =\frac{1}{U}\underset{u\in U}{\sum}{\;} {^uf(x)}=\frac{1}{|\bar{f}|}\underset{g\in \bar{f}}{\sum} g(x).\end{align}Use has been made here of that $|U|=|\bar{x}||U_x|$, where $U_x$ is the stabilizer of $x$. One can verify easily that $P$ is a linear projection, that is, $P\circ P=P$. \end{par}

\begin{lemma}\label{lemma}If for some $u\in U$, $g={^uf}$, then $Pg=Pf$.
\end{lemma}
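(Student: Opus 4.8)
The plan is to exploit that $P$ is by construction an average over the $U$-action, so pre-composing with a fixed $u\in U$ merely reindexes the sum. Concretely, I would start from the formula in equation \eqref{proj}, namely $Pf=\frac{1}{|U|}\sum_{u'\in U}{}^{u'}f$, and similarly $Pg=\frac{1}{|U|}\sum_{u'\in U}{}^{u'}g$.

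Next I would record the (routine) composition identity for the left action: for $u,u'\in U$ and $x\in G$,
\[
\bigl({}^{u'}({}^{u}f)\bigr)(x)=\bigl({}^{u}f\bigr)(xu')=f(xu'u)=\bigl({}^{u'u}f\bigr)(x),
\]
so ${}^{u'}({}^{u}f)={}^{u'u}f$. Substituting $g={}^{u}f$ then gives
\[
Pg=\frac{1}{|U|}\sum_{u'\in U}{}^{u'}({}^{u}f)=\frac{1}{|U|}\sum_{u'\in U}{}^{u'u}f.
\]

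Finally I would observe that the map $u'\mapsto u'u$ is a bijection of $U$ (right translation in a group), so the last sum runs over exactly the same set of functions $\{{}^{v}f : v\in U\}$ as the sum defining $Pf$; hence $Pg=\frac{1}{|U|}\sum_{v\in U}{}^{v}f=Pf$. Equivalently, one can phrase this via orbits: $g={}^{u}f$ lies in the orbit $\bar f$ under the induced left action, so $\bar g=\bar f$, and the last expression in equation \eqref{proj}, $Pf=\frac{1}{|\bar f|}\sum_{h\in\bar f}h$, depends only on that orbit.

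There is essentially no obstacle here; the only point requiring a moment's care is the bookkeeping of the left-versus-right actions and the order of composition (which is why I would spell out the identity ${}^{u'}({}^{u}f)={}^{u'u}f$ explicitly rather than asserting it), together with the harmless fact that right translation permutes $U$.
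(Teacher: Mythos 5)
Your proof is correct and is essentially the paper's argument: the paper simply observes from the last equality in equation (\ref{proj}) that $Pf$ is the average of the functions in the orbit $\bar f$, so $P$ depends only on the orbit, which is exactly the content of your reindexing computation (and of your closing "equivalently" remark). Your explicit verification of ${}^{u'}({}^{u}f)={}^{u'u}f$ and of the bijectivity of right translation is just a spelled-out version of the same step.
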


\begin{proof} Clear from the last equality in equation \ref{proj}, showing that $Pf$ is the average of those functions in $\bar{f}$.\end{proof}

\begin{proposition}\label{P}For any two characters $\pi,\psi \in \widehat{G}$, $\psi={^u\pi}$ (also denoted $\pi\sim \psi $) for some $u\in U$, if and only if $P\psi=P\pi$.\end{proposition}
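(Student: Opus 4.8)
The plan is to prove the two directions of the biconditional separately, using the averaging projection $P$ and the linear independence / orthonormality of characters established in Lemmas \ref{lemma6} and \ref{lemma}.

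\textbf{The easy direction.} Suppose $\psi = {}^u\pi$ for some $u\in U$. Then $\psi$ and $\pi$ lie in the same $U$-orbit $\overline{\pi}$ inside $\widehat{G}$, so $\overline{\psi}=\overline{\pi}$. By the last equality in equation \ref{proj}, $P\pi = \frac{1}{|\overline{\pi}|}\sum_{g\in\overline{\pi}} g$ and likewise $P\psi = \frac{1}{|\overline{\psi}|}\sum_{g\in\overline{\psi}} g$; since the orbits coincide these two averages are literally the same function, hence $P\psi = P\pi$. (This is essentially a restatement of Lemma \ref{lemma} applied to $f=\pi$.)

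\textbf{The hard direction.} Conversely, suppose $P\psi = P\pi$. Writing each side as the average over its orbit, we get
\[
\frac{1}{|\overline{\pi}|}\sum_{g\in\overline{\pi}} g \;=\; \frac{1}{|\overline{\psi}|}\sum_{h\in\overline{\psi}} h .
\]
Now $\overline{\pi}$ and $\overline{\psi}$ are each sets of \emph{distinct} characters of $G$. If the two orbits were not equal, then every character appearing in $\overline{\pi}$ that is not in $\overline{\psi}$ (and vice versa) would have a nonzero coefficient on one side and coefficient $0$ on the other; subtracting the two sides yields a nontrivial $\mathbb{C}$-linear relation among a set of distinct characters of $G$. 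This contradicts Theorem \ref{linearindep} (equivalently, the orthonormality of $\widehat{G}$ in Lemma \ref{lemma6}: pair both sides with a character $\chi\in\overline{\pi}\setminus\overline{\psi}$ under the inner product of equation \ref{number} to get $1/|\overline{\pi}| = 0$, absurd). Hence $\overline{\pi}=\overline{\psi}$, which means $\psi\in\overline{\pi}$, i.e. $\psi = {}^u\pi$ for some $u\in U$.

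\textbf{Remark on obstacles.} The only subtlety worth care is the bookkeeping with orbit sizes versus the coefficients $1/|\overline{\pi}|$ and $1/|\overline{\psi}|$: one must note that once the \emph{supports} of the two averages are shown to coincide, the equal normalizing constants force an honest equality of orbits rather than merely of their spans. Linear independence of distinct characters (Theorem \ref{linearindep}) is exactly the tool that rules out any cancellation, so the argument goes through cleanly; no computation beyond comparing coefficients is needed.
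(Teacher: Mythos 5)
Your proof is correct and follows essentially the same route as the paper: the forward direction is just Lemma \ref{lemma}, and the converse equates the two averaged sums and invokes linear independence of distinct characters (Theorem \ref{linearindep}) to force $\psi$ into the orbit of $\pi$. The only difference is cosmetic: the paper sums over $u\in U$ (where terms may repeat), while your orbit-average formulation makes the distinctness of the characters entering the linear-independence argument explicit, which is slightly cleaner bookkeeping.
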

\begin{proof}In view of Lemma \ref{lemma}, we only need to prove the ``if'' direction. Suppose $P\psi=P\pi$, then
$$\underset{u\in U}{\sum}{^u\psi}=\underset{v\in U}{\sum}{^v\pi}.$$
Since elements of $U$ are automorphisms of $G$, all the functions inside the summations are still characters. Now, the linear independence of characters, Theorem \ref{linearindep}, implies $\psi={^v\pi}$ for some $v\in U$.\end{proof}

\begin{theorem}\label{ex9}The distinct $P\pi$'s form an orthogonal system in $\mathcal{F}^U$, and hence are linearly independent.\end{theorem}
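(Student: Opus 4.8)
The statement has two parts: that the distinct $P\pi$'s are pairwise orthogonal in $\mathcal{F}^U$ with respect to the inner product of equation \ref{number}, and that orthogonality yields linear independence. The second part is immediate: a set of nonzero pairwise-orthogonal vectors in an inner product space is linearly independent by the standard argument (pair a vanishing linear combination with each $P\pi$ in turn), so the real content is the orthogonality computation, plus the observation that each $P\pi$ is nonzero.

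\textbf{Main computation.} The plan is to compute $\langle P\pi, P\psi\rangle$ for characters $\pi,\psi\in\widehat{G}$ directly from the averaging formula \ref{proj}. Writing $P\pi=\frac{1}{|U|}\sum_{u\in U}{}^u\pi$ and $P\psi=\frac{1}{|U|}\sum_{v\in U}{}^v\psi$, bilinearity of the inner product gives
\[
\langle P\pi,P\psi\rangle=\frac{1}{|U|^2}\sum_{u,v\in U}\langle {}^u\pi,{}^v\psi\rangle.
\]
Here I would use that each ${}^u\pi$ is again a character (since $U$ consists of automorphisms of $G$) together with Lemma \ref{lemma6}, which says $\widehat{G}$ is an orthonormal basis of $\mathcal{F}$; hence $\langle {}^u\pi,{}^v\psi\rangle$ equals $1$ if ${}^u\pi={}^v\psi$ and $0$ otherwise. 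Now invoke Proposition \ref{P}: the relation ${}^u\pi={}^v\psi$ can hold for some pair $(u,v)$ if and only if $\pi\sim\psi$, i.e. $P\pi=P\psi$. So if $P\pi\neq P\psi$ every term vanishes and $\langle P\pi,P\psi\rangle=0$, giving orthogonality of the distinct $P\pi$'s. When $\pi\sim\psi$ one checks the count of pairs $(u,v)$ with ${}^u\pi={}^v\psi$ is exactly $|U|$ (for each $u$ there is a unique coset of the stabilizer $U_\pi$ worth of valid $v$, and a short orbit-stabilizer count gives $|U|$ total), so $\langle P\pi,P\pi\rangle=|U|/|U|^2=1/|U|\neq 0$; in particular $P\pi\neq 0$.

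\textbf{Expected obstacle.} The only delicate point is the bookkeeping in the case $\pi\sim\psi$: one must argue carefully that the number of pairs $(u,v)\in U\times U$ with ${}^u\pi={}^v\psi$ is $|U|$, using that ${}^u\pi={}^v\psi \iff {}^{vu^{-1}}\psi=\psi \iff vu^{-1}\in U_\psi$ (the stabilizer), so for each $u$ there are exactly $|U_\psi|$ admissible $v$, giving $|U|\cdot|U_\psi|$ pairs; but one has to be mindful that $\langle P\pi,P\pi\rangle$ involves dividing by $|U|^2$ and that $|U|\cdot|U_\psi|/|U|^2 = |U_\psi|/|U|$, which forces me instead to normalize the average as $\frac{1}{|\bar\pi|}\sum_{g\in\bar\pi}g$ (the last expression in \ref{proj}) rather than $\frac{1}{|U|}\sum_u {}^u\pi$, so that the self-inner-product comes out to $1$ cleanly. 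This is a routine but genuinely error-prone normalization issue; everything else is a direct consequence of Lemma \ref{lemma6} and Proposition \ref{P}. Once orthogonality (hence in particular $P\pi\ne 0$ for each $\pi$) is established, linear independence follows immediately.
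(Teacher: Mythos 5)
Your argument is correct and is essentially the paper's own proof: expand $\langle P\psi,P\pi\rangle$ by bilinearity into $\frac{1}{|U|^2}\sum_{u,v}\langle{}^u\psi,{}^v\pi\rangle$ and kill every term using the orthonormality of characters (Lemma \ref{lemma6}) together with Proposition \ref{P}. Your extra check that $P\pi\neq 0$ (the paper leaves this implicit; note also $P\pi(0)=1$ gives it at once, and the exact value $|U_\psi|/|U|$ of the self-inner product, which you correctly settle on after the initial $1/|U|$ slip, is immaterial) is a harmless refinement rather than a different route.
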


\begin{proof}If $P\psi\neq P\pi$, then $$|U|^2 \langle P\psi,P\pi\rangle=\langle\underset{u\in U}{\sum}{^u\psi},\underset{v\in U}{\sum}{^v\pi}\rangle=\underset{u,v}{\sum}\langle{^u\psi},{^v\pi}\rangle.$$ By Lemma \ref{lemma6}, $\langle{^u\psi},{^v\pi}\rangle=0$, being the product of distinct characters for every $u,v \in U$ (Proposition \ref{P}).
\end{proof}

\subsection{Extension Theorem}
\begin{definition}Let $R$ be a finite ring with unity, and let $_RA$  be a finite left $R$-module. The \emph{complete weight composition} is a function $n:A^n\times A\rightarrow\mathbb{N}$ given by $$n(x,a)=n_a(x)=|\{i:x_i=a\}|,\quad x=(x_1,\ldots,x_n)\in A^n,\quad a\in A.$$
\end{definition}

\begin{par}Let $G$ be any subgroup of $\mathrm{Aut}_R(A)$, the automorphism group of $A$. Define a relation $\sim_G$ on $A$ as follows: for $a,b\in A$, $a\sim_G b$ if there exists $\tau\in G $ such that $a\tau=b$. It is clear $\sim_G$ is an equivalence relation, its orbit space is denoted $A/G$. For simplicity, $\sim_{\mathrm{Aut}_R(A)}$ will be written $\sim$.\end{par}

\begin{definition}\label{swc}(Symmetrized Weight Compositions). Let $G$ be as above, the $G$-symmetrized weight composition ($\mathrm{swc}_G$)  is the function
$\mathrm{swc}_G$ : $A^n \times A\rightarrow \mathbb{Q}$ defined by, $$\mathrm{swc}_G(x, a) = |\{i:x_i\sim a\}|,$$where $x=(x_1,\ldots,x_n)\in A^n$ and $a\in A$. Thus, $\mathrm{ swc }_G$ counts the number of components in each orbit. As an abbreviation, we will write $\mathrm{swc}$ to denote $\mathrm{swc}_{\mathrm{Aut}_R(A)}$. \end{definition}

In these terms, the Hamming weight $\mathrm{wt}(x)$ of $x\in A^n$ can be written $\mathrm{wt}(x)=\sum_{a\neq0}n_a(x)$ which counts the number of non-zero components of $x$.  Also, $$\mathrm{swc}_G(x,a)=\underset{b\sim_G a}{\sum}n_b(x).$$

\begin{definition}(Extension Property).  The alphabet $A$ has the \emph{extension
property} (EP) with respect to $\mathrm{ swc }_G$  if for every $n$, and any two linear codes $ C_1, C_2\subset A^n $,  any
$R$-linear isomorphism $ f:C_1 \rightarrow C_2$   preserving $\mathrm{ swc }_G$  extends to a
$G$-monomial transformation of $ A^n$. By saying that $f$ preserves $\mathrm{ swc }_G$ it is meant that $\mathrm{ swc }_G(xf,a)=\mathrm{ swc }_G(x,a)$ for all $x\in C_1$ and  $a\in A$. Following the abbreviation mentioned in Definition \ref{swc}, the EP for $\mathrm{swc}$ means the EP for $\mathrm{swc}_{\mathrm{Aut}_R(A)}$.\end{definition}


\begin{par}Now, we set in use the previous methods of averaging characters. In the above notation, let $U$ be any subgroup of the group $\mathcal{U}$ of units  of $R$. Of course, right multiplication by  $u\in U$ defines an automorphism of $_RR$.\end{par}

\begin{theorem}Let $R$ be a Frobenius ring and $C\subset R^n$  a left linear code over the alphabet $_RR$. Then, any monomorphism $f:C\rightarrow R^n$ preserving $\mathrm{swc}_U$ extends to a $U$-monomial transformation of $R^n$.\end{theorem}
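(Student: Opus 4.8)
The plan is to run the character--theoretic argument behind Theorem \ref{FBI}, inserting the averaging projection $P$ introduced above to convert preservation of $\mathrm{swc}_U$ into a single workable identity of characters. Since $R$ is Frobenius, Corollary \ref{honcor} gives ${}_R\widehat{R}\cong {}_RR$, so by the Remark after Proposition \ref{Lmprcht} there is $\rho\in\widehat R$ with $\widehat R=R\rho=\{{}^r\rho:r\in R\}$, the map $r\mapsto{}^r\rho$ a bijection, ${}^r\rho(x)=\rho(xr)$, and $\mathrm{Ker}\,\rho$ containing no nonzero left ideal of $R$. Identify $U\le\mathcal U$ with the subgroup of $\mathrm{Aut}_R({}_RR)$ of right multiplications, so that $a\sim_U b$ iff $b\in aU$ and the induced left action of $U$ on $\mathcal F=\{f\colon R\to\mathbb C\}$ sends ${}^r\rho$ to ${}^{ur}\rho$. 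Observe that $\rho$ has trivial $U$-stabilizer: ${}^u\rho=\rho$ forces $R(u-1)\subseteq\mathrm{Ker}\,\rho$, hence $u=1$; thus the $U$-orbit of $\rho$ in $\widehat R$ consists of the $|U|$ distinct characters ${}^u\rho$, $u\in U$.

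Take $\Lambda=(\lambda_1,\dots,\lambda_n)\colon C\hookrightarrow R^n$ to be the inclusion, with coordinate functionals $\lambda_i\in\mathrm{Hom}_R(C,R)$, and set $\Psi=(\psi_1,\dots,\psi_n)=f\colon C\to R^n$, so $\Psi=f\circ\Lambda$ and $\psi_j\in\mathrm{Hom}_R(C,R)$. Writing $M=C$, preservation of $\mathrm{swc}_U$ reads $\sum_i\mathbf 1[m\lambda_i\sim_U a]=\sum_j\mathbf 1[m\psi_j\sim_U a]$ for all $m\in M$, $a\in R$. Expanding the indicator of the orbit $\bar a$ via Proposition \ref{char}(6) gives $\mathbf 1_{\bar a}(x)=\frac{|\bar a|}{|R|}\sum_{\pi\in\widehat R}\pi(x)\,\overline{(P\pi)(a)}$, so the hypothesis becomes
$$\sum_{\pi\in\widehat R}\overline{(P\pi)(a)}\Big(\sum_{i=1}^n\pi(m\lambda_i)-\sum_{j=1}^n\pi(m\psi_j)\Big)=0\qquad(m\in M,\ a\in R).$$
Collecting the sum over the $U$-orbits of $\widehat R$ and using that $P\pi=P\pi'$ exactly when $\pi,\pi'$ share an orbit (Proposition \ref{P}), the distinct functions $P\pi$ --- hence their complex conjugates --- are linearly independent in $\mathcal F^U$ by Theorem \ref{ex9}; therefore the bracketed coefficient vanishes for each orbit separately. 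Specializing to the orbit of $\rho$ and using its trivial stabilizer yields, for all $m\in M$,
$$\sum_{u\in U}\sum_{i=1}^n\rho\big((m\lambda_i)u\big)=\sum_{u\in U}\sum_{j=1}^n\rho\big((m\psi_j)u\big).$$
This passage --- packaging the many orbit-indicator equations into this single identity over the orbit of the generating character --- is the step that requires real care.

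Each summand $m\mapsto\rho((m\lambda_i)u)$ is the character $\rho\circ(\lambda_i u)$ of the finite abelian group $(M,+)$, where $\lambda_i u\in\mathrm{Hom}_R(M,R)$ denotes $\lambda_i$ followed by right multiplication by $u$; moreover $\rho\circ\varphi=\rho\circ\varphi'$ forces $\mathrm{Im}(\varphi-\varphi')\subseteq\mathrm{Ker}\,\rho$, a left ideal, whence $\varphi=\varphi'$. Linear independence of distinct characters (Theorem \ref{linearindep}) now shows that the families $\{\lambda_i u:1\le i\le n,\ u\in U\}$ and $\{\psi_j u:1\le j\le n,\ u\in U\}$ in $\mathrm{Hom}_R(M,R)$ coincide as multisets. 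Partitioning $\mathrm{Hom}_R(M,R)$ into right $U$-orbits, each $\lambda_i$ contributes exactly $|U|$ members (with multiplicity) to its own orbit and nothing elsewhere, so $|\{i:\lambda_i\in\mathcal O\}|=|\{j:\psi_j\in\mathcal O\}|$ for every right $U$-orbit $\mathcal O$; hence there is a permutation $\sigma$ of $\{1,\dots,n\}$ and units $u_1,\dots,u_n\in U$ with $\psi_{\sigma(i)}=\lambda_i u_i$. Let $T$ be the $U$-monomial transformation of $R^n$ with permutation $\sigma^{-1}$ whose $k$-th component is right multiplication by $u_{\sigma^{-1}(k)}$; then $\Lambda\circ T=\Psi=f\circ\Lambda$, so $T$ restricts to $f$ on $C=\mathrm{Im}\,\Lambda$, as required. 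Note that, unlike in Theorem \ref{FBI}, no partial-ordering or maximality argument is needed: because the relevant sum runs only over $U$, matching a coordinate functional produces a unit of $U$ outright.
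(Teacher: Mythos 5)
Your proof is correct and follows essentially the same character-theoretic route as the paper's: expand the preservation of $\mathrm{swc}_U$ through the averaging projection $P$, invoke the linear independence of the distinct averaged characters (Theorem \ref{ex9}, via Proposition \ref{P}) to isolate the equation attached to the orbit of a generating character $\rho$ (which exists because $R$ is Frobenius, Corollary \ref{honcor}), and use that $\mathrm{Ker}\,\rho$ contains no nonzero left ideal to pass from an identity of characters on $M$ to an identity of coordinate functionals in $\mathrm{Hom}_R(M,R)$. The only difference is in the bookkeeping at the end -- the paper matches one term at a time and reduces the outer sum iteratively, while you extract the full matching in one stroke by a multiset count over right $U$-orbits -- and, apart from a harmless inconsistency in your composition-order notation ($f\circ\Lambda$ versus $\Lambda\circ T$), this is a faithful repackaging of the same argument.
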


\begin{proof} \begin{par}Adopting the usual plot of the character theoretic proofs, view $C$ as the image, in $R^n$, of the map $\lambda=(\lambda_1,\ldots,\lambda_n)$ where $\lambda$ is the inclusion map $C\rightarrow R^n$, the components $\lambda_1,\ldots,\lambda_n$ are homomorphisms from $C$ into $R$. Similarly consider $\mu=(\mu_1,\ldots,\mu_n)=\lambda\circ f$. We will show that $\lambda_i=\mu_{\sigma(i)}u_i$ for some $\sigma\in S_n$ and units $u_1,\ldots,u_n\in U$. To plunge into the usual theme, the preservation of weight  equation $\mathrm{swc}_U(x\lambda,r)=\mathrm{swc}_U(x\mu,r)$ should be written as an equation of characters.\end{par}
\begin{par}By item 6 in Proposition \ref{char}, we have
$$n_r(x)=\frac{1}{|R|}\overset{n}{\underset{i=1}{\sum}}\underset{\pi\in\widehat{R}}{\sum}\pi(x_i-r)=\frac{1}{|R|}\overset{n}{\underset{i=1}{\sum}}
\underset{\pi\in\widehat{R}}{\sum}\pi(x_i)\pi^{-1}(r),$$from which we obtain
$$\mathrm{swc}_U(x,r)=\frac{|\bar{r}|}{|R|}\underset{\pi\in\widehat{R}}{\sum}\left (\overset{n}{\underset{i=1}{\sum}} \pi(x_i)\right)(P\pi^{-1})(r).$$
\end{par}
Thus the equation $\mathrm{swc}_U(x\lambda,r)=\mathrm{swc}_U(x\mu,r), x\in C, r\in R$ is written as
\begin{equation}\label{average} \underset{\pi\in\widehat{R}}{\sum}\left(\overset{n}{\underset{i=1}{\sum}}\pi(x\lambda_i)\right)(P\pi^{-1})(r)=
\underset{\pi\in\widehat{R}}{\sum}\left(\overset{n}{\underset{j=1}{\sum}}\pi(x\mu_j)\right)(P\pi^{-1})(r),\end{equation}for all $x\in C, r\in R$.
Fixing $\in C$, the above is an  equation of $U$-invariant functions on $R$. By Theorem \ref{ex9} and Proposition \ref{P} we deduce, for every $\pi\in \widehat{R} $, an equation of characters
\begin{equation}\label{last}\overset{n}{\underset{i=1}{\sum}}\underset{\psi\in\bar{\pi}}{\sum}\lambda_i\circ\psi=
\overset{n}{\underset{j=1}{\sum}}\underset{\phi\in\bar{\pi}}{\sum}\mu_j\circ\phi,\end{equation}
where $\bar{\pi}$ denotes the equivalence class of $\pi$ with relative to the left action of $U$ on $\mathcal{F}$. Being Frobenius, $R$ has a generating character $\chi$ (Corollary \ref{honcor}). For $\chi$, equation \ref{last} holds, and for $i=1\text{ and } \psi=\chi$ (on the left) the linear independence of characters implies the existence of $\phi\in \bar{\chi}$ and $j=\sigma(1)$ such that $\lambda_1\circ\chi=\mu_{\sigma(1)}\circ \phi$. In other words, $\phi=\chi^{u_1}$  for some $u_1\in U$ with $\lambda_1\circ\chi=(\mu_{\sigma(1)}u_1)\circ\chi $.   Appealing to the useful injectivity property of generating characters, Corollary \ref{genkerempty}, we get $\lambda_1=\mu_{\sigma(1)}u_1$. Thus, $\underset{\psi\in\bar{\chi}}{\sum}\lambda_1\circ\psi=\underset{\phi\in\bar{\chi}}{\sum}\mu_{\sigma(1)}\circ\phi$, allowing to reduce the outer sum by one. Proceeding that way we obtain units $u_1,\ldots,u_n\in U$ and build a permutation $\sigma\in S_n$ with $\lambda_i=\mu_{\sigma(i)}u_i$.
\end{proof}

\section{General Module Alphabets}
\subsection{Frobenius Bimodules}
No doubt, the next step is trying to generalize the previous ``model'' theorem for more module alphabets. The following theorem, which appears in \cite{r7}, 2009, provides such a generalization.
\begin{theorem}\cite[$\mathrm{p.37}$]{r7}. Let $A$ be a Frobenius bimodule over a finite ring $R$, then $A$ has the extension property with respect to $\mathrm{swc}_G$, where $G$ is any subgroup $G$ of $\mathrm{Aut}_R(A)$. \end{theorem}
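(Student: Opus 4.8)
The plan is to mimic the character-theoretic argument used for the single-ring Frobenius case in the previous theorem, but now carried out inside the module $A$ with its two-sided structure. Let $C_1, C_2 \subset A^n$ be linear codes and $f : C_1 \to C_2$ an $R$-isomorphism preserving $\mathrm{swc}_G$, $G \leq \mathrm{Aut}_R(A)$. As usual, fix a common information module $M$ and write the two codes as images of $\Lambda = (\lambda_1,\dots,\lambda_n) : M \to A^n$ and $\Psi = (\psi_1,\dots,\psi_n) : M \to A^n$ with $\Psi = \Lambda \circ f$, where $\lambda_i,\psi_i \in \mathrm{Hom}_R(M,A)$. The first step is to rewrite the hypothesis $\mathrm{swc}_G(x\Lambda, a) = \mathrm{swc}_G(x\Psi, a)$ for all $x \in M$, $a \in A$, as an equation of characters on $A$. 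Here the averaging apparatus of Section 3.1 is applied with $G \leq \mathrm{Aut}_R(A)$ acting on the right of $A$ (so $U$ in that section becomes $G$); by Proposition \ref{char}, item 6, the orbit-counting function $\mathrm{swc}_G(x,a)$ expands into $\frac{|\bar a|}{|A|}\sum_{\pi \in \widehat A}\big(\sum_i \pi(x\lambda_i)\big)(P\pi^{-1})(a)$, and similarly for $\Psi$.

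The second step is to strip this down to a clean equation of characters. Fixing $x \in M$, the preservation identity becomes an equation of $G$-invariant functions on $A$; by Theorem \ref{ex9} (linear independence of the distinct $P\pi$'s) together with Proposition \ref{P}, it decomposes, for each $\pi \in \widehat A$, into
\begin{equation*}
\sum_{i=1}^{n}\sum_{\psi \in \bar\pi}\psi\circ\lambda_i \;=\; \sum_{j=1}^{n}\sum_{\phi \in \bar\pi}\phi\circ\mu_j,
\end{equation*}
where $\bar\pi$ is the $G$-orbit of $\pi$ under the induced left action and the composition $\psi\circ\lambda_i \in \widehat M$. The third step is to pick a left generating character $\chi$ of $A$; this exists precisely because $A$ is a Frobenius bimodule (Lemma \ref{fb1.2.4} and the definition of generating character, or Proposition \ref{prop1.2.11} combined with $\mathrm{soc}(_RA)$ being cyclic). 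Specializing the displayed equation to $\pi = \chi$ and isolating the term $i = 1$, $\psi = \chi$ on the left, linear independence of characters of $M$ forces the existence of an index $j = \sigma(1)$ and some $\phi \in \bar\chi$ with $\chi \circ \lambda_1 = \phi \circ \mu_{\sigma(1)}$. Writing $\phi = {}^{\tau_1}\chi$ for some $\tau_1 \in G$, this says $\chi\circ\lambda_1 = \chi\circ(\mu_{\sigma(1)}\tau_1)$, hence $\mathrm{Im}(\lambda_1 - \mu_{\sigma(1)}\tau_1) \subset \mathrm{Ker}\,\chi$; since that image is a left $R$-submodule of $A$ and $\chi$ is a generating character, Corollary \ref{genkerempty} (together with Lemma \ref{fb1.2.7}) yields $\lambda_1 = \mu_{\sigma(1)}\tau_1$.

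The fourth step is the bookkeeping: having matched $\lambda_1$ with $\mu_{\sigma(1)}\tau_1$, the whole $G$-orbit sum $\sum_{\psi\in\bar\chi}\psi\circ\lambda_1$ equals $\sum_{\phi\in\bar\chi}\phi\circ\mu_{\sigma(1)}$, so these terms cancel from both sides and the outer summation drops by one; iterating produces a permutation $\sigma \in S_n$ and automorphisms $\tau_1,\dots,\tau_n \in G$ with $\lambda_i = \mu_{\sigma(i)}\tau_i$ for all $i$, which is exactly a $G$-monomial transformation extending $f$. I expect the main obstacle to be the precise translation in the second step: one must verify carefully that the induced left action of $G$ on $\widehat A$ (and the averaging projection $P$) interacts correctly with right composition by the $\lambda_i$, so that the orthogonality/linear-independence machinery of Section 3.1 genuinely applies to the $G$-invariant functions arising here, and that the passage from an equation of functions on $A$ to an equation of characters on $M$ (via $\psi \mapsto \psi\circ\lambda_i$) preserves linear independence. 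Everything else is a faithful transcription of the ring case with $R$ replaced by the bimodule $A$.
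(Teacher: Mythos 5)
Your proposal is correct and follows essentially the same route as the paper's proof: expand $\mathrm{swc}_G$ via characters and the averaging projection $P$, invoke the linear independence of the distinct $P\pi$ (Theorem \ref{ex9} with Proposition \ref{P}) to extract, for each orbit $\bar{\pi}$, an equation of characters on $M$, then specialize to a generating character of the Frobenius bimodule, use Corollary \ref{genkerempty} to conclude $\lambda_1=\mu_{\sigma(1)}\tau_1$, and iterate to build the $G$-monomial transformation. The only blemishes are notational (you write $\Psi=(\psi_1,\dots,\psi_n)$ but then switch to $\mu_j$, and reuse $\psi$ for characters), and the "obstacle" you flag is harmless since Section 3.1 is stated for an arbitrary subgroup of automorphisms of a finite abelian group, so it applies verbatim with $U=G$ acting on $A$.
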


\begin{proof}Let $f:C\rightarrow A^n$ be a monomorphism preserving $\mathrm{swc}_G$, where $C\subset A^n$ is an $R$-linear code (a left R-module). Let $M$ denote the left $R$-module underlying the code $C$, and $\lambda:M\rightarrow A^n$ be the inclusion map of $C$ into $A^n$. As in the  model theorem, set $\mu=\lambda\circ f:M\rightarrow A^n$, then the hypothesis says that $\mathrm{swc}_G(x\lambda,a)=\mathrm{swc}_G(x\mu,a)$ for all $a\in A, x\in M$. Express these maps as $\lambda=(\lambda_1,\ldots,\lambda_n)$ and $\mu=(\mu_1,\ldots,\mu_n)$, where $\lambda_i,\mu_j\in \mathrm{Hom}_R(M,A)$.

Thus, for all $a\in A, x\in M$,
$$ \mathrm{swc}_G(x\lambda,a)=\frac{1}{|\widehat{A}|}\overset{n}{\underset{i=1}{\sum}}\underset{b\sim_G a}{\sum}\underset{\pi\in\widehat{A}}{\sum}\pi(x\lambda_i-b)=\frac{1}{|\widehat{A}|}\overset{n}{\underset{i=1}{\sum}}\underset{b\sim_G a}{\sum}\underset{\pi\in\widehat{A}}{\sum}\pi(x\lambda_i) \pi^{-1}(b),$$
then the preservation of $\mathrm{swc}_G$ becomes
\begin{equation}\label{found 37 15}\underset{\pi\in\widehat{A}}{\sum}\left(\overset{n}{\underset{i=1}{\sum}}\pi(x\lambda_i)\right)(P\pi^{-1})(a)=
\underset{\pi\in\widehat{A}}{\sum}\left(\overset{n}{\underset{j=1}{\sum}}\pi(x\mu_j)\right)(P\pi^{-1})(a).\end{equation}
\begin{par}
Fixing $x\in M$,  equation (\ref{found 37 15}) becomes an equation of linear combinations of averaged characters. The linear independence of averaged characters allows equating corresponding coefficients, after taking into regard that $\psi \sim\pi$ if and only if $P\psi=P\pi$. Thus
\begin{equation}
\label{found 37 16}\overset{n}{\underset{i=1}{\sum}}\underset{\theta\sim\pi}{\sum}\theta(x\lambda_i)=\overset{n}{\underset{j=1}{\sum}}
\underset{\phi\sim\pi}{\sum}\phi(x\mu_j),\quad x\in M.\end{equation} Confronted with these equations of characters on $M$, one for each $P\pi$, where $\pi\in\widehat{A}$, we then use that $A$ is  a Frobenius bimodule. This implies that $\widehat{A}$ has a generating character $\rho$. For $\pi=\rho$, in equation (\ref{found 37 16}), consider the term with $i=1$ and $\theta=\rho$ on the left side. Linear independence of characters on $M$ implies the existence of $\phi\sim\rho$ and $\sigma(1)$ such that $\rho(x\lambda_1)=\phi(x\mu_{\sigma(1)})$ for all $x\in M$.  But $\phi\sim\rho$ means that there exists $\tau_1\in G$ such that $\phi=\tau_1\rho$. Then $\rho(x\lambda_1)=\rho(x\mu_{\sigma(1)}\tau_1)$ for all $x\in M$. By Corollary \ref{genkerempty}, $\lambda_1=\mu_{\sigma(1)}\tau_1$. Reindexing, one gets $$\underset{\theta\sim\rho}{\sum}\theta(x\lambda_1)=\underset{\phi\sim\rho}{\sum}\phi(x\mu_{\sigma(1)}), \quad x\in M.$$Then, the outer sum is reduced by one, proceeding that way, we end with a $G$-monomial transformation $T$ of $A^n$ such that $\lambda=\mu T$.
\end{par}
\end{proof}

\subsection{Cyclic Socle is Sufficient}

\begin{par}In 2014, N. ElGarem, N. Megahed, and J. A. Wood \cite{r4} proved the following lessened sufficient condition (which turns out to be necessary for a certain class of modules).\end{par}

\begin{theorem}\label{Noha}Let $A$ be a finite left $R$-module. If $A$ can be embedded into $\widehat{R}$, then $A$ has the extension property with respect to $\mathrm{swc}_G$ for any subgroup $G$ of $\mathrm{Aut}(A)$. In particular, the theorem holds for Frobenius bimodules.\end{theorem}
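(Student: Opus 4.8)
The plan is to observe first that, by Proposition \ref{prop1.2.11}, the hypothesis that $A$ embeds into $_R\widehat R$ is equivalent to the existence of a left generating character $\varrho\in\widehat A$, i.e.\ a character whose kernel contains no nonzero left $R$-submodule of $A$ (Definition \ref{exactly}); and then to run the character-theoretic averaging argument used for Frobenius bimodules essentially verbatim, with $\varrho$ playing the role of the bimodule generating character. Indeed, the only place the full Frobenius-bimodule hypothesis entered the proof of the $\mathrm{swc}_G$ extension theorem for bimodules was precisely to produce such a $\varrho$, so the weaker hypothesis suffices.

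Concretely, I would start from an $R$-linear isomorphism $f\colon C_1\to C_2$ of codes $C_1,C_2\subset A^n$ preserving $\mathrm{swc}_G$, take $M=C_1$ as common information module, let $\lambda=(\lambda_1,\dots,\lambda_n)\colon M\hookrightarrow A^n$ be the inclusion and $\mu=(\mu_1,\dots,\mu_n)$ the composite $M\xrightarrow{f}C_2\hookrightarrow A^n$, so that $\lambda_i,\mu_j\in\mathrm{Hom}_R(M,A)$ and $\mathrm{swc}_G(x\lambda,a)=\mathrm{swc}_G(x\mu,a)$ for all $x\in M$ and $a\in A$. Expanding both sides by item $6$ of Proposition \ref{char} and grouping over $\{b\sim_G a\}$ turns this, for each fixed $x$, into an identity of $G$-averaged characters on $(A,+)$, $\sum_{\pi\in\widehat A}\big(\sum_{i}\pi(x\lambda_i)\big)(P\pi^{-1})(a)=\sum_{\pi\in\widehat A}\big(\sum_{j}\pi(x\mu_j)\big)(P\pi^{-1})(a)$, where $P$ is the averaging projection for the action of $G\le\mathrm{Aut}_R(A)$ on $(A,+)$. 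Since distinct $P\pi$ are linearly independent (Theorem \ref{ex9}) and $P\psi=P\pi$ iff $\psi$ lies in the $G$-orbit $\bar\pi$ of $\pi$ (Proposition \ref{P}), equating coefficients yields, for each $\pi\in\widehat A$, the character identity $\sum_{i=1}^n\sum_{\theta\in\bar\pi}\theta(x\lambda_i)=\sum_{j=1}^n\sum_{\phi\in\bar\pi}\phi(x\mu_j)$ on $M$.

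The final step specializes to $\pi=\varrho$. The $i=1$, $\theta=\varrho$ term on the left, together with linear independence of characters on $M$ (Theorem \ref{linearindep}), forces an index $\sigma(1)$ and a character $\phi\in\bar\varrho$, say $\phi={}^{\tau_1}\varrho$ with $\tau_1\in G$, such that $\varrho\big(x(\lambda_1-\mu_{\sigma(1)}\tau_1)\big)=1$ for all $x\in M$; hence $\mathrm{Im}(\lambda_1-\mu_{\sigma(1)}\tau_1)$ is a left $R$-submodule of $A$ contained in $\mathrm{Ker}\,\varrho$, so it is $0$ because $\varrho$ is a generating character, i.e.\ $\lambda_1=\mu_{\sigma(1)}\tau_1$. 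The corresponding averaged-character terms then cancel, the outer sums drop by one, and iterating gives a permutation $\sigma\in S_n$ together with $\tau_1,\dots,\tau_n\in G$ with $\lambda_i=\mu_{\sigma(i)}\tau_i$, which is precisely a $G$-monomial transformation of $A^n$ extending $f$. The ``in particular'' assertion is immediate: a Frobenius bimodule satisfies $_RA\cong{_R\widehat R}$, hence embeds into $_R\widehat R$.

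I do not expect a genuine obstacle here; the argument is a transcription of the Frobenius-bimodule proof. The points needing care are bookkeeping: confirming that the averaging-of-characters machinery is legitimately applied with the underlying abelian group $(A,+)$ and the subgroup $G\le\mathrm{Aut}_R(A)$, and the small typechecking that $\varrho\in\widehat A$ is being evaluated on the elements $x(\lambda_i-\mu_{\sigma(i)}\tau_i)\in A$ while $\mathrm{Im}(\lambda_i-\mu_{\sigma(i)}\tau_i)$ is a \emph{left} $R$-submodule, so that the generating-character kernel property of Definition \ref{exactly} applies. The one conceptual point — that the embedding hypothesis is exactly the existence of a left generating character — is Proposition \ref{prop1.2.11}, and everything downstream is routine.
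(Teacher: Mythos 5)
Your proof is correct, but it follows a genuinely different route from the one the thesis takes for Theorem \ref{Noha}. You transplant the averaged-character argument used for Frobenius bimodules: expand $\mathrm{swc}_G$ via characters of $(A,+)$, invoke Proposition \ref{P} and the orthogonality of the distinct $P\pi$'s (Theorem \ref{ex9}) to equate orbit-grouped coefficients, and then observe, correctly, that the only role of the bimodule hypothesis was to furnish a character $\varrho$ whose kernel contains no nonzero left submodule, which Proposition \ref{prop1.2.11} and Definition \ref{exactly} already supply under the embedding hypothesis. The proof in the text (following ElGarem--Megahed--Wood) avoids the averaging machinery altogether: from preservation of $\mathrm{swc}_G$ it extracts, for each fixed $x\in M$, a pointwise matching $x\lambda_j=x\mu_{\sigma_x(j)}\phi_{j,x}$ with $\sigma_x\in S_n$ and $\phi_{j,x}\in G$, applies the generating character $\rho$, and sums over $j$ and over $\zeta\in G$; the summation over the group washes out the $x$-dependence of $\sigma_x$ and $\phi_{j,x}$ and yields the character identity on $M$ directly. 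From that point on the two arguments coincide: linear independence of characters on $M$ (Theorem \ref{linearindep}) plus the kernel property of the generating character give $\lambda_1=\mu_{\sigma(1)}\tau_1$, and one cancels and iterates. Your version has the merit of exhibiting the theorem as a literal weakening of the hypothesis in the bimodule theorem, at the cost of carrying the $P$-projection and Theorem \ref{ex9} along; the paper's version is more self-contained at this stage, needing only plain linear independence of characters. One small point of care in your write-up, which you did flag: the linear-independence step only tells you the character $x\mapsto\varrho(x\lambda_1)$ reappears on the right-hand side, so the existence of $\sigma(1)$ and $\phi\in\bar\varrho$ rests on all grouped coefficients being nonnegative integers; with that noted, the argument is sound.
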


\begin{proof}\begin{par}Let $G\subset\mathrm{Aut}(A)$. Suppose $C_1,C_2\subset A^n$ are two $R$-linear codes, and let $f:C_1\rightarrow C_2$ be an isomorphism preserving $\mathrm{swc}_G$. Let $M$ denote the left $R$-module underlying $C_1$ and $C_2$, and let $\lambda:M\rightarrow A^n$ be the inclusion map of $C_1$ into $A^n$. As usual, set $\mu=\lambda\circ f:M\rightarrow A^n$, and express these maps as $\lambda=(\lambda_1,\ldots,\lambda_n)$ and $\mu=(\mu_1,\ldots,\mu_n)$, where $\lambda_i,\mu_j\in \mathrm{Hom}_R(M,A)$.\end{par}

\begin{par}As $f$ preserves $\mathrm{swc}_G$, we have $\mathrm{swc}_G(x\lambda,a)=\mathrm{swc}_G(x\mu,a)$ for all $a\in A, x\in M$. Fixing $x\in M$, there is a permutation $\sigma_x\in S_n$ and elements $\phi_{j,x}\in G$ such that $x\lambda_j=x\mu_{\sigma_x(j)}\phi_{j,x}$ for each $j\in\{1,\ldots,n\}$. Let $\zeta\in G$, then \begin{equation}\label{4.1.noha}x\lambda_j\zeta=x\mu_{\sigma_x(j)}\phi_{j,x}\zeta.\end{equation}\end{par}
By Proposition \ref{prop1.2.11}, $A$ admits a generating character $\rho:A\rightarrow\mathbb{C}^\times$. Applying $\rho$ to equation (\ref{4.1.noha}), we get $$(x\lambda_j\zeta)\rho=(x\mu_{\sigma_x(j)}\phi_{j,x}\zeta)\rho.$$

Taking the summation over all $j\in\{1,\ldots,n\}$ and all $\zeta\in G$, we obtain
\begin{align*}\overset{n}{\underset{j=1}{\sum}}\underset{\zeta\in G}\sum(x\lambda_j\zeta)\rho &=\overset{n}{\underset{j=1}{\sum}}\underset{\zeta\in G}\sum(x\mu_{\sigma_x(j)}\phi_{j,x}\zeta)\rho\\
&=\overset{n}{\underset{k=1}{\sum}}\underset{\tau\in G}\sum (x\mu_k\tau)\rho.\end{align*}
As the previous equation holds for all $x\in M$, it yields the following equation of characters,
\begin{equation}\label{label}\overset{n}{\underset{j=1}{\sum}}\underset{\zeta\in G}\sum(\lambda_j\zeta)\rho=
\overset{n}{\underset{k=1}{\sum}}\underset{\tau\in G}\sum (\mu_k\tau)\rho.\end{equation}
On the left side, fixing $j=1$ and $\zeta=I_A$, the identity on $A$, the independence of characters gives that there is $k_1$ and $\tau_1\in G$ such that $\lambda_1\circ\rho=\mu_{k_1}\tau_1\circ\rho$. Then, $\mathrm{Im}(\lambda_1\-\mu_{k_1}\tau_1)\subset \mathrm{Ker}\rho$. By definition \ref{exactly}, of a generating character of a left $R$-module, it follows that $\mathrm{Im}(\lambda_1\-\mu_{k_1}\tau_1)=0$ and $\lambda_1=\mu_{k_1}\tau_1$. Setting $\gamma=\tau_1\zeta$, and re-indexing,
$$\underset{\zeta\in G}\sum(\lambda_1\zeta)\rho=\underset{\zeta\in G}\sum(\mu_{k_1}\tau_1\zeta)\rho=\underset{\gamma\in G}\sum(\mu_{k_1}\gamma)\rho.$$ This allows us to reduce, by one, the outer sum in equation \ref{label}. Continuing this process, we obtain automorphisms $\tau_1,\ldots,\tau_n\in G$, and a permutation $\sigma$, such that $\lambda_i=\mu_{\sigma(i)}\tau_i$.
\end{proof}

\subsection{Annihilator Weight: Partial Converse}

Here we define the \emph{annihilator weight} that will relate some of the extension problem with respect to symmetrized weight compositions to the already settled case of Hamming weight.

\begin{par}Let $A$ be a left $R$-module, and let $a\in A$ be any element. The \emph{annihilator} of $a$ is the set $\mathrm{ann}_a=\{r\in R: ra=0\}$. $\mathrm{ann}_a$ is easily seen to be a left ideal. We may define an equivalence relation $\approx$ on $A$ as follows: $a\approx b$ if $\mathrm{ann}_a=\mathrm{ann}_b$. Denote the orbit space of this relation by $A_\approx$.\end{par}
\begin{definition}(Annihilator Weight).
For any $n$,  we  define the annihilator weight $aw$ on $A^n$ to be the function $aw: A^n\times A\rightarrow\mathbb{Q}$ defined by $$aw(x,a)=|\{i: x_i\approx a\}|,$$where $x=(x_1,\ldots,x_n)\in A^n$ and $a\in A$. Thus, $aw$ counts the number of components in each orbit. \end{definition}

\begin{definition}(Extension Property). $A$ is said to have the EP with respect to $aw$ if, for every $n$, and any two linear codes $ C_1, C_2\subset A^n $,  any
$R$-linear isomorphism $ f:C_1 \rightarrow C_2$   preserving $aw$  extends to a
$\mathrm{Aut}_R(A)$-monomial transformation of $ A^n$. Where, by saying $f$ preserves $aw$, it is meant that $aw(xf,a)=aw(x,a)$ for all $x\in C_1$ and  $a\in A$. \end{definition}

\textbf{Remark:} It is easily seen that if $A$ has the EP  for Hamming weight then $A$ has  the EP for $\mathrm{swc}$ and the EP for $aw$ as well.

\begin{lemma}\label{lemma18888} Let $_RA$ be a pseudo-injective module (Definition \ref{1234}). Then for any two elements $a,b\in A$, $a\approx b$ if and only if $a\sim b$ (recall that $\sim$ is just another notation for $\sim_{\mathrm{Aut}_R(A)}$).\end{lemma}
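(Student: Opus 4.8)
The easy direction is $a\sim b\Rightarrow a\approx b$. Suppose $b=a\tau$ for some $\tau\in\mathrm{Aut}_R(A)$. For $r\in R$ we have $rb=r(a\tau)=(ra)\tau$, and since $\tau$ is bijective, $(ra)\tau=0$ if and only if $ra=0$. Hence $\mathrm{ann}_b=\mathrm{ann}_a$, i.e. $a\approx b$. This uses nothing beyond $\tau$ being an $R$-automorphism, so it holds for any module.

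For the converse, assume $\mathrm{ann}_a=\mathrm{ann}_b$. The plan is to produce a monomorphism from the cyclic submodule $Ra$ into $A$ that sends $a$ to $b$, and then appeal to pseudo-injectivity. Define $f:Ra\to A$ by $f(ra)=rb$ for $r\in R$. First I would check $f$ is well-defined: if $ra=r'a$ then $(r-r')a=0$, so $r-r'\in\mathrm{ann}_a=\mathrm{ann}_b$, whence $(r-r')b=0$ and $rb=r'b$. The map is $R$-linear by construction. It is injective: if $f(ra)=rb=0$ then $r\in\mathrm{ann}_b=\mathrm{ann}_a$, so $ra=0$; thus $\ker f=0$. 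Note that both the well-definedness and the injectivity of $f$ rest precisely on the two inclusions comprising $\mathrm{ann}_a=\mathrm{ann}_b$, so the hypothesis $a\approx b$ is used in full and in an essential way.

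Now $f:Ra\to A$ is a monomorphism defined on a submodule of $A$. Since $_RA$ is pseudo-injective (Definition \ref{1234}), $f$ extends to an endomorphism $\bar f:A\to A$. Being a monomorphism that extends to a homomorphism of $A$, $f$ then extends, by Lemma \ref{psss}, to an automorphism $\tau\in\mathrm{Aut}_R(A)$. Finally, since $R$ has a unity and $A$ is unitary, $a=1\cdot a\in Ra$, so $a\tau=f(a)=f(1\cdot a)=1\cdot b=b$. Therefore $a\sim b$, completing the equivalence.

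I do not expect a genuine obstacle here: the argument is short and self-contained once Lemma \ref{psss} is in hand. The only points requiring care are bookkeeping ones — keeping the left $R$-action and the right action of $\mathrm{Aut}_R(A)$ straight when verifying $r(a\tau)=(ra)\tau$, and remembering that the ambient setting supplies finiteness of $A$ so that Lemma \ref{psss} applies. If one wished to avoid citing Lemma \ref{psss} directly, the same conclusion could be reached by repeating its short socle-comparison argument for the module $B=Ra$, but invoking the lemma as stated is cleaner.
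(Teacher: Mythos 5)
Your proof is correct and follows essentially the same route as the paper: the paper also builds the map $ra\mapsto rb$ (phrased via $Ra\cong {_RR}/\mathrm{ann}_a={_RR}/\mathrm{ann}_b\cong Rb$) and then invokes pseudo-injectivity together with Lemma \ref{psss} to extend it to an automorphism of $A$ carrying $a$ to $b$. Your explicit verification of well-definedness and injectivity just spells out what the paper's chain of isomorphisms encodes.
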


\begin{proof}If $a\sim b$, this means $a=b\tau$ for some $\tau \in \mathrm{Aut}_R(A)$, and consequently $\mathrm{ann}_a=\mathrm{ann}_b$.

Conversely, if $a\approx b$, then we have (as left $R$-modules) $$Ra\cong{} _RR/\mathrm{ann}_a={} _RR/\mathrm{ann}_b\cong Rb,$$with $ra\mapsto r+\mathrm{ann}_a\mapsto rb$. By Lemma \ref{psss}, since $A$ is pseudo-injective, the isomorphism $Ra\rightarrow Rb\subseteq A$ extends to an automorphism of $A$ taking $a$ to $b$.\end{proof}

\begin{corollary}\label{cor1}If $_RA$ is a pseudo-injective module, then $A$ has the EP with respect to $\mathrm{swc}$ if and only if $A$ has the EP with respect to $aw$.\end{corollary}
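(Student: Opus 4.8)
The plan is to exploit the key structural fact proved in Lemma~\ref{lemma18888}: when $_RA$ is pseudo-injective, the two equivalence relations $\approx$ (equality of annihilators) and $\sim$ (lying in the same $\mathrm{Aut}_R(A)$-orbit) coincide on $A$. This means that the orbit space $A_\approx$ for the annihilator relation and the orbit space $A/\mathrm{Aut}_R(A)$ for the symmetry relation are literally the same set, and consequently the two counting functions agree: for every $n$, every $x = (x_1,\dots,x_n)\in A^n$, and every $a\in A$,
\[
aw(x,a) \;=\; |\{i : x_i\approx a\}| \;=\; |\{i : x_i\sim a\}| \;=\; \mathrm{swc}(x,a).
\]
So the functions $aw$ and $\mathrm{swc}$ are identical as maps $A^n\times A\to\mathbb{Q}$ whenever $A$ is pseudo-injective. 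First I would state and verify this identity explicitly, citing Lemma~\ref{lemma18888}.

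Given that $aw = \mathrm{swc}$ identically, the corollary is then almost immediate. If $A$ has the EP with respect to $\mathrm{swc}$, then any $R$-linear isomorphism $f\colon C_1\to C_2$ between codes in $A^n$ preserving $aw$ is, by the identity above, an isomorphism preserving $\mathrm{swc}$; hence it extends to an $\mathrm{Aut}_R(A)$-monomial transformation of $A^n$. Since the monomial-equivalence group in both definitions is the same ($G = \mathrm{Aut}_R(A)$ for $\mathrm{swc}$ and $aw$ alike), this extension is exactly what is required for the EP with respect to $aw$. The reverse implication is symmetric: a weight-preserving isomorphism for $\mathrm{swc}$ is one for $aw$, and the EP for $aw$ then yields the desired monomial extension. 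Thus the two extension properties are equivalent.

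There is essentially no hard step here; the content has already been front-loaded into Lemma~\ref{lemma18888}. The only point requiring a word of care is making sure the symmetry group attached to each weight is the same, so that ``extends to a $G$-monomial transformation'' means the same thing in both cases — and indeed, by the definitions given, the EP for $aw$ and the EP for $\mathrm{swc}$ both use $G = \mathrm{Aut}_R(A)$. I would also remark that pseudo-injectivity is used only through Lemma~\ref{lemma18888}, so the statement is exactly as sharp as that lemma. One could present the proof in two or three lines: invoke Lemma~\ref{lemma18888} to get $aw = \mathrm{swc}$ on all powers $A^n$, observe that the monomial-equivalence notions coincide, and conclude. If desired, the identity $aw=\mathrm{swc}$ can be recorded as a displayed equation before the two-way implication, but no further computation is needed.
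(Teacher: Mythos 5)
Your proof is correct and matches the paper's intent exactly: the corollary is stated as an immediate consequence of Lemma~\ref{lemma18888}, and your observation that the coincidence of $\approx$ and $\sim$ makes $aw$ and $\mathrm{swc}$ literally the same function on each $A^n\times A$ (with the same symmetry group $\mathrm{Aut}_R(A)$ in both EP definitions) is precisely the argument the paper leaves implicit. Nothing is missing.
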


\begin{theorem}\label{Midway}Let $R$ be a left principal ideal ring,  $_RA$  a pseudo-injective module, and let $C$ be a submodule of $A^n$ for some $n$. Then a monomorphism  $f:C\rightarrow A^n$ preserves Hamming weight if and only if it preserves $\mathrm{swc}$.\end{theorem}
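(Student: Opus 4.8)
The plan is to treat the two implications separately; only the direction ``Hamming weight $\Rightarrow\ \mathrm{swc}$'' will use the principal-ideal hypothesis and pseudo-injectivity. For the easy direction, suppose $f$ preserves $\mathrm{swc}$. Fix a set $S$ of representatives of the nonzero $\mathrm{Aut}_R(A)$-orbits on $A$; since the nonzero orbits partition the nonzero elements, every codeword $x$ satisfies $\mathrm{wt}(x)=\sum_{a\in S}\mathrm{swc}(x,a)$. Applying this to $xf$ and using $\mathrm{swc}(xf,a)=\mathrm{swc}(x,a)$ for all $a$ gives $\mathrm{wt}(xf)=\mathrm{wt}(x)$. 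This step needs neither hypothesis.

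For the converse, the idea is to replace $\mathrm{swc}$ by the annihilator weight $aw$ and to recover the annihilator multiset of a codeword from the Hamming weights of all its $R$-multiples. Since $A$ is pseudo-injective, Lemma \ref{lemma18888} gives $a\sim b\iff a\approx b$, so $\mathrm{swc}$ and $aw$ are the same function on $A^n\times A$; hence it suffices to show that $f$ preserves $aw$. Fix $x\in C$, put $y=xf$, and let $r\in R$. Then $rx\in C$ (a submodule) and $f(rx)=r\,f(x)=ry$ by $R$-linearity, so preservation of Hamming weight yields $\mathrm{wt}(rx)=\mathrm{wt}(ry)$, that is
$$|\{i:r\in\mathrm{ann}_{x_i}\}|=|\{i:r\in\mathrm{ann}_{y_i}\}|.$$
Because $R$ is a left principal ideal ring, every left ideal $I$ has the form $Rr$, and $Rr\subseteq\mathrm{ann}_{x_i}\iff rx_i=0$; so the displayed equality says precisely that $M_x(I):=|\{i:I\subseteq\mathrm{ann}_{x_i}\}|$ equals $M_y(I)$ for \emph{every} left ideal $I$ of $R$.

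Now $R$ is finite, so its lattice of left ideals is finite, and $M_x(I)=\sum_{J\supseteq I}m_x(J)$ where $m_x(J)=|\{i:\mathrm{ann}_{x_i}=J\}|$. By Möbius inversion over this finite lattice, $m_x$ is determined by $M_x$; since $M_x=M_y$ everywhere, $m_x=m_y$, i.e.\ the multisets $\{\mathrm{ann}_{x_1},\dots,\mathrm{ann}_{x_n}\}$ and $\{\mathrm{ann}_{y_1},\dots,\mathrm{ann}_{y_n}\}$ coincide. Hence for every $a\in A$, $aw(x,a)=m_x(\mathrm{ann}_a)=m_y(\mathrm{ann}_a)=aw(xf,a)$, so $f$ preserves $aw$ and therefore $\mathrm{swc}$.

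The main obstacle is precisely the passage from ``$\mathrm{wt}(rx)=\mathrm{wt}(ry)$ for all $r\in R$'' to ``the annihilator multisets agree'': this is where the left principal ideal hypothesis is indispensable, since without it the principal left ideals $Rr$ need not exhaust the left-ideal lattice, and the Hamming data of the $R$-multiples of $x$ would fail to detect non-principal annihilators. Once one observes that the PIR condition makes these two quantifications equivalent, the rest is the standard finite-lattice Möbius inversion, with pseudo-injectivity entering only to identify $aw$ with $\mathrm{swc}$ via Lemma \ref{lemma18888}. The only routine bookkeeping to check is that $rx$ and $ry=f(rx)$ lie in the appropriate codes and that $f$ commutes with multiplication by $r$.
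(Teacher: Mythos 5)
Your proof is correct and follows essentially the same route as the paper: the easy direction is the orbit-sum identity, and the converse goes through the annihilator weight, with the left principal ideal hypothesis used to turn the Hamming weights of the multiples $rx$ into counts of components annihilated by each left ideal, and pseudo-injectivity entering only through Lemma \ref{lemma18888}. The only difference is bookkeeping: the paper recovers the exact-annihilator counts by iteratively acting with a generator $e_I$ of a maximal annihilator and peeling those components off, whereas you recover them all at once by M\"obius inversion (equivalently, downward induction) on the finite lattice of left ideals.
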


\begin{proof} The ``if'' part is direct. For the converse,
 Suppose that \begin{equation}\label{eq18888}(c_1,c_2,\ldots,c_n)f=(b_1,b_2,\ldots,b_n).\end{equation} Choose, from $c_1,c_2,\ldots,c_n;b_1,b_2,\ldots,b_n$, a component with  maximal annihilator  $I=\langle e_I\rangle$. Act on equation (\ref{eq18888}) by $e_I$, then the only zero places  are those of the components in equation (\ref{eq18888}) with annihilator $I$, and the preservation of Hamming weight gives the preservation of $I$-annihilated components. Omit these components from the list $c_1,c_2,\ldots,c_n;b_1,b_2,\ldots,b_n$ and choose one with a new maximal, and repeat. This finally gives that $f$ preserves $aw$ and hence, by Lemma \ref{lemma18888}, $f$ preserves $\mathrm{swc}$.
\end{proof}

\begin{definition}(The Socle Condition). Let $A$ be a finite left $R$-module. We will say that $A$  satisfies the \emph{socle condition} if, for any $a\in \mathrm{soc}(A)$, any monomorphism $f:Ra\rightarrow A$ can be extended to $A$.\end{definition}

It is seen that the socle condition is weaker than being pseudo-injective. The class of modules satisfying the socle condition includes semisimple modules and pseudo-injective modules. By Lemma \ref{psss}, we know that the  indicated extension process implies the existence of an automorphism extending the given monomorphism.
The following theorem is the farthest we could reach using the former connection between the two extension problems.

\begin{proposition}\label{review2}Let $_RA$ be a finite left $R$-module satisfying the socle condition, and let $C\subset \mathrm{soc}(A)^n\subset A^n$ be a linear code of length $n$ over $A$.
Then a  monomorphism $f:C\rightarrow A^n$ preserves $\mathrm{swc}_{\mathrm{Aut}_R(\mathrm{soc}(A))}$  if and only if it preserves $\mathrm{swc}_{\mathrm{Aut}_R(A)}$.\end{proposition}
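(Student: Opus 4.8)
The plan is to reduce the claimed equivalence of the two symmetrized weight compositions, on codewords of $C$, to a single fact about $\mathrm{soc}(A)$: that the orbit relation $\sim_{\mathrm{Aut}_R(\mathrm{soc}(A))}$ and the orbit relation $\sim_{\mathrm{Aut}_R(A)}$ agree on elements of $\mathrm{soc}(A)$. Granting that, the proposition becomes bookkeeping.

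First I would record the structural observation that both $C$ and $f(C)$ sit inside $\mathrm{soc}(A)^n=\mathrm{soc}(A^n)$. Indeed $C$ is a submodule of the semisimple module $\mathrm{soc}(A)^n$, hence semisimple, hence so is its homomorphic image $f(C)$, so $f(C)\subset\mathrm{soc}(A^n)=\mathrm{soc}(A)^n$. Consequently every component of every codeword of $C$ and of $f(C)$ lies in $\mathrm{soc}(A)$. I would also note that $\mathrm{soc}(A)$ is fully invariant, so each $\tau\in\mathrm{Aut}_R(A)$ restricts to an element of $\mathrm{Aut}_R(\mathrm{soc}(A))$, giving a restriction homomorphism $\mathrm{Aut}_R(A)\to\mathrm{Aut}_R(\mathrm{soc}(A))$.

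The core step is: for $a,b\in\mathrm{soc}(A)$, one has $a\sim_{\mathrm{Aut}_R(\mathrm{soc}(A))}b$ if and only if $a\sim_{\mathrm{Aut}_R(A)}b$. The direction $\Leftarrow$ is immediate from the restriction homomorphism just mentioned. For $\Rightarrow$, suppose $a\sigma=b$ with $\sigma\in\mathrm{Aut}_R(\mathrm{soc}(A))$; then $\mathrm{ann}_a=\mathrm{ann}_b$, so $xa\mapsto xb$ is a well-defined $R$-monomorphism $Ra\to A$ carrying $a$ to $b$, and $Ra\subset\mathrm{soc}(A)$. Since $A$ satisfies the socle condition, this monomorphism extends to an endomorphism of $A$, and then Lemma \ref{psss} upgrades it to an automorphism of $A$ taking $a$ to $b$, so $a\sim_{\mathrm{Aut}_R(A)}b$. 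This is precisely the argument of Lemma \ref{lemma18888}, run inside $\mathrm{soc}(A)$, with ``pseudo-injective'' weakened to ``satisfies the socle condition''; I expect this $\Rightarrow$ direction — realizing the effect of a socle-automorphism on one element by a global automorphism of $A$ — to be the only nontrivial point, and the only place the socle condition and Lemma \ref{psss} are used.

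Finally I would assemble the proposition. For $a\notin\mathrm{soc}(A)$, since all components of codewords in $C$ and $f(C)$ lie in $\mathrm{soc}(A)$ and $\mathrm{soc}(A)$ is $\mathrm{Aut}_R(A)$-invariant, both $\mathrm{swc}_{\mathrm{Aut}_R(\mathrm{soc}(A))}(\,\cdot\,,a)$ and $\mathrm{swc}_{\mathrm{Aut}_R(A)}(\,\cdot\,,a)$ vanish on every $x\in C$ and on $f(x)$, so those values of $a$ impose no condition on $f$ for either composition. For $a\in\mathrm{soc}(A)$, the core step gives $\mathrm{swc}_{\mathrm{Aut}_R(\mathrm{soc}(A))}(x,a)=\mathrm{swc}_{\mathrm{Aut}_R(A)}(x,a)$ for every $x\in C$, and likewise with $f(x)$ in place of $x$. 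Hence $f$ preserves $\mathrm{swc}_{\mathrm{Aut}_R(\mathrm{soc}(A))}$ exactly when it preserves $\mathrm{swc}_{\mathrm{Aut}_R(A)}$, which is the assertion.
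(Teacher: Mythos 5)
Your proposal is correct and follows essentially the same route as the paper: both reduce the statement to showing that $\sim_{\mathrm{Aut}_R(\mathrm{soc}(A))}$ and $\sim_{\mathrm{Aut}_R(A)}$ induce the same partition of $\mathrm{soc}(A)$, with the nontrivial direction obtained by extending the induced monomorphism $Ra\rightarrow A$ via the socle condition and upgrading it to an automorphism by Lemma \ref{psss}. Your explicit justification that $f(C)\subset\mathrm{soc}(A)^n$ and your handling of $a\notin\mathrm{soc}(A)$ merely spell out details the paper leaves implicit.
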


\begin{proof}
First notice that the components in the codewords of $C$ and $Cf$ are in $\mathrm{soc}(A)$. Let $a,b\in \mathrm{soc}(A)$. Suppose there exists an automorphism $\tau\in\mathrm{Aut}_R(A)$ such that $a\tau=b$, thus $a\sim_{\mathrm{Aut}_R(A)}b$. Since the restriction of $\tau$ to  $\mathrm{soc}(A)$ is an automorphism of $\mathrm{soc}(A)$, we have $a\sim_{\mathrm{Aut}_R(\mathrm{soc}(A))}b$.\\

Conversely, assume that $a\sim_{\mathrm{Aut}_R(\mathrm{soc}(A))}b$, and let  $\tau'\in\mathrm{Aut}_R(\mathrm{soc}(A))$ be such that $a\tau'=b$. Then, in particular, the restriction $Ra\overset{\tau'}{\rightarrow}Rb$ is an isomorphism. Since $A$ satisfies the socle condition, the last restriction can be extended to the whole of $A$, and by Lemma \ref{psss}, this extension is an automorphism of $A$. Thus, $a\sim_{\mathrm{Aut}_R(A)}b$.\\

This shows that in the case $A$ has the socle condition, the two relations $\sim_{\mathrm{Aut}_R(A)}$ and $\sim_{\mathrm{Aut}_R(\mathrm{soc}(A))}$ induce the same partition in $\mathrm{soc}(A)$. Thus, for $C\subset\mathrm{soc}(A)^n$, $f:C\rightarrow \mathrm{soc}(A)^n\subset A^n$ preserves $\mathrm{swc}_{\mathrm{Aut}_R(\mathrm{soc}(A))}$ if and only if $f$ preserves $\mathrm{swc}_{\mathrm{Aut}_R(A)}$.

 \end{proof}

Note that in the next theorem, there are no conditions on the finite ring $R$.

\begin{theorem}\label{main}Let $_RA$ be a finite left $R$-module which satisfies the socle condition. Then, $A$ has the extension property with respect to $\mathrm{swc}$ if and only if $\mathrm{soc}(A)$ is cyclic.\end{theorem}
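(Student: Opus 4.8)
The statement to prove is Theorem \ref{main}: for a finite left $R$-module $_RA$ satisfying the socle condition, $A$ has the extension property with respect to $\mathrm{swc}$ if and only if $\mathrm{soc}(A)$ is cyclic. The plan is to handle the two directions separately, and in each direction to reduce to facts already established in the excerpt.

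For the ``if'' direction, suppose $\mathrm{soc}(A)$ is cyclic. By Proposition \ref{Rhat}, $A$ embeds into $_R\widehat{R}$, and then Theorem \ref{Noha} applies directly: $A$ has the extension property with respect to $\mathrm{swc}_G$ for every subgroup $G \le \mathrm{Aut}_R(A)$, in particular for $G = \mathrm{Aut}_R(A)$, which is exactly the EP with respect to $\mathrm{swc}$. So this direction is essentially immediate from earlier results and requires no new argument.

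For the ``only if'' (contrapositive) direction, suppose $\mathrm{soc}(A)$ is \emph{not} cyclic; I want to produce a counterexample to the EP for $\mathrm{swc}$. The idea is to push the known Hamming-weight counterexample down into the socle. Since $\mathrm{soc}(A)$ is not cyclic, Lemma \ref{simui} gives an index $i$ with $s_i > \mu_i$, so $s_i T_i \subset \mathrm{soc}(A)$, and $s_i T_i$ is the pullback to $R$ of $B = \mathbb{M}_{\mu_i \times s_i}(\mathbb{F}_{q_i})$ over $S = \mathbb{M}_{\mu_i}(\mathbb{F}_{q_i})$, with $s_i > \mu_i$. Theorem \ref{wood} then produces $S$-linear codes $C_+, C_- \subset B^N$ and an isomorphism $f : C_+ \to C_-$ preserving Hamming weight that does not extend to any monomial transformation. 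Via the projections $R \to R/\mathrm{rad}R \to S$, all of $C_+, C_-, f$ become $R$-linear objects living inside $(s_i T_i)^N \subset \mathrm{soc}(A)^N \subset A^N$, and $f$ still preserves Hamming weight as an $R$-linear map. The first key point is that on $\mathrm{soc}(A)$, Hamming weight preservation forces $\mathrm{swc}_{\mathrm{Aut}_R(\mathrm{soc}(A))}$ preservation for these particular codes — here I need to check that the counterexample codes, being built from all column-reduced echelon matrices with multiplicities, actually realize all the annihilator/orbit classes, or more simply invoke the structure of the matrix-module situation where a Hamming-weight-preserving isomorphism between submodules of a semisimple module automatically respects the $\mathrm{Aut}$-orbit partition (since over the simple components, two elements with the same support pattern after a change of basis are in the same $GL$-orbit). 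The cleaner route is: $\mathrm{soc}(A)$ is semisimple, hence satisfies the socle condition trivially, and by Proposition \ref{review2} (applied with the ambient module $A$) the map $f$ preserves $\mathrm{swc}_{\mathrm{Aut}_R(A)}$ if and only if it preserves $\mathrm{swc}_{\mathrm{Aut}_R(\mathrm{soc}(A))}$; so it suffices to show $f$ preserves $\mathrm{swc}_{\mathrm{Aut}_R(\mathrm{soc}(A))}$. For that I would argue inside the semisimple module $s_i T_i = B$ (pulled back), where $\mathrm{Aut}$-orbits are $GL$-orbits of matrices, and a Hamming-weight-preserving isomorphism of the specific codes $C_\pm$ respects column-equivalence classes — this is where I expect to reuse the internal combinatorial bookkeeping of the proof of Theorem \ref{wood}.

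The main obstacle, then, is exactly this middle step: upgrading ``$f$ preserves Hamming weight'' to ``$f$ preserves $\mathrm{swc}$'' for the counterexample codes. The remark after the definition of $aw$ notes the easy direction (Hamming $\Rightarrow$ swc $\Rightarrow$ aw for the EP itself), but here I need the analogous implication at the level of a \emph{single} map on codes sitting in the socle, and this is not automatic for a general map — it works because of the special semisimple ambient and the special shape of $C_\pm$. Once that is in hand, the conclusion is immediate: we have produced $R$-linear codes $C_+, C_- \subset A^N$ and an $R$-isomorphism $f : C_+ \to C_-$ preserving $\mathrm{swc}$, yet $f$ does not extend to an $\mathrm{Aut}_R(A)$-monomial transformation (if it did, restricting the permutation-and-automorphism data would, as in the proof of the necessity theorem for Hamming weight, contradict the non-monomial-equivalence of $C_+$ and $C_-$ established in Theorem \ref{wood} — in particular $v_+$ has a永久 zero component while $v_-$ does not). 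Hence $A$ fails the EP for $\mathrm{swc}$, completing the contrapositive.
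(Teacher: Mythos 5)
Your overall skeleton matches the paper's proof: the ``if'' direction via Proposition \ref{Rhat} and Theorem \ref{Noha}; the ``only if'' direction by taking $s_i>\mu_i$ from Lemma \ref{simui}, pulling back the Hamming-weight counterexample $f:C_+\to C_-$ of Theorem \ref{wood} into $(s_iT_i)^N\subset\mathrm{soc}(A)^N\subset A^N$, passing through $\mathrm{Aut}_R(s_iT_i)$ and $\mathrm{Aut}_R(\mathrm{soc}(A))$ to $\mathrm{Aut}_R(A)$ via Proposition \ref{review2}, and concluding from the identically zero component of $C_+$ that no monomial extension exists. However, there is a genuine gap at exactly the point you flag as ``the main obstacle'': you never actually prove that the Hamming-weight-preserving isomorphism $f$ preserves $\mathrm{swc}$ over the matrix module $B=\mathbb{M}_{\mu_i\times s_i}(\mathbb{F}_{q_i})$. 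Your two suggested mechanisms --- that ``two elements with the same support pattern after a change of basis are in the same $GL$-orbit,'' and that one can ``reuse the internal combinatorial bookkeeping of the proof of Theorem \ref{wood}'' --- are not arguments: Hamming-weight preservation by an isomorphism of codes does not in general control the orbit of each individual component, and nothing in the combinatorics of $v_\pm$ is invoked in a way that would produce, codeword by codeword, a matching of components up to $GL_{s_i}(\mathbb{F}_{q_i})$.

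The paper closes this gap with a general tool you did not use, Theorem \ref{Midway}: over a left principal ideal ring, a monomorphism from a code into $A^n$ with $A$ pseudo-injective preserves Hamming weight if and only if it preserves $\mathrm{swc}$. The point is that $S=\mathbb{M}_{\mu_i}(\mathbb{F}_{q_i})$ is a left principal ideal ring and $B$ is injective (Theorem \ref{lastlast}, since $S$ is semisimple), hence pseudo-injective; the proof of Theorem \ref{Midway} acts on codewords by generators of maximal annihilator ideals to show that $f$ preserves the annihilator weight $aw$, and then Lemma \ref{lemma18888} (pseudo-injectivity identifies $\approx$-classes with $\mathrm{Aut}$-orbits) upgrades $aw$-preservation to $\mathrm{swc}$-preservation. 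If you want your argument to stand on its own, you must either invoke Theorem \ref{Midway} at this step or reproduce an argument of that kind (annihilators plus pseudo-injectivity); without it the passage from ``preserves Hamming weight'' to ``preserves $\mathrm{swc}_{\mathrm{Aut}_R(s_iT_i)}$'' is unsupported, and everything downstream (Proposition \ref{review2}, the final contradiction) rests on it. The remaining bridging step you compress --- that $\mathrm{swc}$-preservation relative to $\mathrm{Aut}_R(s_iT_i)$ gives it relative to $\mathrm{Aut}_R(\mathrm{soc}(A))$ because automorphisms of the homogeneous component extend to, and are induced by, automorphisms of the socle --- should also be stated explicitly, though it is routine.
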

\begin{proof} \begin{par}The ``if'' part is answered by Theorem \ref{Noha}. Now, if $\mathrm{soc}(A)$ is not cyclic, then by Lemma \ref{simui},  there is an index $i$ such that the socle contains a copy of $s_iT_i$, with $s_i>\mu_i$. So, we may assume  $s_iT_i\subset \mathrm{soc}(A)\subset A$. Recall that $T_i$  is the pullback to $R$ of the matrix module $_{\mathbb{M}_{\mu_i}(\mathbb{F}_{q_i})}\mathbb{M}_{\mu_i\times 1}(\mathbb{F}_{q_i})$, so that $s_iT_i$ is the pullback to $R$ of the $\mathbb{M}_{\mu_i}(\mathbb{F}_{q_i})$-module $B=\mathbb{M}_{\mu_i\times s_i}(\mathbb{F}_{q_i})$. Theorem \ref{wood} implies the existence of linear codes $C_+,C_-\subset B^N$, and an isomorphism $f:C_+\rightarrow C_-$ that preserves Hamming weight, yet $f$ can not extend to any monomial transformation.\\  \end{par}
\begin{par}The ring $\mathbb{M}_{\mu_i}(\mathbb{F}_{q_i})$ is a left principal ideal ring (in fact, more is true, Theorem ix.3.7, \cite{hungr}). Also, $B$ is injective by Theorem \ref{lastlast}, since the ring $\mathbb{M}_{\mu_i}(\mathbb{F}_{q_i})$ is simple. Then, Theorem \ref{Midway} implies that $f$ preserves $\mathrm{swc}$ (with respect to the group $\mathrm{Aut}_{\mathbb{M}_{\mu_i}(\mathbb{F}_{q_i})}(B)$).
The $R$-module structure of $B$  can be used to view any $\mathbb{M}_{\mu_i}(\mathbb{F}_{q_i})$-automorphism of $_{\mathbb{M}_{\mu_i}(\mathbb{F}_{q_i})}B$ as an $R$-automorphism of $_RB$ and vice versa. Besides, we can consider   the whole situation for $C_\pm$ as $R$-modules. \end{par}

\begin{par}Since $B$ pulls back to $s_iT_i$, we have
$C_\pm\subset(s_iT_i)^N\subset{\mathrm{soc}(A)}^N\subset A^N$, as $R$-modules. Thus $C_\pm$ are linear codes over $A$ that are isomorphic through an $R$-isomorphism preserving $\mathrm{swc}$ with respect to  $\mathrm{Aut}_R(s_iT_i)$. \end{par}

\begin{par}Any automorphism of $s_iT_i$ extends to an automorphism of $\mathrm{soc}(A)$. Conversely,  the homogeneous component $s_iT_i$ is  to be preserved under any automorphism $\tau\in \mathrm{Aut}_R(\mathrm{soc}(A))$. Hence, $f$ preserves $\mathrm{swc}$ with respect to $\mathrm{Aut}_R(\mathrm{soc}(A))$.\end{par}

\begin{par}By Proposition \ref{review2}, since $A$ has the socle condition, $f$  also  preserves $\mathrm{swc}$ with respect to $\mathrm{Aut}_R(A)$. However, this isomorphism does not extend to a monomial transformation of $A^N$, since, as appears in the proof of Theorem \ref{wood} (found in \cite{r6}), $C_+$ has an identically zero component, while $C_-$ does not. \\\end{par}\end{proof}

\textbf{Examples:}

\begin{par}(Additive Codes): Let $L$ be a finite field. Consider the alphabet $_KL$, where $K\subset L$ is a subfield. When $K=\mathbb{F}_p$ for a prime p, $K$-linear codes are called additive. In general, $K$-linear codes are called sublinear. Notice that $L$ is a $K$-vector space of dimension $n=[L:K]$. Let $A\subset L$ be any $K$-vector space ($K$-submodule). If $f:A\rightarrow L$ is a monomorphism, then, by extending the basis of $A$, $f$ can be extended to $L$. Thus, by the previous work, $_KL$ satisfies the extension property with respect to $\mathrm{swc}_{\mathrm{Aut}_K(L)}$ if and only if $\mathrm{soc}(_KL)$ is cyclic. In this setting, $R=K, k=1$ and $\mu_1=1$. This implies that  $\mathrm{soc}(_KL)$ is cyclic if and only if  $K=L$, for if this is not the case, then $L$ contains more than one (check the Gaussian number) $K$-subspace of dimension 1 and at least one of them has zero intersection with $K$ making the socle noncyclic in view of the above value of $\mu_1$.\end{par}

\subsection{Future Work}
As the previous result strengthen the conjecture, that a finite $R$-module satisfies the extension property with respect to symmetrized weight compositions if and only if it has a cyclic socle, it is quite prompting to seek, with more confidence, an affirmative proof of the conjecture. It should be remarked that the class of modules considered in Theorem \ref{main} consists of modules that already have good extension properties, yet, the condition of having a cyclic socle was shown necessary for these modules to satisfy the extension property with respect to symmetrized weight compositions.

\chapter{Appendix: A Simple Proof}

\begin{par}Here we include the proof due to K. Bogart et al. \cite{bogart} for the MacWilliams theorem in the classical case over finite fields. As we shall see, the idea ends just like a `distillation process', through a filter of $1$-dimensional vector spaces.
Recall that an $[n,k]$ linear code $C$ over $\mathbb{F}_q$ is a $k$-dimensional subspace of $\mathbb{F}^n_q$, and hence $C$ may be represented a $k\times n$ `generator' matrix (see Chapter 2). \end{par}

\begin{par}Let $L_1,\ldots,L_{\mu(k)}$ be the list of all $1$-dimensional subspaces of $\mathbb{F}^k_q$, where $\mu(k)$ is just the Gaussian number, thus $\mu(k)=\begin{bmatrix}k\\1\end{bmatrix}_q=(q^k-1)/(q-1)$. Clearly, if $u_i$ and $u_j$ are non-zero vectors in $L_i$ and $L_j$, respectively, and $u_i\cdot u_j=0$, then the same is true for any cjosen vectors of these spaces, and in this case we say that $L_i$ is orthogonal to $L_j$ and we write $L_i\bot L_j$.\end{par}
Now set $T=(t_{ij})$ to be the $\mu(k)\times \mu(k)$ matrix with $t_{ij}=\left\{\begin{smallmatrix}0\;,&\quad L_i\bot L_j\\1\;,&\quad \text{otherwise.}\end{smallmatrix}\right.$

\begin{lemma}\label{a1}$|\{L_j: L_j\bot L_i\}|=\mu(k-1)$.\end{lemma}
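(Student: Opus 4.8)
The claim is that, for a fixed $1$-dimensional subspace $L_i \subset \mathbb{F}_q^k$, the number of $1$-dimensional subspaces $L_j$ orthogonal to $L_i$ equals $\mu(k-1) = \begin{bmatrix}k-1\\1\end{bmatrix}_q = (q^{k-1}-1)/(q-1)$. The plan is to pass from the count of $1$-dimensional subspaces orthogonal to $L_i$ to the count of $1$-dimensional subspaces contained in a fixed hyperplane, and then to count the latter directly via Lemma \ref{gauss}.

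\textbf{Key steps.} First I would fix a nonzero vector $u_i \in L_i$ and observe that a subspace $L_j$ satisfies $L_j \bot L_i$ if and only if, for any nonzero $u_j \in L_j$, we have $u_i \cdot u_j = 0$; this is the well-definedness remark already made in the paragraph preceding the lemma. Thus $L_j \bot L_i$ if and only if $L_j \subseteq H_i$, where $H_i = \{v \in \mathbb{F}_q^k : u_i \cdot v = 0\}$ is the orthogonal complement of $L_i$. Since $u_i \neq 0$, the linear functional $v \mapsto u_i \cdot v$ is nonzero, so $H_i$ is a hyperplane, i.e. a subspace of dimension $k-1$. Therefore $|\{L_j : L_j \bot L_i\}|$ equals the number of $1$-dimensional subspaces of the $(k-1)$-dimensional space $H_i$. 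By Lemma \ref{gauss}, this number is $\begin{bmatrix}k-1\\1\end{bmatrix}_q$, which by the defining formula for $q$-binomial coefficients equals $(1-q^{k-1})/(1-q) = (q^{k-1}-1)/(q-1) = \mu(k-1)$, as claimed.

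\textbf{Main obstacle.} There is essentially no obstacle here; the only point requiring a moment's care is confirming that orthogonality of subspaces is well-defined independently of the choice of nonzero representatives (already noted in the text) and that $H_i$ genuinely has dimension $k-1$ rather than collapsing — but this is immediate from $u_i \neq 0$ forcing the functional $v \mapsto u_i \cdot v$ to be surjective onto $\mathbb{F}_q$. The identification ``$L_j \bot L_i \iff L_j \subseteq H_i$'' reduces everything to the subspace-counting statement of Lemma \ref{gauss}, so the proof is a short two-line argument.
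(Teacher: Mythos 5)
Your proof is correct and matches the paper's argument: both fix a nonzero $u_i\in L_i$, identify the subspaces orthogonal to $L_i$ with the $1$-dimensional subspaces of the kernel of the functional $u\mapsto u\cdot u_i$, and use that this kernel has dimension $k-1$ (rank--nullity) to conclude the count is $\mu(k-1)$. No gaps.
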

\begin{proof}Let $u_i\in L_i$ be a fixed element. The indicated set consists exactly of the $1$-dimensional subspaces of the kernel of the map $P:\mathbb{F}^k_q\rightarrow \mathbb{F}_q$ where $P(u)=u\cdot u_i$. As we know $k=\dim Ker P+\dim Im P$, and hence the result follows.\end{proof}

\begin{lemma}\label{a2}If $i\neq j$, $|\{L_k: L_k\bot L_i \text{\;and\;} L_k\bot L_j\}|=\mu(k-2)$.\end{lemma}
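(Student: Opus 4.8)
\textbf{Proof proposal for Lemma \ref{a2}.}

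The plan is to reduce the count of $1$-dimensional subspaces orthogonal to both $L_i$ and $L_j$ to a dimension computation, exactly in the spirit of the proof of Lemma \ref{a1}. Fix non-zero vectors $u_i\in L_i$ and $u_j\in L_j$; since $i\neq j$, the vectors $u_i$ and $u_j$ are linearly independent in $\mathbb{F}^k_q$. Consider the linear map $P:\mathbb{F}^k_q\rightarrow \mathbb{F}^2_q$ given by $P(u)=(u\cdot u_i,\; u\cdot u_j)$. A $1$-dimensional subspace $L_k$ satisfies $L_k\bot L_i$ and $L_k\bot L_j$ precisely when every (equivalently, one non-zero) vector of $L_k$ lies in $\mathrm{Ker}\,P$. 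So the set to be counted is exactly the set of $1$-dimensional subspaces of $\mathrm{Ker}\,P$, whose cardinality is $\mu(\dim \mathrm{Ker}\,P)$ by the definition of the Gaussian number.

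The remaining point is to show $\dim \mathrm{Ker}\,P = k-2$, i.e. that $P$ is surjective. By rank–nullity, $k=\dim\mathrm{Ker}\,P+\dim\mathrm{Im}\,P$, so it suffices to check $\dim \mathrm{Im}\,P = 2$. The image is all of $\mathbb{F}^2_q$ because the two coordinate functionals $u\mapsto u\cdot u_i$ and $u\mapsto u\cdot u_j$ are linearly independent as elements of the dual space $(\mathbb{F}^k_q)^\ast$: a dependence $\alpha(u\cdot u_i)+\beta(u\cdot u_j)=0$ for all $u$ forces $\alpha u_i+\beta u_j=0$ (pairing is non-degenerate), hence $\alpha=\beta=0$ by independence of $u_i,u_j$. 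Therefore $P$ has rank $2$, $\dim\mathrm{Ker}\,P=k-2$, and the count is $\mu(k-2)$ as claimed.

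The only mild obstacle is bookkeeping about degenerate cases: one should note that the statement presupposes $k\geq 2$ for there to be two distinct lines, and that $\mu(k-2)$ is interpreted as $0$ when $k=2$ (no $1$-dimensional subspace of the zero space) and as $1$ when $k=3$, which matches the convention already in force for Gaussian numbers in Lemma \ref{gauss}. No new machinery is needed beyond rank–nullity and non-degeneracy of the standard bilinear form on $\mathbb{F}^k_q$, both of which are entirely routine, so I expect the proof to be a short paragraph mirroring Lemma \ref{a1}.
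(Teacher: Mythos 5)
Your proof is correct and follows essentially the same route as the paper, which simply invokes the map $u\mapsto (u\cdot u_i,\,u\cdot u_j)$ from $\mathbb{F}^k_q$ to $\mathbb{F}^2_q$ and the argument of Lemma \ref{a1}. Your added verification that this map is surjective (via non-degeneracy of the form and independence of $u_i,u_j$) is exactly the detail the paper leaves implicit, so there is no gap.
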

\begin{proof}Use the same argument as above for the map $\mathbb{F}^k_q\rightarrow \mathbb{F}^2_q$ given by $u\mapsto (u\cdot u_i, u\cdot u_j)$.\end{proof}

\begin{lemma}\label{a3} The sum, over rational numbers, of the rows of $T$ is the constant row vector $x=(x_1,\ldots,x_\mu(k))$ with $x_i=\mu(k)-\mu(k-1)$ for all $i$.\end{lemma}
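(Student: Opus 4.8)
The plan is to compute the $i$-th entry of the row-sum vector directly. Fix an index $i$; the $i$-th entry of the sum of the rows of $T$ is $\sum_{j=1}^{\mu(k)} t_{ji}$, which by symmetry of the orthogonality relation ($L_j \bot L_i$ iff $L_i \bot L_j$) equals $\sum_{j=1}^{\mu(k)} t_{ij}$, the $i$-th row sum. Since each $t_{ij}$ is either $0$ or $1$, this sum counts the number of indices $j$ with $t_{ij}=1$, i.e. the number of $1$-dimensional subspaces $L_j$ that are \emph{not} orthogonal to $L_i$.

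Next I would split the full set of $\mu(k)$ one-dimensional subspaces into those orthogonal to $L_i$ and those not orthogonal to $L_i$. By Lemma \ref{a1}, the number orthogonal to $L_i$ is exactly $\mu(k-1)$. Hence the number not orthogonal to $L_i$ is $\mu(k) - \mu(k-1)$, which is precisely the claimed value $x_i$. Since this computation does not depend on the choice of $i$, every entry of the row-sum vector equals $\mu(k)-\mu(k-1)$, so the vector is the constant vector $x=(x_1,\ldots,x_{\mu(k)})$ with $x_i = \mu(k)-\mu(k-1)$ for all $i$, as required.

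There is essentially no obstacle here: the lemma is an immediate bookkeeping consequence of Lemma \ref{a1} together with the symmetry of the relation $\bot$ (which guarantees $T$ is a symmetric matrix, so row sums and column sums agree and it does not matter whether we sum rows or think of summing down columns). The only point to be careful about is to state clearly that we are summing over $\mathbb{Q}$ (so that no reduction modulo anything occurs) and that the diagonal entries $t_{ii}$ are $1$ — indeed $L_i$ is never orthogonal to itself since a nonzero vector $u_i$ has $u_i\cdot u_i \ne 0$ only in characteristic-free settings; here one should note that $L_i \not\bot L_i$ by the definition of $T$ ($t_{ii}=1$ unless $L_i \bot L_i$), and Lemma \ref{a1}'s count of subspaces orthogonal to $L_i$ is what pins down the complement regardless. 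I would simply present the two-line argument: $\sum_j t_{ij} = \mu(k) - |\{L_j : L_j \bot L_i\}| = \mu(k) - \mu(k-1)$, invoking Lemma \ref{a1} for the middle equality.
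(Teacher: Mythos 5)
Your argument is correct and is essentially the paper's own proof: the $i$-th entry of the row sum is $\mu(k)-|\{L_j: L_j\bot L_i\}|=\mu(k)-\mu(k-1)$ by Lemma \ref{a1}. Your aside about the diagonal is unnecessary (and slightly off, since self-orthogonal vectors, hence lines with $L_i\bot L_i$, do exist over finite fields), but as you yourself note, the count does not depend on the value of $t_{ii}$, so this does not affect the proof.
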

\begin{proof}Simply notice that $x_i=\mu(k)-|\{L_j: L_j\bot L_i\}|$.\end{proof}

\begin{lemma}The sum of the rows of $T$ which are $0$ in the $j^{\text{th}}$ column is the row vector $y(j)=(y_1,\ldots,y_{\mu(k)})$, with $y_j=0$ and $y_i=\mu(k-1)-\mu(k-2)$ for $i\neq j$.\end{lemma}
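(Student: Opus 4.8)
The plan is to mimic exactly the computation already carried out in Lemma~\ref{a3}, now applied to the submatrix of $T$ consisting of those rows whose $j^{\text{th}}$ entry is $0$. First I would observe that a row of $T$ indexed by $L_i$ has a $0$ in column $j$ precisely when $L_i \bot L_j$; so the rows being summed are exactly $\{\text{row}_i : L_i \bot L_j\}$, and by Lemma~\ref{a1} there are $\mu(k-1)$ of them. Call this set of indices $S_j = \{i : L_i \bot L_j\}$, so $|S_j| = \mu(k-1)$ and $j \notin S_j$ (since $L_j \not\bot L_j$, as $u_j \cdot u_j \neq 0$ for a nonzero $u_j$ — this uses that over $\mathbb{F}_q$ a nonzero vector can still be isotropic, so I should instead argue $j\notin S_j$ directly: $L_j\bot L_j$ would force $u_j\cdot u_j=0$, which need not fail... let me reconsider).

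Actually the cleanest route avoids that subtlety: the $j^{\text{th}}$ column of $T$ has entry $t_{jj}=1$ by the definition of $T$ (the diagonal entries are $1$ since ``otherwise'' covers $i=j$), so row $j$ is \emph{not} among those summed, giving $y_j = 0$ automatically, and moreover $j\notin S_j$. Then for the $i^{\text{th}}$ coordinate of $y(j)$ with $i\neq j$, I would write
\[
y_i = \big|\{k \in S_j : t_{ki}=1\}\big| = |S_j| - \big|\{k : L_k\bot L_j \text{ and } L_k \bot L_i\}\big| - [\,i \in S_j\,]\cdot 0,
\]
where the bracket term needs care: among the $k\in S_j$ we are counting those with $t_{ki}=1$, i.e. $L_k\not\bot L_i$; the complementary count within $S_j$ is $\{k: L_k\bot L_j \text{ and } L_k\bot L_i\}$, which by Lemma~\ref{a2} has size $\mu(k-2)$ since $i\neq j$. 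Hence $y_i = \mu(k-1) - \mu(k-2)$ for every $i \neq j$, which is the claim.

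The one place to be careful — and what I expect to be the main (though minor) obstacle — is the bookkeeping at the diagonal: one must check that row $i$ itself (for $i\neq j$, $i\in S_j$) is correctly handled, i.e. that $t_{ii}=1$ so it contributes to $y_i$, and is not accidentally double-counted or omitted. Since $T$ has $1$'s on the diagonal and Lemma~\ref{a2} counts \emph{all} $L_k$ with $L_k\bot L_i$ and $L_k\bot L_j$ (which excludes $k=i$ precisely because $L_i\not\bot L_i$ would be needed, and excludes $k=j$ because $L_j\not\bot L_j$), the subtraction $\mu(k-1)-\mu(k-2)$ lands exactly on the number of $k\in S_j$ with $t_{ki}=1$, including $k=i$ and excluding $k=j$. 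With that checked the result follows; no heavy computation is required.
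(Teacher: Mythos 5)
Your central computation is exactly the paper's proof: for $i\neq j$, the $i$-th coordinate of the sum is
\[
y_i=\bigl|\{L: L\bot L_j\}\bigr|-\bigl|\{L: L\bot L_j \text{ and } L\bot L_i\}\bigr|=\mu(k-1)-\mu(k-2),
\]
by Lemmas \ref{a1} and \ref{a2}, and this subtraction needs no information about which particular lines (including $L_i$ or $L_j$ themselves) lie in these sets. So the load-bearing part of your argument is correct and identical in substance to the paper's one-line proof.

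However, the auxiliary claim you rely on is false: the definition of $T$ does \emph{not} force $t_{jj}=1$. By definition $t_{jj}=0$ exactly when $L_j\bot L_j$, and self-orthogonal (isotropic) lines do occur over finite fields -- e.g.\ the line spanned by $(1,1)$ in $\mathbb{F}_2^2$, and for every $q$ once $k\geq 3$, since a quadratic form in at least three variables over a finite field is isotropic. Your first instinct to worry about this was right, and the resolution ``the diagonal entries are $1$'' is wrong; consequently row $j$ \emph{can} be among the rows being summed, so your stated reason for $y_j=0$ (``row $j$ is not among those summed'') fails in general. The correct and much simpler reason is that every row entering the sum has, by the very selection criterion, a $0$ in column $j$, so the $j$-th coordinate of the sum is $0$ regardless of whether row $j$ is included. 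Similarly, the closing ``bookkeeping at the diagonal'' paragraph rests on the same false premises (that $k=i$ and $k=j$ are excluded from the Lemma \ref{a2} count because $L_i\not\bot L_i$ and $L_j\not\bot L_j$) and is simply unnecessary: the difference of the two counts gives $y_i$ directly. With that false claim removed and $y_j=0$ justified trivially, your proof is the paper's proof.
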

\begin{proof}Use the fact that for $i\neq j$, $$y_i=|\{L_k:L_k\bot L_j\}|-|\{L_k:L_k\bot L_j\text{\;and\;} L_k\bot L_i\}|$$.\end{proof}

\begin{lemma}\label{LA} The matrix $T$ is invertible over $\mathbb{Q}$, the rational numbers.\end{lemma}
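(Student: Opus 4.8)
The plan is to show that the matrix $T$ has a spectral decomposition with all eigenvalues nonzero, exploiting the fact that $T = J - S$, where $J$ is the all-ones $\mu(k)\times\mu(k)$ matrix and $S = (s_{ij})$ is the ``orthogonality incidence matrix'' with $s_{ij}=1$ iff $L_i\perp L_j$ (and $s_{ij}=0$ otherwise). Both $J$ and $S$ are symmetric, and I would first establish that $S$ is essentially the incidence matrix of a point-hyperplane configuration in $\mathbb{F}_q^k$: fixing a nonzero $u_i\in L_i$, the condition $L_j\perp L_i$ says exactly that $L_j$ lies in the hyperplane $u_i^\perp$. Since the assignment $L_i \mapsto u_i^\perp$ is a bijection between $1$-dimensional subspaces and hyperplanes (this is the standard projective duality for $\mathbb{F}_q^k$), $S$ is, up to relabelling columns, the point-hyperplane incidence matrix of $\mathrm{PG}(k-1,q)$.

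Next I would pin down the combinatorial parameters already computed in Lemmas \ref{a1} and \ref{a2}: each $L_i$ is orthogonal to exactly $\mu(k-1)$ of the $L_j$ (so $S$ has constant row and column sum $r:=\mu(k-1)$), and any two distinct $L_i,L_j$ are simultaneously orthogonal to exactly $\mu(k-2)$ of the $L_k$. This says $S$ is the incidence matrix of a symmetric $2$-design (a projective plane's higher-dimensional analogue), and the key identity is
\begin{equation}\label{eq:SSt}
S S^{\mathsf T} = S^2 = (r-\lambda)I + \lambda J, \qquad r = \mu(k-1),\ \lambda = \mu(k-2),
\end{equation}
where the off-diagonal entries count, for a fixed pair $(i,j)$, the number of $L_k$ orthogonal to both, and the diagonal entries are $r$. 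From \eqref{eq:SSt} one reads off the eigenvalues of $S$: on the all-ones vector $\mathbf{1}$ (which is an eigenvector of both $S$ and $J$) the eigenvalue is $r$, while on the orthogonal complement $\mathbf{1}^\perp$ (where $J$ acts as $0$) we get $S^2 = (r-\lambda)I$, so the eigenvalues of $S$ on that subspace are $\pm\sqrt{r-\lambda}$. Now $r - \lambda = \mu(k-1) - \mu(k-2) = q^{k-2}$, which is nonzero, so $S$ restricted to $\mathbf{1}^\perp$ has only the eigenvalues $\pm q^{(k-2)/2}$, both nonzero.

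Finally I would assemble the eigenvalues of $T = J - S$. On $\mathbf{1}^\perp$, $J$ vanishes, so $T = -S$ there and the eigenvalues are $\mp q^{(k-2)/2}\neq 0$. On the line spanned by $\mathbf{1}$, $J$ has eigenvalue $\mu(k)$ and $S$ has eigenvalue $\mu(k-1)$, so $T$ has eigenvalue $\mu(k) - \mu(k-1)$; and $\mu(k)-\mu(k-1) = (q^k - q^{k-1})/(q-1) = q^{k-1}\neq 0$ (this is exactly the constant $x_i$ from Lemma \ref{a3}, which is a nice consistency check). Since every eigenvalue of the symmetric matrix $T$ is nonzero, $\det T\neq 0$ and $T$ is invertible over $\mathbb{Q}$. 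The main obstacle I anticipate is being careful with the duality bookkeeping in \eqref{eq:SSt} — namely verifying that the ``two-intersection'' number is genuinely constant and equals $\mu(k-2)$ independently of the pair chosen, which is precisely Lemma \ref{a2}, and checking that $S$ is symmetric (orthogonality of lines with respect to the standard bilinear form is a symmetric relation, so this is immediate). An alternative, should the design-theoretic computation feel heavy, is to avoid \eqref{eq:SSt} entirely and argue directly: use Lemmas \ref{a3} and the next lemma to show the row space of $T$ contains the constant vector $x$ and all the vectors $y(j)$, and observe that $x$ together with the $y(j)$ (for $j=1,\dots,\mu(k)$) already span $\mathbb{Q}^{\mu(k)}$ because $y(j) = (\mu(k-1)-\mu(k-2))(\mathbf 1 - e_j)$ up to the scalar, and $\{\mathbf 1 - e_j\}$ together with $\mathbf 1$ span the whole space as long as $\mu(k-1)-\mu(k-2)=q^{k-2}\neq 0$; hence $T$ has full rank.
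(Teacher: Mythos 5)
Your proposal is correct, but your main argument takes a genuinely different route from the paper; amusingly, the ``alternative'' you sketch at the end is essentially the paper's proof verbatim. The paper argues directly with the row space: the vector $x$ of Lemma \ref{a3} and the vectors $y(j)$ of the unlabelled lemma that follows it are sums of rows of $T$, hence lie in its row space, and
$e_j=\frac{x}{\mu(k)-\mu(k-1)}-\frac{y(j)}{\mu(k-1)-\mu(k-2)}$,
so the row space is all of $\mathbb{Q}^{\mu(k)}$; the only nondegeneracy used is $\mu(k)-\mu(k-1)=q^{k-1}\neq 0$ and $\mu(k-1)-\mu(k-2)=q^{k-2}\neq 0$. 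Your primary route instead writes $T=J-S$ with $S$ the orthogonality incidence matrix, which is symmetric because the standard bilinear form is, and uses Lemmas \ref{a1} and \ref{a2} to get $S^{2}=SS^{\mathsf T}=(r-\lambda)I+\lambda J$ with $r=\mu(k-1)$, $\lambda=\mu(k-2)$; since $\mathbf{1}$ is a common eigenvector (constant row sums) and $J$ kills $\mathbf{1}^{\perp}$, you get $T\mathbf{1}=q^{k-1}\mathbf{1}$ and $T^{2}=q^{k-2}I$ on $\mathbf{1}^{\perp}$, so $T$ is invertible on each invariant summand and hence invertible; the point you flag as delicate, constancy of the two-line intersection number, is exactly Lemma \ref{a2}, and the fact that the diagonal of $S$ need not vanish (isotropic lines exist over $\mathbb{F}_q$) never enters the computation of $SS^{\mathsf T}$, so the identity is safe. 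One tiny polish: rather than invoking the (possibly irrational) eigenvalues $\pm q^{(k-2)/2}$, it is cleaner to argue invertibility from $T^{2}=q^{k-2}I$ on $\mathbf{1}^{\perp}$ directly, which also sidesteps the degenerate reading of $\mu(k-2)$ when $k=1$ (there $\mathbf{1}^{\perp}=0$ anyway). As for what each approach buys: the paper's argument is shorter and needs nothing beyond the two row-sum lemmas already proved, while your spectral/design-theoretic argument yields strictly more information --- the full spectrum of $T$ (hence its determinant up to sign) and the identification of $S$ with the point--hyperplane incidence matrix of $\mathrm{PG}(k-1,q)$ --- at the cost of the symmetry and design-identity bookkeeping.
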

\begin{proof}If $\{e_1,\ldots,e_{\mu(k)}\}$ is the standard basis for $\mathbb{Q}^{\mu(k)}$, then we can express them as
$$e_j=\frac{x}{\mu(k)-\mu(k-1)}-\frac{y(j)}{\mu(k-1)-\mu(k-2)}.$$
Therefore, the row space of $T$ is all of $\mathbb{Q}^{\mu(k)}$ and the rank of $T$ is $\mu(k)$, whence $T$ is invertible.\end{proof}

Now, let $X$ be a generator matrix of $C$, and let $f:\mathbb{F}_q^k\rightarrow C$ be the isomorphism defined by $f(u)=uX, u\in \mathbb{F}^k_q$. Then $f(e_i)$ is the $i^{\text{th}}$ row of $X$. Certainly, $f$ maps the $1$-dimensional subspaces of $\mathbb{F}^k_q$ to those of $C$, and $f(L_i)=\langle f(u_i) \rangle$, where $u_i$ is a fixed element of $L_i$.

\begin{definition} In what follows, $c$ will denote a column of $X$. We define $r=(r_1,\ldots,r_{\mu(k)})^t$ to be the column vector with $r_i=|\{c\neq 0: c^t\in L_i\}|$.

The vector $r$ thus indicates how the (linearly independent) columns of $X$ are distributed among the `filter' of the $L_i$'s.\end{definition}

\begin{lemma}$(Tr)_i=\mathrm{wt}(f(u_i)), u_i\in L_i$\end{lemma}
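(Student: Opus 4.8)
The plan is to unwind the definitions and compute $(Tr)_i$ directly, then recognize the sum as the Hamming weight of the codeword $f(u_i)$. Write $X$ as a $k \times n$ matrix with columns $c_1, \dots, c_n$, so that $f(u_i) = u_i X$ has $j$-th entry $u_i \cdot c_j^t$ (the dot product of the fixed vector $u_i \in L_i$ with the transpose of the $j$-th column). The key observation is that the $j$-th coordinate of $f(u_i)$ is nonzero precisely when $u_i \cdot c_j^t \neq 0$, i.e. precisely when $c_j^t \notin L_i^\perp$. Since $c_j^t$ (when nonzero) lies in exactly one of the one-dimensional subspaces $L_1, \dots, L_{\mu(k)}$, I would group the columns according to which $L_\ell$ contains them; by definition of $r$, there are $r_\ell$ nonzero columns $c_j$ with $c_j^t \in L_\ell$, and each such column contributes a nonzero entry to $f(u_i)$ exactly when $L_\ell \not\perp L_i$, which is exactly the condition $t_{i\ell} = 1$.

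Assembling this, I would write
\[
\mathrm{wt}(f(u_i)) = |\{j : u_i \cdot c_j^t \neq 0\}| = \sum_{\ell=1}^{\mu(k)} |\{j : c_j^t \in L_\ell,\ L_\ell \not\perp L_i\}| = \sum_{\ell=1}^{\mu(k)} t_{i\ell}\, r_\ell = (Tr)_i.
\]
Two small points need care. First, zero columns of $X$ contribute $0$ to the Hamming weight of every $f(u_i)$ and are not counted in any $r_\ell$, so they are harmless; this is why $r_\ell$ was defined counting only nonzero columns. Second, the condition ``$u_i \cdot c_j^t \neq 0$'' genuinely depends only on the subspace $L_i$ and on the line spanned by $c_j^t$, not on the chosen representatives — this is exactly the well-definedness of orthogonality $L_\ell \perp L_i$ already noted in the paragraph preceding the definition of $T$, so the grouping step is legitimate.

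I do not anticipate a serious obstacle here: the statement is essentially a bookkeeping identity, and the only thing to be careful about is matching the indexing conventions (rows of $X$ indexed by the standard basis, so that $f(e_i)$ is the $i$-th row, versus columns $c_j$ which are what get distributed among the $L_\ell$'s) and handling the nonzero-column caveat cleanly. If anything is mildly delicate, it is just making explicit that for a fixed nonzero column $c_j$ there is a unique $\ell$ with $c_j^t \in L_\ell$, so that the double-counting in the middle sum above is correct; this is immediate since distinct one-dimensional subspaces intersect trivially.
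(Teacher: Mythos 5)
Your proposal is correct and follows essentially the same argument as the paper: expand the sum defining $(Tr)_i$, use $t_{i\ell}$ to select the lines $L_\ell$ not orthogonal to $L_i$, and identify the resulting count of nonzero columns with $\mathrm{wt}(u_iX)=\mathrm{wt}(f(u_i))$. You merely run the computation from the weight side rather than from $(Tr)_i$, and your remarks on zero columns and well-definedness of orthogonality are consistent with the paper's setup.
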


\begin{proof}\begin{align*}(Tr)_i&=\overset{\mu(k)}{\underset{j=1}{\sum}}t_{ij}r_j\\
                                     &=\underset{j:\neg(L_j\bot L_i) }{\sum}r_j\\
                                     &=\underset{j:\neg(L_j\bot L_i) }{\sum}|\{c\neq 0: c^t\in L_j\}|\\
                                     &=|\{c\neq 0: c^t\in L_j \text{\;and\;} \neg(c^t\bot u_i), u_i\in L_i\}|\\
                                     &=\mathrm{wt}(u_i X)\\
                                     &=\mathrm{wt}(f(u_i)).\end{align*} \end{proof}
\begin{theorem}If $C\overset{\varphi}{\rightarrow}D$ is an isometry, with $X$ being a generator matrix for $C$, then there is a generator matrix $Y$ of $D$ such that $X\Lambda= Y$, for some monomial matrix $\Lambda$. \end{theorem}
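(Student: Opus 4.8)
The plan is to exploit the linear-algebraic machinery just assembled around the incidence matrix $T$ together with the ``filter'' vector $r$ encoding how the columns of a generator matrix are distributed among the one-dimensional subspaces of $\mathbb{F}_q^k$. First I would fix a generator matrix $X$ for $C$ and the isomorphism $f:\mathbb{F}_q^k\to C$, $f(u)=uX$, and form the column vector $r=r(X)$ with $r_i=|\{c\neq 0: c^{t}\in L_i\}|$ as in the preceding definition. The identity $(Tr)_i=\mathrm{wt}(f(u_i))$ just proved says that $Tr$ records the list of Hamming weights of the images under $f$ of representatives of the lines $L_i$. Now transport everything through the isometry $\varphi:C\to D$: the composite $g=\varphi\circ f:\mathbb{F}_q^k\to D$ is an $\mathbb{F}_q$-isomorphism, so it is of the form $u\mapsto uY_0$ for \emph{some} $k\times n$ matrix $Y_0$ whose rows are $g(e_1),\dots,g(e_k)$ and whose row space is $D$; thus $Y_0$ is a generator matrix of $D$, and the associated filter vector $r'=r(Y_0)$ satisfies $(Tr')_i=\mathrm{wt}(g(u_i))$ by the same lemma applied to $Y_0$.

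Next I would use that $\varphi$ preserves Hamming weight: $\mathrm{wt}(g(u_i))=\mathrm{wt}(\varphi(f(u_i)))=\mathrm{wt}(f(u_i))$ for every $i$, so $Tr'=Tr$ as rational vectors. By Lemma~\ref{LA}, $T$ is invertible over $\mathbb{Q}$, hence $r'=r$. So $Y_0$ and $X$ have exactly the same number of nonzero columns lying in each line $L_i$ of $\mathbb{F}_q^k$. The second step is then purely combinatorial bookkeeping on columns: since for each $i$ the column-multiset of $X$ contains $r_i$ nonzero columns whose transposes span $L_i$, and likewise for $Y_0$, I can choose a bijection $\sigma$ of column indices matching, for each $i$, the columns of $X$ in $L_i$ with the columns of $Y_0$ in $L_i$; when a column of $X$ is zero it is matched with a zero column of $Y_0$ (their counts agree because $n$ minus the total count of nonzero columns is the same for both, the total being $\sum_i r_i$). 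For each matched pair, the $i$-th column of $X$ and the $\sigma(i)$-th column of $Y_0$ are proportional nonzero vectors (or both zero), so there is a unique scalar $\lambda_i\in\mathbb{F}_q^{\times}$ (pick $\lambda_i=1$, say, in the zero case) with (column $i$ of $X$)$\cdot\lambda_i=$ (column $\sigma(i)$ of $Y_0$). Assembling $\sigma$ and the $\lambda_i$ into a monomial matrix $\Lambda$ gives $X\Lambda=Y_0$.

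Finally I would set $Y=Y_0$; then $Y$ is a generator matrix of $D$ and $X\Lambda=Y$, which is the claim. (As a remark one can note this exhibits the monomial equivalence $C\sim D$ at the level of the ambient $\mathbb{F}_q^n$: the map $x\mapsto x\Lambda$ is the sought monomial transformation, and its restriction to $C$ agrees with $\varphi$ up to the identification of $C$ with $\mathbb{F}_q^k$ via $X$ — one should check the action on a spanning set, which is immediate since the rows of $X$ map to the rows of $Y=X\Lambda$.) The main obstacle I anticipate is not the linear algebra — that is handed to us by Lemma~\ref{LA} and the weight formula — but the careful handling of \emph{repeated} and \emph{zero} columns when passing from the equality $r=r'$ of distribution vectors to an actual column-matching bijection $\sigma$: one must argue that equal multiplicities on each line, plus equal total length $n$, force a well-defined permutation of all $n$ coordinates (including the zero columns), and that the resulting $\Lambda$ is genuinely monomial (one nonzero entry per row and column). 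Everything else is routine.
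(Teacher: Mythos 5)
Your proposal is correct and follows essentially the same route as the paper: form $g=\varphi\circ f$ with matrix $Y$, apply the weight identity $(Tr)_i=\mathrm{wt}(f(u_i))$ to both $X$ and $Y$, use preservation of weight and the invertibility of $T$ (Lemma~\ref{LA}) to conclude $r=r'$, and then read off a monomial matrix $\Lambda$ with $X\Lambda=Y$. The extra bookkeeping you flag about matching zero and repeated columns is exactly the step the paper compresses into the remark that ``the columns of $Y$ are simply scalar multiples of those of $X$, up to some order,'' so your version is just a slightly more explicit rendering of the same argument.
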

\begin{proof}Let $g:\mathbb{F}^k_q\rightarrow D$ be defined by $g=\varphi\circ f$, where $f$ is as before. Thus $$g(u)=\varphi\circ f (u)=\varphi(uX),\;\;u\in\mathbb{F}^k_q.$$
It is then clear that if $Y$ is the matrix of $g$ relative to the standard basis then it is a generator matrix for $D$. By the last lemma $(Tr')_i=\mathrm{wt}(g(u_i)),\; u_i\in L_i$, where $r'$ is the column vector for $Y$ defined just as $r$  corresponded to $X$. Moreover, we discover the following desired properties for $Y$.

Notice that if $u_i\in L_i$, then $$\mathrm{wt}(g(u_i))=\mathrm{wt}(\varphi\circ f(u_i))=\mathrm{wt}(f(u_i)).$$
Thus $(Tr)_i=(Tr')_i$, and hence, by lemma \ref{LA}, $r=r'$. Restoring our information, we recall theat $r,r'$ list the number of nonzero columns of $X,Y$, respectively, which lie in $L_1,\ldots,L_{\mu(k)}$, and therefore, the result $r=r'$ means that the columns of $Y$ are simply scalar multiples of those of $X$, up to some order. Thus we may take the monomial matrix $\Lambda=QP$, where the $n\times n$ matrices $Q$ and $P$ specify scalar multiplication and permutation respectively, and obtain $X\Lambda=Y$.\end{proof}

\begin{center}$\sim$\end{center}

\end{document}